\documentclass[11pt]{article}
\usepackage{fullpage}
\usepackage{amsfonts}
\usepackage{bbding}
\usepackage{graphicx,float}
\usepackage{amsfonts,amssymb,amsmath,amsthm,amscd}
\usepackage{mathrsfs}
\usepackage{xcolor}
\usepackage{empheq}
\usepackage{adforn}
\usepackage{cancel}
\usepackage{xr-hyper}
\usepackage{xr}
\usepackage{mdframed}
\usepackage{mathdots}
\usepackage{enumerate}
\usepackage{lmodern}
\usepackage{mdframed}
\usepackage{stmaryrd}
\usepackage{blindtext}
\usepackage[all, knot]{xy}
\usepackage{leftidx}
\usepackage{accents}
\usepackage{appendix}
\usepackage{cases}
\usepackage{bbm}
\usepackage{subfigure}

\definecolor{slightblue}{rgb}{.8, .8, 1}
\definecolor{hair}{RGB}{100,225,190}
\definecolor{ruby}{RGB}{220,50,120}
\definecolor{grass}{RGB}{150,220,110}
\definecolor{ceruleanblue}{rgb}{0.16, 0.32, 0.75}
\definecolor{deepcarmine}{rgb}{0.66, 0.13, 0.24}
\definecolor{otterbrown}{rgb}{0.4, 0.26, 0.13}
\definecolor{sapphire}{rgb}{0.03, 0.15, 0.4}

\usepackage[colorlinks=true,  linkcolor=otterbrown, citecolor=sapphire,  urlcolor=hair!80!black]{hyperref}

\usepackage[T1]{fontenc}

\newtheorem{theorem}{Theorem}[section] \newtheorem{lemma}[theorem]{Lemma}
\newtheorem{proposition}[theorem]{Proposition} \newtheorem{corollary}[theorem]{Corollary}

\theoremstyle{definition} 
\newtheorem{definition}[theorem]{Definition}

\newtheorem{remark}[theorem]{Remark} \numberwithin{equation}{section}
\numberwithin{figure}{section}

\newcommand{\Cb}{\mathbb{C}}

\newcommand{\Eb}{\mathbb{E}}

\newcommand{\Hb}{\mathbb{H}}

\newcommand{\Nb}{\mathbb{N}}

\newcommand{\Pb}{\mathbb{P}}

\newcommand{\Rb}{\mathbb{R}}

\newcommand{\Tb}{\mathbb{T}}

\newcommand{\Zb}{\mathbb{Z}}

\newcommand{\Pf}{\mathbf{P}}

\newcommand{\Lfr}{\mathfrak{L}}

\newcommand{\Ac}{\mathcal{A}}
\newcommand{\Bc}{\mathcal{B}}
\newcommand{\Cc}{\mathcal{C}}
\newcommand{\Dc}{\mathcal{D}}
\newcommand{\Ec}{\mathcal{E}}
\newcommand{\Fc}{\mathcal{F}}
\newcommand{\Gc}{\mathcal{G}}

\newcommand{\Lc}{\mathcal{L}}

\newcommand{\Nc}{\mathcal{N}}

\newcommand{\Pc}{\mathcal{P}}

\newcommand{\Tc}{\mathcal{T}}
\newcommand{\Uc}{\mathcal{U}}
\newcommand{\Vc}{\mathcal{V}}

\newcommand{\Zc}{\mathcal{Z}}

\usepackage[section]{placeins}
\usepackage{wrapfig}
\usepackage{lpic}

\newcommand{\wt}{\widetilde}

\DeclareFontFamily{OMX}{yhex}{}
\DeclareFontShape{OMX}{yhex}{m}{n}{<->yhcmex10}{}
\DeclareSymbolFont{yhlargesymbols}{OMX}{yhex}{m}{n}
\DeclareMathAccent{\wideparen}{\mathord}{yhlargesymbols}{"F3}

\makeatletter
\newcommand*\rel@kern[1]{\kern#1\dimexpr\macc@kerna}
\newcommand*\wb[1]{%
	\begingroup
	\def\mathaccent##1##2{%
		\rel@kern{0.8}%
		\overline{\rel@kern{-0.8}\macc@nucleus\rel@kern{0.2}}%
		\rel@kern{-0.2}%
	}%
	\macc@depth\@ne
	\let\math@bgroup\@empty \let\math@egroup\macc@set@skewchar
	\mathsurround\z@ \frozen@everymath{\mathgroup\macc@group\relax}%
	\macc@set@skewchar\relax
	\let\mathaccentV\macc@nested@a
	\macc@nested@a\relax111{#1}%
	\endgroup
}
\makeatother

\usepackage{soul}

\def \eps {\varepsilon}

\newcommand{\ind}{\mathbf{1}}
\newcommand{\loc}{\mathrm{loc}}

\newcommand{\lemclustersize}{3.2} 
\newcommand{\lemcrossingloop}{3.4} 
\newcommand{\proplocality}{5.3} 
\newcommand{\proplocalityin}{5.10} 
\newcommand{\proplocalityb}{5.12} 
\newcommand{\lemquasimult}{6.1} 
\newcommand{\proparmexp}{6.2} 
\newcommand{\propquasin}{6.3} 
\newcommand{\proparmexpbdy}{6.4} 
\newcommand{\seclocality}{5} 
\newcommand{\secquasimult}{6} 
\newcommand{\lemloops}{5.7} 

\newcommand{\eqrec}{5.10} 
\newcommand{\figquasi}{6.1} 
\newcommand{\eqquasidec}{6.6} 
\newcommand{\eqgamma}{6.9} 

\title{Percolation of discrete GFF in dimension two\\ II. Connectivity properties of two-sided level sets}
\author{Yifan Gao\thanks{Westlake University} \and Pierre Nolin\thanks{City University of Hong Kong} \and Wei Qian\thanks{The University of Hong Kong}}
\date{}
\setcounter{tocdepth}{2}

\begin{document}

\maketitle
	
\begin{abstract}
We study percolation of two-sided level sets for the discrete Gaussian free field (DGFF) in dimension two. 
For a DGFF $\varphi$ defined in a box with side length $N$, we show that for $C$ large enough, there exist low crossings in the set of vertices $z$ where $|\varphi(z)|\le C \sqrt{\log \log N}$, with probability tending to $1$ as $N \to \infty$, while the average and the maximum of $\varphi$ are of order $\sqrt{\log N}$ and $\log N$, respectively.
As a consequence, we also obtain connectivity properties of the set of thick points of a random walk.

We rely on an isomorphism between the DGFF and the random walk loop soup with critical intensity $\alpha=1/2$, and further extend our study to the occupation field of the loop soup for all subcritical intensities $\alpha\in(0,1/2)$. For the loop soup in a box with side length $N$, 
 we show that for some constant $\lambda$ large enough, with probability tending to $1$ polynomially fast in $N$, there exists a low path connecting the left and right sides of the box, whose vertices all have a value smaller than $\lambda$, even though the average occupation time is of order $\log N$. In addition, no such path exists for $\lambda$ small. Our results thus uncover a \emph{non-trivial phase-transition} for this highly-dependent percolation model. 

For both the DGFF and the occupation field of the random walk loop soup (with intensity $\alpha\in(0,1/2)$), we further show that such low crossings can be found in the ``carpet'' of the loop soup -- the set of vertices which are not in the interior of any outermost cluster of the loop soup. We also study crossings in subdomains of the box, where boundary effects do not come into play, and we obtain an exponential decay property. Finally, we explain how to extend our results to the metric graph.

This work is the second part of a series of two papers. 
It relies heavily on tools and techniques developed for the random walk loop soup in the first part, especially surgery arguments on loops, which were made possible by a separation result for random walks in a loop soup. This allowed us, in that companion paper, to derive several useful properties such as quasi-multiplicativity, and obtain a precise upper bound for the probability that two large connected components of loops ``almost touch'', which is instrumental here.
\bigskip

\textit{Key words and phrases: random walk loop soup, Gaussian free field, percolation, arm exponent, spin $O(N)$ model.}
\end{abstract}
	
\tableofcontents
	
\section{Introduction} \label{sec:intro}
Our original motivation for this work lies in connectivity properties of the discrete Gaussian free field (DGFF) in dimension $2$. We want to study \emph{two-sided level sets} (TSLS), where the DGFF remains below a certain level, in absolute value, on which very little is known. In our study, we start by invoking a connection between the DGFF and the occupation field of the random walk loop soup (RWLS) with critical intensity $\alpha=1/2$. It is then natural to extend our analysis to the occupation fields of all RWLS with (subcritical and critical) intensities $\alpha \in (0,1/2]$.

Section~\ref{subsec:main_gff} is devoted to the results on the discrete Gaussian free field, as well as consequences on the thick points of the random walk. Section~\ref{subsec:rwls} is devoted to the results on random walk loop soups. In Section~\ref{subsec:geometry}, we describe the additional geometric properties of the percolating paths. In Section~\ref{subsec:strategy}, we present the general strategy of our proof. In Section~\ref{sec:open_questions}, we mention some natural open questions and extensions. Finally in Section~\ref{subsec:org}, we explain the structure of the paper.

\subsection{Discrete Gaussian free field}\label{subsec:main_gff}

Level set percolation of the DGFF has received much attention in the past, as a prominent example of percolation in strongly correlated systems, see e.g.\ \cite{LS1986,BLM1987,MR2080601,marinov2007percolation}.
More recently, impressive progresses have been made in dimension $d\ge 3$, since the introduction of the random interlacement by Sznitman \cite{MR2680403}, which is another celebrated model where long-range dependence occurs. Due to its close relation with the DGFF \cite{MR3050507,MR2892408,MR2932978}, 
the methods developed for the random interlacement, see e.g.\ \cite{MR2744881,MR2891880}, has led to the proof of a non-trivial phase transition for the percolation of the \emph{one-sided level set}, i.e.\ the set $\{ z\in\Zb^d: \phi(z)\ge h \}$ for the DGFF $\phi$ defined on $\Zb^d$ \cite{MR3053773}. 
On the other hand, for the \emph{two-sided level set} $\{ z\in\Zb^d: |\phi(z)|\le \lambda \}$,  a non-trivial phase transition for percolation in the TSLS (resp.\ the complementary set of the TSLS) has also been proved in  \cite{DPR2018a} (resp.\ \cite{Ro2014}).
We are not able to be exhaustive due to the vast literature, but refer to e.g.\ \cite{MR3502602,DPR2018a, Sz2019,MR4112719,CN2020,DPR2022,MR4474499,DPR2018b, MR4568695, DPR2023,drewitz2023arm,CD2023,drewitz2024critical,cai2024one} for many other interesting and important progresses related to this subject. 

An essential feature in $d\ge 3$ is the fast-decaying correlations of the field. 
The DGFF in $d=2$ is more delicate, due to the $\log$-order correlations. In spite of this, exploiting the domain Markov property of the DGFF, \emph{one-sided level sets} in $d=2$ have been well understood. In particular, the crossing probability of a macroscopic annulus in the bulk is non-vanishing for any level $h\in \Rb$, suggesting the absence of a phase transition, see \cite{MR3800790,MR4112719} -- this is completely different from the behavior in higher dimensions. 
On the other hand, the one-sided level sets can be described by the level-lines of the DGFF, which are known to converge to SLE$_4$-type curves  \cite{MR3101840}, coupled with the continuum GFF. The well-developed theory of SLE$_4$ and GFF then implies connectivity properties of the one-sided level sets of the DGFF,  see e.g.\ \cite{MR4452651}.

Let us also mention a particular type of level set for the two-dimensional DGFF in a box of side length $2N$, above a level which depends on $N$. The leading-order asymptotic of the maximum of the GFF (in absolute value) has been shown in \cite{BDG2001} to be  $2\sqrt{g}\log N$ for $g:=2/\pi$, see \eqref{eq:max}.
The following one-sided level set is known as \emph{intermediate level sets}
\[
\{ z\in B_N: \varphi_N(z)\ge \left(2\sqrt{g}a\right)\log N \}, \quad \text{ with } a \in (0,1).
\] 
It was established in \cite{Da2006} that for any $a \in (0,1)$, the corresponding intermediate level set contains $N^{2(1-a^2)+o(1)}$ vertices, exhibiting an interesting fractal structure. Later, it was proved in \cite{BL2019} that if one encodes an intermediate level set as a point measure, then it converges, after a suitable rescaling, to a Liouville quantum gravity (LQG) measure.
\medbreak

In sharp contrast to the extensive literature on level-set percolation of the DGFF that we mentioned above, little is known about the percolative property of the \emph{two-sided level sets} (TSLS), for the DGFF $\phi$ in $d=2$.
The chemical distance within the TSLS was studied in \cite{MR4419197}, but the results there are based on an assumption about the percolative property of the TSLS, whose validity is still unknown (see Remark~\ref{rmk:yg}). 
Before the present paper, the best conclusion we can draw seems to be the following: the aforementioned results about intermediate level sets (combined with the fact that $\phi$ has the same law as $-\phi$) could imply that the TSLS $\{z: |\phi(z)|\le \lambda\}$ for any $\lambda>\sqrt{2g} \log N$ has macroscopic crossings.
Prior to this paper, percolation of the TSLS of the DGFF in $d=2$ had remained out of reach, in particular because the techniques that proved so powerful in the earlier settings are inapplicable here. In particular, it is highly unclear whether the theory of GFF and its level lines in the continuum is relevant for our purpose (see Remark~\ref{rmk:gff1}), and we develop a completely different approach, based on the geometric representation provided by the critical random walk loop soup.

Finally, we mention a recent work \cite{AGS2022} which studies percolation of the level set of (the norm of) the \emph{$n$-vector-valued} DGFF in $d=2$, where many interesting results have been obtained for $n\ge 2$ (see Remark~\ref{rmk:supercritical} for more details). However, as pointed out in \cite{AGS2022}, a geometric understanding of the TSLS for $n=1$ is still lacking. 
An important motivation in \cite{AGS2022} lies in consequences for the spin $O(n+1)$ model, in particular the spin $O(3)$ model (the classical Heisenberg model) for $n=2$. Hence, results for $n=1$ (which is the focus of the present paper) could potentially give information about the celebrated $XY$ model (which is the spin $O(2)$ model).

Interestingly, an explicit conjecture about the $n=1$ case was made in \cite{AGS2022}. The authors believe that the percolation threshold for the TSLS of the DGFF should be constant, and further hinted  (see Remark 1 and a simulation in Figure 3 there) that this threshold should be the same as  the one for two-valued sets of the continuum GFF \cite{ASW2019}. 
Even though we will not provide a rigorous proof, we have obtained some preliminary simulation results which suggest a different scenario than what is hinted in \cite[Figure 3]{AGS2022}, see Remark~\ref{rem:improve_critical} below.

\subsubsection{Main result on TSLS of DGFF}\label{sec:main_gff} 
Throughout the introduction, we will state the results on $\Zb^2$, but they can also be generalized to the metric graph (see Section~\ref{sec:extensions} for some extensions of our main results).

For $N \ge 1$, let $B_N := [-N,N]^2\cap\Zb^2$ be the discrete box centered on $0$ with side length $2N$.
Let  $\varphi_N$ be a DGFF in $B_N$ with Dirichlet boundary conditions. We are interested in the existence of large-scale paths, for instance crossing $B_N$ from left to right, in two-sided level sets (TSLS) of the field  $\varphi_N$. That is, nearest-neighbor paths along which $|\varphi_N|$ remains smaller than some given level $\lambda > 0$. 	
For any $0 < n \leq N$, and any subset $A\subseteq B_N$, consider the crossing event
\begin{equation}\label{eq:Cn}
	\Cc_{n}(A) :=  \left\{ 
	\text{there exists a path in $A\cap B_n$ crossing from left to right in $B_n$}  \right\},
\end{equation}
where the left side of $B_n$ is defined as the set of leftmost vertices in $B_n$, and the right side is defined similarly (as well as the top and bottom sides).
Our first main result is the following.

\begin{theorem}\label{thm:GFF}
	There exist $c_0, C_0>0$, such that for all $C\ge C_0$,
	\begin{align}\label{eq:gff-2}
		&\Pb( \Cc_N( \{ z\in B_N: |\varphi_N(z)|\le C \sqrt{\log \log N} \} ) )= 1- O((\log N)^{-c_0 C^2}) \quad \text{as} \quad N\to \infty.
	\end{align}
	Moreover, for all $C\ge C_0$,
	\begin{align}
		\label{eq:gff-1}
		&\liminf_{N \to \infty}\Pb(  \Cc_{N/2}( \{ z\in B_N: |\varphi_N(z)|\le C \sqrt{\log \log N} \} ) )>0.
	\end{align}
\end{theorem}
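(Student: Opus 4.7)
The starting point is the isomorphism theorem of Le Jan/Lupu, which couples the (metric-graph) DGFF $\varphi_N$ with the critical random walk loop soup (RWLS) $\mathfrak{L}_{1/2}$ at intensity $\alpha=1/2$, so that $\varphi_N^2/2$ equals the occupation field $\hat{\ell}_{1/2}$ pointwise and $\varphi_N$ has a constant sign on each cluster of $\mathfrak{L}_{1/2}$, vanishing on cluster boundaries. Under this coupling, the event $\{|\varphi_N(z)|\le\varepsilon\sqrt{\log N}\}$ translates into $\{\hat{\ell}_{1/2}(z)\le \varepsilon^2(\log N)/2\}$, so Theorem~\ref{thm:GFF} reduces to producing a left-right crossing of $B_N$ inside the sub-level set of the critical occupation field at threshold $(\varepsilon^2/2)\log N$.

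My plan is to locate such a crossing inside the \emph{carpet} of $\mathfrak{L}_{1/2}$ at a mesoscopic scale $r_0$ chosen with $\log r_0 = \Theta(\varepsilon^2\log N)$ (so $r_0 = N^{c\varepsilon^2}$ for a suitable constant $c$); the carpet is the complement of the interiors of outermost clusters of diameter at least $r_0$. The first main step is to prove that this carpet contains a left-right crossing with high probability. This proceeds by a dyadic RSW-style gluing argument between scales $r_0$ and $N$, combining the locality (Proposition~\proplocality{}), quasi-multiplicativity (Lemma~\lemquasimult{}) and bulk/boundary arm-exponent bounds (Propositions~\proparmexp{} and~\proparmexpbdy{}) from the companion paper, with the ``almost touching'' bound (equation~\eqquasidec{}) being the central input that rules out chains of nearly-touching macroscopic clusters blocking every carpet crossing. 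The construction mirrors the one developed in the companion paper for subcritical loop soups, now adapted to $\alpha=1/2$.

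The second main step is a uniform bound on $\hat{\ell}_{1/2}$ along the carpet crossing. Each carpet vertex either lies on a cluster boundary (where $\varphi_N=0$ by the coupling) or in the interior of a cluster of diameter less than $r_0$; in the latter case, the loops contributing to $\hat{\ell}_{1/2}(z)$ are confined to a region of diameter $r_0$, so a scale-by-scale decomposition of the occupation field together with the concentration estimates for the RWLS from the companion paper gives $\hat{\ell}_{1/2}(z)\le \varepsilon^2(\log N)/2$ uniformly on the crossing, with failure probability $N^{-c_0\varepsilon^2}$. Combining with the carpet-existence bound yields~\eqref{eq:gff-2}. For~\eqref{eq:gff-1}, running the same construction inside $B_{N/2}\subset B_N$ and exploiting the buffer annulus $B_N\setminus B_{N/2}$ to decouple from the Dirichlet boundary produces a uniform positive lower bound independent of $N$.

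The principal obstacle is calibrating $r_0$ so that two competing requirements coexist: the carpet must admit a crossing with polynomially small failure probability, while on such a crossing the occupation field stays below $(\varepsilon^2/2)\log N$ with a matching polynomial rate. The fine quantitative control over near-touchings of macroscopic clusters furnished by the companion paper is exactly what balances these two factors and produces the exponent proportional to $\varepsilon^2$; absent such control, one would be stuck with a much weaker rate, and the $\sqrt{\log\log N}$-level crossings mentioned in the abstract would be entirely out of reach.
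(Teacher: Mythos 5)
Your high-level reduction to the critical RWLS occupation field via Le Jan's isomorphism matches the paper's, and you correctly identify the carpet as the place to look for the crossing, but the second step of your plan is not just different from the paper's — it rests on two false premises.

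First, the claim that $\varphi_N=0$ on cluster boundaries is wrong on the discrete lattice $\Zb^2$: that sign-and-zero structure is a feature of Lupu's \emph{metric-graph} coupling, where the zeros of $\tilde\varphi$ live at interior points of edges, not at lattice vertices. Theorem~\ref{thm:GFF} as stated is about the discrete DGFF, and there is no vertex on which it deterministically vanishes. Second, and more seriously, the uniform occupation bound you want on the carpet crossing is simply unattainable. The paper says this explicitly in Remark~\ref{rmk:gff1}: \emph{``there are vertices in the carpet of a RWLS with arbitrarily high occupation times.''} Even with your mesoscopic truncation at $r_0 = N^{c\varepsilon^2}$, a carpet vertex $z$ lying in a region covered only by small clusters has occupation $\sim\Gamma(1/2,\,g\log r_0)$; the probability that this exceeds $\tfrac{\varepsilon^2}{2}\log N$ is then a \emph{constant} in $N$ (roughly $e^{-1/(2gc)}$), not polynomially small, so a union bound over the $\gtrsim N$ vertices of any crossing path is hopeless. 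And a carpet vertex on the external boundary of a cluster of diameter $\ge r_0$ is visited by that cluster's loops, which are not confined to $B_{r_0}(z)$, so the ``confined occupation'' reasoning does not even apply there.

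The paper avoids this entirely by not trying to show the crossing has uniformly low occupation. Instead it runs a Peierls/duality argument: if no $\lambda$-open carpet crossing exists (and there is no macroscopic cluster), a blocking $*$-chain of clusters must cross vertically, and at each \emph{$\lambda$-passage edge} between consecutive clusters both endpoints must have occupation exceeding $\lambda$. The whole machinery of Section~4 — the $\lambda$-arm events of Definition~\ref{def:mfa}, their locality and quasi-multiplicativity, the decoupling result Lemma~\ref{lem:decom} which extracts a factor $\rho^{\lambda}$ per passage edge, and the induction argument culminating in Propositions~\ref{prop:critical-lbd-1} and~\ref{prop:critical-b2} — is what makes this summation over all potential blocking chains converge, using crucially that the interior four-arm exponent at $\alpha=1/2$ is $\xi(1/2)=2$, so the $o(1)$ in the exponent must be offset by taking $\lambda\asymp\varepsilon\log N$. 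None of that apparatus appears in your proposal, and without it there is no way to control the ``entropy'' of all possible blocking configurations; an RSW gluing at one mesoscopic scale cannot substitute for it. Your calibration heuristic for $r_0$ and your attribution of the $\varepsilon^2$ to the ``almost-touching'' bound also miss the actual source of the exponent: the $\varepsilon^2$ is simply the Gaussian-to-Gamma threshold translation $\varepsilon\sqrt{\log N}\mapsto\tfrac{\varepsilon^2}{2}\log N$, and the constant $c_0$ comes from the $\rho^\lambda$-gain per passage edge together with the boundary two-arm exponent.
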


Note that $\varphi_N(z)$ follows a Gaussian distribution with mean $0$ and variance $G_N(z,z)$, where $G_{N}(\cdot,\cdot)$ is the Green's function in $B_N$ (its precise definition is given in \eqref{eq:Green}). 
By \cite[Lemma 1]{BDG2001}, there exists a universal constant $C'>0$ such that for all $N\ge 10$ and $z\in B_{N/2}$,
\begin{equation}\label{eq:green-estimate}
	\big|G_N(z,z)- g\log N\big| \le C'
\end{equation}
(recall that $g=2/\pi$).
Thus there exists a universal constant $c>0$ such that for all $N\ge 10$ and $z\in B_{N/2}$,
\begin{equation}\label{eq:density}
	\Pb( |\varphi_N(z)|\le C \sqrt{\log \log N} ) \le \frac{c\, C\, \sqrt{\log \log N}}{\sqrt{\log N}}.
\end{equation}
The maximum of the GFF (in absolute value) has been shown in \cite{BDG2001} to be 
\begin{align}\label{eq:max}
	\max_{z\in B_N} |\varphi_N(z)|=\left(2\sqrt{g}+o(1)\right)\log N,
\end{align}
where $o(1)\rightarrow 0$ in probability as $N\rightarrow\infty$.

\begin{remark}\label{rmk:gff_bernoulli}
	By \eqref{eq:density}, the density of the random set $\{ z\in B_{N/2}: |\varphi_N(z)|\le C \sqrt{\log \log N} \}$ vanishes as $N$ goes to infinity.
    We stress that this property makes our situation very different from the one-sided case, where $\{ z\in B_{N/2}: \varphi_N(z)\ge h\}$ occupies half of the vertices as $N\to \infty$ (by \eqref{eq:density-bound}).
	Our result is also contrary to Bernoulli percolation, where open sites should have a positive density (above a certain critical threshold) to percolate. 
	This situation, which we find remarkable, occurs because the DGFF takes advantage of the strong correlations. A similar phenomenonology, where positive association helps percolation, is also conjectured in three dimensions, as explained in the introduction of \cite{DPR2018a}.
\end{remark}

\begin{figure}[t]
	\centering
	\includegraphics[width=\textwidth]{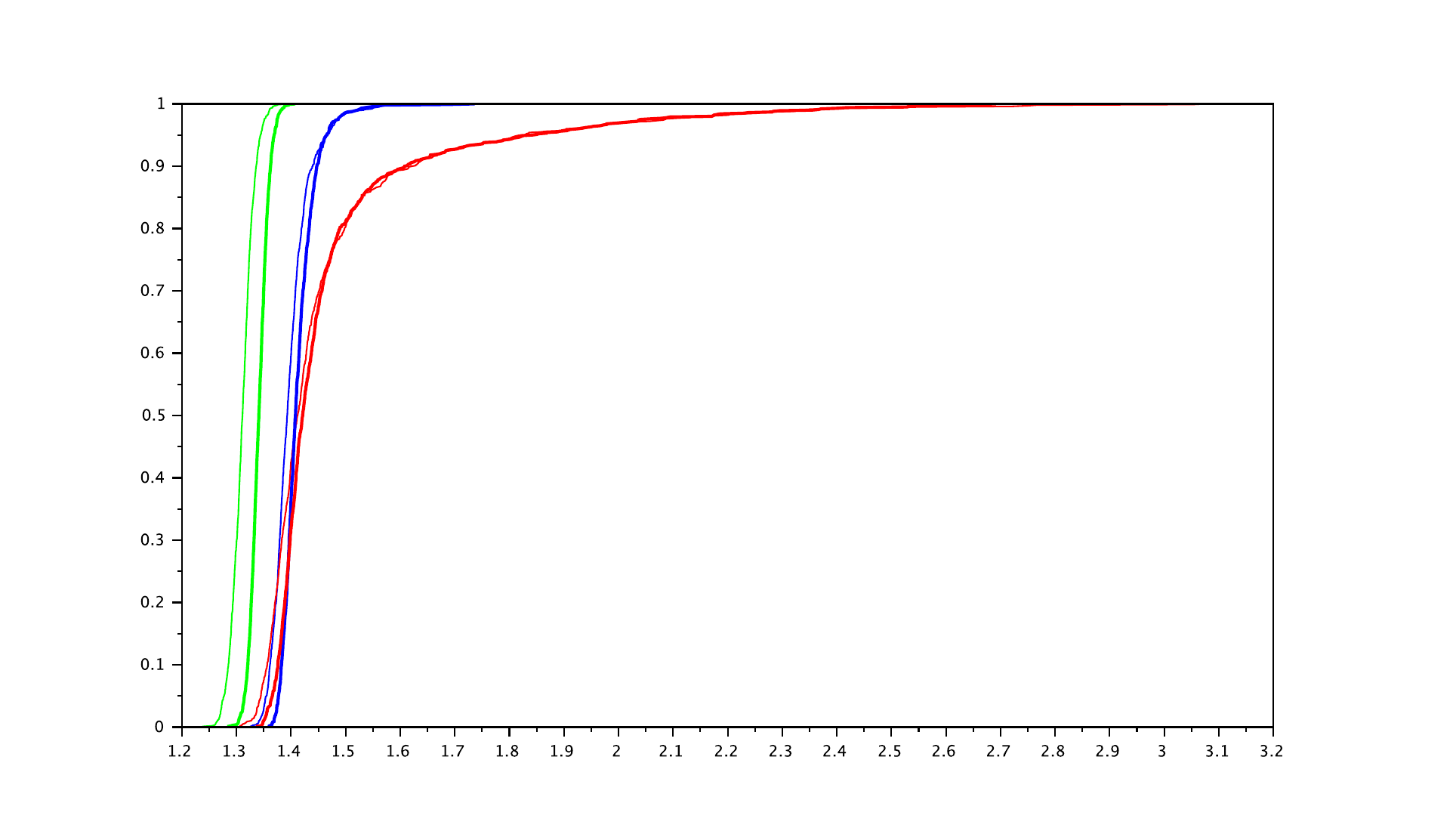}
	\caption{This figure shows estimates for the distribution of the threshold in the box $B_N$ with side length $2N$, for $N = 2048$ (in thinner line) and $N=4096$ (in thicker line), using three different settings. Each of them is obtained from $1000$ samples, in the following way. For the blue curve, we measure the smallest value of $\lambda$ such that $\Cc_N( \{ z\in B_N: |\varphi_N(z)|\le \lambda \} )$ occurs. In red, we find the smallest $\lambda$ such that $\Cc_{N/2}( \{ z\in B_N: |\varphi_N(z)|\le \lambda \} )$ occurs (bulk case). Finally, in green we consider a setting which is analogous to that in \cite[Figure 3]{AGS2022}: we determine the smallest $\lambda$ such that there is a crossing of the annulus $B_N \setminus B_{N/2}$ along which $|\varphi_N| \leq \lambda$.}
	\label{fig:simulation}
\end{figure}

\begin{remark} \label{rem:improve_critical}
As far as we know, it is still a widely open question to determine the percolation threshold for the TSLS of the DGFF, or at least its order of magnitude (as $N \to \infty$). It was conjectured by Aru, Garban and Sep\'ulveda  in \cite[Remark 1]{AGS2022} that this threshold should be constant in the limit. They further hinted, supported by a simulation in \cite[Figure 3]{AGS2022}, that this constant is possibly $\lambda_0$, where $2\lambda_0$ is the height gap of the continuum GFF. This conjecture is quite natural, because $\lambda_0$ also turns out to be the percolation threshold for the two-valued sets of the GFF, which is conceptually the continuum analogue of the TSLS of the DGFF.

At present, we have no argument that would support or refute this conjecture. However, we have made some preliminary simulations which give strong hints that the discrete threshold is \emph{not} $\lambda_0$, see Figure~\ref{fig:simulation}. In our normalization, the height gap is $2\lambda_0 = \sqrt{2 \pi}$, and the thresholds that we observe are clearly above $\lambda_0 = 1.253314\ldots$ (even though in the annulus case, they are rather close for small boxes).

To conclude this discussion, we wish to point out that there is, on the mathematical side, also an essential gap between the discrete picture and the continuum one (see Remark~\ref{rmk:gff1} for a more detailed discussion). 
\end{remark}

\subsubsection{Consequences on thick points of a random walk}

Thanks to the generalized second Ray-Knight theorem (see Theorem~\ref{thm:gsrk}), \label{page}
we can further use the result for the DGFF to study \emph{thick points} of a random walk in $B_N$. 

For a continuous-time random walk $(W_s)_{s\ge 0}$ in $B_N$ with wired boundary conditions, its \emph{cover time} of $B_N$ is the first time that every vertex of $B_N$ has been visited. At the cover time, the local time at a typical vertex is asymptotic to $(\log N)^2/\pi$. The thick points are roughly speaking the points where the random walk has spent more time than average.
More precisely, we follow the definition given in \cite[Eq.~(2.4)]{AB2022}.
For any $t > 0$, let $(\Lfr^{t}_{B_N}(z))_{z\in B_N}$ be the family of local times induced by $(W_s)_{s\ge 0}$, up to the time when the local time at the boundary accumulates up to $t$ (see Section~\ref{subsec:GFF} for rigorous definitions). For $a\in(0,1]$ and $\theta>0$, we define the following set of thick points with thickness $a$
\begin{equation}\label{eq:thick}
	\Tc_N^+(\theta,a):= \Big\{  z\in B_N: \Lfr^{t_N}_{B_N}(z)\ge \frac{(\sqrt{\theta}+a)^2}{\pi}(\log N)^2 \Big\},
\end{equation}
where
	\begin{equation}\label{eq:tN}
		t_N:=\frac{\theta}{\pi}(\log N)^2
	\end{equation}
	(which is asymptotic to the typical time that the random walk spends at any given vertex, at the $\theta$-multiple of the cover time).
It is proved in \cite[Theorem 1.2]{Ab2015} (see also \cite[Eq.~(2.18)]{AB2022}) that
\begin{align}\label{eq:thick_estim}
	|\Tc_N^+(\theta,a)|=N^{2(1-a^2)+o(1)},
\end{align}
where $o(1)\rightarrow 0$ in probability. Note that this asymptotics depends only on the thickness $a$, but not on $\theta$.

We are able to deduce the following connectivity result for thick points, with a thickness which vanishes as $N\to \infty$. We first state our results in this case as the following Theorem~\ref{thm:thick}, and refer the reader to Theorem~\ref{thm:lt} for a more general form.
\begin{theorem}\label{thm:thick}
Let $c_0,C_0$ be as in Theorem~\ref{thm:GFF}.
	Let $a_{C, N}:=C\, \sqrt{\frac{\pi}{2}}\, \frac{\sqrt{\log\log N}}{\log N}$. Then for all $\theta>0$ and $C\ge C_0$, we have 
	\begin{align}\label{eq:thick-2}
		&\Pb\left( \Cc_{N}\left( \Tc_N^+ \left( \theta,a_{C,N}\right) \right) \right)=O((\log N)^{-c_0 C^2}) \quad \text{as} \quad N\to \infty.
	\end{align}
	Moreover, for  all $\theta, \eps>0$,
	\begin{align}
		\label{eq:thick-1}
		&\limsup_{N \to \infty} \Pb\left( \Cc_{N/2}\left( \Tc_N^+ \left( \theta, a_{C,N} \right) \right) \right)<1.
	\end{align}
\end{theorem}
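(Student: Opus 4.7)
The plan is to deduce Theorem~\ref{thm:thick} from Theorem~\ref{thm:GFF} via the generalized second Ray--Knight isomorphism (Theorem~\ref{thm:gsrk}). The isomorphism couples the local times $(\Lfr^{t_N}_{B_N}(z))_z$ with a Dirichlet GFF $\phi$ on $B_N$ (independent from the random walk) through the equality in law
\[
\bigl(\Lfr^{t_N}_{B_N}(z)+\tfrac{1}{2}\phi(z)^2\bigr)_{z\in B_N}
\stackrel{d}{=}\bigl(\tfrac{1}{2}(\phi(z)+\sqrt{2t_N})^2\bigr)_{z\in B_N},
\]
with $\sqrt{2t_N}=\sqrt{g\theta}\log N$. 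Setting $\lambda_N:=\frac{(\sqrt{\theta}+a_{\eps,N})^2}{\pi}(\log N)^2$, a short calculation gives $\sqrt{2\lambda_N}-\sqrt{2t_N}=\eps\sqrt{g\log N}$. Since $\phi^2\ge 0$, the inclusion $\{\Lfr^{t_N}_{B_N}(z)\ge\lambda_N\}\subseteq\{\Lfr^{t_N}_{B_N}(z)+\tfrac{1}{2}\phi(z)^2\ge\lambda_N\}$ combined with the above identity yields
\[
\Pb\bigl(\Cc_N(\Tc_N^+(\theta,a_{\eps,N}))\bigr)
\le \Pb\bigl(\Cc_N(\{z\in B_N:|\phi(z)+\sqrt{2t_N}|\ge\sqrt{2\lambda_N}\})\bigr).
\]

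Next I would decompose the right-hand set into its ``upper'' part $A_+:=\{\phi\ge\eps\sqrt{g\log N}\}$ and its ``lower'' part $A_-:=\{\phi\le -\eps\sqrt{g\log N}-2\sqrt{g\theta}\log N\}$. For $\theta$ sufficiently large, pointwise Gaussian bounds together with \eqref{eq:green-estimate} show that $A_-$ is empty up to a polynomially small error; and for arbitrary $\theta>0$, the super-polynomial decay of the probability that two neighboring GFF values differ by $\Omega(\log N)$ prevents any nearest-neighbor crossing from switching between $A_+$ and $A_-$. Up to a negligible error, every left-right crossing in $A_+\cup A_-$ is therefore entirely contained in $A_+$. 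To bound $\Pb(\Cc_N(A_+))$, I apply Theorem~\ref{thm:GFF} at level $\eps':=\eps\sqrt{g}/2$, chosen so that $\eps'\sqrt{\log N}<\eps\sqrt{g\log N}$: by \eqref{eq:gff-2} and the invariance of $B_N$ under $90^\circ$ rotation, there is a top-bottom nearest-neighbor crossing of $B_N$ inside $\{|\phi|\le\eps'\sqrt{\log N}\}$ with probability $1-O(N^{-c_0 g\eps^2/4})$. This crossing lies strictly outside $A_+$, and the standard planar duality for site percolation on $\Zb^2$ precludes any left-right crossing in $A_+$. Combining everything gives \eqref{eq:thick-2}, after relabeling the constant $c_0$. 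Statement \eqref{eq:thick-1} is obtained analogously by invoking \eqref{eq:gff-1} in place of \eqref{eq:gff-2} at the same level $\eps'$.

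The main difficulty I anticipate is handling the negative set $A_-$ uniformly in $\theta>0$: for small $\theta$ the threshold $-2\sqrt{g\theta}\log N$ lies within the typical fluctuation range of the minimum of the Dirichlet GFF, so a direct union bound over vertices of $A_-$ is insufficient. I expect to overcome this by exploiting the sub-Gaussian, bounded-variance nature of GFF increments between neighbors, which makes the event that a single nearest-neighbor edge connects $A_+$ to $A_-$ super-polynomially rare and effectively decouples the crossing problem from the sign of $\phi+\sqrt{2t_N}$, leaving $A_+$ alone to dictate the asymptotic behavior.
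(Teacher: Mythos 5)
Your core strategy---use the Ray--Knight isomorphism to reduce to a level-set question for the GFF, then invoke Theorem~\ref{thm:GFF} and planar duality---is essentially the paper's (there via Corollary~\ref{cor:gsrk} and Theorem~\ref{thm:lt}). However, your intermediate decomposition into $A_+$ and $A_-$ introduces a genuine gap. From the argument that a nearest-neighbor edge cannot join $A_+$ to $A_-$, you conclude that every left-right crossing in $A_+\cup A_-$ is, up to negligible error, entirely contained in $A_+$. This does not follow: ruling out edges between $A_+$ and $A_-$ does not rule out a crossing lying \emph{entirely} in $A_-$. For $\theta<1$, $A_-$ has about $N^{2(1-\theta)+o(1)}$ vertices by the intermediate-level-set estimates, and you give no argument that such a set cannot carry a macroscopic crossing. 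As written, \eqref{eq:thick-2} is therefore not established uniformly over $\theta>0$.

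The decomposition---and hence the gap---is unnecessary. Your blocking top-bottom crossing sits in $\{|\phi|\le\eps'\sqrt{\log N}\}$ with $\eps'\sqrt{\log N}<\eps\sqrt{g\log N}$, and this set is disjoint not only from $A_+$ but also from $A_-$, since $A_-\subseteq\{\phi<-\eps\sqrt{g\log N}\}\subseteq\{\phi<-\eps'\sqrt{\log N}\}$. So the same duality argument that precludes a left-right $*$-crossing in $A_+$ already precludes one in $A_+\cup A_-$, with no separate treatment of $A_-$ and no increment estimates. The ``main difficulty'' you anticipate in your last paragraph is a phantom. This is also exactly why the paper works with the low set directly: the inclusion $\{|\varphi|\le\eps\sqrt{\log N}\}\subseteq\{\tfrac12(\varphi+\sqrt{2t})^2\le\tfrac12(\eps\sqrt{\log N}+\sqrt{2t})^2\}$, pushed through the stochastic domination of Corollary~\ref{cor:gsrk}, yields the blocking crossing in $B_N\setminus\Tc_N^+$ at once, so there is never a high set to peel apart into its two tails.
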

In fact, Theorem~\ref{thm:thick} holds true even if so-called $*$-crossings are considered, i.e., if one allows paths in $\Tc_N^+ ( \theta,a_{C,N})$ to use the diagonals of the faces of $\Zb^2$ (in other words, at each step, one can jump not only to nearest neighbors, but also to next-nearest neighbors), see Remark~\ref{rem:*thick}.

Thick points of a random walk were first studied by Erd\H{o}s and Taylor in \cite{MR0121870}, and
have been revisited many times (see, e.g., \cite{Ro2005,DPRZ2006,Ab2015,Ok2016,Je2020b, AB2022}) since the breakthrough \cite{DPRZ2001}, which described its multifractal structure.
More recently, thick points have been studied and exploited in a more detailed way, producing many interesting objects related to conformal geometry (see \cite{BBK,AHS, AB2022,ABL2019,Je2020a,Je2023,ABJL2023}). 
Note that for $a>1/\sqrt{2}$, \eqref{eq:thick_estim} immediately implies that there cannot be macroscopic crossings (length of order $N$) in $\Tc_N^+(\theta,a)$. However, to the best of our knowledge, no connectivity results for $a$-thick points were known for $a\le 1/\sqrt{2}$, partly because the strong correlations place it out of the class of Bernoulli percolation. We believe that our results and our techniques open the door to the study of such strongly correlated fields.

\subsection{Random walk loop soup}\label{subsec:rwls}
Le Jan \cite{LeJa2010, MR2815763} states that the occupation field $X_N$ of a random walk loop soup (RWLS) with critical intensity $\alpha=1/2$ in $B_N$ has the same law as $\varphi_N^2/2$, where $\varphi_N$ is a DGFF  in $B_N$.
This is a particular case of the \emph{isomorphism theorems} which have a rich history, see e.g.\
\cite{Symanzik66Scalar,Symanzik1969QFT,
	BFS82Loop,Dynkin1984Isomorphism,
	Dynkin1984IsomorphismPresentation,
	Wolpert2,
	EKMRS2000,
	MarcusRosen2006MarkovGaussianLocTime,
	MR2932978}.
This immediately allows us to translate Theorem~\ref{thm:GFF} to a result about the critical ($\alpha=1/2$)  RWLS. 

In fact, we will study the occupation fields of all RWLS with (subcritical and critical) intensities $\alpha \in (0,1/2]$. 
For the subcritical case $\alpha\in (0,1/2)$, we obtain much stronger results than in the critical $\alpha=1/2$ case, namely percolation for the occupation field exhibits a non-trivial phase transition at a constant level. 

\subsubsection{Background on Random walk loop soup}
The RWLS, introduced in  \cite{LTF2007}, is the discrete analogue of the Brownian loop soup (abbreviated as BLS in the discussion), introduced by Lawler and Werner \cite{LW2004}. The BLS emerged as a key object in the study of conformally invariant random systems in dimension two. 
In particular, the outer boundaries of outermost clusters in the BLS with intensity $\alpha \in (0,1/2]$ are shown by Werner and Sheffield \cite{SW2012} to be distributed as a conformal loop ensemble (CLE) with parameter $\kappa \in (8/3,4]$, where $\kappa$ and $\alpha$ are related explicitly by 
\begin{equation} \label{eq:alpha_kappa}
\alpha(\kappa) := \frac{(3\kappa-8)(6-\kappa)}{4\kappa}.
\end{equation}
The CLE, was first constructed in \cite{MR2494457} for $\kappa\in(8/3,8)$ using the Schramm-Loewner evolutions (SLE) \cite{Sc2000}, which are simple curves for $\kappa\in(0,4]$ \cite{MR2153402}. The range $\kappa\in(8/3,4]$ corresponds to the regime where the CLE is a countable collection of disjoint simple loops.
The CLE and SLE are both known, or conjectured, to be related to the scaling limits of many discrete models on lattices at criticality, in particular Bernoulli percolation \cite{Sm2001, CN2006}, the Ising model \cite{CS2012, BH2019}, and the random-cluster model with cluster weight $q=2$ \cite{Sm2010, KS2019, KS2016}.

Both the RWLS and the BLS are constructed from an infinite measure on loops, respectively $\mu^{\textrm{BL}}$ on Brownian loops in the continuous case, and $\mu^{\textrm{RWL}}$ on random walk loops in the discrete case. More precisely, for any given $\alpha$, we consider a Poisson point process of loops with an intensity which is either $\alpha \mu^{\textrm{BL}}$ on $\Cb$, or $\alpha \mu^{\textrm{RWL}}$ on $\Zb^2$. From Donsker's invariance principle, it is very natural to expect a convergence result for the rescaled RWLS to the BLS, and such a connection indeed holds true \cite{LTF2007}. 
It was shown in \cite{SW2012} that the BLS in a simply connected domain $D\subset \Cb$ displays a phase transition at the critical intensity $\alpha=1/2$, in the following sense: for all $\alpha \leq 1/2$, it contains infinitely many connected components of loops, while for all $\alpha > 1/2$, all loops are connected (it contains only one cluster). 
It was shown in \cite{BCL2016, Lu2019} that the outer boundaries of the outermost clusters of a RWLS with intensity $\alpha\in (0,1/2]$ (on the lattice $(\delta\Zb)^2 \cap D$) converge as $\delta\to 0$ in law to a CLE$_\kappa$ in $D$.
In the present paper, we thus restrict to intensities $\alpha \leq 1/2$, which are exactly the intensities where the RWLS  and the BLS are connected to CLE.

We consider the random walk loop soup $\Lc_N=\Lc_{B_N}$ with intensity $\alpha >0$. 
Let $X_N=(X_N(z))_{z\in B_N}$ be the occupation time field of $\Lc_N$, that is, how much time the loops spend, in total, at the various vertices (see Section~\ref{subsec:RWLS} for precise definitions). 
Note that an occupation field is by definition a positive field. It was shown in \cite{MR2815763, MR3298468} that the occupation field of the loop soup is a permanental field. The latter was introduced by Vere-Jones \cite{MR1450811} before the BLS (also see \cite{MR3077522}). 
Note that all moments of that field can be computed explicitly (see \cite[Proposition 16]{MR2815763}), and we have in particular for its covariance structure:
\begin{equation}
	\mathrm{Cov}(X_N(z),X_N(w))=\alpha\, G_N(z,w)^2.
\end{equation}

\subsubsection{Main results on random walk loop soup}\label{subsubsec:rwls}
For any $\lambda>0$, each vertex $z\in B_N$ is called $\lambda$-open if $X_N(z)\le\lambda$, and $\lambda$-closed otherwise, and a $\lambda$-open path for $X_N$ is a nearest-neighbor path in $B_N$ along which all vertices are $\lambda$-open. Recall the event $\Cc_n(\cdot)$ defined in \eqref{eq:Cn}.
Define the (horizontal) boundary crossing event by
\begin{equation}\label{eq:E-lbd}
	\Ec_{\alpha,N}(\lambda) :=  \Cc_N( \{ z\in B_N: \text{$z$ is $\lambda$-open} \} ).
\end{equation}
Since this event is clearly increasing in $\lambda$, we can associate with it the critical parameter
\begin{align} \label{eq:crit_val1}
	\lambda_*(\alpha) := \inf \left\{ \lambda\ge 0: \liminf_{N \to \infty} \Pb( \Ec_{\alpha,N}(\lambda) )>0 \right\} \in [0, \infty],
\end{align}
with the usual convention $\inf \emptyset=\infty$. 
We are also interested in the ``bulk'' case: we define the internal crossing event
\begin{equation}\label{eq:tE-lbd}
	\Ec'_{\alpha,N}(\lambda) := \Cc_{N/2}( \{ z\in B_N: \text{$z$ is $\lambda$-open} \} ),
\end{equation}
and we associate with it the critical value
\begin{align} \label{eq:crit_val2}
	\lambda'_*(\alpha) := \inf \left\{ \lambda\ge 0: \liminf_{N \to \infty} \Pb( \Ec'_{\alpha,N}(\lambda) )>0 \right\} \in [0,\infty].
\end{align}

Before stating our main theorem for the random walk loop soup, we also need to define the following ``boundary two-arm'' exponent (see \eqref{eq:2_arm_exponent_bdy}). For $\alpha\in (0,1/2)$,
\begin{align}\label{eq:four_arm}
	\xi^{2,+}(\alpha) := \frac{24}{13-2\alpha-\sqrt{4\alpha^2-52\alpha+25}}-1 \in (1,2).
\end{align}

\begin{theorem}\label{thm:main}
	For all $\alpha\in (0,1/2)$, we have $0<\lambda_*(\alpha)\le\lambda'_*(\alpha)<\infty$. 
	Moreover, for all $\eps>0$, there exists $\bar\lambda_0(\alpha,\eps)\in(0,\infty)$ such that for all $\lambda\ge \bar\lambda_0(\alpha,\eps)$,
	\begin{align}\label{eq:main-cr}
		\Pb( \Ec_{\alpha,N}(\lambda) )= 1- O(N^{1-\xi^{2,+}(\alpha)+\eps}) \quad \text{as} \quad N\to \infty.
	\end{align}
\end{theorem}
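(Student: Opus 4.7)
The theorem breaks into three quasi-independent pieces: (a) the upper bound $\lambda'_*(\alpha) < \infty$, obtained via an RSW-type argument; (b) the quantitative boundary-crossing bound (\ref{eq:main-cr}), obtained via duality together with the boundary two-arm exponent of Proposition \proparmexpbdy, which in particular also gives $\lambda_*(\alpha) < \infty$; and (c) the strict positivity $\lambda_*(\alpha) > 0$. The plan is to address these three pieces in this order.

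For (a), I would first produce a uniform-in-$N$ lower bound on the probability of a $\lambda$-open crossing of an aspect-ratio $2$ rectangle, for $\lambda = \lambda_1(\alpha)$ large enough. The inputs are: a tail bound on $X_N(z)$ at a single vertex (so that a typical vertex is $\lambda_1$-open with probability bounded below); the locality estimate of Proposition \proplocality, which allows one to replace the full field in a sub-box by its intrinsic analogue up to an explicit error; and, most importantly, the quasi-multiplicativity of Lemma \lemquasimult, which plays the role of FKG or of independence in the usual RSW gluing (since the occupation field has logarithmic correlations, neither independence nor positive association alone suffices). Once a single-scale lower bound is secured, iterating it scale by scale, with Proposition \proplocalityb to control boundary conditions, would promote it to $\liminf_N \Pb(\Ec'_{\alpha,N}(\lambda_1)) > 0$ and hence $\lambda'_*(\alpha) \le \lambda_1 < \infty$.

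For (b), I would use planar duality: on $\Ec_{\alpha,N}(\lambda)^c$ there is no $\lambda$-open left-right path in $B_N$, so there exists a $\lambda$-closed $*$-path $\gamma$ from the top to the bottom side of $B_N$. Fixing the top endpoint $y$ of $\gamma$, the $\lambda$-closed cluster of $y$ is pinned to the top boundary and produces a boundary two-arm event at $y$ of macroscopic extent in $B_N$. Proposition \proparmexpbdy bounds the probability of this event by $N^{-\xi^{2,+}(\alpha)+o(1)}$, uniformly in $y$, provided $\lambda \ge \bar\lambda_0(\alpha,\eps)$ is large enough so that the arm events for $\{X_N>\lambda\}$ fall into the scope of that proposition (this is where $\bar\lambda_0$ enters). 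Summing over the $O(N)$ candidate positions of $y$ along the top side of $B_N$ then yields the rate $O(N^{1-\xi^{2,+}(\alpha)+\eps})$ claimed in (\ref{eq:main-cr}).

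For (c), I would argue by a first-moment bound complemented by loop-soup geometry. If $\lambda<1$, every $\lambda$-open vertex is untouched by every loop of $\Lc_N$; hence a $\lambda$-open left-right path in $B_N$ must avoid the trace of $\Lc_N$ entirely, and in particular the outer boundary of each outermost cluster it would otherwise cross. Using the pointwise estimate $\Pb(X_N(z)\le\lambda) = O(\lambda^\alpha)$ together with the arm exponent of Proposition \proparmexp applied to the outermost clusters (whose outer boundaries approximate CLE$_\kappa$ loops with $\kappa$ given by (\ref{eq:alpha_kappa})), one shows that for $\lambda$ small enough the expected number of candidate $\lambda$-open left-right paths in $B_N$ tends to $0$ as $N\to\infty$, so that $\lambda_*(\alpha)>0$. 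I expect the hardest step of the whole argument to be the single-scale RSW input in (a): in the strongly correlated loop-soup setting the usual independence or FKG shortcuts are not available, and the construction of macroscopic crossings must rest entirely on the delicate quasi-multiplicativity and separation estimates for the random walk loop soup established in the companion paper.
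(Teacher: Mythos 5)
Your decomposition into (a) $\lambda'_*(\alpha)<\infty$, (b) the quantitative bound \eqref{eq:main-cr}, (c) $\lambda_*(\alpha)>0$ matches the logical structure of the theorem, but all three pieces contain gaps, and (a) and (c) rest on false premises.

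\textbf{On (c).} The claim that for $\lambda<1$ every $\lambda$-open vertex is ``untouched by every loop of $\Lc_N$'' is false. If a vertex $z$ is visited once, the waiting time assigned to that visit is $\mathrm{Exp}(1)$-distributed, and conditionally on $n(z)$ visits $X_N(z)\sim\Gamma(n(z)+\alpha,1)$; these distributions put positive mass on $(0,1)$. There is no deterministic threshold below which openness implies avoidance. The paper's actual argument for $\lambda_*(\alpha)>0$ needs none of the loop-soup geometry or arm exponents: since conditionally on everything else the probability that $z$ is $\lambda$-open is at most $\Pb(\Gamma(\alpha,1)\le\lambda)\le\lambda^\alpha/(\alpha\Gamma(\alpha))$, the process $(\ind_{X_N(z)\le\lambda})_z$ is stochastically dominated by subcritical Bernoulli site percolation once $\lambda$ is small enough, and the claim follows.

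\textbf{On (a).} The premise that one can arrange $\lambda$ so that ``a typical vertex is $\lambda_1$-open with probability bounded below'' is false: by \eqref{eq:density-bound}, $\Pb(X_N(z)\le\lambda)\lesssim_{\alpha,\lambda}(\log N)^{-\alpha}\to 0$ as $N\to\infty$, for any fixed $\lambda$. This is exactly the point made in Remark~\ref{rmk:rwls}, and it is what makes this percolation problem genuinely hard -- the density of open sites vanishes. Consequently there is no single-scale box-crossing lower bound coming from first-moment considerations, and an RSW iteration has nothing to start from (the paper does prove an RSW statement, Theorem~\ref{thm:RSW}, but it is a \emph{relative} bound between aspect ratios, not an absolute lower bound; it is used elsewhere, for Theorem~\ref{thm:exp_decay}, not here). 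The paper instead proves $\lambda'_*(\alpha)<\infty$ by a Peierls-type dual argument: if there is no $\lambda$-open crossing, there is a $\lambda$-closed $*$-path, which gives rise to a chain of loop-soup clusters linked by $\lambda$-passage edges, and a union bound over the candidate center points using the $\lambda$-arm estimate of Proposition~\ref{prop:mae} closes the argument.

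\textbf{On (b).} This is the piece closest to the paper: duality, a union bound over $O(N)$ positions on the boundary, and the exponent $\xi^{2,+}(\alpha)$ from Proposition~\ref{prop:2_arm_proba_bdy} do produce the rate $N^{1-\xi^{2,+}(\alpha)+\eps}$. But there is a serious missing bridge. Proposition~\ref{prop:2_arm_proba_bdy} concerns clusters \emph{of the loop soup}, whereas the ``$\lambda$-closed cluster of $y$'' is a level-set cluster of the occupation field. These are different objects, and a $\lambda$-closed $*$-path can pass through many distinct loop clusters of all scales. The paper's bridge is the machinery of $\lambda$-arm events (Definition~\ref{def:mfa}): a blocking path gives a $*\lambda*$-chain of loop clusters, and the probability of such a chain crossing is controlled by an induction (Propositions~\ref{prop:b2am}, \ref{prop:b2lam}, building on Lemma~\ref{lem:decom} and ultimately Proposition~\ref{prop:mae}) in which each $\lambda$-passage edge contributes an exponentially small factor in $\lambda$. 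Saying that large $\lambda$ makes arm events for $\{X_N>\lambda\}$ ``fall into the scope'' of the cluster two-arm estimate is not a precise reduction; the actual mechanism is the interplay between the exponential $\lambda$-cost per passage edge and the entropy of chains, summed over all scales via $\xi(\alpha)>2$. That interplay is the heart of the proof, and the sketch does not engage with it.
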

We point out that $\Eb(X_N(z))$ is of order $\log N$ for all $z\in B_{N/2}$. 	
Indeed, it follows from \cite[Corollary 1]{MR2815763} (see the paragraph below this result) that $X_N(z)$ has a Gamma distribution with parameters $(\alpha,G_{N}(z,z))$, i.e.\ $X_N(z)$ has density
\begin{equation}\label{eq:density_XN}
	\frac{1}{\Gamma(\alpha) G_{N}(z,z)^{\alpha}} x^{\alpha-1} e^{-x/G_{N}(z,z)} \mathbbm{1}_{x > 0}.
\end{equation}
In particular, $\Eb(X_N(z))=\alpha\, G_N(z,z)$, which has order $\alpha g\log N$  for all $z\in B_{N/2}$ by~\eqref{eq:green-estimate}. 
Moreover, by combining \eqref{eq:density_XN} and \eqref{eq:green-estimate}, we can get immediately that for all $\alpha >0$ and $\lambda>0$, there exists $C(\alpha,\lambda)>0$ such that for all $z\in B_{N/2}$,
\begin{equation}\label{eq:density-bound}
	\Pb( X_N(z)\le \lambda ) \le \frac{\lambda^{\alpha}}{\alpha\Gamma(\alpha)G_{N}(z,z)^{\alpha}}\le C(\alpha,\lambda) (\log N)^{-\alpha}.
\end{equation}

\begin{remark}\label{rmk:rwls}
	The right-hand side of \eqref{eq:density-bound} can be viewed as an upper bound for the density of $\{ z\in B_{N/2}: \text{$z$ is $\lambda$-open} \}$, which is in particular vanishing as $N \to \infty$ for any given $\lambda$. This is again in stark contrast to Bernoulli percolation (also commented in Remark~\ref{rmk:gff_bernoulli}).
\end{remark}

Finally, we can analyze the one-arm event, defined by: for any $\lambda > 0$,
	\begin{equation} \label{eq:arm_def}
		\mathrm{Arm}_{n,N}(\lambda):=\{ \text{$0$ is connected to $\partial B_n$ by a $\lambda$-open path in $B_N$} \}, \quad 0 < n \leq N.
	\end{equation}
More precisely, we prove an exponential decay property for the probability of that event, in the whole subcritical regime (see Section~\ref{subsec:annulus}).
	\begin{theorem} \label{thm:exp_decay}
		For all $\alpha\in (0,1/2)$, and $\lambda < \lambda'_*(\alpha)$, there exists $c(\alpha,\lambda)>0$ such that the following holds. For all $0 < n \le N$,
		\begin{equation}\label{eq:arm}
			\Pb( \mathrm{Arm}_{n,N}(\lambda) ) \le e^{-cn}.
		\end{equation}
\end{theorem}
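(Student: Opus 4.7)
The plan is to prove exponential decay by a block renormalization argument, using the Poissonian independence of loops in disjoint regions together with an RSW-type one-scale crossing bound derived from the definition of $\lambda'_*(\alpha)$. Concretely, pick an auxiliary level $\lambda'\in(\lambda,\lambda'_*(\alpha))$. By monotonicity of $\lambda\mapsto \Pb(\Ec'_{\alpha,s}(\lambda))$ and the defining property \eqref{eq:crit_val2} of $\lambda'_*$, one has $\liminf_{s\to\infty}\Pb(\Ec'_{\alpha,s}(\lambda'))=0$, so we fix a scale $s_0$ at which $\Pb(\Ec'_{\alpha,s_0}(\lambda'))\le \delta$, with $\delta>0$ to be tuned small. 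Combining with its vertical analogue via the square-root trick and using planar duality (with $*$-adjacency for the $\lambda'$-closed set), together with the RSW/quasi-multiplicativity machinery from Section~\secquasimult{}, we obtain that the annulus $A^* := B_{s_0}\setminus B_{s_0/2}$ contains a $\lambda'$-closed $*$-circuit around $B_{s_0/2}$ with probability at least $1-c_1\delta$, where the underlying field is that of the loop soup $\Lc_{B_{s_0}}$.

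Next, place $K = \lfloor c\, n/s_0\rfloor$ pairwise disjoint concentric annuli $A_1,\dots,A_K$ inside $B_n$, each a translate of $A^*$. Let $X^{A_i}$ be the occupation field built from the loops of $\Lc_N$ that are entirely contained in $A_i$. Since the loops of $\Lc_N$ form a Poisson point process, the restrictions to the disjoint collections ``loops in $A_i$'' are mutually independent, so $X^{A_1},\dots,X^{A_K}$ are independent; moreover each $X^{A_i}$ has the law of the occupation field of the loop soup in $A_i$, to which the one-scale estimate above applies. Hence each annulus hosts a $\lambda'$-closed $*$-circuit of $X^{A_i}$ with probability $\ge 1-c_1\delta$. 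Because $X^{A_i}\le X_N$ pointwise and $\lambda<\lambda'$, every vertex on such a circuit satisfies $X_N\ge X^{A_i}>\lambda'>\lambda$, so it is $\lambda$-closed for the full field and the circuit blocks every $\lambda$-open $X_N$-path from $0$ to $\partial B_n$. By independence of the circuit events across the $A_i$,
\[
\Pb(\mathrm{Arm}_{n,N}(\lambda)) \le (c_1\delta)^{K} \le e^{-cn}
\]
for some $c=c(\alpha,\lambda)>0$, once $\delta$ is small enough.

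The main obstacle is the one-scale step, namely promoting a small left--right crossing probability at level $\lambda'$ into the near-certainty of a $\lambda'$-closed $*$-circuit in $A^*$. For independent site percolation this follows from FKG, the square-root trick and classical RSW, but the long-range dependence coming from large loops of the RWLS rules out a direct application of those tools. The RSW-type statements established in the companion paper -- themselves built on the separation theorem for random walks in a loop soup and the quasi-multiplicativity of Section~\secquasimult{} -- are exactly what is needed here. Once that bound is secured, the rest of the argument is essentially automatic: Poissonian independence of loops in disjoint regions gives exact independence of the local-circuit events, and monotonicity in the loop intensity gives the pointwise comparison $X^{A_i}\le X_N$, which turns each local circuit into a genuine blocker of the arm event.
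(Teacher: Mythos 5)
Your high-level plan — reduce to a one-scale estimate and exploit Poissonian independence of loops in disjoint regions, together with the pointwise bound $X^{A_i}\le X_N$ to turn a closed circuit of the restricted field into a blocker of the full-field arm — is the right instinct, and the monotonicity step is exactly the mechanism the paper uses. But the block geometry does not deliver exponential decay. The phrase ``pairwise disjoint concentric annuli $A_1,\dots,A_K$, each a translate of $A^*$'' is self-contradictory: translates of $A^*=B_{s_0}\setminus B_{s_0/2}$ with the same center coincide, so no two are disjoint. If you nest annuli of the fixed width $s_0/2$ around the origin (radii $r_i\approx is_0/2$), you do get $K\approx n/s_0$ of them, but the one-scale circuit bound at scale $s_0$ does not transfer: building a closed $*$-circuit in $B_{r+s_0/2}\setminus B_r$ requires gluing $\Theta(r/s_0)$ local crossings, so the circuit probability decays in $r$ rather than staying near $1$. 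If instead you take scale-similar annuli $B_{2^i s_0}\setminus B_{2^{i-1}s_0}$ (so RSW transfers the bound), only $O(\log n)$ fit disjointly in $B_n$, giving $\delta^{O(\log n)}$, i.e.\ polynomial rather than exponential decay. You also misattribute the source of the one-scale estimate: the quasi-multiplicativity/locality machinery from the companion paper concerns four-arm events for loop clusters, not RSW for the occupation-field percolation process; what is actually needed is the occupation-field RSW of Theorem~\ref{thm:RSW}, together with Theorem~\ref{thm:equal} identifying the annulus-crossing critical value with $\lambda'_*(\alpha)$.

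The paper circumvents these obstacles by a coarse-graining rather than a circuit argument. Using Theorem~\ref{thm:equal}, the hypothesis $\lambda<\lambda'_*(\alpha)$ yields a fixed scale $m$ at which the annulus-crossing probability $\Pb(\overline\Ec'_{\alpha,m}(\lambda))$ is as small as desired. One then tiles $B_N$ by boxes that are translates of $B_{m/4}$, and declares a tile $T$ ``open'' if a $\lambda$-open path in the larger concentric box $T''$ crosses the intermediate annulus $T'\setminus T$, the occupation field being that of the loop soup restricted to $T''$. Tiles with disjoint $T''$ are independent, so this defines a translation-invariant $k$-dependent site process; by the Liggett--Schonmann--Stacey domination theorem it is dominated by subcritical Bernoulli once the opening probability is small. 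Since a $\lambda$-open arm from $0$ to $\partial B_n$ forces a path of $\Theta(n/m)$ open tiles, exponential decay follows. The missing pieces in your proposal are precisely this renormalization step (replacing the incoherent circuit decomposition) and the use of Theorems~\ref{thm:RSW} and~\ref{thm:equal} to supply the small one-scale probability.
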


\begin{remark}\label{rmk:supercritical}
	In the aforementioned paper \cite{AGS2022}, which uses very interesting and completely different techniques and ideas than here (e.g.\ a Mermin-Wagner-type theorem for the GFF),  the authors focus on the $n$-vector valued DGFF $\phi: \Zb^2\to \Rb^n$. For all $n\ge 2$, the authors prove that for $\lambda =o((\log n)^{1/6})$, the level set for $\|\phi\| \le \lambda$ is degenerate, namely it does not percolate and an exponential decay property holds for low paths in a box. By the isomorphism theory, $\|\phi\|^2$ is equal to the occupation field of a RWLS with intensity $\alpha=n/2$. This immediately implies the same triviality of level sets (under the level $o((\log n)^{1/3})$) for the occupation field of a \emph{supercritical} RWLS, for any intensity $\alpha \geq 1$.
	This naturally leads to the question of what happens for the intensities $\alpha\in(1/2,1)$, see (Q\ref{q2}).
\end{remark}

\subsection{Geometry of the percolating paths}\label{subsec:geometry}
Our proof relies on a fine analysis of the geometric properties of the clusters (i.e., maximal connected components) in the RWLS, which enables us to leverage the aforementioned connection between the  boundaries of these clusters and the conformal loop ensemble (CLE) process with parameter $\kappa \in (8/3,4]$, through certain arm exponents (see Section~\ref{sec:prop_four_arm}). We refer to Section~\ref{subsec:strategy} for a detailed description of our strategy.

In particular, this analysis allows us to show that in the percolating regime, one can find a ``low'' crossing path which stays in the \emph{carpet} of the corresponding RWLS (see Definition~\ref{def:carpet} for a rigorous definition), namely the path never intersects the ``interior'' of any of the outermost clusters.

\subsubsection{Main result for the DGFF}
As mentioned earlier, Le Jan's isomorphism allows us to couple a DGFF $\varphi_N$ in $B_N$ with a critical ($\alpha=1/2$) RWLS $\Lc_N$ in $B_N$, so that the occupation field $X_N$ of $\Lc_N$ is equal to $\varphi_N^2/2$.
\begin{definition} \label{eq:def_DN}
	For $\lambda>0$, let $\Dc_N(\lambda)$ (resp.\ $\Dc'_N(\lambda)$) be the event that there exists a nearest-neighbor path $\gamma$ in $\{ z\in B_N: |\varphi_N(z)|\le \lambda\}$ crossing from left to right in $B_N$ (resp.\ $B_{N/2}$), such that $\gamma$ also stays in the carpet of $\Lc_N$. 
\end{definition}
The following theorem strengthens Theorem~\ref{thm:GFF}.

\begin{theorem}\label{thm:GFF_carpet}
	Let $c_0,C_0$ be as in Theorem~\ref{thm:GFF}. Then for all $C\ge C_0$,
	\begin{align}
		\label{eq:critical_carpet1}
		&\Pb( \Dc_{N}(C \sqrt{\log\log N}) )=1-O( (\log N)^{-c_0 C^2}) \quad \text{as} \quad N\to \infty.
	\end{align}
	Moreover, for all $C\ge C_0$,
	\begin{align}
		\label{eq:critical_carpet2}
		&\liminf_{N \to \infty}\Pb( \Dc'_{N}( C\sqrt{\log\log N}) )>0.
	\end{align}
\end{theorem} 
\begin{proof}[Proof of Theorem~\ref{thm:GFF}]
	Note that by definition, $\Dc_{N}(\lambda) \subseteq \Cc_N( \{ z\in B_N: |\varphi_N(z)|\le \lambda \})$ and $\Dc'_{N}(\lambda) \subseteq \Cc_{N/2}( \{ z\in B_N: |\varphi_N(z)|\le \lambda \})$. Hence Theorem~\ref{thm:GFF_carpet} implies Theorem~\ref{thm:GFF}.
\end{proof}

\begin{remark}\label{rmk:carpet}
	The carpet of CLE$_\kappa$  has Hausdorff dimension $d(\kappa)=(\kappa+8)(3\kappa+8)/(32\kappa)$ \cite{MR2491617,MR2802511}. For $\alpha=1/2$, this dimension is $15/8$ (note that $\alpha$ and $\kappa$ are related by \eqref{eq:alpha_kappa}).
	We believe that if a suitable rate of convergence from the RWLS clusters to the BLS clusters is established, then Theorem~\ref{thm:GFF_carpet} would imply that the chemical distance dimension in the TSLS at level $C\sqrt{\log\log N}$, for all $C$ sufficiently large, is bounded above by $15/8$. We plan to address this question in the future.
\end{remark}

Note that Theorem~\ref{thm:GFF_carpet} concerns not only the DGFF, but also the RWLS coupled with it via isomorphism. 
In the following Remark~\ref{rmk:gff1}, we recall some analogous couplings in the continuum. In particular, we evoke some striking similarities w.r.t.\ the GFF/CLE$_4$ coupling by Miller and Sheffield, but also differences.  Despite the similarities, there seems to be a huge gap between the geometric properties of the corresponding objects in the continuum and the percolative properties of the TSLS in the discrete. We emphasize that our proof does not use the continuum GFF nor its level lines (readers unfamiliar with the level lines of the continuum GFF can also skip Remark~\ref{rmk:gff1}). 
This is again in contrast with the one-sided case, where 
many connectivity properties of the discrete level set follow closely from properties of the level-lines (which are SLE$_4$-type curves \cite{MR3101840}) in the continuum, see e.g.\ \cite{MR4452651}.

\begin{remark}\label{rmk:gff1}
	It is natural to compare Theorem~\ref{thm:GFF_carpet} with some results on the continuum GFF.
	Let us now carry out a heuristic discussion, to point out some similarities and differences.
	
	\underline{Analogy with the couplings in the continuum.}
	In the continuum, the critical BLS ($\alpha=1/2$), GFF and CLE$_4$ are coupled together via three different relations: (1) Le Jan's isomorphism also applies to the occupation field of the critical BLS and the square of the GFF \cite{LeJa2010, MR2815763}, but a renormalization is needed, since the BLS cumulates an infinite amount of time in any open set and the GFF is a distribution which does not have pointwise value. (2) The outer boundaries of the outermost clusters of the critical BLS have the law of CLE$_4$, by Sheffield and Werner \cite{SW2012}. (3) Miller and Sheffield state that CLE$_4$ can be coupled with the GFF as its level loops \cite{MS,MR3708206,ASW2019}. It is proved in \cite{QW2019} that we can couple the critical BLS, CLE$_4$ and GFF in the same space so that the three relations hold simultaneously. 
	
	Level loops are a special case of the level lines (for the continuum GFF) defined by Schramm and Sheffield \cite{MR3101840}, using the fundamental notion of \emph{local set} which was also introduced there. The continuum $a$-level lines are shown in \cite{SS2009} to be the scaling limit of $a$-level lines of the DGFF (discrete $a$-level lines are interfaces between connected components of vertices with value $<a$ and $>a$). In the continuum, the CLE$_4$ carpet is the set bounded between outermost $\lambda_0$-level loops and $-\lambda_0$-level loops for some explicit $\lambda_0>0$. The result in \cite{SS2009} was shown in the chordal setting, but we believe that a version of this result should still be true for the level-loops. In this sense, the CLE$_4$ carpet can (loosely) be seen as the ``continuum analogue'' of the TSLS in $[-\lambda_0, \lambda_0]$ for the DGFF. 
	
	This bears a conceptual similarity with Theorem~\ref{thm:GFF_carpet} which states that there is a percolating path (in the TSLS of the DGFF) that stays in the carpet of the RWLS. However, we will point out some major differences in the following.
	
	\underline{Differences compared to the continuum setting.}
	\begin{itemize}
		\item 
		The analogy between the CLE$_4$ carpet and the TSLS is based on the convergence of the discrete level lines/loops. Any two loops in CLE$_4$ do not touch each other, but the discrete level loops (which should converge to the CLE$_4$ loops) may very well touch each other.
		This is because there are infinitely many loops in CLE$_4$, and its carpet is a fractal set with $0$ Lebesgue measure. Therefore, although the carpet of CLE$_4$ is connected and connected to the boundary, the convergence of the level lines/loops does not imply that the TSLS in $[-\lambda_0, \lambda_0]$ is also connected. 
		In fact, a more general two-valued set was studied in \cite{ASW2019}, and can be seen (loosely) as the continuum analogue of the TSLS in $[-a, b]$ for $a,b>0$. 
		However, similarly, 
		properties of the two-valued sets do not readily imply connectivity properties of the TSLS in $[-a, b]$, even for arbitrarily large $a,b>0$.

		There is a subtle difference here compared to the one-sided case. The continuum analogues of one-sided level sets (above a constant level) can also be described using level lines  \cite{ALS2020a, ALS2020b}, and they are also fractal sets with $0$ Lebesgue measure. However, there is a natural Markovian way of exploring the one-sided level sets from the boundary of the domain (as in \cite{SS2009}), so that the boundary-intersecting behavior of the level lines (which are SLE$_4$-type curves in the scaling limit) does imply the connectivity property of certain one-sided level sets (one usually needs to fix a piecewise constant boundary condition, and look at the connected component of the one-sided set containing a given piece of the boundary). This is similar to the fact that SLE$_6$ (which is the scaling limit of percolation interfaces \cite{Sm2001}) can describe many connectivity properties of the percolation clusters (see e.g.\ \cite{LSW2002b,SW2001}).
		However, there seems to be no natural Markovian way to explore the TSLS from the boundary, so that the interface is given by a level line. In fact, it seems plausible that the interface of a component of the TSLS (say in $[-a,a]$) should not stay at a constant level (either $a$ or $-a$), but that it should rather alternate between $-a$ and $a$, probably infinitely many times as the meshsize goes to $0$.

		\item The CLE$_4$ carpet was compared to the TSLS in $[-\lambda_0, \lambda_0]$, but we obtain results for TSLS at level $C\sqrt{\log\log N}$.
		This might inspire one to make use of the continuum level lines in the following way: If a sequence of discrete level lines $\ell_N$ is known to converge to a continuum level line $\ell$ which is itself known to make a left-right crossing, then can we say that the discrete vertices which are adjacent to $\ell_N$ have absolute values less than $C\sqrt{\log\log N}$? This would create a low crossing in the discrete by the vertices along $\ell_N$. This question is legitimate, because it is shown in \cite{MR3101840} that level lines in the continuum are local sets of the GFF, in the sense that if one conditions on a level line $\ell$, then the GFF in the complement of $\ell$ is just a GFF with constant boundary values along $\ell$ (there is in fact a $2\lambda_0$ height gap at the two sides of $\ell$).
		Since the DGFF converges to the GFF in distribution,  the DGFF restricted to the domain cut by $\ell_N$ should also be very close to a DGFF with constant boundary values along $\ell_N$.
		
		However, not to mention the difficulty to make the approximations quantitative, the following heuristic indicates that this approach can hardly work. Take a DGFF $\varphi_N$ in $B_N$ with $0$ boundary conditions (i.e.\ it is $0$ on $\Zb^2\setminus B_N$), and look at the straight line $\eta_N$ from $(-N/2, -N)$ to $(N/2, -N)$ with length $N$. For each vertex $z$ on $\eta_N$, $\varphi_N(z)$ is a Gaussian variable with mean $0$ and variance $G_N(z,z)$, where $G_N(z,z)$ tends to a constant $\sigma>0$ as $N\to\infty$. One can roughly consider the random variables $\varphi_N(z)$ as i.i.d.\ for $z$ on $\eta_N$, because the correlation between $\varphi_N(z_1)$ and $\varphi_N(z_2)$ decays rapidly as $z_1$ gets away from $z_2$.
		The maximum of $N$ i.i.d.\ Gaussian variables is known to have expectation $\sigma \sqrt{\log N}$, which is above $C\sqrt{\log\log N}$. Hence $\eta_N$ is not a low path even though it is adjacent to the $0$ boundary.

		\item As mentioned earlier, we can couple the critical BLS, CLE$_4$ and GFF in the same space so that the three relations listed at the beginning of this remark hold simultaneously \cite{QW2019}. 
		However, the simultaneous coupling of the critical RWLS, its carpet and the DGFF does not hold exactly in the discrete. Even though the carpet of a critical RWLS is connected and connected to the boundary, it is \emph{not} the same as the TSLS of the DGFF. In fact, there are vertices in the carpet of a RWLS with arbitrarily high occupation times. 
		Nevertheless, we observe that a typical vertex in the carpet has low occupation time with high probability, and make use of this fact in our proof. Even though the carpet of the RWLS is connected, it is a thin fractal set which is very close to being disconnected. Our proof relies on a quantitative analysis which combines the spatial distributions and the occupation times of the vertices in the carpet (see Section~\ref{subsec:strategy} for more details on the proof strategy).
	\end{itemize}
\end{remark}

\subsubsection{Main result for the subcritical RWLS}

Similarly to the $\alpha=1/2$ case, we also have the following result which provides additional geometric information about the low crossings for $\alpha\in(0,1/2)$. 
\begin{definition}\label{def:carpet_cross}
	Let $\Lc_N$ be a RWLS in $B_N$ with intensity $\alpha\in(0,1/2]$.
	Let $\Fc_{\alpha,N}(\lambda)$ (resp.\ $\Fc'_{\alpha,N}(\lambda)$) be the event that there is a $\lambda$-open crossing from left to right in $B_N$ (resp.\ $B_{N/2}$) (for the occupation field of $\Lc_N$) which stays in the carpet of $\Lc_N$. 
\end{definition}
By definition, we have $\Fc_{\alpha,N}(\lambda) \subseteq \Ec_{\alpha,N}(\lambda)$ and $\Fc'_{\alpha,N}(\lambda) \subseteq \Ec'_{\alpha,N}(\lambda)$, so the following theorem strengthens certain points of Theorem~\ref{thm:main}.

\begin{theorem}\label{thm:carpet1}
	For all $\alpha\in(0,1/2)$ and $\eps>0$, there exists $\bar\lambda_0(\alpha,\eps)\in(0,\infty)$ such that the following holds. For all $\lambda\ge \bar\lambda_0(\alpha,\eps)$,
	\begin{align}\label{eq:F_alpha1}
		&\Pb( \Fc_{\alpha,N}(\lambda) )= 1- O(N^{1-\xi^{2,+}(\alpha)+\eps}) \quad \text{as} \quad N\to \infty,\\[2mm]
		\label{eq:F_alpha2}
		&\liminf_{N \to \infty}\Pb( \Fc'_{\alpha,N}(\lambda) )>0.
	\end{align}
\end{theorem}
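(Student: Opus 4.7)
My plan is to show that the crossing constructed in the proof of Theorem~\ref{thm:main} can be chosen, essentially without additional cost, to lie inside the carpet. Decompose the occupation field as $X_N = X_N^{\mathrm{out}} + X_N^{\mathrm{in}}$, where $X_N^{\mathrm{out}}$ collects the occupation of loops of $\Lc_N$ that are not strictly inside any outermost cluster, while $X_N^{\mathrm{in}}$ collects the loops trapped inside such clusters. On the carpet (by definition, the complement of the interior of all outermost clusters) one has $X_N = X_N^{\mathrm{out}}$. The key point is that the Gamma tail bound~\eqref{eq:density-bound} continues to hold for a vertex conditioned on belonging to the carpet, up to a $\lambda$-dependent multiplicative constant, which follows from the Poissonian structure of $\Lc_N$ together with standard locality/decoupling arguments separating the ``small'' loops contributing to $X_N^{\mathrm{out}}(z)$ from the large outermost clusters determining the carpet structure far from $z$.

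Next, I would run the multiscale construction sketched in Section~\ref{subsec:strategy}, in two steps. Step~1 is purely geometric: by the convergence of outermost cluster boundaries to $\CLE_\kappa$~\cite{BCL2016, Lu2019} together with the boundary two-arm estimate (Proposition~\proparmexpbdy), the carpet of $\Lc_N$ contains a left--right crossing of $B_N$ with probability at least $1 - O(N^{1-\xi^{2,+}(\alpha)+\eps})$, and contains a left--right crossing of $B_{N/2}$ with probability uniformly bounded away from $0$ (the latter via an RSW-type input for the CLE carpet, combined with the interior counterpart of~\eqref{eq:four_arm}). This is where the rate in~\eqref{eq:F_alpha1} originates. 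Step~2 is the occupation upgrade: along a mesoscopic sequence of annuli spanning a given geometric carpet crossing, I would apply quasi-multiplicativity (Lemma~\lemquasimult) together with the Gamma tail for carpet vertices to impose $X_N \le \lambda$ on every vertex of the crossing, choosing $\lambda \ge \bar\lambda_0(\alpha,\eps)$ large enough to absorb the $\eps$-loss coming from the $O(\log N)$ scales. The resulting path realises $\Fc_{\alpha,N}(\lambda)$ (resp.\ $\Fc'_{\alpha,N}(\lambda)$), proving~\eqref{eq:F_alpha1} (resp.~\eqref{eq:F_alpha2}).

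The principal obstacle is Step~2: the two conditions ``stays in the carpet'' and ``has occupation $\le \lambda$'' are both measurable with respect to the same loop soup, so naively conditioning on the first distorts the distribution of the second. This is precisely where the separation lemma from the companion paper enters. After conditioning on a good configuration of the carpet at a given scale (arms well separated near the endpoints of the relevant annulus, produced by loop surgery), the loops contributing to $X_N^{\mathrm{out}}$ inside that annulus decouple sufficiently from the outside that the Gamma tail can be applied essentially independently across scales. Once this decoupling is in place, the aggregation matches the rates of Step~1, and the gluing follows the surgery pattern of Section~\secquasimult.
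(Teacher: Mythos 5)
Your proposal takes a fundamentally different route from the paper, and I believe the route has a genuine gap that cannot be repaired.

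The paper does \emph{not} attempt to construct a low crossing in the carpet. Instead, it runs a Peierls-type duality argument: the complementary event $\Fc_{\alpha,N}(\lambda)^c$ forces a vertical blocking $\lambda$-chain of distinct outermost clusters, connected through $\lambda$-passage edges (edges whose endpoints both have occupation $>\lambda$). This is exactly the boundary $\lambda$-two-arm event $\Bc^{\lambda}_{D}(z;1,N)$ anchored at some boundary vertex $z$. Proposition~\ref{prop:b2lam} gives $\Pb(\Bc^{\lambda}_D(z;1,N)) \lesssim N^{-\xi^{2,+}(\alpha)+\eps}$, and a union bound over the $O(N)$ choices of $z$ along the boundary yields \eqref{eq:F_alpha1}. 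For \eqref{eq:F_alpha2}, one uses the interior $\lambda$-arm bound from Proposition~\ref{prop:mae}, and restricts attention to the event $F_N^c$ (no macroscopic cluster) so that the blocking chain must have a $\lambda$-passage edge in $B_{N/2}$. The $\lambda$-dependence enters through the induction in Proposition~\ref{prop:mae}/\ref{prop:b2lam}: each $\lambda$-passage edge costs an extra factor $\rho^\lambda$, which absorbs the entropy of where the passage edge sits.

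Your proposal instead tries to (Step~1) first produce a geometric carpet crossing, then (Step~2) upgrade it to one with occupation $\le\lambda$. Step~1 is mis-posed: by Definition~\ref{def:carpet}, the carpet is the complement of the interiors of outermost clusters of $\Lc_N$; it contains $\partial B_N$ and is connected, so it \emph{always} contains a left--right crossing. No rate $1-O(N^{1-\xi^{2,+}(\alpha)+\eps})$ comes from this step, and the boundary two-arm exponent has nothing to do with the existence of a geometric crossing. The real and unavoidable problem is Step~2: by \eqref{eq:density-bound}, a fixed vertex is $\lambda$-open with probability $\lesssim (\log N)^{-\alpha}$, which vanishes as $N\to\infty$. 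Any crossing of $B_N$ uses $\gtrsim N$ vertices (or $\gtrsim \log N$ mesoscopic scales), so no matter how you decouple across scales, demanding low occupation along a predetermined skeleton has probability tending to $0$. This is precisely why the paper's Remark~\ref{rmk:gff_bernoulli} emphasizes that percolation occurs despite the vanishing density, exploiting correlations via the dual event rather than via a direct construction. The separation lemma and quasi-multiplicativity that you invoke are deployed in the paper on the \emph{dual} arm events (to control loop-surgery costs in the induction for Propositions~\ref{prop:mae} and~\ref{prop:b2lam}), not to decouple the occupation along a chosen path. As written, your Step~2 cannot be closed, and you would need to abandon the constructive framing and pass to the duality argument.
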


Similarly to Remark~\ref{rmk:carpet}, Theorem~\ref{thm:carpet1} could potentially lead to a non-trivial upper bound on the chemical distance dimension of $\lambda$-level sets for the occupation field of RWLS. The following remark, on the other hand, provides the parallel with Remark~\ref{rmk:gff1}.

\begin{remark}\label{rmk:rwls}
	The occupation field $X_N$ of a loop soup $\Lc_N$ is by definition positive. However, it is also possible to translate Theorems~\ref{thm:main} and~\ref{thm:carpet1} to the TSLS of the following field $\varphi_{\alpha, N}$ which is centered: For $\alpha\in(0,1/2]$, let  $|\varphi_{\alpha,N}(z)|:= X_N(z)^{1-\alpha}$ for each $z\in B_N$, and then give i.i.d.\ signs $\pm 1$ with probability $1/2$ to each cluster of $\Lc_N$. For $\alpha=1/2$, it was shown in \cite{MR3502602} that $\varphi_{1/2, N}$ is a DGFF. For $\alpha\in (0,1/2)$, a recent work \cite{JLQ2023b} has given the conjectural scaling limit of $\varphi_{\alpha, N}$, which is a  conformally invariant field $\varphi_\alpha$ in the continuum.
	It was further shown in  \cite{JLQ2023b} that the coupling between the critical BLS ($\alpha=1/2$), CLE$_4$ and GFF (mentioned in Remark~\ref{rmk:gff1}) can be extended to all subcritical intensities $\alpha\in(0,1/2)$, where $\varphi_\alpha$ plays the role of the GFF. 
	For example, $\varphi_\alpha$ admits CLE$_\kappa$ (where $\kappa$ and $\alpha$ are related by \eqref{eq:alpha_kappa}) as level lines, in a sense that conditionally on a CLE$_\kappa$ loop $\gamma$, $\varphi_\alpha$ restricted to the domain encircled by $\gamma$ has constant boundary conditions. 
	Therefore, we believe that Remark~\ref{rmk:gff1} should also apply to the $\alpha\in(0,1/2)$ case, modulo some differences (which we do not discuss for the sake of brevity).
\end{remark}

\subsection{Strategy of the proof}\label{subsec:strategy}

We now try to convey the general road map that we follow in this paper, and mention a few of the most important technical issues that we have to address. 
Even though we make use of the connection with CLE in the continuum, we mostly focus on the discrete analysis. Along the way, we had to develop a robust toolbox for the four-arm events that we use, and a large part of the companion paper \cite{GNQ2024a} is devoted to this endeavor. We believe that the tools and ideas developed there, in particular separation lemmas, quasi-multiplicativity, and locality properties for such events, will be useful to tackle other related questions.

\subsubsection{Chains of clusters} \label{sec:intro_chains}
First of all, even though the levels of various vertices are far from being independent of each other in our situation, the loops creating strong correlations, we use insight provided by the classical Bernoulli percolation model where vertices are independent. More precisely, we focus on its site version on the infinite two-dimensional lattice $\Zb^2$, which can be described as follows. For some given $p \in [0,1]$, known as the percolation parameter, each vertex of $\Zb^2$ is declared occupied with probability $p$, and vacant with probability $1-p$, independently of all other vertices. This process displays a phase transition at some critical value $p_c = p_c^{\text{site}}(\Zb^2) \in (0,1)$ of the parameter $p$, called the percolation threshold, where its connectivity changes drastically. In particular, for all $p < p_c$, there exists a.s.\ no infinite cluster of occupied vertices, while for $p > p_c$, there is a.s.\ at least one.

At a high level, our proof can be described as a Peierls'-type argument. In Bernoulli percolation, the existence of an infinite occupied cluster can be proved by showing the non-existence of a blocking circuit made of vacant sites, which can be done for values of the parameter close enough to $1$. In the same way, in order to prove the existence of a path with low occupation time crossing horizontally (from left to right) $B_N$, we consider the complementary event, namely the existence of a blocking path that crosses $R$ vertically (from top to bottom), all of whose vertices are high (see Figure~\ref{fig:blocking_path}, left). We show that this event has vanishingly small probability as $N$ tends to infinity, provided that the level $\lambda$ has been chosen large enough. For this purpose, we use the union bound over the set of all possible paths on the close-packed graph $(\Zb^2)^*$ of $\Zb^2$, which is obtained by adding the two diagonal edges to each face of $\Zb^2$. More specifically, we only keep track of the clusters visited by the path, together with the edges (of $(\Zb^2)^*$) between them.

The occupation field of a RWLS on $B_N$ has the following nice property: If one conditions on a cluster $\Cc$ (i.e.\ on all the vertices and edges visited by this cluster), then the occupation field on $\Cc$ depends only on the shape of $\Cc$, not on the position of $\Cc$ in $B_N$, and is also independent from the loop configuration in $B_N\setminus \Cc$ (as long as the other loops are disjoint from $\Cc$).
If all the clusters of loops were single-site, we would observe exactly Bernoulli percolation, with a parameter $p(\lambda) \in (0,1)$ which tends to $1$ as $\lambda\to \infty$. This would readily lead to the existence of a critical parameter $\lambda_*$. The actual situation is of course very different (see Remark~\ref{rmk:rwls}), since the open sites have a vanishing density as $N\to \infty$.

Heuristically, this picture should remain valid, at least to some extent, if all clusters were microscopic. However, we know from the description of these clusters in the continuum (their connection to CLE in the scaling limit, with a parameter $\kappa \leq 4$) that macroscopic components do exist.  
It can be tempting for the blocking path to use big clusters as ``highways'', where it is less costly (at least, deep in their interior) to have high occupation time. However, using such big clusters also has a cost: they cannot come too close to each other, as can be seen from the scaling limit. On the discrete level, we will use that for each intensity $\alpha\in (0,1/2)$, there is a corresponding exponent $\xi = \xi(\alpha)$ which is $>2$.

Of course, this is a very crude explanation. In reality, there is a whole range of sizes of clusters which are at our disposal, microscopic, mesoscopic and macroscopic, leading to all types of paths. At first sight, taking into account the ``entropy'' contribution in the union bound, which comes from the wide variety of all possible chains of clusters of all sizes, may look hopeless.

The clusters visited by a blocking path are used through the edges connecting any two successive ones, that we call ``passage edges'' from now on. Around each such edge, one can observe a ``four-arm'' configuration created by the two clusters that meet, as shown on Figure~\ref{fig:blocking_path}, right.

\begin{figure}
	\centering
	\subfigure{\includegraphics[width=.47\textwidth]{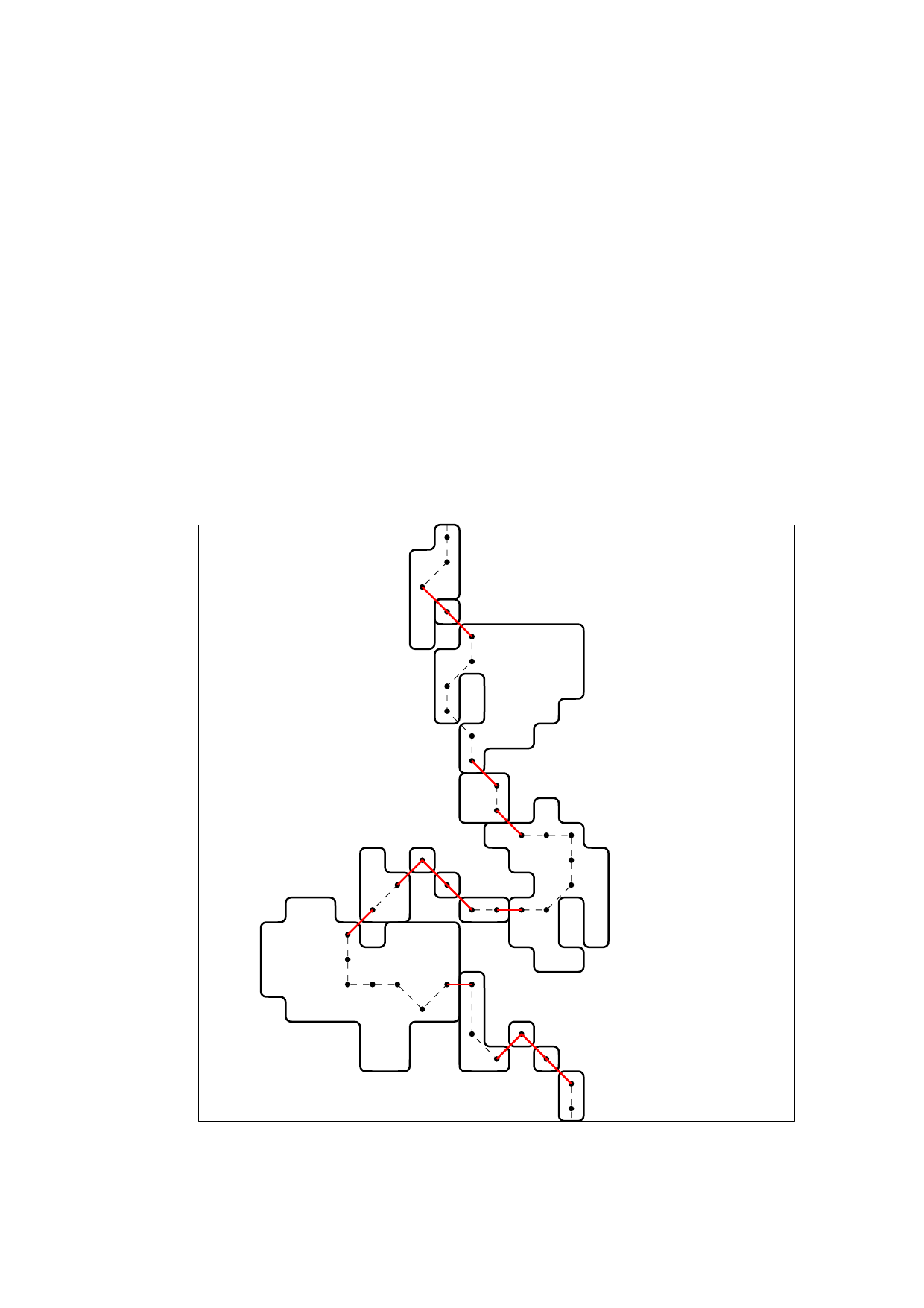}}
	\hspace{0.5cm}
	\subfigure{\includegraphics[width=.47\textwidth]{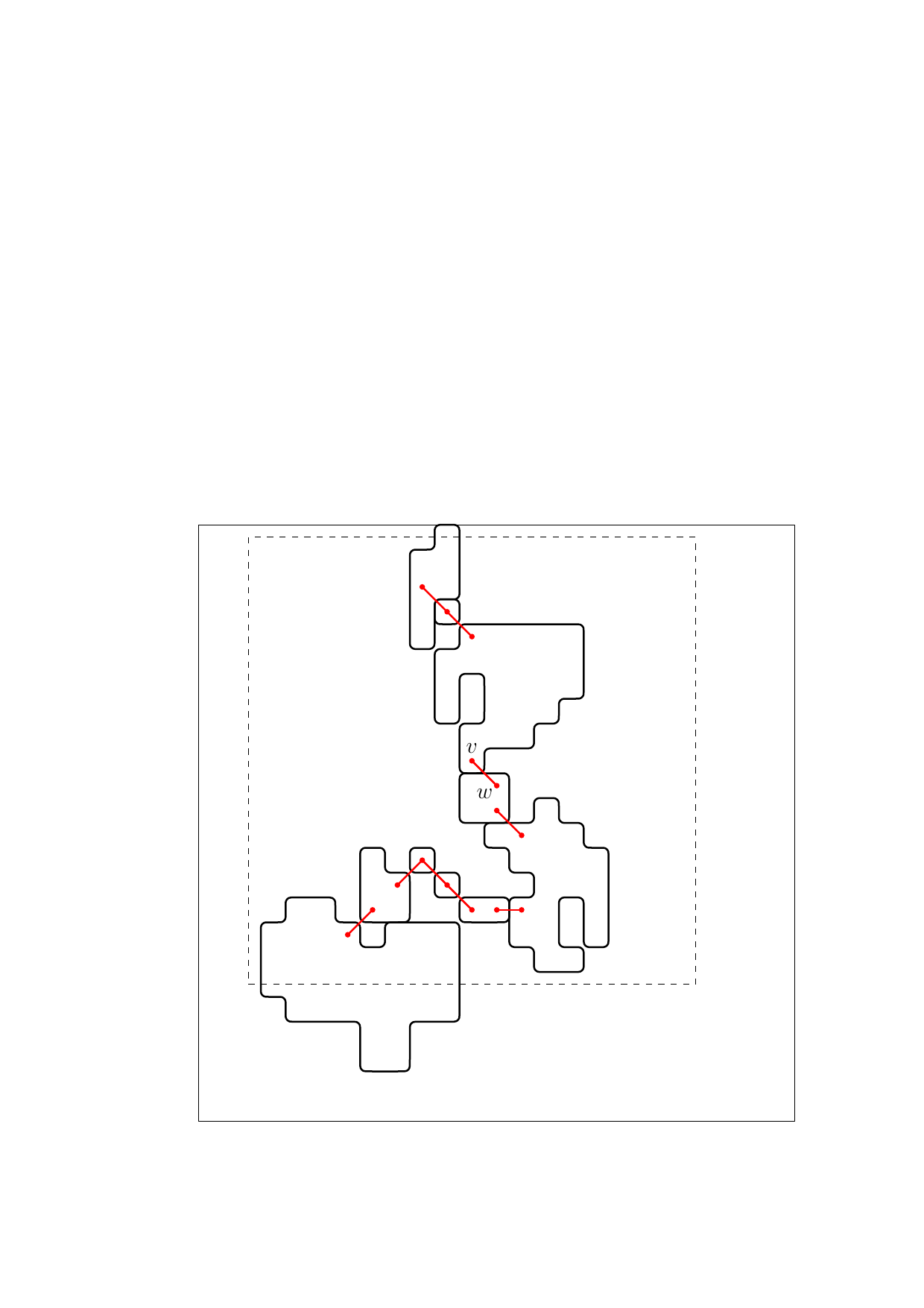}}
	\caption{\emph{Left:} We use the sequence of clusters of loops used by a potential blocking path, a vertical crossing of high vertices preventing the existence of a horizontal crossing of low vertices. \emph{Right:} If $(v,w)$ is a passage edge between two sucessive clusters, then we observe a ``four-arm'' configuration around $v$.}
	\label{fig:blocking_path}
\end{figure}

\subsubsection{Connection with forest fires} \label{sec:intro_FF}

It has been explained in the paper \cite{KMS2015} how such summations on all possible paths can be carried out, still in a percolation context but for a completely different problem, namely understanding the behavior of the Drossel-Schwabl forest fire model near criticality, that is, as the density of trees approaches the critical threshold $p_c$. There, Kiss, Manolescu and Sidoravicius answer a question of van den Berg and Brouwer which had remained open for a decade. To our knowledge, this is the first instance where such detailed summations have been successfully performed, combining microscopic, mesoscopic and macroscopic scales, and with an unbounded number of clusters.

More specifically, in order to analyze near-critical forest fires, one needs to understand the effect of ``macroscopic'' fires, in the following sense. We consider Bernoulli percolation with parameter $p_c$ in $B_N$, and we assume that an occupied horizontal crossing exists in this square. First, we turn vacant (``burn'') all the vertices which are connected (by an occupied path) to such a horizontal crossing, which has obviously the effect of disconnecting the bottom and the top sides (which may have been connected or not in the first place). We then consider a small ``recovery'' in the process, of size $\delta > 0$: all vacant vertices independently become occupied with probability $\delta$. It was shown in \cite{KMS2015} (or rather, a variant of this) that for some universal $\delta_0$ small enough, the probability of observing an occupied vertical crossing in the final configuration, i.e.\ after recoveries, converges to $0$ as $N \to \infty$. Informally speaking, it takes a positive amount of time for the forest to recover from large-scale fires.

In \cite{KMS2015}, the authors use chains containing distinguished vertices, that they call ``passage sites'', with the property that (roughly speaking) new vertical connections are formed when all passage sites are recovered. A key observation is that for such a chain which is minimal, a configuration as depicted schematically on Figure~\ref{fig:six_arms_FF} arises, with a six-arm configuration of the type $ovvovv$, where $o$ and $v$ stand for $p_c$-occupied and $p_c$-vacant arms. The corresponding arm exponent $\xi_6$ is known to be $> 2$ for critical Bernoulli percolation on a wide variety of two-dimensional graphs, and in particular on the square lattice $\Zb^2$ (where, similarly to above, the $p_c$-vacant arms lie on $(\Zb^2)^*$). Indeed, this only requires minimal symmetry assumptions on the graph, essentially so that a form of Russo-Seymour-Welsh estimate holds true. Note that the precise value of this exponent $\xi_6 = \frac{35}{12}$ is known in the case of site percolation on the triangular lattice $\Tb$ \cite{SW2001}. For universality reasons, the same value is expected to be observed on $\Zb^2$, however, this is not at all needed for the summations and the key is the inequality $\xi_6 > 2$.

The summations in \cite{KMS2015} use the exact spatial independence possessed by Bernoulli percolation. For the probability of new vertical connections, the proof produces the upper bound $N^{2 - \xi_6 +o(1)} = N^2 \cdot N^{-\xi_6 +o(1)}$, which can be thought as a stability result for six-arm events: not exactly the usual arm event (with types $ovvovv$) mentioned above, but rather a complicated and peculiar arm event which should incorporate the complicated ``network'' structure of passage sites. This led van den Berg and the second author to introduce such an ``exotic'' six-arm event whose convoluted definition -- this is a price to pay -- makes it amenable to an induction argument, in the spirit of Lemma~8.4 in \cite{GPS2018}. Such an induction was already applied by the same authors for a process of percolation with impurities \cite{BN2021} that they introduced as a stochastic lower bound for the Drossel-Schwabl process. A key idea in our proofs is to exploit the four-arm structure shown on Figure~\ref{fig:blocking_path} by introducing so-called $\lambda$-arm events, that incorporate the successive clusters potentially visited by a path, together with the passage edges between them. This then allows us to take advantage of the inequality $\xi(\alpha) > 2$ (see \eqref{eq:arm_exponent}), for any given subcritical intensity $\alpha \in (0,1/2)$.

\begin{figure}[t]
	\centering
	\includegraphics[width=.5\textwidth]{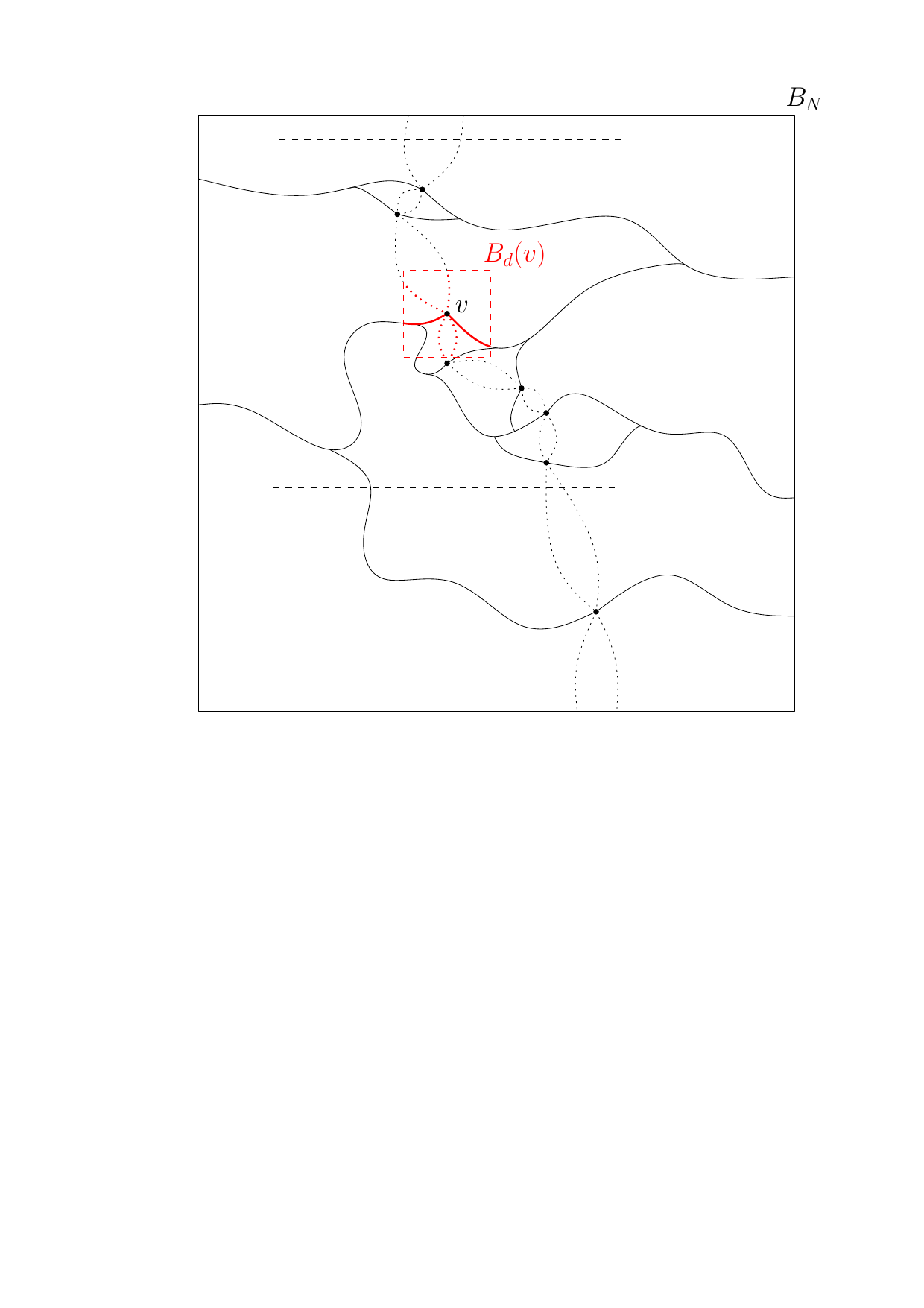}
	\caption{In order to understand the effect of recoveries in the forest-fire process, chains of so-called passage sites are considered in \cite{KMS2015}: for an occupied vertical crossing of $B_N$ using this chain to appear, all of the passage sites need to be rendered occupied by the $\delta$-recovery. This figure shows the geometric picture along such a chain, if it is chosen to be minimal: the $p_c$-occupied paths are drawn in solid line, while the $p_c$-vacant ones are in dotted line. Around each passage site $v$, we observe locally a ``six-arm'' configuration made of four vacant ($v$) and two occupied ($o$) arms, in the clockwise order $ovvovv$ (colored in red).}
	\label{fig:six_arms_FF}
\end{figure}

The critical case $\alpha=1/2$ turns out to be more subtle, since the corresponding exponent $\xi(1/2)$ is exactly equal to $2$, making the summations quite delicate. We study this case in Section~\ref{sec:critical-metric}, after having analyzed the subcritical case $\alpha < 1/2$ in detail in Section~\ref{sec:pf-thm}. For the critical intensity $\alpha=1/2$, we manage to derive a result with a level depending on $N$, $\lambda_N = C \log\log N$ for $C$ large enough, see Theorem~\ref{thm:carpet2} (which yields Theorem~\ref{thm:GFF_carpet}). This property is weaker than for $\alpha < 1/2$, but it is still highly non-trivial since the average occupation time is of order $\log N$. Moreover, our proof is contingent on getting rid of an error term $o(1)$ in the exponent (i.e. of potential logarithmic corrections in the power-law estimate for four arms), which we have done recently (together with Yijie Bi) in \cite{bi2025arm} -- this is not an issue in the subcritical case, since the inequality $\xi > 2$ leaves some ``breathing space''.

\subsubsection{Toolbox: arm events for RWLS}\label{subsubsec:arm}

In order to carry out the summation arguments sketched in Sections~\ref{sec:intro_chains} and \ref{sec:intro_FF}, we need to devise a set of tools to work with arm events in a random walk loop soup (RWLS). The companion paper \cite{GNQ2024a} is primarily concerned with establishing such results. In our proofs, we use the four-arm event, that two distinct clusters cross a given annulus, as well as the two-arm event (for only one cluster). As a matter of fact, the intuition provided by near-critical Bernoulli percolation turned out to be useful, even though the details of the proofs are not at all the same, involving different objects and making use of distinct sets of techniques.

We think that the properties of arm events that we developed, with our specific purpose in mind, are also interesting in their own right, and we hope that some of them (or some variations of them) will turn out to be useful in other contexts. As an illustration, we mention some of the main results that we obtained in this direction, in a brief and informal manner, before providing more precise statements in Section~\ref{sec:prop_four_arm}.
\begin{enumerate}
	\item First, we proved a \emph{separation} result, for packets of random walks inside a RWLS. This property is instrumental in that it then allows us to repeatedly perform surgery on well-chosen loops, by resampling specific parts of them.
	
	\item When working with four-arm events, one needs to address the potential effect of loops coming from far away. Thanks to the separation property, we were able to recombine big loops, which enabled us to show a property that we called \emph{locality}. We proved that the probability of a four-arm event in a given annulus $A$ can be upper bounded, up to a multiplicative constant, by a ``localized'' version of it, where only loops with a diameter comparable to that of $A$, or smaller than it, are taken into account. Note that the reverse inequality is not true.
	
	\item The locality property, combined with further surgery arguments, was then used to establish a \emph{quasi-multiplicativity} upper bound for four-arm probabilities. Together with the arm exponents provided by CLE, this bound allowed us to derive power law upper bounds on four-arm probabilities, which play a central role in the present paper.
\end{enumerate}

\subsection{Open questions and extensions}\label{sec:open_questions}

We conclude this introductory discussion by stating explicitly several open questions about possible extensions of our work.

\begin{enumerate}[(Q1)]
	\item\label{q1} A first obvious direction, already hinted at in Remark~\ref{rem:improve_critical}, is to try to improve our understanding of the critical intensity $\alpha=1/2$. In particular, can the thresholds appearing in Theorems~\ref{thm:GFF} and~\ref{thm:GFF_carpet} be improved?

	\item\label{q2} The results in \cite{AGS2022} provide the degeneracy of two-sided level sets in the RWLS for any intensity $\alpha \geq 1$ (see Remark~\ref{rmk:supercritical}): what about $\alpha \in (1/2,1)$? Similarly to (Q\ref{q1}), we can also ask what the actual threshold is for each $\alpha>1/2$. In particular, for $\alpha \geq 1$, can the corresponding bound $o((\log n)^{1/3})$ (from \cite{AGS2022}) for the threshold be improved? Moreover, can an independent proof be spelled out using techniques for random walk loops, similarly to the present paper?

	\item\label{q3} Finally, it is natural to ask whether some form of sharpness can be established for the percolation transition, as the threshold $\lambda$ increases. For $\alpha\in(0,1/2)$, we can also define the following critical values
	\begin{align*}
		\lambda_{**}(\alpha) := \inf \big\{ \lambda\ge 0: \liminf_{N \to \infty} \Pb( \Ec_{\alpha,N}(\lambda) )=1 \big\}, \, \lambda'_{**}(\alpha) := \inf \big\{ \lambda\ge 0: \liminf_{N \to \infty} \Pb( \Ec'_{\alpha,N}(\lambda) )=1 \big\}.
	\end{align*}
	Theorem~\ref{thm:main} implies that $\lambda_{**}(\alpha)<\infty$: do we have $\lambda_*(\alpha)=\lambda_{**}(\alpha)$? On the other hand, is it true that $\lambda'_{**}(\alpha)<\infty$?
	
	Recently, sharpness of the phase transition for one-sided level sets of the DGFF on $\Zb^d$, for all $d\ge 3$, has been established in \cite{MR4568695}. Furthermore, an improved description has also been obtained for a wide class of Gaussian percolation models on $\Zb^d$ or $\Rb^d$, for $d\ge 2$, under the assumption that correlations decay algebraically fast, see for example \cite{MS2022,Mu2022,MRVKS2023} and the references therein.
	
\end{enumerate}

\subsection{Organization of the paper}\label{subsec:org}

The paper is structured in the following way. We start by giving the setup and choosing notations in Section~\ref{sec:setup}. We then recall, in Section~\ref{sec:tools_RWLS}, the tools developed in the first part of this work \cite{GNQ2024a} for the random walk loop soup, primarily regarding the four-arm event. In that section, we also collect several elementary results for future use. In Section~\ref{sec:pf-thm}, we prove Theorems~\ref{thm:main} and~\ref{thm:carpet1} by introducing suitable modified arm events, arising from chains of clusters visited by a high path. We establish a stability property for their probabilities, based on an induction procedure. In Section~\ref{sec:critical-metric}, we discuss the critical case $\alpha=1/2$ and its relation to the discrete Gaussian free field. In particular, we obtain results for the two-sided level sets of the GFF, as well as for the thick points of a simple random walk. Finally, we discuss extensions of our results in Section~\ref{sec:extensions}.

\section{Setup and notations} \label{sec:setup}

\subsection{Notations}

Let $d(z,z') = |z-z'|$, for $z, z' \in \Rb^2$, be the standard Euclidean distance in the plane. Two vertices $z$ and $z'$ in $\Zb^2$ are said to be neighbors if $d(z,z') = 1$, which we denote by $z \sim z'$. As usual, the $l_{\infty}$ distance between two points $z, z' \in \Rb^2$, with $z=(x,y)$ and $z'=(x',y')$, is defined as $d_{\infty}(z,z') = |z-z'|_{\infty} := |x-x'| \vee |y-y'|$. In particular, $z, z' \in \Zb^2$ are \emph{$*$-neighbors} if $|z-z'|_{\infty} = 1$, and we indicate it by $z \sim^* z'$. We also write $d_{\infty}(A,B):=\inf_{z \in A,z'\in B} |z-z'|_{\infty}$ for any two subsets $A$ and $B$ of $\Zb^2$. We say that $A$ and $B$ are \emph{$*$-connected} if $d_{\infty}(A,B)\le 1$, i.e., if they intersect or there exist $z \in A$ and $z' \in B$ with $z \sim^* z'$. The complement of a subset $A\subseteq\Zb^2$ is defined as $A^c := \Zb^2 \setminus A$, its inner vertex boundary is $\partial A= \partial^{\mathrm{in}} A:= \{ z\in A : |z-w|_{\infty}=1 \text{ for some } w \in A^c\}$, and we let $\mathring{A} := A \setminus \partial A$. 
We define $\mathrm{Fill}(A)$ to be the complement of the unbounded $*$-connected component (i.e.\ for the $*$-connection) of $A^c$, and we let $\partial_{\mathrm{ext}}A:=  \partial^{\mathrm{in}}\, \mathrm{Fill}(A)$ be the ``external'' inner vertex boundary of $A$.
Finally, we use the notation $A^+ := A \cap \Hb$, where $\Hb:=\{ (x,y) \in \Rb^2: y>0 \}$ is the upper half-plane.

We consider \emph{paths} (resp. \emph{$*$-paths}) of vertices, which are simply finite sequences $\eta = (z_1,\cdots,z_j)$, where each $z_i \in \Zb^2$, and such that for all $i=1,\ldots,j-1$, $z_i \sim z_{i+1}$ (resp. $z_i \sim^* z_{i+1}$). We denote by $|\eta| = j-1$ the length of such a path, and $z_1$ and $z_j$ are respectively its starting and ending vertices. Moreover, a path consisting only of vertices in a subset $A\subseteq\Zb^2$ is called a \emph{path in $A$}. An \emph{excursion in $A$} is a path in $A$ whose starting and ending vertices both belong to $\partial A$.

A path which has the same starting and ending vertices is called a \emph{rooted loop}. Any such loop can be \emph{unrooted}, and we denote the corresponding map by $\Uc$. This map $\Uc$ assigns, to each rooted loop $\eta$, its equivalence class under rerooting, where two loops $\eta$ and $\eta'$ are considered as equivalent if they have the same length, and if, denoting $j = |\eta| + 1 = |\eta'| + 1$, we have that for some $k \in \{0, \ldots, j-1\}$, $\eta'_i = \eta_{i+k}$ for all $i = 1, \ldots, j$ (where of course, indices are meant modulo $j$). Later, we always use the terminology \emph{loop} for unrooted loops, unless this is specified explicitly. A single vertex can also be thought of as a loop with length $0$, and we call it a trivial loop. The set of all such loops is denoted by $\Zc^2$, and we use the notation $\Gamma$ for the set of non-trivial loops, i.e.\ with length at least $1$.

The box with radius $n \geq 0$ (for $d_{\infty}$) centered on a given vertex $z \in \Zb^2$, which has side length $2n$, is denoted by $B_n(z) := z + [-n,n]^2 \cap \Zb^2$ ($= \{z' \in \Zb^2 : d_{\infty}(z,z') \le n\}$). For $0<n<N$, the annulus around $z$ with radii $n$ and $N$ is defined as $A_{n,N}(z) := B_N(z) \setminus B_{n-1}(z)$ ($= \{ z' \in \Zb^2 : n \le d_{\infty}(z,z') \le N \}$). We remove $z$ from these notations if $z=0$, i.e., we simply write $B_n=B_n(0)$ and $A_{n,N}=A_{n,N}(0)$.

Finally, let us adopt some more notation with functions and constants. If $f$ and $g$ are two positive functions, $f \lesssim g$ means that $f \leq C g$ for some constant $C > 0$. If we need to allow the constant $C$ to depend on another parameter, say $\alpha$, we indicate it explicitly by writing $f \lesssim_{\alpha} g$. We use the letters $c$ and $C$ (or $c', c'', \ldots$) to denote arbitrary constant which may change from one line to the next. On the other hand, we use subscripts to denote particular constants which remain fixed, such as $c_0, c_1, \ldots$, and we always specify it when a constant depends on other parameter, writing for example $c_2(\alpha)$.

\subsection{Random walk loop soup} \label{subsec:RWLS}

\subsubsection*{Definition}

We now define precisely the random walk loop soup. For this purpose, we use the same notations as in the companion paper \cite{GNQ2024a}, that we recall briefly. We remind the reader that the sets of trivial and non-trivial loops are denoted by $\Zc^2$ and $\Gamma$, respectively. In the present paper, a loop configuration is by definition of the form $L=L' \cup \Zc^2$, where $L'\in (\Nb_0)^{\Gamma}$ is a multiset of $\Gamma$, and we denote by $n_L(\gamma)$ the number of occurrences of $\gamma \in \Gamma \cup \Zc^2$ in $L$. The union of two loop configurations $L_1$ and $L_2$ is denoted by $L:=L_1 \uplus L_2$, with $n_L(\gamma)=1$ for all $\gamma \in \Zc^2$, and $n_L(\gamma)=n_{L_1}(\gamma)+n_{L_2}(\gamma)$ for $\gamma \in \Gamma$. Sometimes, we also need to consider the difference between two configurations, and we let $L:=L_1\setminus L_2$ be the loop configuration so that, again, $n_L(\gamma)=1$ for $\gamma \in \Zc^2$, while $n_L(\gamma)=(n_{L_1}(\gamma)-n_{L_2}(\gamma))\vee 0$ if $\gamma \in \Gamma$. For any subset $D \subseteq \Zb^2$, we denote by $L_D$ the set of all loops in $L$ which are contained in $D$. For a finite $A \subseteq \Zb^2$, we write $L(A)$ for the set of non-trivial loops in $L$ that visit \emph{all} the vertices in $A$. For a vertex $z \in \Zb^2$, we abbreviate $L(\{z\})$ (the set of loops visiting $z$) simply as $L^z$.

Next, we consider the measure $\mu$ on paths, defined by $\mu(\eta) = 4^{-|\eta|}$ if $\eta$ has length at least $1$, and $\mu(\eta) = 0$ if $\eta$ is trivial (i.e.\ it has length $0$). For any two vertices $z,z'\in \Zb^2$, we denote by $\mu^{z,z'}$ the restriction of $\mu$ to the paths whose starting and ending vertices are $z$ and $z'$, respectively. We also define the measure $\mu_0$ on unrooted loops, such that if the loop $\gamma$ is non-trivial, $\mu_0(\gamma) = 4^{-|\gamma|}$ (and $\mu_0(\gamma) = 0$ otherwise). The \emph{unrooted loop measure} is then defined as 
\begin{equation}\label{eq:nu}
	\nu(\gamma):=\frac{\mu_0(\gamma)}{J(\gamma)},
\end{equation}
where $J(\gamma)$ is the \emph{multiplicity} of the (non-trivial, unrooted) loop $\gamma$ (i.e.\ $J(\gamma)$ is the largest integer $j$ such that $\gamma$ can be decomposed as $j$ successive copies of the same loop). This measure plays a central role, since the definition of the \emph{random walk loop soup} (abbreviated as RWLS most of the time) relies on it, as we recall next.

For any $\alpha > 0$, we define the RWLS in $\Zb^2$ of intensity $\alpha$ to be the random loop configuration $\Lc$ obtained as a Poisson point process with intensity $\alpha \nu$, together with all trivial loops. 
In some works (e.g.\ in \cite{LTF2007}), the RWLS is defined without trivial loops. The trivial loops have no influence on the connected components formed by the loops, but it is the version with trivial loops that generates an occupation field which is a permanental field \cite{MR2815763, MR3298468}, and corresponds to the Gaussian free field  at $\alpha=1/2$ (see Theorem~\ref{thm:Le-Jan}).
For future use, note that for any $k$ distinct non-trivial loops $\gamma_1,\ldots,\gamma_k$, we have by definition the following formula:
\begin{equation}\label{eq:rwls}
	\Pb( n_{\Lc}(\gamma_1)=n_1,\ldots, n_{\Lc}(\gamma_k)=n_k )=\prod_{i=1}^{k} e^{-\alpha \nu(\gamma_i)} \frac{(\alpha \nu(\gamma_i))^{n_i}}{n_i !}.
\end{equation}
We can transfer, in a natural way, all the earlier notations for a given loop configuration $L$ to the RWLS, just replacing $L$ by $\Lc$. In particular, for a subset $D \subseteq \Zb^2$, $\Lc_D$ contains all the loops in $\Lc$ which remain in $D$, and we call it the \emph{random walk loop soup in $D$}.

\subsubsection*{Occupation time field}

We now describe the occupation time field associated with a loop configuration. For each non-trivial loop in $\Gamma$, we assign an $\mathrm{Exp}(1)$ distributed waiting time to each of its jumps. For each trivial loop in $\Zc^2$, we assign a waiting time with law $\Gamma(\alpha,1)$ to the vertex where it stays. All these waiting times are assumed to be independent from each other and from the loop configuration. Then, for any loop configuration $L$, the \emph{occupation time field} (or simply occupation field) $X_L$ for $L$ is defined, at each $z\in \Zb^2$, as the total waiting time at $z$ produced by all the loops in $L$ (i.e., non-trivial and trivial ones). For any bounded subset $D\subseteq \Zb^2$, we consider the occupation field $X_D: D\mapsto [0,\infty)$ for the RWLS $\Lc_D$ of intensity $\alpha$, which is defined by $X_D(z):=X_{\Lc_D}(z)$ for all $z\in D$.

From standard properties, the occupation field $X_D$ can be sampled by the following procedure.
\begin{enumerate}[(1)]
	\item We first sample the RWLS $\Lc_D$, which produces the total number of visits to each vertex $z\in D$ by all the non-trivial loops in $\Lc_D$:
	\begin{equation}\label{eq:disc-occup}
		n(z):=\sum_{\gamma\in\Lc_D\cap\Gamma} n_{\gamma}(z), \quad z \in D,
	\end{equation}
	where $n_{\gamma}(z)$ is the number of visits to $z$ by the loop $\gamma$.
	\item Conditioned on $( n(z) )_{z\in D}$, we sample $t(z)$ for each $z \in D$ independently, with law $\Gamma(n(z)+\alpha,1)$. 
	\item In this way, we have defined the joint law of $((n(z),t(z)))_{z\in D}$: the marginal distribution of $t$ is then the same as that of $X_D$.
\end{enumerate}
It was shown in \cite{MR2815763, MR3298468} that $X_D$ is a certain permanental field, and its Laplace transform has been computed in \cite[Corollary 1]{MR2815763}.  Some related results are listed below Theorem~\ref{thm:main}.

\subsubsection*{Connected components}

Finally, connectivity for loops in a loop configuration $L$ can be defined in a natural way. For two loops $\gamma$ and $\gamma'$, we write $\gamma \sim \gamma'$ if $\gamma$ and $\gamma'$ intersect each other, i.e., if there is at least one vertex which is visited by both loops. For any $j \geq 1$, a sequence of loops $\gamma_1, \cdots, \gamma_j$ such that $\gamma_i \sim \gamma_{i+1}$ for each $i = 1, \ldots, j-1$ is called a \emph{chain} of loops (with length $j$), and we say that two loops $\gamma$ and $\gamma'$ are \emph{connected} in $L$ if one can find such a chain of loops, with some length $j \geq 1$, such that $\gamma_1 = \gamma$ and $\gamma_j = \gamma'$. This produces obviously an equivalence relation, and we refer to the associated classes as \emph{connected components} or \emph{clusters} (of $L$). 
By a slight abuse of notation, we can regard a cluster either as a collection of loops, or as a subset of $\Zb^2$.
Note that clusters may be trivial, i.e.\ contain only a single loop in $\Zc^2$. For any $D\subseteq \Zb^2$, the clusters of $L_D$ induce a partition of $D$.  A cluster $\Cc$ of $L$ is said to be an \emph{outermost} cluster if there is no other cluster $\Cc'$ of $L$ such that $\Cc$ is included in $\mathrm{Fill}(\Cc')$. For each cluster $\Cc$, let $\mathrm{int}(\Cc):= \Cc  \setminus \partial_{\mathrm{ext}}\Cc$ be the ``interior'' of $\Cc$. Note that $\mathrm{int}(\Cc)$ is empty if $\Cc$ is trivial.

\begin{definition}[Carpet]\label{def:carpet}
	Let $L$ be a loop configuration and let $D\subseteq \Zb^2$. The \emph{carpet} of $L$ in $D$ is equal to $D\setminus \bigcup \mathrm{int} (\Cc)$, where the union is over clusters $\Cc$ of $L_D$. In other words, it is obtained as the union of $\partial_{\mathrm{ext}}\Cc$ over all \emph{outermost} clusters $\Cc$.
\end{definition}

\subsection{Gaussian free field}\label{subsec:GFF}

From now on, we will only consider the \emph{discrete} Gaussian free field. Hence, we use the abbreviation GFF (instead of DGFF) in the remainder of this paper, for simplicity.

\subsubsection*{Definition}

Consider a bounded subset $D$ of $\Zb^2$. Let $S$ be a simple random walk started from some vertex in $D$, and let $\zeta_{D}:=\min\{ i\ge 0: S_i \notin D \}$. The \emph{Green's function} in $D$ is defined as 
\begin{equation}\label{eq:Green}
	G_D(u,v):= \Eb^u\bigg( \sum_{i=0}^{\zeta_D} \ind_{\{S_i=v\}}  \bigg)  \quad \text{ for all } u,v\in D, 
\end{equation}
where $\Eb^u$ denotes the expectation for the random walk $S$ starting from $u$. The \emph{Gaussian free field} (GFF) in $D$ with $0$ boundary condition (along $\partial^{\mathrm{out}} D := \partial^{\mathrm{in}}(D^c)$) is a centered Gaussian process on $D$, which is denoted by $\varphi_D$, with covariance given by $G_D(\cdot,\cdot)$ and such that $\varphi_D(z) = 0$ for all $z \in D^c$.

\subsubsection*{Isomorphism theorems}
Le Jan's isomorphism theorem relates the occupation field for the RWLS at the critical intensity $\alpha=1/2$ to the square of the GFF in the following way. It will allow us to translate readily our percolation result for the occupation field into an analogous result for the (squared) GFF (see Section~\ref{subsec:toGFF}).

\begin{theorem}[Le Jan's isomorphism, \cite{MR2815763}]\label{thm:Le-Jan}
	Consider the occupation field $X_D = (X_D(z))_{z\in D}$ of a RWLS in $D$ with intensity $\alpha=1/2$, and let $\varphi_D$ be a GFF in $D$ (with $0$ boundary condition). Then $X_D$ has the same law as $\frac12 \varphi_D^2=\big( \frac12 \varphi_D(z)^2 \big)_{z\in D}$.
\end{theorem}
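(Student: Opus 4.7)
The plan is to show equality in law by matching Laplace transforms. Since $D$ is finite and both $X_D$ and $\tfrac{1}{2}\varphi_D^2$ are non-negative random vectors indexed by $D$, it suffices to check that for every $\chi:D\to[0,\infty)$, writing $M_\chi$ for the diagonal matrix with entries $(\chi(z))_{z\in D}$,
\[ \Eb\bigl[\exp(-\langle\chi,X_D\rangle)\bigr] = \Eb\bigl[\exp(-\tfrac{1}{2}\langle\chi,\varphi_D^2\rangle)\bigr], \]
and I will in fact show that both sides equal $\det(I+G_D M_\chi)^{-1/2}$.

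The Gaussian side is immediate: the density of $\varphi_D$ on $\Rb^D$ is proportional to $\exp(-\tfrac{1}{2}\varphi^T G_D^{-1}\varphi)$, and completing the square yields
\[ \Eb\bigl[\exp(-\tfrac{1}{2}\langle\chi,\varphi_D^2\rangle)\bigr] = \Bigl(\frac{\det(G_D^{-1})}{\det(G_D^{-1}+M_\chi)}\Bigr)^{1/2} = \det(I+G_D M_\chi)^{-1/2}. \]

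For the loop-soup side I would work at a general intensity $\alpha>0$ and specialize at $\alpha=1/2$ at the end. Split $X_D$ into trivial and non-trivial loop contributions, and let $t_z(\gamma)$ denote the waiting time that $\gamma$ accumulates at $z$. A single trivial loop at each $z\in D$ contributes an independent $\Gamma(\alpha,1)$ variable, producing a factor $\prod_z (1+\chi(z))^{-\alpha}$. The non-trivial loops form a Poisson point process of intensity $\alpha\nu$, so by the exponential formula, and using that conditionally on $\gamma$ the time $t_z(\gamma)$ is $\Gamma(n_\gamma(z),1)$,
\[ \Eb\Bigl[\exp\Bigl(-\sum_{\gamma\in\Lc_D\cap\Gamma}\sum_z\chi(z) t_z(\gamma)\Bigr)\Bigr] = \exp\Bigl(-\alpha\int_{\gamma\subseteq D}\bigl(1 - \textstyle\prod_z(1+\chi(z))^{-n_\gamma(z)}\bigr)\,d\nu(\gamma)\Bigr). \]
The key step is evaluating this integral. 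Using $\nu(\gamma)=\mu_0(\gamma)/J(\gamma)$, the unrooted-loop integral becomes a sum over rooted loops $\tilde\gamma$ with weight $4^{-|\tilde\gamma|}/|\tilde\gamma|$, which one recognizes as a trace expansion: setting $Q(z,z'):=\tfrac{1}{4}(1+\chi(z'))^{-1}\ind_{z\sim z'}$ on $D$,
\[ \sum_{\tilde\gamma\subseteq D,\,|\tilde\gamma|\geq 1}\frac{4^{-|\tilde\gamma|}}{|\tilde\gamma|}\prod_z(1+\chi(z))^{-n_{\tilde\gamma}(z)} = \sum_{k\geq 1}\tfrac{1}{k}\mathrm{tr}(Q^k) = -\log\det(I-Q). \]
Doing the same expansion at $\chi\equiv 0$ (where $Q$ reduces to the SRW kernel $P$ in $D$, for which $(I-P)^{-1}=G_D$) and subtracting, then using $\det(I-PD')=\det(I-D'P)$ with $D'=(I+M_\chi)^{-1}$ and the factorization $(I-P)+M_\chi=(I-P)(I+G_D M_\chi)$, the integral reduces to
\[ \log\det(I+G_D M_\chi) - \sum_z\log(1+\chi(z)). \]
Combining trivial and non-trivial contributions, the $\prod_z(1+\chi(z))^{\pm\alpha}$ factors cancel exactly and we are left with $\det(I+G_D M_\chi)^{-\alpha}$. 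At $\alpha=1/2$ this matches the Gaussian side, and uniqueness of the Laplace transform concludes.

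The hard part is the trace-determinant identity linking $\nu$ to the Fredholm determinant $\det(I-Q)$. It depends crucially on the multiplicity divisor $1/J(\gamma)$ in the definition of $\nu$: after unrooting, this divisor is precisely what yields the combinatorial weight $1/k$ in the series $\sum_k\tfrac{1}{k}\mathrm{tr}(Q^k)$, and hence the exponent $-\alpha$ (rather than some multiple of it) in the final formula $\det(I+G_D M_\chi)^{-\alpha}$. Matching this exponent to $-1/2$ from the Gaussian side then singles out the critical intensity $\alpha=1/2$.
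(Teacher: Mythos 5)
The paper does not prove this theorem: it is stated as a direct citation of Le Jan's book \cite{MR2815763}, so there is no in-paper proof to compare against. Your argument is a correct self-contained reconstruction of the standard route: one computes the Laplace transform $\Eb[\exp(-\langle\chi,X_D\rangle)]=\det(I+G_D M_\chi)^{-\alpha}$ for general $\alpha>0$ (this is exactly the permanental-field identity the paper invokes when it cites \cite[Corollary~1]{MR2815763} below Theorem~\ref{thm:main}), and matches it at $\alpha=1/2$ to the Gaussian computation $\Eb[\exp(-\tfrac12\langle\chi,\varphi_D^2\rangle)]=\det(I+G_D M_\chi)^{-1/2}$. The delicate points all check out: the divisor $J(\gamma)$ in $\nu=\mu_0/J$ is precisely what converts the unrooted-loop integral into a rooted-loop sum with weight $4^{-k}/k$, giving $\sum_{k\ge1}\tfrac1k\mathrm{tr}(Q^k)=-\log\det(I-Q)$; the reduction $\det(I-PD')/\det(I-P)=\det(I+G_D M_\chi)\prod_z(1+\chi(z))^{-1}$ via $D'=(I+M_\chi)^{-1}$, $G_D=(I-P)^{-1}$ and $I-P+M_\chi=(I-P)(I+G_D M_\chi)$ is correct; and the $\prod_z(1+\chi(z))^{\pm\alpha}$ factors from the trivial loops cancel exactly. (The sub-stochasticity of $P$ on the finite domain $D$ gives $\|Q\|\le\|P\|<1$, justifying the trace expansion.) This is essentially Le Jan's own derivation.
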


As we mentioned in the introduction, this result belongs to a more general class of isomorphism theorems, relating random walks and Brownian motions to the GFF. In particular, our percolation result for the squared GFF can be used to derive similar results for other processes, via another isomorphism theorem, that we mention now. For this purpose, we consider the modified graph obtained by contracting all vertices of $D^c$ into a single point, denoted by $\varrho$. Let $E$ be the set of edges in $D$, together with all the edges from $D$ to $D^c$ (each of these edges is now an edge from $\partial D$ to $\varrho$). Let $\Gc=(D\cup\{\varrho\},E)$ be the resulting connected graph (see e.g.\  \cite[Figure~1]{AB2022} for an illustration). Let $\wt S$ be the continuous-time simple random walk on $\Gc$ with jump rate (conductance) $\frac14$ along every edge in $E$. We use $\Pb^{\varrho}$ to denote the law of $\wt S$ when it starts from $\varrho$. For any $t > 0$, the \emph{local time up to time $t$} is given by, at each $z\in D\cup\{\varrho\}$,
\begin{equation}
	\ell^t_{D}(z):=\int_0^t \ind_{\{ \wt S_s=z \}} \, ds.
\end{equation}
Let $\varsigma(t):=\inf\{ s\ge0: \ell^s_{D}(\varrho)>t \}$ be the inverse local time at $\varrho$, and write 
\begin{equation}\label{eq:local-time}
	\Lfr^t_D(z):=\ell^{\varsigma(t)}_{D}(z), \quad z\in D \cup \{\varrho\}.
\end{equation}
The generalized second Ray-Knight theorem can be stated as follows. The precise form below is proved in  \cite{EKMRS2000}, but we also refer the reader to the references given in \cite{EKMRS2000, AB2022}.

\begin{theorem}[Generalized second Ray-Knight theorem]\label{thm:gsrk}
	Let $\varphi_D$ be a GFF in $D$, and let $\varphi'_D$ be a copy of it. For each $t>0$, if $\varphi'_D$ is independent of $\Lfr^t_D$, then 
	\[
	\Big(\Lfr^t_D(z)+\frac12 \varphi'_D(z)^2\Big)_{z\in D}\stackrel{d}{=}\Big(\frac12 ( \varphi_D(z) + \sqrt{2t} )^2\Big)_{z\in D}.
	\]
\end{theorem}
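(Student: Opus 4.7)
The plan is to verify this identity in distribution by computing and matching the joint Laplace transforms of the two sides. Fix nonnegative weights $\lambda = (\lambda_z)_{z\in D}$, and let $\Lambda$ denote the diagonal matrix with entries $\lambda_z$ acting on $\ell^2(D)$. For the right-hand side, I would use that $\varphi_D + \sqrt{2t}$ is a Gaussian vector with mean $\sqrt{2t}\cdot\mathbf{1}$ and covariance $G_D$, and a completion-of-the-square computation gives
\begin{equation*}
	\Eb\Big[\exp\Big(-\tfrac{1}{2}\sum_z \lambda_z\big(\varphi_D(z) + \sqrt{2t}\big)^2\Big)\Big] = \det(I + \Lambda G_D)^{-1/2}\exp\!\Big(-t\,\big\langle \mathbf{1}, (I+\Lambda G_D)^{-1}\Lambda\mathbf{1}\big\rangle\Big).
\end{equation*}

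For the left-hand side, the assumed independence of $\varphi'_D$ and $\Lfr^t_D$ factorizes the joint Laplace transform. The factor coming from $\tfrac{1}{2}(\varphi'_D)^2$ is exactly $\det(I+\Lambda G_D)^{-1/2}$ by the same computation specialized to $t=0$, matching the prefactor on the right. It then remains to identify the Laplace transform of $\Lfr^t_D$ under $\Pb^\varrho$ with the exponential factor. I would achieve this via excursion theory on the contracted graph $\Gc$: the trajectory of $\wt S$ up to time $\varsigma(t)$ decomposes as a Poisson point process of excursions from $\varrho$ with intensity $t\cdot n_\varrho$, where $n_\varrho$ is the excursion measure at $\varrho$. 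Exponentiation of the single-excursion Laplace functional yields
\begin{equation*}
	\Eb^\varrho\Big[\exp\Big(-\sum_z \lambda_z \Lfr^t_D(z)\Big)\Big] = \exp(-t\cdot \Psi(\lambda)),
\end{equation*}
with $\Psi(\lambda) := n_\varrho\big[1 - \exp(-\sum_z \lambda_z \ell_z^{\exc})\big]$, where $\ell_z^{\exc}$ denotes the occupation time at $z$ of a single excursion. A Feynman--Kac computation for the random walk in $D$ killed at $\varrho$ with additional potential $\Lambda$, combined with the resolvent identity $(G_D^{-1} + \Lambda)^{-1} = G_D(I + \Lambda G_D)^{-1}$, identifies $\Psi(\lambda) = \langle \mathbf{1}, (I+\Lambda G_D)^{-1}\Lambda\mathbf{1}\rangle$, matching the right-hand side exactly.

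The main obstacle is this last identification of $\Psi(\lambda)$ with the explicit quadratic form. The algebra of the resolvent is routine, but one must correctly interpret $n_\varrho$ as the appropriate ``clock'' at the boundary vertex and carefully track the normalizations relating the local time at $\varrho$ on $\Gc$ to hitting and killing times for the random walk in $D$; a wrong factor here destroys the precise matching with the shift $\sqrt{2t}$. Alternatively, one could bypass the excursion-measure computation altogether by invoking Le Jan's isomorphism (Theorem~\ref{thm:Le-Jan}) on the extended graph $\Gc$ with a \emph{tilted} boundary condition at $\varrho$, and matching occupation fields via a ``loop-chopping'' decomposition of critical loops through $\varrho$: this route is more conceptual and fits naturally with the loop-soup perspective of the paper, but it still requires bookkeeping to recover the shift $\sqrt{2t}$ at $\varrho$ from the added Poisson intensity $t$ of excursions.
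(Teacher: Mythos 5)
The paper does not prove this theorem: immediately before the statement it says ``The precise form below is proved in \cite{EKMRS2000}, but we also refer the reader to the references given in \cite{EKMRS2000, AB2022}.'' Theorem~\ref{thm:gsrk} is imported as a known result, not re-derived, so there is no ``paper's proof'' to compare your argument against. What you have written is therefore an independent proof sketch.

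As a sketch, the architecture is sound and it matches the classical route (Eisenbaum--Kaspi--Marcus--Rosen--Shi). The Gaussian side of the Laplace-transform computation is correct: for $X\sim N(\mu,\Sigma)$ one indeed has $\Eb[e^{-\frac12 X^T\Lambda X}]=\det(I+\Lambda\Sigma)^{-1/2}\exp(-\frac12\mu^T(I+\Lambda\Sigma)^{-1}\Lambda\mu)$, and specializing to $\mu=\sqrt{2t}\,\mathbf 1$, $\Sigma=G_D$ gives exactly the right-hand side expression you wrote. Factorizing the left-hand side via the assumed independence, and peeling off $\det(I+\Lambda G_D)^{-1/2}$ from the $\frac12(\varphi'_D)^2$ term, is also correct. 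The genuine content of the theorem is therefore concentrated in the one step you flag as the ``main obstacle'': showing that $\Eb^\varrho[\exp(-\sum_z\lambda_z\Lfr^t_D(z))]=\exp(-t\langle\mathbf 1,(I+\Lambda G_D)^{-1}\Lambda\mathbf 1\rangle)$. You have not established this; you have only named the tools (excursion decomposition at $\varrho$, exponential formula, Feynman--Kac, resolvent identity) that should deliver it. In particular, the identification $\Psi(\lambda)=\langle\mathbf 1,(I+\Lambda G_D)^{-1}\Lambda\mathbf 1\rangle$ requires relating the It\^o excursion measure $n_\varrho$ on the contracted graph $\Gc$ to the Green's function $G_D$ of the discrete walk killed on $\partial^{\mathrm{out}}D$, and as you note, a wrong normalization (jump rate $1/4$ per edge versus discrete steps, local time at $\varrho$ versus number of excursions) would spoil the precise factor $\sqrt{2t}$. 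This is exactly where the EKMRS proof does nontrivial work, so the sketch, while plausible, is genuinely incomplete at that point. Your alternative suggestion — running Le Jan's isomorphism on the extended graph with an added source of excursions at $\varrho$ — is also a legitimate and by now well-documented strategy (it is the loop-soup/Poisson-excursion proof of Ray--Knight identities in the spirit of Lupu and Sabot--Tarr\`es), and would fit the loop-soup viewpoint of the present paper more naturally; but the same normalization bookkeeping reappears there, as you correctly anticipate.

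In short: your plan is a correct outline of one standard proof of a result the paper merely cites, but the crucial excursion/Feynman--Kac identification is asserted rather than carried out, so the sketch does not yet constitute a proof.
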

As a corollary of the above theorem, we get 
\begin{corollary}\label{cor:gsrk}
	Let $\varphi_D$ and $\Lfr^{\boldsymbol{\cdot}}_D$ be as above. For all $t>0$,
	\begin{equation}\label{eq:dom}
		\big(\Lfr^t_D(z)\,\big)_{z\in D} \preceq \Big(\frac12 ( \varphi_D(z) + \sqrt{2t} )^2 \Big)_{z\in D},
	\end{equation}
	where $\preceq$ denotes stochastic domination.
\end{corollary}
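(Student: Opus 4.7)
The plan is to deduce the stochastic domination \eqref{eq:dom} essentially for free from Theorem~\ref{thm:gsrk}, by exhibiting an explicit coupling realizing the pointwise inequality and then invoking the standard characterization of stochastic domination (Strassen's theorem) for coordinate-wise monotone functionals on $\Rb^D$.

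More precisely, on a single probability space construct the local-time field $(\Lfr^t_D(z))_{z \in D}$ and, independently of it, a GFF $\varphi'_D$ in $D$ with $0$ boundary condition. Set
\[
Y(z) := \Lfr^t_D(z) + \tfrac12 \varphi'_D(z)^2, \qquad z \in D.
\]
Theorem~\ref{thm:gsrk} asserts that
\[
(Y(z))_{z \in D} \stackrel{d}{=} \Big( \tfrac12 \big( \varphi_D(z) + \sqrt{2t} \big)^2 \Big)_{z \in D}.
\]
Since $\tfrac12 \varphi'_D(z)^2 \ge 0$ everywhere, we have the deterministic pointwise bound $\Lfr^t_D(z) \le Y(z)$ for every $z \in D$. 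Thus on the same probability space we have realized $(\Lfr^t_D(z))_{z \in D}$ and a field distributed as the right-hand side of \eqref{eq:dom}, with the former dominated pointwise by the latter. By the standard equivalence between the existence of such a monotone coupling and stochastic domination, this yields \eqref{eq:dom}.

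There is no real obstacle here; the only thing to double-check is the convention used for stochastic domination of random fields indexed by the finite set $D$: namely, that $X \preceq Y$ means $\Eb[f(X)] \le \Eb[f(Y)]$ for all bounded measurable functions $f:\Rb^D \to \Rb$ that are non-decreasing in each coordinate. Under this (standard) convention, Strassen's theorem on $\Rb^{|D|}$ gives exactly the equivalence with the existence of a coupling satisfying the coordinate-wise inequality, which is what we have constructed. Hence the corollary follows directly from Theorem~\ref{thm:gsrk} without any further computation.
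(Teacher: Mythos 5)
Your proof is correct and is exactly what the paper implies by ``As a corollary of the above theorem'': couple $(\Lfr^t_D(z))_{z\in D}$ with an independent GFF $\varphi'_D$, note the pointwise inequality $\Lfr^t_D(z) \le \Lfr^t_D(z) + \tfrac12\varphi'_D(z)^2$, and apply Theorem~\ref{thm:gsrk} to identify the law of the right-hand side. One small stylistic remark: you only need the trivial direction (pointwise coupling $\Rightarrow$ stochastic domination), so invoking Strassen's theorem by name is unnecessary, though of course not incorrect.
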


\subsubsection*{Estimate: Green's function}

Finally, we state an estimate for the Green's function in a ball of $\Zb^2$, which has proved to be useful in the companion paper \cite{GNQ2024a}. This result can be found for instance in \cite{La1991} (see Theorem~1.6.6 and Proposition~1.6.7 there).

\begin{lemma}\label{lem:green's}
	There exists a universal constant $c>0$ such that the following holds. For all $n \geq 1$,
	$$G_{B_n}(0,0)=\frac2\pi\log n+c+O(n^{-1}).$$
	Moreover, for all $x\in B_n\setminus\{0\}$, 
	$$G_{B_n}(x,0)=\frac2\pi(\log n-\log |x|)+O \big( |x|^{-1}+n^{-1} \big).$$
\end{lemma}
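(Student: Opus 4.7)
The plan is to derive both estimates from two classical ingredients: the asymptotic expansion of the potential kernel $a:\Zb^2\to[0,\infty)$ on the whole plane, and a martingale/optional-stopping identity expressing the finite-domain Green's function in terms of $a$ and the exit distribution from $B_n$.

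First, I would set up the potential kernel $a$ as the unique function with $a(0)=0$ that is discrete harmonic away from the origin and normalized so that the discrete Laplacian $\Delta a$ equals $\delta_0$. Via Fourier inversion,
\[
a(x)=\frac{1}{(2\pi)^2}\int_{[-\pi,\pi]^2}\frac{1-\cos(x\cdot\theta)}{1-\hat p(\theta)}\,d\theta,\qquad \hat p(\theta)=\tfrac12(\cos\theta_1+\cos\theta_2),
\]
and I would extract the behavior near $\theta=0$, where $1-\hat p(\theta)\sim|\theta|^2/4$, to isolate a logarithmic leading term; the smooth part and a careful stationary-phase/integration-by-parts analysis of the remainder then yield the expansion
\[
a(x)=\tfrac{2}{\pi}\log|x|+\kappa+O(|x|^{-2}),\qquad |x|\to\infty,
\]
with an explicit constant $\kappa$. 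This step is the technical heart of the whole argument.

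Next, I would observe that, for fixed $y$, the function $a(\cdot -y)$ is discrete harmonic on $\Zb^2\setminus\{y\}$, and that for the simple random walk $S$ stopped at $\tau=\tau_{B_n}$ the process $a(S_k-y)-(\text{local time at }y)$ is a martingale. Optional stopping (with a standard verification of uniform integrability using the $\log$ growth of $a$ and the exit tail of $S$) gives the identity
\[
G_{B_n}(x,y)=\Eb^x\bigl[a(y-S_\tau)\bigr]-a(y-x).
\]
Plugging in $x=y=0$, I use $|S_\tau|\asymp n$ on $\partial^{\mathrm{out}}B_n$ together with the expansion of $a$ to write
\[
\Eb^0[a(S_\tau)]=\tfrac{2}{\pi}\log n+\kappa+\tfrac{2}{\pi}\Eb^0\!\bigl[\log(|S_\tau|/n)\bigr]+O(n^{-2}).
\]
An invariance-principle / scaling argument shows that $\Eb^0[\log(|S_\tau|/n)]$ converges as $n\to\infty$ to its Brownian analogue (the expectation of $\log|B_{\tau_1}|$ at the first exit from the rescaled unit box), with a quantitative $O(n^{-1})$ rate from standard coupling bounds for SRW and BM; collecting the two constants into a single $c$ yields the first formula.

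For general $x\neq 0$, the same identity (with $y=0$) gives
\[
G_{B_n}(x,0)=\Eb^x[a(S_\tau)]-a(x),
\]
and subtracting the expansion $a(x)=\tfrac{2}{\pi}\log|x|+\kappa+O(|x|^{-2})$ cancels the constants $\kappa$ and leaves $\tfrac{2}{\pi}(\log n-\log|x|)$ plus three error contributions: $O(|x|^{-2})$ from the expansion of $a(x)$, $O(n^{-2})$ from the expansion on $\partial^{\mathrm{out}}B_n$, and an $O(|x|^{-1})+O(n^{-1})$ correction coming from the dependence of the harmonic measure on the starting point $x$, estimated via a gradient/coupling bound for discrete harmonic functions in $B_n$. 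Together these give the stated error $O(|x|^{-1}+n^{-1})$. The main obstacle throughout is the sharp Fourier-analytic derivation of $a$ with $O(|x|^{-2})$ error; once this is in hand, both Green's function estimates are a direct consequence of the potential-kernel identity and exit-measure control.
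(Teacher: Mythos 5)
The paper does not prove this lemma; it simply cites \cite{La1991} (Theorem~1.6.6 and Proposition~1.6.7), so there is no in-paper proof to compare against. Your route — expand the potential kernel $a(x)=\tfrac{2}{\pi}\log|x|+\kappa+O(|x|^{-2})$ via Fourier inversion, then use the optional-stopping identity $G_D(x,y)=\Eb^x[a(S_{\tau_D}-y)]-a(x-y)$ — is exactly the classical approach in Lawler's book. The framework is right.

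There is, however, a concrete gap in the way you dispose of the exit term. After the identity and the expansions of $a$, one gets
\begin{equation*}
G_{B_n}(x,0)=\tfrac{2}{\pi}\bigl(\log n-\log|x|\bigr)+\tfrac{2}{\pi}\,\Eb^x\!\bigl[\log(|S_\tau|/n)\bigr]+O\bigl(|x|^{-2}+n^{-2}\bigr),
\end{equation*}
and you assert that the middle term is an $O(|x|^{-1})+O(n^{-1})$ correction "coming from the dependence of the harmonic measure on $x$". For the square $B_n$ this is false: on $\partial^{\mathrm{out}}B_n$ the exit point satisfies $|S_\tau|\in[n,\sqrt{2}\,n]$, so $\log(|S_\tau|/n)$ is of constant order, and $\Eb^x[\log(|S_\tau|/n)]$ converges to a strictly positive continuum limit rather than vanishing. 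A gradient/coupling bound would only control the \emph{difference} $\Eb^x[\cdot]-\Eb^0[\cdot]$ (which is $O((|x|/n)^2)$ by the symmetry of the square, not $O(|x|^{-1})$); it does nothing about the $\Theta(1)$ term itself. The same remark applies to the first display, where your invariance-principle step is needed precisely because this term is not small — but then the $O(n^{-1})$ rate of convergence you invoke would need a genuine argument, and the generic KMT-type coupling only gives $O(n^{-1}\log n)$.

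What rescues Lawler's statement (and makes the whole computation trivial) is that his domain is the Euclidean disk $\{|z|\le n\}$, for which $|S_\tau|=n+O(1)$ deterministically. Then $\log(|S_\tau|/n)=O(n^{-1})$ with no invariance principle needed: the constant in the first formula is just $\kappa$, and the second formula follows with error $O(|x|^{-2}+n^{-1})$. If you rewrite your proof for the disk, it is correct and considerably shorter; if you insist on the box, the second formula as stated (without an additive constant and with error $O(|x|^{-1}+n^{-1})$) cannot be obtained from your argument, and the $O(n^{-1})$ in the first formula requires a quantitative statement about harmonic measure on the square that you have not supplied.
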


\section{Preliminary results about RWLS} \label{sec:tools_RWLS}

In this section, we collect results for the RWLS which are needed in future proofs.

\subsection{Classical tools}

We first state two standard results for random walk loop soups, that we already used repeatedly in the companion paper \cite{GNQ2024a}. We start with Palm's formula, which is also often called Mecke's equation (we refer the reader to \cite[Proposition 15]{MR2815763} or \cite[Theorem 4.1]{LP2017}, for example). It originates from the Poisson structure of the random ensemble.

\begin{lemma}[Palm's formula] \label{lem:Palm formula}
Consider the random walk loop soup $\Lc_{\alpha}$ in $\Zb^2$ with intensity $\alpha$, for any given $\alpha > 0$. If $\Phi$ is a bounded functional on loop configurations, and $F$ is an integrable function on loops, then we have
$$\mathbb{E}\left(\sum_{\gamma \in \mathcal{L}_\alpha} F(\gamma) \Phi\left(\mathcal{L}_\alpha\right)\right)=\int \mathbb{E}\left(\Phi\left(\mathcal{L}_\alpha \uplus\{\gamma\}\right)\right) \alpha F(\gamma) \nu(d \gamma).$$
Furthermore, the same result holds true for the loop soup restricted to any finite domain $D \subseteq \Zb^2$.
\end{lemma}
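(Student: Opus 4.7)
The plan is to follow the standard route for Palm-type formulas of Poisson point processes. First, I would reduce to the case where $F$ is an indicator, specifically $F = \ind_A$ for a measurable set $A$ of non-trivial loops with $0 < \nu(A) < \infty$. Both sides of the claimed identity are linear in $F$ and continuous under monotone limits of nonnegative functions, so by a standard monotone class argument (and, at the end, splitting an integrable $F$ into its positive and negative parts) it suffices to verify the formula in this indicator case.

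For $F = \ind_A$, the key input is the restriction property of a Poisson point process: the decomposition $\Lc_\alpha = \Lc_\alpha^A \uplus \Lc_\alpha^{A^c}$, where $\Lc_\alpha^A$ collects the loops of $\Lc_\alpha$ lying in $A$, yields two independent Poisson point processes with intensities $\alpha \nu|_A$ and $\alpha \nu|_{A^c}$ respectively. In particular, $N_A := |\Lc_\alpha^A|$ is Poisson distributed with parameter $\alpha \nu(A)$, and conditionally on $\{N_A = n\}$ the loops making up $\Lc_\alpha^A$ are i.i.d.\ with law $\nu|_A/\nu(A)$, independently of $\Lc_\alpha^{A^c}$.

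Using this, I would rewrite the left-hand side as
\[
\Eb\Bigl( \sum_{\gamma \in \Lc_\alpha} \ind_A(\gamma)\,\Phi(\Lc_\alpha) \Bigr) = \sum_{n \geq 1} n\,\Pb(N_A = n)\,\Eb\bigl[ \Phi(\{X_1,\ldots,X_n\} \uplus \Lc_\alpha^{A^c}) \bigr],
\]
where $X_1,\ldots,X_n$ are i.i.d.\ $\nu|_A/\nu(A)$, independent of $\Lc_\alpha^{A^c}$. The elementary identity $n\,\Pb(N_A=n) = \alpha \nu(A)\,\Pb(N_A=n-1)$, combined with the re-indexing $m=n-1$, converts this into $\alpha \nu(A)\,\Eb[\Phi(\{X\} \uplus \Lc_\alpha')]$, where $X$ has law $\nu|_A/\nu(A)$ and $\Lc_\alpha'$ is an independent copy of $\Lc_\alpha$. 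Absorbing $\nu(A)$ back into the distribution of $X$ produces exactly $\int \alpha\,\ind_A(\gamma)\,\Eb[\Phi(\Lc_\alpha \uplus \{\gamma\})]\,\nu(d\gamma)$, which is the desired identity.

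The restriction to any finite domain $D$ introduces nothing new: $\Lc_{\alpha,D}$ is itself a Poisson point process, with intensity $\alpha \nu$ restricted to loops contained in $D$, so the same derivation applies verbatim. I do not anticipate any genuine obstacle here; the one point to be careful about is measurability of $(\Lc_\alpha,\gamma) \mapsto \Phi(\Lc_\alpha \uplus \{\gamma\})$, which is needed to justify Fubini and the monotone class extension, but this is standard given the Polish-type structure carried by the space of loop configurations.
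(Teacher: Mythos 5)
Your proof is correct and is the standard derivation of Mecke's equation for a Poisson point process: reduce to indicators of finite-$\nu$-measure sets via a monotone class argument, use the restriction property to split $\Lc_\alpha$ into the independent pieces $\Lc_\alpha^A$ and $\Lc_\alpha^{A^c}$, condition on $N_A$, and exploit the size-biasing identity $n\,\Pb(N_A=n)=\alpha\nu(A)\,\Pb(N_A=n-1)$. The paper does not give its own proof of this lemma (it simply cites \cite[Proposition~15]{MR2815763} and \cite[Theorem~4.1]{LP2017}), and the reference LP2017 proves Mecke's formula by essentially this same route, so there is nothing to reconcile; the one small point worth making explicit in your write-up is that the sum on the left-hand side runs only over the non-trivial loops (the points of the Poisson process), consistent with $\nu$ being supported on $\Gamma$ and with the deterministic trivial loops playing no role.
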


Next, we recall the FKG inequality, stating it in the specific setting of the RWLS (see e.g. \cite[Theorem 20.4]{LP2017}). This inequality is very common in models from statistical mechanics, and holds in a much wider generality (e.g. for general Poisson point processes, and Bernoulli percolation). We remind the reader that by definition, a function $f$ on the space of loop configurations is \emph{increasing} (resp. decreasing) if: for all $L$ and $L'$, $L \subseteq L'$ implies $f(L) \le f(L')$ (resp. $f(L) \ge f(L')$).

\begin{lemma}[FKG inequality] \label{lem:FKG-RWLS}
For any $\alpha > 0$, consider the random walk loop soup in $\Zb^2$ with intensity $\alpha$, and let $f,g\in L^2(\Pb)$ be two increasing functions on loop configurations. Then,
$$\Eb(fg)\ge \Eb(f)\cdot \Eb(g).$$
\end{lemma}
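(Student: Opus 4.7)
The plan is to exploit the Poisson structure of the RWLS in order to reduce the statement to the classical Harris--FKG inequality for independent real-valued random variables. Enumerate the countable set $\Gamma$ of non-trivial unrooted loops as $\{\gamma_i\}_{i\ge 1}$, and set $N_i := n_{\Lc_\alpha}(\gamma_i)$. From \eqref{eq:rwls}, the $N_i$ are mutually independent, with $N_i \sim \mathrm{Poisson}(\alpha\nu(\gamma_i))$; the trivial loops are deterministically present and play no role. Under this identification, the statement that $f$ is increasing (i.e.\ $L \subseteq L'$ implies $f(L) \le f(L')$) becomes the assertion that $f$ is coordinatewise non-decreasing in $(N_i)_{i\ge 1}$.

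First I would establish FKG for finitely many independent real-valued variables by induction on the number of coordinates. The base case follows from the identity
$$2\bigl(\Eb(fg) - \Eb(f)\Eb(g)\bigr) = \Eb\bigl[(f(N_1) - f(N_1'))(g(N_1) - g(N_1'))\bigr] \ge 0,$$
where $N_1'$ is an independent copy of $N_1$: the monotonicity of $f$ and $g$ forces the two factors on the right to have the same sign. For the inductive step, I would condition on $N_n$: by independence of $(N_1, \dots, N_{n-1})$ from $N_n$, both $\Eb(f \mid N_n)$ and $\Eb(g \mid N_n)$ remain non-decreasing in $N_n$, so the one-variable case applied in $N_n$ gives $\Eb(fg) \ge \Eb\bigl[\Eb(f \mid N_n)\,\Eb(g \mid N_n)\bigr]$, and the induction hypothesis, applied at each fixed value of $N_n$ to the two coordinatewise non-decreasing functions of $N_1, \dots, N_{n-1}$ so obtained, closes the argument.

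To pass from finitely many to countably many coordinates, let $\mathcal{F}_n := \sigma(N_1, \ldots, N_n)$ and set $f_n := \Eb(f \mid \mathcal{F}_n)$, $g_n := \Eb(g \mid \mathcal{F}_n)$. Independence of $(N_i)_{i>n}$ from $\mathcal{F}_n$ ensures that $f_n$ and $g_n$ remain non-decreasing in $N_1, \ldots, N_n$, so the finite-dimensional case yields $\Eb(f_n g_n) \ge \Eb(f_n)\Eb(g_n)$. Since $f, g \in L^2(\Pb)$, Doob's martingale convergence theorem gives $f_n \to f$ and $g_n \to g$ in $L^2$, and Cauchy--Schwarz then lets us pass to the limit $n \to \infty$, yielding the desired inequality. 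The main (mild) technical obstacle is checking that conditional expectations inherit coordinatewise monotonicity; this really uses the product structure of the underlying law and is established through a straightforward Fubini argument, but the implication would genuinely fail without the independence hypothesis.
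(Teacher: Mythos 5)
Your argument is correct, and it is genuinely different from what the paper does: the paper does not prove the lemma at all, but rather cites a general Harris--FKG inequality for Poisson point processes (Theorem~20.4 in \cite{LP2017}), which is proved there by coupling/Mehler-formula techniques and holds on general state spaces. Your route instead exploits the specific feature of the RWLS that the loop space $\Gamma$ is countable, so that by \eqref{eq:rwls} the random element is simply a sequence $(N_i)_{i\ge 1}$ of independent Poisson variables, and the problem collapses to the classical Harris inequality for coordinatewise monotone functions of independent real variables, followed by a Doob martingale $L^2$ limit. The identification of ``increasing in the loop-configuration partial order'' with ``coordinatewise non-decreasing in $(N_i)$'' is exactly right, the observation that independence is what preserves monotonicity under conditional expectation (and under the projection onto the first $n$ coordinates) is the correct crux, and the $L^2$ hypothesis in the lemma is exactly what makes the passage to the limit via Cauchy--Schwarz and Jensen painless. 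What the paper's citation buys is generality (it covers Poisson processes over arbitrary measurable spaces, so would also apply to the Brownian loop soup or the metric-graph loop soup without change); what your argument buys is a short, self-contained proof using only first-year probability, at the cost of leaning on the countability of $\Gamma$.

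One small wording slip in the inductive step: you attribute $\Eb(fg)\ge \Eb\bigl[\Eb(f\mid N_n)\Eb(g\mid N_n)\bigr]$ to ``the one-variable case applied in $N_n$,'' but that inequality actually follows from the induction hypothesis applied conditionally on $N_n$; the one-variable case is what then yields $\Eb\bigl[\Eb(f\mid N_n)\Eb(g\mid N_n)\bigr]\ge \Eb(f)\Eb(g)$, using that $\Eb(f\mid N_n)$ and $\Eb(g\mid N_n)$ are non-decreasing in $N_n$. The chain
$$\Eb(fg)=\Eb\bigl[\Eb(fg\mid N_n)\bigr]\ \ge\ \Eb\bigl[\Eb(f\mid N_n)\Eb(g\mid N_n)\bigr]\ \ge\ \Eb(f)\,\Eb(g)$$
has the first inequality from the inductive hypothesis and the second from the base case; worth stating in that order to avoid confusing a reader.
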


\subsection{Properties of arm events} \label{sec:prop_four_arm}

We now summarize the results developed in our work on arm events in the random walk loop soup \cite{GNQ2024a}, which play a key role in our later proofs.

The values of the arm exponents, given below in \eqref{eq:arm_exponent}, \eqref{eq:arm_exponent_bdy} and \eqref{eq:2_arm_exponent_bdy}, 
are equal to the well-known SLE$_\kappa$ arm exponents, which were first computed in \cite{SW2001} in 2001 (for the $\kappa=6$ case, due to the particular interest in percolation). The proof in \cite{SW2001} was in fact based on a series of breakthroughs \cite{LSW2001a,LSW2001b,MR1899232}, which computed many other closely related exponents for SLE$_\kappa$ for all $\kappa\in(0,8)$. A derivation of the SLE$_\kappa$ arm exponents through the KPZ relations can be found in \cite{MR2112128,MR2581884} (and see the references therein). In 2018, a proof was also written down in \cite{MR3846840}.

There is some non-trivial work to relate the SLE$_\kappa$ arm exponents to the arm exponents for the random walk loop soup, as we explained in Section~\ref{subsubsec:arm}. This is done in our companion paper \cite{GNQ2024a}, which also uses the input from \cite{GNQ2024c}.

Throughout this section, we let $\alpha\in (0,1/2]$, and we use $\Lc_D$ to denote a RWLS in the domain $D$ with intensity $\alpha$. We say that a connected subset $A \subseteq \Zb^2$ crosses a given annulus $A_{l,d}(z)$, or the semi-annulus $A_{l,d}^+(z)$ (recall that it is $A_{l,d}(z) \cap \Hb$, by definition), if $A$ intersects both $\partial B_l(z)$ and $\partial B_d(z)$.

\subsubsection*{Interior four-arm events}

The interior four-arm event is defined as follows.

\begin{definition}[Interior four-arm event]
Let $\Lc$ be a random loop configuration. The \emph{interior four-arm event} in the annulus $A_{l,d}(z)$, for any $0<l<d$ and $z \in \Zb^2$, is defined as $\Ac_{\Lc}(z;l,d) := \{$there exist (at least) two outermost clusters in $\Lc$ which cross $A_{l,d}(z)\}$. Furthermore, for any subset $D \subseteq \Zb^2$, we denote $\Ac_D(z;l,d) := \Ac_{\Lc_D}(z;l,d)$, and in particular $\Ac_{\loc}(z;l,d) := \Ac_{B_{2d}(z)}(z;l,d)$. As before, we do not specify $z$ in these notations when $z=0$.
\end{definition}

This last event can be viewed as a ``localized'' version of the four-arm event, where large loops, coming from ``outside the box $B_{2d}(z)$'', are simply discarded.

For $\kappa\in (8/3,4]$, we define the \emph{interior four-arm exponent}
\begin{equation} \label{eq:arm_exponent}
\eta(\kappa) := \frac{(12-\kappa)(\kappa+4)}{8\kappa}.
\end{equation}
Note that $\eta(\kappa)>2$ for $\kappa\in(8/3,4)$, and $\eta(4)=2$. We can then define $\xi(\alpha) := \eta(\kappa)$ for all $\alpha \in (0,1/2]$, where $\alpha(\kappa)$ is given by \eqref{eq:alpha_kappa}, which  is an increasing bijection from $[8/3,4]$ to $[0,1/2]$.
Therefore, $\xi(\alpha)>2$ for $\alpha\in (0,1/2)$, and $\xi(1/2) = 2$.

We then have the following upper bound, which was established in \cite[Proposition~\proparmexp{}]{GNQ2024a}.

\begin{proposition} \label{prop:four_arm_proba}
For any $\alpha\in(0,1/2]$, and $\eps>0$, there exists $c_2(\alpha,\eps)>0$ such that: for all $0<d_1<d_2$, $z \in \Zb^2$, and $D\supseteq B_{2d_2}(z)$,
\begin{equation} \label{eq:arm_proba}
\Pb( \Ac_{D}(z; d_1,d_2) ) \le c_2\, (d_2/d_1)^{-\xi(\alpha)+\eps}.
\end{equation}
\end{proposition}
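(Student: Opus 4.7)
The plan is to combine three main ingredients: the locality property (Proposition~\proplocality{} of \cite{GNQ2024a}), quasi-multiplicativity for four-arm probabilities (Lemma~\lemquasimult{} of \cite{GNQ2024a}), and as external input the known CLE$_\kappa$ arm exponent $\eta(\kappa)$ from \eqref{eq:arm_exponent}, together with the scaling-limit result \cite{BCL2016,Lu2019} that identifies the outermost cluster boundaries of a RWLS with intensity $\alpha$ with CLE$_\kappa$ where $\alpha$ and $\kappa$ are related by \eqref{eq:alpha_kappa}.

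By translation invariance we may take $z=0$. The first step is to reduce to the localized event: using locality, there is an absolute constant $C$ such that
\[
\Pb(\Ac_D(d_1,d_2)) \le C\, \Pb(\Ac_{\loc}(d_1,d_2)),
\]
so only loops contained in $B_{2d_2}$ need to be considered; this removes the dependence on $D$. The second step is to fix a large integer $L=L(\alpha,\eps)$ and establish a ``one-scale'' bound of the form
\[
\Pb(\Ac_{\loc}(r, Lr)) \le L^{-\xi(\alpha)+\eps/2}
\]
for every $r\ge r_0(\alpha,\eps)$. For such $r$, Donsker's invariance principle applied to the RWLS, combined with \cite{BCL2016,Lu2019}, shows that the outermost clusters of $\Lc_{B_{2Lr}}$ meeting the annulus $A_{r,Lr}$ converge in law to a family of CLE$_\kappa$ loops in the corresponding continuous annulus; the event $\Ac_{\loc}(r,Lr)$ converges to the event that two distinct CLE$_\kappa$ loops cross that annulus, whose probability is $L^{-\eta(\kappa)+o(1)} = L^{-\xi(\alpha)+o(1)}$ by the CLE four-arm exponent. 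Taking $L$ large enough absorbs the $o(1)$ error into $\eps/2$.

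The third step is to iterate. Quasi-multiplicativity (Lemma~\lemquasimult{}) yields, for some constant $c_{\mathrm{qm}}(\alpha)$ and all $d_1 \le r_1 \le r_2 \le d_2$,
\[
\Pb(\Ac_{\loc}(d_1,d_2)) \le c_{\mathrm{qm}} \Pb(\Ac_{\loc}(d_1,r_1)) \Pb(\Ac_{\loc}(r_2,d_2)).
\]
Combining the one-scale bound along a geometric sequence of radii $r_k = L^k r_0$ up to scale $d_2$, and bounding the initial scales (up to $r_0$) by a trivial constant, produces the desired power law $(d_2/d_1)^{-\xi(\alpha)+\eps}$, with a constant $c_2$ depending only on $\alpha$ and $\eps$ (through $r_0$, $L$ and $c_{\mathrm{qm}}$). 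The FKG inequality (Lemma~\ref{lem:FKG-RWLS}) and Palm's formula (Lemma~\ref{lem:Palm formula}) enter in the background to make the surgery arguments underlying locality and quasi-multiplicativity work.

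The main obstacle is the second step: transferring the continuum CLE exponent to a quantitative discrete inequality that holds for all $r \ge r_0$ with only an arbitrarily small loss $\eps/2$ in the exponent. The convergence of cluster boundaries is a pointwise statement about laws on loops, but we need a uniform estimate on events whose probabilities are polynomially small; this is exactly where the separation lemma of \cite{GNQ2024a} becomes essential, as it allows one to discard boundary-effect pathologies at each scale and thereby convert the qualitative scaling limit into a genuine polynomial bound.
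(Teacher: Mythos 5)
Your proposal captures the high-level strategy the paper attributes to its companion work: locality, quasi-multiplicativity, and the CLE arm exponent. It is worth noting that the present paper does not prove Proposition~\ref{prop:four_arm_proba} itself, but cites it directly from Proposition~\proparmexp{} of \cite{GNQ2024a}; one can therefore only compare against the informal summary in Section~\ref{subsubsec:arm}, and your outline is consistent with that summary. The quasi-multiplicativity iteration in your third step is essentially sound; the constant $c_1$ incurred at each application of Lemma~\ref{lem:quasi} forces $L$ to be chosen large enough so that the accumulated cost $c_1^{O(\log(d_2/d_1)/\log L)}$ is absorbed into the $\eps$-loss, which is implicit in your phrasing.

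However, your Step~2 (the one-scale bound) is where the real work lies, and as written it leaves two substantive gaps. First, the assertion that two distinct CLE$_\kappa$ loops cross an annulus of modulus $L$ with probability $L^{-\eta(\kappa)+o(1)}$ is not an off-the-shelf consequence of the SLE arm exponents of \cite{SW2001}: those apply to SLE curves, and translating them into a sharp upper bound for the CLE ``two-cluster crossing'' event requires its own argument. The paper explicitly credits this input to a third work, \cite{GNQ2024c}, which your proposal never invokes. Second, even granting the continuum estimate, passing from the distributional convergence of \cite{BCL2016,Lu2019} to the uniform quantitative bound $\Pb(\Ac_{\loc}(r,Lr)) \le L^{-\xi(\alpha)+\eps/2}$ for all $r\ge r_0(\alpha,\eps)$ requires showing that the localized four-arm event is a continuity set for the limiting CLE law (ruling out positive limiting mass on configurations in which a cluster grazes a boundary circle), and this is precisely where the separation lemma and the associated surgery arguments are indispensable at the discrete level, rather than merely operating ``in the background'' as you suggest. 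Without both pieces made precise, the one-scale estimate is asserted rather than derived, and the rest of your argument rests on it.
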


In the particular case of the critical intensity $\alpha=1/2$, the above four-arm estimate was recently strengthened in \cite[Corollary 1.3]{bi2025arm} by using different techniques, relying directly on the GFF. We recall it below.
\begin{proposition}\label{prop:critical four arm}
	For $\alpha=1/2$, there exists $c_1^*>0$ such that for all $z\in \Zb^2$, $0<d_1<d_2$, and $D \supseteq B_{2d_2}(z)$,
	$$\Pb( \Ac_D(z;d_1,d_2) ) \le c_1^*\, (d_2/d_1)^{-2}.$$
\end{proposition}

\subsubsection*{Boundary four-arm events}

In our proofs, we also have to take care of four-arm events near the boundary of a domain. For this purpose, we introduce the boundary four-arm event as follows.

\begin{definition}[Boundary four-arm event]\label{def:bfa}
Let $\Lc$ be a random loop configuration. The \emph{boundary four-arm event} in the semi-annulus $A_{l,d}^+(z)$, for any $0<l<d$ and $z = (x,0)$, is defined as $\Ac^+_{\Lc}(z;l,d) := \Ac_{\Lc_{\Hb}}(z;l,d) = \{$there exist (at least) two outermost clusters in $\Lc_\Hb$ which cross $A_{l,d}^+(z)\}$. Furthermore, for any subset $D \subseteq \Zb^2$, we write $\Ac^+_D(z;l,d) := \Ac^+_{\Lc_D}(z;l,d) = \Ac_{\Lc_{D^+}}(z;l,d)$, and in particular $\Ac^+_{\loc}(z;l,d) := \Ac^+_{B_{2d}(z)}(z;l,d)$. As always, we omit $z$ from these notations in the case $z=0$.
\end{definition}

We now define the \emph{boundary four-arm exponent}. For $\kappa \in (8/3,4]$, let 
\begin{equation} \label{eq:arm_exponent_bdy}
\eta^+(\kappa) := \frac{2 (12-\kappa)}{\kappa}.
\end{equation}
Note that $\eta^+(\kappa) \ge 4$ for $\kappa \in (8/3,4]$. We then define $\xi^+(\alpha) := \eta^+(\kappa)$, where, as in the interior case, $\alpha$ and $\kappa$ are related through \eqref{eq:alpha_kappa}. In particular, $\xi^+(\alpha) \ge 4$ for $\alpha\in (0,1/2]$. 

In the boundary four-arm case, the upper bound below holds true (see  \cite[Theorem~\proparmexpbdy{}]{GNQ2024a}).

\begin{proposition}\label{prop:four_arm_proba_bdy}
For any $\alpha\in (0,1/2]$, and $\eps>0$, there exists $c_2^+(\alpha,\eps)>0$ such that: for all $0<d_1<d_2$, $z = (x,0)$, and $D\supseteq B_{2d_2}(z)$,
\begin{equation} \label{eq:arm_proba_bdy}
\Pb( \Ac^+_{D}(z; d_1,d_2) ) \le c^+_2 \, (d_2/d_1)^{-\xi^+(\alpha)+\eps}.
\end{equation}
\end{proposition}

\subsubsection*{Boundary two-arm events}

Later, when we consider boundary crossing probabilities, we make also use of the boundary two-arm event. We collect the results that we need below.

\begin{definition}[Boundary two-arm event]\label{def:bta}
	For any $0<l<d$ and $D\supseteq B_{2d}$, let $\Bc_D(l,d)$ be the \emph{boundary two-arm event} that there is a cluster in $\Lc_{D^+}$ crossing $A_{l,d}^+$. 
\end{definition}

We now define the \emph{boundary two-arm exponent}. For $\kappa \in (8/3,4]$, let 
\begin{equation} \label{eq:2_arm_exponent_bdy}
	\eta^{2,+}(\kappa) := \frac{8}{\kappa}-1.
\end{equation}
We define $\xi^{2,+}(\alpha) := \eta^{2,+}(\kappa)$ with $\alpha=\alpha(\kappa)$ via \eqref{eq:alpha_kappa}.
By Theorem~\proparmexpbdy{} in \cite{GNQ2024a}, we have 
\begin{proposition}\label{prop:2_arm_proba_bdy}
	For any $\alpha\in (0,1/2]$, and $\eps>0$, there exists $\bar c_2(\alpha,\eps)>0$ such that for all $0<d_1<d_2$ and $D\supseteq B_{2d}$,
	\begin{equation} \label{eq:2_arm_proba_bdy}
		\Pb( \Bc_{D}(d_1,d_2) ) \le \bar c_2 \, (d_2/d_1)^{-\xi^{2,+}(\alpha)+\eps}.
	\end{equation}
\end{proposition}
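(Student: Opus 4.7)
The plan is to view Proposition~\ref{prop:2_arm_proba_bdy} as the one-cluster analogue, near the boundary, of Proposition~\ref{prop:four_arm_proba_bdy}, and to derive it by the same route via Theorem~\proparmexpbdy{} of the companion paper \cite{GNQ2024a}. The event $\Bc_D(l,d)$ asks for a single outermost cluster of $\Lc_{D^+}$ to cross the semi-annulus $A^+_{l,d}$, which in the scaling-limit picture corresponds to a CLE$_\kappa$ boundary one-arm / two-arm configuration (one cluster, two disjoint arms from the inner to the outer semicircle), with associated SLE$_\kappa$/CLE$_\kappa$ arm exponent $\eta^{2,+}(\kappa)=8/\kappa-1$, i.e.\ $\xi^{2,+}(\alpha)$ after the change of parameters \eqref{eq:alpha_kappa}.

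First, I would reduce to the localized event $\Bc_{B_{2d_2}}(d_1,d_2)$ by invoking the boundary analogue of the locality property (the half-plane version of Proposition~\proplocalityb{} established in \cite{GNQ2024a}): loops straddling the boundary of $D$ and hitting $B_{2d_2}$ contribute only a multiplicative constant, so it suffices to control the version in which the loop soup is confined to a box of radius $2d_2$ around the center.

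Next, I would apply quasi-multiplicativity for boundary arm events from the companion paper: splitting $A^+_{d_1,d_2}$ into a geometric sequence of dyadic semi-annuli $A^+_{2^k, 2^{k+1}}$ for $k$ ranging over roughly $\log_2(d_2/d_1)$ values, the probability of $\Bc_{D}(d_1,d_2)$ is bounded (up to a constant) by the product over scales of single-scale boundary two-arm probabilities. At each single scale, the crossing probability is, by the CLE$_\kappa$ boundary one-arm exponent and the coupling between RWLS outermost clusters and CLE$_\kappa$ boundaries from \cite{BCL2016, Lu2019}, of order $2^{-\xi^{2,+}(\alpha)+o(1)}$; this is the content of the single-scale input delivered by Theorem~\proparmexpbdy{} in \cite{GNQ2024a}. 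Multiplying across the $\log_2(d_2/d_1)$ scales then yields the power-law bound $(d_2/d_1)^{-\xi^{2,+}(\alpha)+\eps}$, the arbitrary $\eps>0$ absorbing the subpolynomial errors both from quasi-multiplicativity and from the RWLS-to-CLE comparison.

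The main difficulty, already resolved in the companion paper, is the single-scale boundary estimate: transferring a CLE$_\kappa$ arm exponent at the boundary to a genuinely discrete RWLS estimate requires the boundary versions of the separation lemma and of locality in order to perform loop surgery at the half-plane boundary without leaking correlations. Once those are in hand, the derivation of Proposition~\ref{prop:2_arm_proba_bdy} is parallel to that of Proposition~\ref{prop:four_arm_proba_bdy}, the only change being the replacement of the relevant CLE arm exponent $\eta^+(\kappa)$ by $\eta^{2,+}(\kappa)=8/\kappa-1$.
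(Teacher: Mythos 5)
Your proposal correctly identifies that the present paper treats Proposition~\ref{prop:2_arm_proba_bdy} as a direct citation: it follows immediately from Theorem~\proparmexpbdy{} of the companion paper \cite{GNQ2024a}, exactly as Proposition~\ref{prop:four_arm_proba_bdy} does for boundary four-arm events. The surrounding text in the paper confirms that the boundary two-arm versions of locality, quasi-multiplicativity, and the arm-exponent upper bound are all proved in \cite{GNQ2024a} by minor modifications of the interior arguments, which is the machinery you describe (locality to localize to $B_{2d_2}$, quasi-multiplicativity across dyadic semi-annuli, single-scale exponent from the RWLS-to-CLE$_\kappa$ comparison, with $\eps$ absorbing the accumulated constants and $o(1)$ errors). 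So your route is essentially the same as the paper's; you have simply unpacked the argument that lives inside Theorem~\proparmexpbdy{}, whereas the paper invokes it as a black box.
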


The above estimate was also strengthened in \cite[Corollary 1.3]{bi2025arm} for the critical intensity, as follows.
\begin{proposition}\label{prop:critical 2_arm_proba_bdy}
	For $\alpha=1/2$, there exists $c_2^*>0$ such that for all $0<d_1<d_2$ and $D\supseteq B_{2d}$,
	\begin{equation} 
		\Pb( \Bc_{D}(d_1,d_2) ) \le c_2^* \, (d_2/d_1)^{-1}.
	\end{equation}
\end{proposition}

\subsubsection*{Other useful tools}

We conclude this section by mentioning a few important intermediate results which were established in \cite{GNQ2024a}, and led in particular to the upper bounds \eqref{eq:arm_proba} and \eqref{eq:arm_proba_bdy} above. We state them here because for our proofs in the present paper, they turn out to be also useful in themselves (as well as the arguments to show them).

First, we define the \emph{truncated} four-arm event $\overrightarrow\Ac_{\Lc}(z;l,d) := \Ac_{\Lc}(z;l,d) \cap \{ \Lambda(B_l(z), \Lc) \subseteq B_{2d}(z) \}$, where $\Lambda(B_l(z), \Lc)$ denotes the union of $B_l(z)$ and all clusters in $\Lc$ intersecting that ball. In a similar way as earlier, we write $\overrightarrow\Ac_D(z;l,d) := \overrightarrow\Ac_{\Lc_D}(z;l,d)$. The following result was established in \cite[Proposition~\proplocality{}]{GNQ2024a}.

\begin{proposition}[Locality] \label{prop:locality}
There is a universal constant $C>0$ such that for all $z\in\Zb^2$, $1\le l\le d/2$, $D \supseteq B_{2d}(z)$, and any intensity $\alpha\in (0,1/2]$,
$$\Pb(\Ac_D(z;l,d))\le C\, \Pb( \overrightarrow\Ac_D(z;l,d) ),$$
and thus,
$$\Pb(\Ac_D(z;l,d)) \le C\, \Pb(\Ac_{\loc}(z;l,d)).$$
\end{proposition}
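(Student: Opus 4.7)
The second inequality follows immediately from the first, because restricting the soup to $B_{2d}(z)$ automatically ensures $\Lambda(B_l(z),\Lc_{B_{2d}(z)}) \subseteq B_{2d}(z)$, so $\Ac_{\loc}(z;l,d) = \overrightarrow\Ac_{B_{2d}(z)}(z;l,d)$ and the factorization of the Poisson intensity on loops staying inside vs.\ not staying inside $B_{2d}(z)$ yields $\Pb(\overrightarrow\Ac_D(z;l,d)) \le \Pb(\Ac_{\loc}(z;l,d))$. My plan therefore focuses on the first inequality, via a surgery argument on the ``big'' loops of $\Lc_D$—those exiting $B_{2d}(z)$ while intersecting $B_l(z)$—which are exactly the loops responsible for the event $\Ac_D(z;l,d) \setminus \overrightarrow\Ac_D(z;l,d)$.

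First, I would split $\Lc_D = \Lc^{\mathrm{in}} \uplus \Lc^{\mathrm{out}}$, where $\Lc^{\mathrm{in}}$ collects loops contained in $B_{2d}(z)$, and invoke the separation property from \cite{GNQ2024a}: conditionally on $\Ac_D(z;l,d)$, with probability uniformly bounded below the four arms realizing the event hit $\partial B_{3d/2}(z)$ at four macroscopically separated points, so that a positive fraction of the annulus $A_{3d/2, 2d}(z)$ is free of the arms of the two competing outermost clusters. On this favorable sub-event, I would perform surgery loop-by-loop on every big loop $\gamma$ touching $B_l(z)$: decompose $\gamma$ at its successive exits and reentries of $B_{2d}(z)$, and replace each excursion outside $B_{2d}(z)$ by a short random-walk bridge routed through the free portion of $A_{3d/2,2d}(z)$. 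By the explicit Poisson density \eqref{eq:rwls} together with the Green's-function bounds of Lemma~2.3, the Radon-Nikodym derivative of the modified soup against the original one is bounded below by a positive constant depending only on the relative geometry of $B_l(z) \subseteq B_{2d}(z)$, and by construction the modified configuration realizes $\overrightarrow\Ac_D(z;l,d)$.

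The subtle point is to carry out this surgery without incurring a cost that blows up with the number of offending loops. The Poisson intensity of non-trivial loops that cross from $B_l(z)$ to the complement of $B_{2d}(z)$ is $O(1)$ as $l,d$ vary, by an explicit $\nu$-mass computation using Lemma~2.3; hence Mecke's equation (Lemma~3.1) controls the expectation of any functional summed over such loops by a bounded constant times the expectation over a single Palm insertion. Applying this with the rerouting procedure above, and keeping track of the separation condition to guarantee that the rerouted arcs do not collide with the two competing arms (so the four-arm structure inside $B_{2d}(z)$ is preserved), one obtains $\Pb(\Ac_D(z;l,d)) \le C\, \Pb(\overrightarrow\Ac_D(z;l,d))$ for a universal $C$.

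The hard part, as already noted, is this last step: a direct union bound over the modified loops would pick up an additional factor that degenerates with scale. The saving comes from combining the separation lemma (ensuring the surgery happens in a fixed deterministic region) with the bounded total Poisson mass of loops that can possibly cause trouble, together with the fact that the surgeries act independently on disjoint loops by the independence structure of the Poisson point process.
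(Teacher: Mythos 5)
Your plan matches the high-level strategy described in Section~\ref{subsubsec:arm} — a separation result followed by surgery on well-chosen loops, with the Poisson structure controlling the cost — and the deduction of the second inequality from the first is essentially fine: on $\overrightarrow\Ac_D(z;l,d)$, the clusters realizing the four-arm event all lie inside $B_{2d}(z)$, hence $\overrightarrow\Ac_D(z;l,d) \subseteq \Ac_{\loc}(z;l,d)$ under the natural coupling $\Lc_{B_{2d}(z)} \subseteq \Lc_D$, and the probability comparison follows by a simple inclusion; there is no need to invoke a factorization of the Poisson intensity.

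There is, however, a genuine gap in the main step. You assert that the loops ``exiting $B_{2d}(z)$ while intersecting $B_l(z)$'' are ``exactly the loops responsible for the event $\Ac_D(z;l,d) \setminus \overrightarrow\Ac_D(z;l,d)$.'' This is incorrect. The truncation condition $\Lambda(B_l(z),\Lc_D) \subseteq B_{2d}(z)$ fails as soon as some \emph{cluster} meeting $B_l(z)$ extends past $\partial B_{2d}(z)$, and such a cluster can be a chain of many small loops, none of which individually touches both $B_l(z)$ and $B_{2d}(z)^c$. Your loop-by-loop rerouting, applied only to individual big loops touching $B_l(z)$, does nothing to break such a chain: even after the surgery, the modified configuration may still have a cluster of small loops stretching from $B_l(z)$ past $\partial B_{2d}(z)$, so that $\overrightarrow\Ac_D(z;l,d)$ still fails. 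The $O(1)$ bound on the $\nu$-mass of ``offending'' loops rests on the same misidentification: the $\nu$-mass of loops connecting $B_l(z)$ to $B_{2d}(z)^c$ is indeed $O(1)$ uniformly in $l,d$, but the number of small loops that can participate in a crossing chain is not controlled by that quantity, and Mecke's equation does not bound the cost of dismantling a chain of a priori unbounded length. The separation result therefore has to carry more weight than your plan lets on: it must be used to localize the \emph{whole cluster structure} near $B_l(z)$ — for instance by producing, with uniformly positive conditional probability, a region of $A_{d,2d}(z)$ free of the clusters in question — rather than only rerouting the individual big loops, and the cost bookkeeping must be adapted accordingly.
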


We also established an analogous result in the reversed direction, for the ``inward'' arm event defined as $\overleftarrow \Ac_{\Lc}(z; l,d) := \Ac_{\Lc}(z; l,d) \cap \{ \Lambda(\partial B_d(z), \Lc) \subseteq B_{l/2}(z)^c \}$, for any $z\in \Zb^2$, $1<l<d$, and any random loop configuration $\Lc$ (with a similar notation for $\Lambda(., .)$ as for the truncated four-arm event). As before, we write $\overleftarrow\Ac_D(z;l,d):=\overleftarrow\Ac_{\Lc_D}(z;l,d)$. The result below was established in \cite{GNQ2024a} (Proposition~\proplocalityin{} there).

\begin{proposition}[Inward locality] \label{prop:in-locality}
There is a universal constant $C>0$ such that for all $z\in\Zb^2$, $2\le l\le d/2$, $D \supseteq B_{d}(z)$, and any intensity $\alpha\in (0,1/2]$,
$$\Pb(\Ac_D(z; l,d))\le C\, \Pb(\overleftarrow\Ac_D(z; l,d)).$$
\end{proposition}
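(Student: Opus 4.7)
The plan is to adapt the proof of the (outward) locality estimate of Proposition~\ref{prop:locality} by reversing the roles of the inner and outer boundaries of the annulus. The two main ingredients will be the separation result for clusters in a RWLS established in Section~\seclocality{} of \cite{GNQ2024a}, and a standard RSW/no-crossing lower bound for the RWLS at a fixed scale ratio.

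First, starting from the event $\Ac_D(z; l, d)$, I would apply a separation estimate at the \emph{inner} scale $l$: at the cost of a universal multiplicative constant, one may assume that the two outermost crossing clusters $\Cc_1, \Cc_2$ land on $\partial B_l(z)$ at four angularly well-separated arcs of constant angular width, pairwise at angular distance bounded below by a universal positive constant. This is provided by the separation framework of the companion paper.

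Second, I would decompose the loop soup as $\Lc_D = \Lc^{\mathrm{in}} \uplus \Lc^{\mathrm{out}} \uplus \Lc^{\mathrm{bdy}}$, where $\Lc^{\mathrm{in}}$ gathers the loops contained in $B_{l-1}(z)$, $\Lc^{\mathrm{out}}$ gathers those contained in $D \setminus B_{l-1}(z)$, and $\Lc^{\mathrm{bdy}}$ gathers the remaining ``boundary loops'' that visit both regions. By the Poissonian structure of the RWLS, these three pieces are independent, and $\Lc^{\mathrm{in}}$ is itself a RWLS in $B_{l-1}(z)$ with intensity $\alpha$. The four-arm event together with the well-separated landing configuration at $\partial B_l(z)$ is measurable with respect to $\Lc^{\mathrm{out}} \uplus \Lc^{\mathrm{bdy}}$. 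Conditionally on these, the additional constraint $\Lambda(\partial B_d(z), \Lc) \subseteq B_{l/2}(z)^c$ becomes the requirement that, after grafting the excursions of the boundary loops into $B_{l-1}(z)$ onto $\Lc^{\mathrm{in}}$, no resulting cluster reaches $\partial B_{l/2}(z)$---that is, a one-arm crossing event in the fixed-ratio annulus $A_{l/2, l}(z)$ does \emph{not} occur.

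The remaining task is to lower bound the probability of this no-crossing event uniformly in the conditioning. At the well-separated configuration, the number of excursions that need to be grafted is of order $O(1)$, attached at the four well-separated arcs of $\partial B_l(z)$. The main obstacle will be controlling these grafted excursions, since in principle they could travel far into $B_{l-1}(z)$: I would use a gambler's-ruin-type estimate for simple random walks on $A_{l/2, l}(z)$ to bound below, by a universal constant, the probability that each individual excursion exits $B_l(z)$ before hitting $\partial B_{l/2}(z)$. Combining this with the standard RSW/no-crossing estimate for the RWLS at fixed modulus ratio and intensity $\alpha \in (0, 1/2]$---analogous to the inputs exploited for the quasi-multiplicativity argument of Section~\secquasimult{} of \cite{GNQ2024a}---yields the desired lower bound $\Pb(\overleftarrow{\Ac}_D(z; l, d) \mid \Ac_D(z; l, d)) \ge 1/C$, completing the proof.
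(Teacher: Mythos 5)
Your plan contains a measurability error that undermines the conditioning step. The assertion that ``the four-arm event together with the well-separated landing configuration at $\partial B_l(z)$ is measurable with respect to $\Lc^{\mathrm{out}} \uplus \Lc^{\mathrm{bdy}}$'' is false: whether two arms that cross $A_{l,d}(z)$ disjointly remain \emph{distinct} outermost clusters of $\Lc_D$ depends on what happens inside $B_{l-1}(z)$, since loops in $\Lc^{\mathrm{in}}$, or excursions of loops in $\Lc^{\mathrm{bdy}}$ that dip into $B_{l-1}(z)$, could merge the two arms; and conversely a single crossing cluster of $\Lc_D$ may only be held together through pieces in $B_{l-1}(z)$, so $\Ac_D(z;l,d)$ need not even imply the corresponding crossing event for $\Lc^{\mathrm{out}} \uplus \Lc^{\mathrm{bdy}}$. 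As a result, conditioning on the separated four-arm event does not reduce the truncation constraint $\Lambda(\partial B_d(z),\Lc) \subseteq B_{l/2}(z)^c$ to an independent one-arm/no-crossing estimate for the inner pieces.

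There is a second, related gap. You propose to control the ``grafted excursions'' by a gambler's-ruin estimate, but those excursions are the portions of the loops of $\Lc^{\mathrm{bdy}}$ inside $B_{l-1}(z)$, and $\Lc^{\mathrm{bdy}}$ is part of the data you conditioned on: the shapes of those excursions are already fixed and cannot be resampled. If some such excursion already reaches $B_{l/2}(z)$ --- and nothing you have imposed rules this out, since well-separation controls where the crossing arms land on $\partial B_l(z)$, not how deeply boundary loops penetrate, nor what non-crossing clusters touching $\partial B_d(z)$ do --- the truncation simply fails with conditional probability one. To salvage the conditional-law idea you would need to condition only on the exterior traces of the loops rather than on the full loops, and you would still face the measurability problem above. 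Note also that the present paper does not itself prove this statement; it only cites Proposition~\proplocalityin{} of \cite{GNQ2024a}. The toolkit summarized in Section~\ref{sec:prop_four_arm} --- the separation lemma enabling repeated surgery on well-chosen loops --- points to a modification argument (locate offending loops responsible for a $\partial B_d(z)$-cluster entering $B_{l/2}(z)$ and re-route them at bounded Radon--Nikodym cost), which sidesteps exactly the factorization issues your conditioning runs into.
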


Next, we proved a quasi-multiplicativity upper bound (Proposition~\lemquasimult{} in \cite{GNQ2024a}).

\begin{lemma}[Quasi-multiplicativity] \label{lem:quasi}
For any $\alpha\in (0,1/2]$, there exists a constant $c_1(\alpha)>0$ such that for all $z\in \Zb^2$, $1\le d_1\le d_2/2\le d_3/16$, and $D \supseteq B_{2d_3}(z)$,
$$\Pb( \Ac_{D}(z; d_1,d_3) ) \le c_1\, \Pb( \Ac_{\loc}(z; d_1,d_2) )\, \Pb( \Ac_{D}(z; 4d_2,d_3) ).$$
\end{lemma}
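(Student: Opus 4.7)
The plan is to exploit the Poisson structure of the random walk loop soup to split $\Lc_D$ into two independent sub-configurations: $\Lc^{\mathrm{in}} := \Lc_{B_{2d_2}(z)}$ (loops contained in the smaller box) and $\Lc^{\mathrm{out}} := \Lc_D \setminus \Lc^{\mathrm{in}}$ (loops exiting $B_{2d_2}(z)$). By the thinning property of Poisson point processes, these are independent. The goal will be to bound $\Pb(\Ac_D(z; d_1, d_3))$ by decomposing it into an ``outer'' piece, controlled essentially by $\Lc^{\mathrm{out}}$, and an ``inner'' piece, controlled by $\Lc^{\mathrm{in}}$, and then invoking the independence of these two parts to produce the product bound.

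I would first apply the locality property (Proposition~\ref{prop:locality}) and the inward locality (Proposition~\ref{prop:in-locality}) to reduce, at a constant cost, to configurations where the two outermost crossing clusters of $\Lc_D$ stay in $B_{2d_3}(z)$ and do not protrude excessively far inward from the outer part of the annulus. The next step would invoke the separation machinery developed in the companion paper \cite{GNQ2024a}: up to another constant multiplicative loss, one can assume that the two arms of the crossing clusters hit $\partial B_{2d_2}(z)$ at angularly well-separated arcs. Because loops contained in $B_{2d_2}(z)$ cannot reach $A_{4d_2, d_3}(z)$, the crossings of the outer annulus are realized entirely within $\Lc^{\mathrm{out}}$, and the resulting outer event has probability bounded above by a constant times $\Pb(\Ac_D(z; 4d_2, d_3))$.

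Conditionally on such a good outer configuration with well-separated arm tips on $\partial B_{2d_2}(z)$, the independent inner configuration $\Lc^{\mathrm{in}}$ must extend each outer arm inward through $A_{d_1, d_2}(z)$ as a distinct cluster in $\Lc_{B_{2d_2}(z)}$. Via a surgery/gluing argument at the separated entry points on $\partial B_{2d_2}(z)$, combined with another application of Proposition~\ref{prop:locality} to absorb possible boundary effects, the probability of this inner event is at most a constant times $\Pb(\Ac_{\loc}(z; d_1, d_2))$. Multiplying the two estimates yields the claim.

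The main obstacle I anticipate is the handling of the ``outermost'' condition through the $\Lc^{\mathrm{in}} / \Lc^{\mathrm{out}}$ decomposition: adding the inner loops $\Lc^{\mathrm{in}}$ to an $\Lc^{\mathrm{out}}$-configuration that supports two distinct outer crossing clusters could in principle \emph{merge} those clusters through inner routes in $B_{2d_2}(z)$, thereby destroying the four-arm event for $\Lc_D$. The separation lemma is precisely the tool that resolves this difficulty: once the arm tips on $\partial B_{2d_2}(z)$ are well-separated, one can show that, with constant probability, the inner configuration $\Lc^{\mathrm{in}}$ produces the two required inner arms without inadvertently connecting the two outer clusters.
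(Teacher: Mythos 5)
Your general toolbox (locality, separation, and surgery on loops, all imported from \cite{GNQ2024a}) matches the paper's description of how Lemma~\ref{lem:quasi} is established, but the organizing logic of your argument has a gap precisely where the surgery is supposed to carry the load, and you do not describe that surgery.

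The problem is with the ``conditionally on the outer configuration, the inner configuration must extend the arms inward'' step. A loop $\gamma \in \Lc^{\mathrm{out}}$ exits $B_{2d_2}(z)$ by definition, but nothing stops it from also visiting $\partial B_{d_1}(z)$; such a loop crosses $A_{d_1,d_2}(z)$ all by itself. If both crossing clusters of $\Ac_D(z;d_1,d_3)$ contain such scale-spanning loops, then the full four-arm event is already realized with no contribution from $\Lc^{\mathrm{in}}$. In that case the conditional probability of ``$\Lc^{\mathrm{in}}$ produces two arms across $A_{d_1,d_2}$'' is irrelevant (the event has already happened), and in particular the conditional probability of completing the four-arm event given the outer data is not bounded by $C\,\Pb(\Ac_{\loc}(z;d_1,d_2))$, which could be very small when $d_2/d_1$ is large. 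Put differently, the inclusion $\Ac_D(z;d_1,d_3) \subseteq \{\Lc^{\mathrm{in}}\text{-measurable inner event}\} \cap \{\Lc^{\mathrm{out}}\text{-measurable outer event}\}$ is simply false, so the independence of $\Lc^{\mathrm{in}}$ and $\Lc^{\mathrm{out}}$ does not directly yield the product. Neither Proposition~\ref{prop:locality} nor Proposition~\ref{prop:in-locality} repairs this: they constrain clusters, not individual loops, and they do not localize loops to a strict interior box when applied to the large-scale event $\Ac_D(z;d_1,d_3)$.

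The missing idea is that the surgery must be performed \emph{on the scale-spanning loops themselves}: one must cut each loop of the crossing clusters that crosses $A_{d_2,4d_2}(z)$ into pieces, at a cost which is bounded (via Palm's formula and the explicit form of the loop measure $\nu$, in the spirit of \eqref{eq:rwls}), so that after the modification the inner crossings live entirely in $\Lc^{\mathrm{in}}$ and the outer crossings in $\Lc^{\mathrm{out}}$. The separation step is what makes this surgery possible (it provides room to re-attach the cut ends without destroying the four-arm structure), but separation alone does not remove the scale-spanning loops. Relatedly, the obstacle you flag at the end --- inner loops merging the two outer clusters --- would be the concern for a \emph{lower} bound on $\Pb(\Ac_D(z;d_1,d_3))$; for the upper bound at hand, the obstruction is the one above. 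Finally, be aware that restricting $\Lc_D$ to $\Lc^{\mathrm{out}}$ does not straightforwardly preserve the ``outermost'' property of the two crossing clusters, so even the outer step needs a short argument rather than the assertion that it is ``realized entirely within $\Lc^{\mathrm{out}}$.''
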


Finally, all the results that we just stated for the interior four-arm events are also valid for the boundary two-arm and four-arm events, thanks to some minor modifications in the proofs; see Proposition~\proplocalityb{} and Proposition~\propquasin{} in \cite{GNQ2024a}.

\subsection{Further results}

For future use, we finally derive some further estimates on loops visiting given vertices, and the occupation field of the RWLS.

Recall that for any two vertices, $z \sim^* z'$ means that $|z-z'|_{\infty} = 1$, and that $\Lc_D(A)$ denotes the set of \emph{non-trivial} loops in $\Lc_D$ that visit all the points in $A$.
For any $d>0$, we denote $\Lc_d:=\Lc_{B_d}$.
We show the following result regarding the occurrence of loops that visit some predetermined vertices.

\begin{lemma}\label{lem:0w}
	There exists a universal constant $C>1$ such that for all $\alpha>0$, $w \in \Zb^2$ with $w \sim^* 0$, and $d\ge 2$,
	\[
	\Pb( \Lc_d({0,w})=\emptyset ) \le C^{\alpha}\, \Pb( \Lc_d(0)=\emptyset ).
	\] 
\end{lemma}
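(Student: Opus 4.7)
The starting point is the Poisson point process structure of $\Lc_d$: for any finite $A\subseteq B_d$, the number of non-trivial loops in $\Lc_d$ containing $A$ is a Poisson random variable with mean $\alpha\mu_d(A)$, where
\[
\mu_d(A):=\nu(\{\gamma:A\subseteq\gamma,\ \gamma\subseteq B_d\}).
\]
Consequently $\Pb(\Lc_d(A)=\emptyset)=e^{-\alpha\mu_d(A)}$, and the claim reduces to the \emph{deterministic} estimate
\[
\mu_d(\{0\})-\mu_d(\{0,w\})\le\log C,
\]
for a universal $C>1$. The left-hand side is exactly the $\nu$-measure of loops in $B_d$ that visit $0$ but avoid $w$, that is, the $\nu$-measure of loops in $D':=B_d\setminus\{w\}$ that visit $0$.

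Next, I plan to bound this quantity by a Green's-function quantity. Enumerating rooted representatives based at $0$, each unrooted loop $\gamma$ of multiplicity $J(\gamma)$ yields $n_\gamma(0)/J(\gamma)$ distinct rooted-at-$0$ copies, each of weight $4^{-|\gamma|}$; so
\[
G_{D'}(0,0)-1\;=\;\sum_{\substack{\text{rooted at }0\\\gamma\subseteq D',\,|\gamma|\ge 1}}4^{-|\gamma|}\;=\;\sum_{\substack{\text{unrooted }\gamma\ni 0\\\gamma\subseteq D',\,\text{nontriv.}}}\frac{n_\gamma(0)}{J(\gamma)}\,4^{-|\gamma|}.
\]
Since $\nu(\gamma)=4^{-|\gamma|}/J(\gamma)$ on non-trivial loops and $n_\gamma(0)\ge 1$ on the loops being summed, this yields the bound
\[
\nu(\gamma\ni 0,\ \gamma\subseteq D')\;\le\;G_{D'}(0,0)-1.
\]

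Finally, I will show that $G_{D'}(0,0)$ is bounded by a universal constant uniformly in $d\ge 2$ and $w\sim^* 0$. The strong Markov property applied at the first hitting time of $w$ gives the classical identity
\[
G_{B_d\setminus\{w\}}(0,0)=G_{B_d}(0,0)-\frac{G_{B_d}(0,w)^2}{G_{B_d}(w,w)}.
\]
By Lemma~\ref{lem:green's}, $G_{B_d}(0,0)=\tfrac{2}{\pi}\log d+O(1)$; since $w\sim^* 0$ satisfies $|w|\in\{1,\sqrt 2\}$, the same lemma yields $G_{B_d}(0,w)=\tfrac{2}{\pi}\log d+O(1)$; and a sandwich between $B_{d-|w|}(w)$ and $B_{d+|w|}(w)$ combined again with Lemma~\ref{lem:green's} gives $G_{B_d}(w,w)=\tfrac{2}{\pi}\log d+O(1)$. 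Expanding the ratio, the dominant $\tfrac{2}{\pi}\log d$ terms cancel, leaving $G_{B_d\setminus\{w\}}(0,0)=O(1)$, with an $O(1)$ that is uniform in $w$ and $d$. Taking $C_0$ to be the resulting universal upper bound and $C:=e^{C_0}$ closes the argument.

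\textbf{Main obstacle.} Nothing is genuinely deep; the only point demanding attention is the uniformity of the cancellation in the Green's-function identity. This ultimately hinges on the fact that for $w\sim^*0$, the quantities $\log|w|$ and $|w|^{-1}$ appearing in the error term of Lemma~\ref{lem:green's} are bounded by universal constants, and that $G_{B_d}(w,w)$ differs from $G_{B_d}(0,0)$ only by an $O(1)$ boundary effect (since $|w|=O(1)\ll d$).
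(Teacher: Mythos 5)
Your proof is correct, and it takes a route that differs in a real way from the one in the paper. The paper invokes two closed-form avoidance probabilities from Le Jan's work, namely $\Pb(\Lc_d(0)=\emptyset)=G_{B_d}(0,0)^{-\alpha}$ and $\Pb(\Lc_d(\{0,w\})=\emptyset)=\big(1-\tfrac{G_{B_d}(0,w)^2}{G_{B_d}(0,0)G_{B_d}(w,w)}\big)^{\alpha}$, then rewrites the ratio inside the parenthesis as a product of hitting probabilities $p^d_{0,w}\,p^d_{w,0}$ and shows $1-p^d_{0,w}\,p^d_{w,0}\lesssim(\log d)^{-1}\lesssim G_{B_d}(0,0)^{-1}$. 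You instead go back to the Poisson-point-process structure, reduce the claim to the deterministic bound $\nu(\{\gamma\ni 0,\ w\notin\gamma,\ \gamma\subseteq B_d\})\le\log C$, enumerate rooted-at-$0$ loops to bound this measure by $G_{B_d\setminus\{w\}}(0,0)-1$ (using $n_\gamma(0)\ge1$ together with $\nu(\gamma)=4^{-|\gamma|}/J(\gamma)$), and then show the punctured Green's function is $O(1)$ via the standard identity $G_{B_d\setminus\{w\}}(0,0)=G_{B_d}(0,0)-G_{B_d}(0,w)^2/G_{B_d}(w,w)$. Your approach is more self-contained in that it does not rely on the specific two-point avoidance formula (only the one-point Poisson structure), and it makes transparent that the key quantity is the measure of loops through $0$ avoiding $w$, bounded by the Green's function in the punctured domain; the paper's route is shorter to write once Le Jan's formulas are available as a black box. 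Both ultimately rest on the same Green's function asymptotics from Lemma~\ref{lem:green's}, and the uniformity point you flag — that $|w|_\infty\le 1$ makes $G_{B_d}(w,w)=\tfrac{2}{\pi}\log d+O(1)$ via a domain sandwich, so the leading $\log d$ terms cancel — is correctly the only place where care is needed.
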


\begin{proof}
	By  \cite[Proposition~18]{MR2815763}, we have 
	\[
	\Pb( \Lc_d(0)=\emptyset )=G_{B_d}(0,0)^{-\alpha},
	\]
	and by \cite[Eq.~(4.9)]{MR2815763}, we have
	\[
	\Pb( \Lc_d({0,w})=\emptyset ) = \left(1-\frac{G_{B_d}(0,w)^2}{G_{B_d}(0,0)G_{B_d}(w,w)}\right)^{\alpha}.
	\]
	Let $u,v \in \Zb^2$. For a simple random walk in $\Zb^2$ started from $u$, let $p^d_{u,v}$ be the probability that it visits $v$ before hitting $B_d^c$.
	By Lemma~4.6.1 of \cite{MR2677157}, we have
	\[
	G_{B_d}(0,w)=p^d_{0,w} \, G_{B_d}(w,w)=p^d_{w,0} \, G_{B_d}(0,0).
	\]
	Furthermore, by \cite[Proposition 1.6.7]{La1991},
	\[
	p^d_{w,0}=1+O((\log d)^{-1}) \quad \text{and} \quad p^d_{0,w}=1+O((\log d)^{-1}).
	\]
	Therefore, for some universal constants $c,C>0$,
	\[
		\Pb( \Lc_d({0,w})=\emptyset ) = (1-p^d_{0,w}\, p^d_{w,0})^{\alpha} \leq c^{\alpha}\, (\log d)^{-\alpha}\le C^{\alpha}\, G_{B_d}(0,0)^{-\alpha},
	\]
	where we used Lemma~\ref{lem:green's} in the last inequality. 
	This completes the proof.
\end{proof}

Next, we give a result on the occupation field. Recall that $X_D=X_{\Lc_D}$ is the occupation field of the random walk loop soup $\Lc_D$. One can easily see that $X_{B_d}(0)$ tends to infinity as $d\rightarrow\infty$. However, its distribution has an exponential upper bound if we consider the occupation at $0$ produced only by the loops avoiding some vertex near $0$. More precisely, for $w\sim^* 0$ and $\{0,w\}\subseteq D$, we let $X_D^w(0):=X_{\Lc_D\setminus \Lc_D^w}(0)$ be the occupation at $0$ created by all the loops in $\Lc_D$ that avoid $w$. Then, we have the following result.

\begin{lemma}[Exponential bound]
	\label{lem:Gauss}
	There exist two universal constants $c,C>0$ such that for all $\alpha\in (0,1/2]$, $w \in \Zb^2$ with $w \sim^* 0$, and $D \subseteq \Zb^2$ with $D \supseteq B_1$, we have
	\begin{equation}\label{eq:exp bound}
	\Eb \Big( e^{c \, X_D^w(0)} \Big) \le C.
	\end{equation}
\end{lemma}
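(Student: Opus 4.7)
The strategy is to identify $X_D^w(0)$ in distribution with the single-vertex occupation of a RWLS on the smaller domain $D\setminus\{w\}$, invoke the explicit Gamma distribution of the occupation field at one vertex, and bound the corresponding Green's function uniformly in $D$ and in $w$.

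For the identification, I would use the Poisson structure of the loop soup. The non-trivial loops of $\Lc_D$ that avoid $w$ form a Poisson point process of intensity $\alpha\nu$ restricted to loops contained in $D\setminus\{w\}$, which coincides with the intensity of the non-trivial part of $\Lc_{D\setminus\{w\}}$. Adjoining the independent trivial loops at each vertex of $D\setminus\{w\}$ yields exactly $\Lc_{D\setminus\{w\}}$; the only discrepancy with $\Lc_D\setminus\Lc_D^w$ is the trivial loop at $w$, which contributes to the occupation only at $w$. Hence $X_D^w(0)\stackrel{d}{=}X_{D\setminus\{w\}}(0)$, and by \eqref{eq:density_XN} the latter has distribution $\Gamma(\alpha,G_{D\setminus\{w\}}(0,0))$, with Laplace transform $(1-cG_{D\setminus\{w\}}(0,0))^{-\alpha}$ whenever $cG_{D\setminus\{w\}}(0,0)<1$.

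It remains to bound $G_{D\setminus\{w\}}(0,0)$ by a universal constant. By the strong Markov property applied at successive returns of the simple random walk to $0$, one has $G_{D\setminus\{w\}}(0,0)=1/q_D(w)$, where $q_D(w)$ is the probability that the walk started at $0$ exits $D\setminus\{w\}$ (either by hitting $w$ or by leaving $D$) before the first return to $0$. Since $D\supseteq B_1$ contains all $*$-neighbors of $0$, I would exhibit a short exit path depending on $w\sim^*0$: if $w\sim 0$, the one-step path $0\to w$ already gives $q_D(w)\ge 1/4$; if $w$ is a diagonal neighbor of $0$ (say $w=(1,1)$), then the two-step path $0\to(1,0)\to w$ lies in $B_1\setminus\{0\}$ and yields $q_D(w)\ge(1/4)^2=1/16$. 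In both cases $G_{D\setminus\{w\}}(0,0)\le 16$, so choosing $c=1/32$ gives
\[
\Eb(e^{cX_D^w(0)})\le(1-1/2)^{-\alpha}=2^{\alpha}\le\sqrt{2},
\]
which proves the lemma with $c=1/32$ and $C=\sqrt{2}$.

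The main obstacle is cosmetic rather than technical: in the identification step one must carefully note that, by the paper's convention, $\Lc_D^w$ denotes only the \emph{non-trivial} loops visiting $w$, so $\Lc_D\setminus\Lc_D^w$ still contains the trivial loop at $w$. Since this extra trivial loop affects the occupation only at $w$, it does not interfere with the distributional identity $X_D^w(0)\stackrel{d}{=}X_{D\setminus\{w\}}(0)$. Everything else reduces to elementary properties of Gamma random variables and of the planar simple random walk.
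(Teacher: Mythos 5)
Your proof is correct, and it takes a genuinely different route from the paper's. The paper splits $X_D^w(0)$ into the random number of visits $W_D^w(0)$ made by non-trivial loops avoiding $w$, uses Palm/Campbell calculus for Poisson point processes to show that $W_D^w(0)$ has an exponential moment (by bounding the return probability $q$ of the walk to $0$ before hitting $w$ or exiting $D$ by $7/8$), and then closes the argument with the conditional law $\Gamma(W_D^w(0)+\alpha,1)$. You instead observe that $\Lc_D \setminus \Lc_D^w$ has, up to the harmless trivial loop at $w$, the same law as the loop soup $\Lc_{D\setminus\{w\}}$, invoke the explicit $\Gamma(\alpha,\,G_{D\setminus\{w\}}(0,0))$ marginal for the one-point occupation from Le Jan's formula (the analogue of \eqref{eq:density_XN} for a general finite domain, which the cited \cite[Corollary 1]{MR2815763} covers), and bound the Green's function by $G_{D\setminus\{w\}}(0,0)=1/q_D(w)\le 16$. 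The two arguments hinge on the same elementary random walk fact — the escape probability from $D\setminus\{w\}$ started at $0$ is bounded below, because $0$ has a short path in $B_1$ to $w$ — but yours is shorter and yields explicit constants directly, without needing the $\alpha=1/2$ monotonicity reduction (the bound $(1-cG)^{-\alpha}\le\sqrt 2$ is already monotone in $\alpha$). Two cosmetic remarks: you cite \eqref{eq:density_XN}, which is stated for $X_N$ in $B_N$, but your application is to the domain $D\setminus\{w\}$ — the underlying reference does give the formula in that generality, so one should just cite it directly; and the two-step escape path $0\to(1,0)\to w$ starts at $0$, so it does not ``lie in $B_1\setminus\{0\}$'' literally, but the point you need (that $(1,0)\in D$, $(1,0)\neq 0$, $(1,0)\neq w$) is correct and the bound $q_D(w)\ge 1/16$ follows.
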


\begin{proof}
	By monotonicity, we can assume that $\alpha=1/2$.
	For any loop $\gamma$, we denote by $n_0(\gamma)$ the number of visits to $0$ made by $\gamma$. Denote the total number of visits to $0$ by non-trivial loops in $\Lc_D$ that avoid $w$ by
	\[
	W_D^w(0):=\sum_{\gamma\in \Lc_D\cap\Gamma} n_0(\gamma) \ind_{\{w\notin \gamma\}}.
	\]
	Then by definition, 
	\begin{equation}\label{eq:conditional law}
		\text{conditioned on $W_D^w(0)$, the conditional law of $X_D^w(0)$ is given by $\Gamma(W_D^w(0)+1/2,1)$}.
	\end{equation}
	We first prove an exponential bound for $W_D^w(0)$, and we then conclude the proof by using \eqref{eq:conditional law}. 
	Using a standard property of Poisson point processes (see e.g.\ \cite[Exercise 3.4]{LP2017}), we have 
	\begin{equation}\label{eq:W-nu}
	\Eb \Big( e^{c \, W_D^w (0)} \Big) =\exp \Big( \int (e^{c\,n_0(\gamma)}-1)\,\ind_{\{w\notin \gamma,\gamma\subseteq D\}}\,\nu(d\gamma) \Big),
	\end{equation}
	where $\nu$ is the unrooted loop measure defined in \eqref{eq:nu}. Now, let $\nu_0$ be the measure on rooted loops which assigns $0$ to a loop if its root is not $0$, and assigns to each loop $l$ rooted at $0$, the weight $\mu_0(l)/n_0(l)$, where $\mu_0(l)=4^{-|l|}$ is extended to rooted loops in the natural way. Since $\nu(\gamma)=\sum_{\Uc(l)=\gamma}\nu_0(l)$,
	it is immediate that $\nu$ is the pushforward of $\nu_0$ under the unrooting map $\Uc$. Therefore, \eqref{eq:W-nu} remains true if we replace $\nu$ by $\nu_0$, where the integral is then over a different space of loops. In the remainder of the proof, we use $l$ to denote a loop rooted at $0$. We have
	\[
	\int (e^{c\,n_0(l)}-1)\,\ind_{\{w\notin l, l \subseteq D\}}\,\nu_0(dl)
	=\sum_{k=1}^{\infty}(e^{ck}-1)\, k^{-1}\,\mu_0(\{ l: n_0(l)=k, w\notin l, l \subseteq D \}).
	\]
	Note that $\mu_0(\{ l: n_0(l)=k, w\notin l, l \subseteq D \})=q^k$, with
	\[
	q:=\mu_0(\{ l: n_0(l)=1, w\notin l, l \subseteq D \})
	=\Pb(\text{$S$ returns to $0$ before visiting $w$ or exiting $D$}),
	\]
	where $S$ is a simple random walk starting from $0$.
	Since $w\sim^* 0$ and $B_1\subseteq D$, we have $q \le 7/8$. Combining the above observations, we get that for all $c<\log (8/7)$,
	\begin{equation}\label{eq:WD}
	\Eb \Big( e^{c \, W_D^w (0)} \Big) \le \exp \bigg( \sum_{k=1}^{\infty}(e^{ck}-1)\, k^{-1}\, (7/8)^k \bigg)\le c',
	\end{equation}
for some constant $c' = c'(c) < \infty$. Thus, we have proved \eqref{eq:exp bound}, but with $W_D^w(0)$ instead of $X_D^w(0)$.
	Finally, by \eqref{eq:conditional law} and the explicit formula for the moment generating function of the Gamma distribution, we obtain that
	\begin{align*}
	\Eb \Big( e^{c \, X_D^w(0)} \Big)= \Eb \Big(\Eb \Big( e^{c \, X_D^w(0)} \mid W_D^w(0) \Big) \Big)=\Eb \Big( (1-c)^{-W_D^w(0)-1/2} \Big).
	\end{align*}
    This, combined with \eqref{eq:WD}, concludes the proof of the lemma (for all $c > 0$ sufficiently small so that $1/(1-c) < 8/7$).
\end{proof}

\section{Absence of $*$-crossings with high occupation}\label{sec:pf-thm}

In this section, we establish the first of our main results, Theorem~\ref{thm:main}, by showing the non-existence, in a subcritical loop soup, of a $*$-crossing with high occupation time. 
In reality, we show the non-existence of a $*$-crossing with high occupation time at the passage edges between clusters, which will imply the stronger result Theorem~\ref{thm:carpet1}.
For this purpose, we start by defining some well-suited modified ``four-arm'' events (in the interior case) in Section~\ref{subsec:moe}. Such arm events would arise along a high $*$-crossing, as we explained informally in the introduction (see Figure~\ref{fig:blocking_path}). We then derive an upper bound for their probability in Section~\ref{subsec:moe_upper}, based on an induction argument inspired by the proof of \cite[Theorem~5.3]{BN2022} (this result showed the stability of modified six-arm events in the case of forest fires, arising from configurations such as that on Figure~\ref{fig:six_arms_FF}). We also need to consider boundary analogs of these events, which we do in Section~\ref{subsec:boundary}. Finally, in Section~\ref{subsec:main} we combine the estimates obtained on arm probabilities, in both the interior and the boundary cases, to prove Theorem~\ref{thm:main}.

\subsection*{Notations for arm probabilities}

We first set some notations, which are used in later computations. As we observed in Section~\ref{sec:prop_four_arm}, $\xi(\alpha)>2$ for all $\alpha \in (0,1/2)$. We consider any such $\alpha$ in the remainder of Section~\ref{sec:pf-thm}, and we let
\begin{equation} \label{eq:def_beta}
	\beta(\alpha) := \bigg( \frac{\xi(\alpha) - 2}{2}\bigg) \wedge \frac{1}{2} \in (0,1),
\end{equation}
so that $2 < 2+\beta(\alpha) < \xi(\alpha)$. Then by Proposition~\ref{prop:four_arm_proba}, for all $\alpha\in (0,1/2)$, there exists a positive constant $c_2(\alpha)$ such that for all $0<d_1<d_2$, $z\in\Zb^2$, and $D\supseteq B_{2d_2}(z)$,
\begin{equation}\label{eq:beta1}
	\Pb( \Ac_{D}(z;d_1,d_2) ) \le c_2 (d_2/d_1)^{-2-\beta}.
\end{equation}
Hence, we denote 
\begin{equation} \label{eq:beta2}
	\pi(d_1,d_2) = \pi_{\alpha}(d_1,d_2) := (d_2/d_1)^{-2-\beta}.
\end{equation}

In the boundary case, since $\xi^+(\alpha)\ge 4$ for all $\alpha\in (0,1/2]$, by choosing $\beta(\alpha) < 1$ in \eqref{eq:def_beta}, we have the following. By Proposition~\ref{prop:four_arm_proba_bdy}, for all $\alpha\in (0,1/2]$, there exists $c^+_2(\alpha)$ such that for all $z = (x,0)$, $0<d_1<d_2$, and $D \supseteq B_{2d_2}(z)$,
\begin{equation}\label{eq:beta3}
	\Pb( \Ac^+_{D}(z;d_1,d_2) ) \le c^+_2  (d_2/d_1)^{-3} \le c^+_2 \pi(d_1,d_2).
\end{equation}

\subsection{Modified arm events}\label{subsec:moe}

We now introduce a suitable version of arm events, but before that, we need some definitions, illustrated on Figure~\ref{fig:chain}. We then conclude this section by stating some estimates which will be useful later.

\begin{definition} [$*$- and $\lambda$-chains]
	Consider any loop configuration $L$.	
	\begin{enumerate}[(i)]
		\item We say that two distinct clusters $\Cc$ and $\Cc'$ are \emph{$*$-connected} if there exist $u\in\Cc$ and $v\in\Cc'$ such that $u \sim^* v$. In this case, the edge $(u,v)$ is called a \emph{$*$-passage edge} of $(\Cc,\Cc')$.
		A \emph{$*$-chain} in $L$, with some length $k \geq 1$, is a finite sequence $(\Cc_1, \ldots, \Cc_k)$ of distinct outermost clusters such that any two successive ones are $*$-connected. Note that such a $*$-chain might contain single-site clusters made by trivial loops.
		We say that the $*$-chain $(\Cc_1, \ldots, \Cc_k)$ \emph{crosses properly} the annulus $A_{d_1,d_2}(z)$ if $\Cc_1\cap \partial B_{d_1}(z)\neq\emptyset, \Cc_k \cap \partial B_{d_2}(z)\neq\emptyset$, and, when $k \geq 3$, $\Cc_i\subseteq \mathring{A}_{d_1,d_2}(z)$ for $2\le i\le k-1$.
		
		\item Let $X_{L}$ be the occupation field for $L$ (recall it from Section~\ref{subsec:RWLS}). For any $\lambda>0$, we say that two clusters $\Cc$ and $\Cc'$ are \emph{$\lambda$-connected} if there exists a $*$-passage edge $(u,v)$ of $(\Cc,\Cc')$ such that both $X_{L}(u)>\lambda$ and $X_{L}(v)>\lambda$. In this case, the edge $(u,v)$ is called a \emph{$\lambda$-passage edge} of $(\Cc,\Cc')$. A \emph{$\lambda$-connected chain}, or simply $\lambda$-chain for brevity, is a chain of distinct $\lambda$-connected outermost clusters in $L$. Moreover, we say that the sequence $(\Cc_1, \ldots, \Cc_k)$ is a \emph{$*\lambda*$-chain} if it is a $*$-chain, and, when $k\ge 4$, $(\Cc_2, \ldots, \Cc_{k-1})$ is a $\lambda$-chain. In the same way, we can define $*\lambda$-chains and $\lambda*$-chains.
	\end{enumerate}
\end{definition}

\begin{figure}
	\centering
	\includegraphics[width=.8\textwidth]{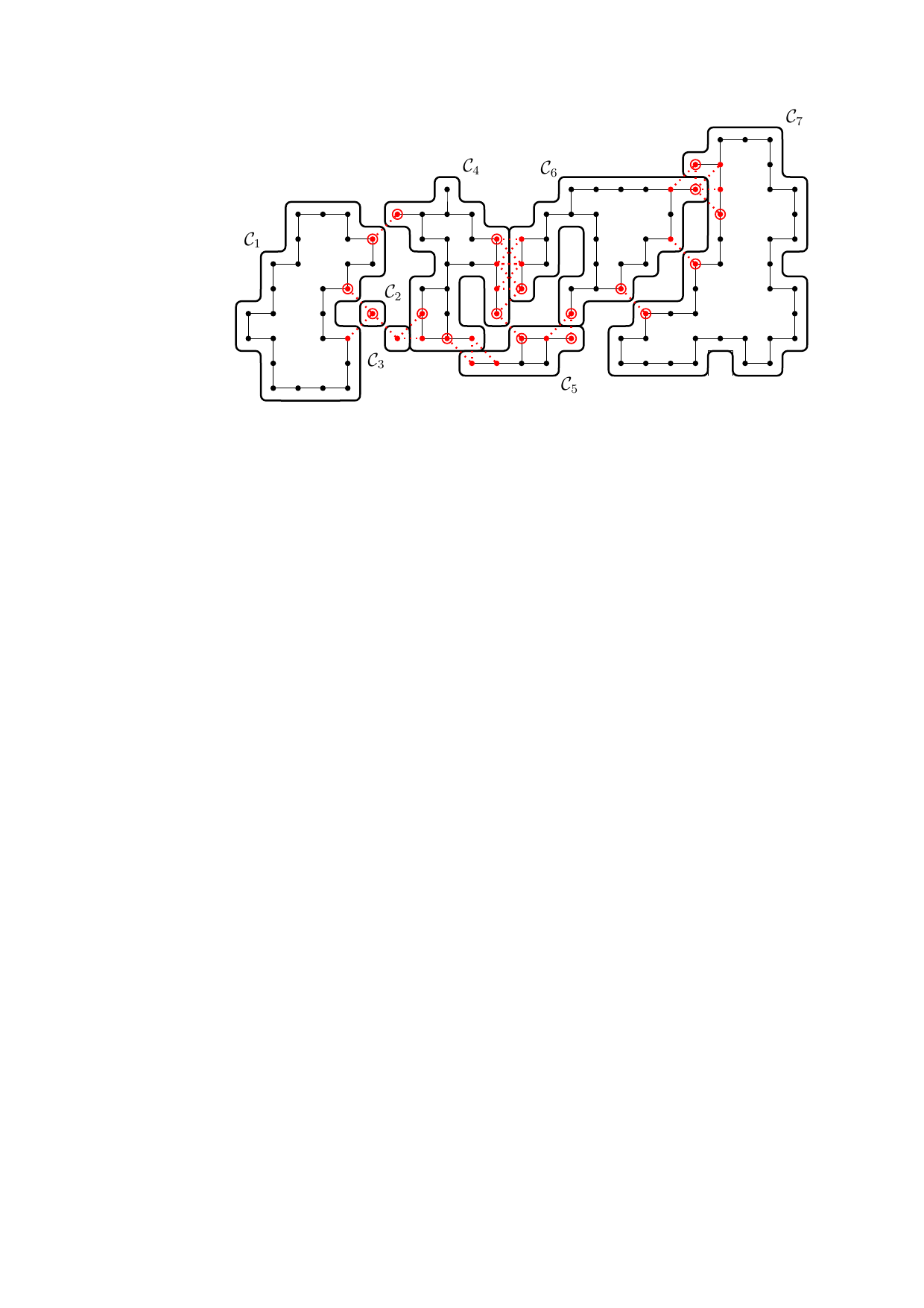}
	\caption{This figure depicts a $*$-chain $\Cc_1, \ldots, \Cc_7$: for all $i=1,\ldots,6$, $\Cc_i \sim^* \Cc_{i+1}$. The $*$-passage edges, $*$-connecting any two clusters, are shown in red. For any given $\lambda > 0$, each $*$-passage edge is also a potential $\lambda$-passage edge: for this to be the case, both of its endpoints need to have occupation time $> \lambda$ (vertices with that property are surrounded by a red circle on the figure). For instance, $\Cc_1, \Cc_4, \Cc_6, \Cc_7$ forms a $\lambda$-chain, since there exists at least one $\lambda$-passage edge between any two successive clusters in this sequence. Similarly, $\Cc_1, \Cc_4, \Cc_5, \Cc_6, \Cc_7$ is a $\lambda$-chain, but not $\Cc_1, \Cc_2, \Cc_3, \Cc_4, \Cc_6, \Cc_7$.}
	\label{fig:chain}
\end{figure}

We now give the definition of some modified arm events, namely, $*$- and $\lambda$-arm events (see Figure~\ref{fig:mae} for an illustration). Recall that for any set $A\subseteq \Zb^2$ and any loop configuration $L$, we denote by $L(A)$ the set of loops in $L$ which visit all of the vertices in $A$. Recall that we write $L^z:=L(\{z\})$ for $z \in \Zb^2$.

\begin{definition}[$*$- and $\lambda$-arm events]\label{def:mfa} Let $0< d_1<d_2$, $z\in \Zb^2$, and consider any loop configuration $L$.
	\begin{itemize}
		\item	The {\it $*$-arm event} $\Ac^*_{L}(z; d_1,d_2)$ is the event that there exist two disjoint $*$-chains of $L$ crossing properly $A_{d_1,d_2}(z)$.	
		\item For any $\lambda>0$, the {\it $\lambda$-arm event} $\Ac^{\lambda}_{L}(z; d_1,d_2)$ is the event that there exist two disjoint $*\lambda*$-chains of $L$ across $A_{d_1,d_2}(z)$.
		\item Let $\Ac^*_{L}(\dot{z}; d_1,d_2):=\Ac^*_{L\setminus L^z}(z; d_1,d_2)$ be the $*$-arm event \emph{avoiding the vertex $z$} (with a dot in the notation to emphasize that $z$ is avoided).
		Similarly, we let $\Ac^{\lambda}_L(\dot{z}; d_1,d_2):=\Ac^{\lambda}_{L\setminus L^z}(z; d_1,d_2)$ be the $\lambda$-arm event avoiding $z$.
	\end{itemize}
\end{definition}	

\begin{figure}[t]
	\centering
	\includegraphics[width=.6\textwidth]{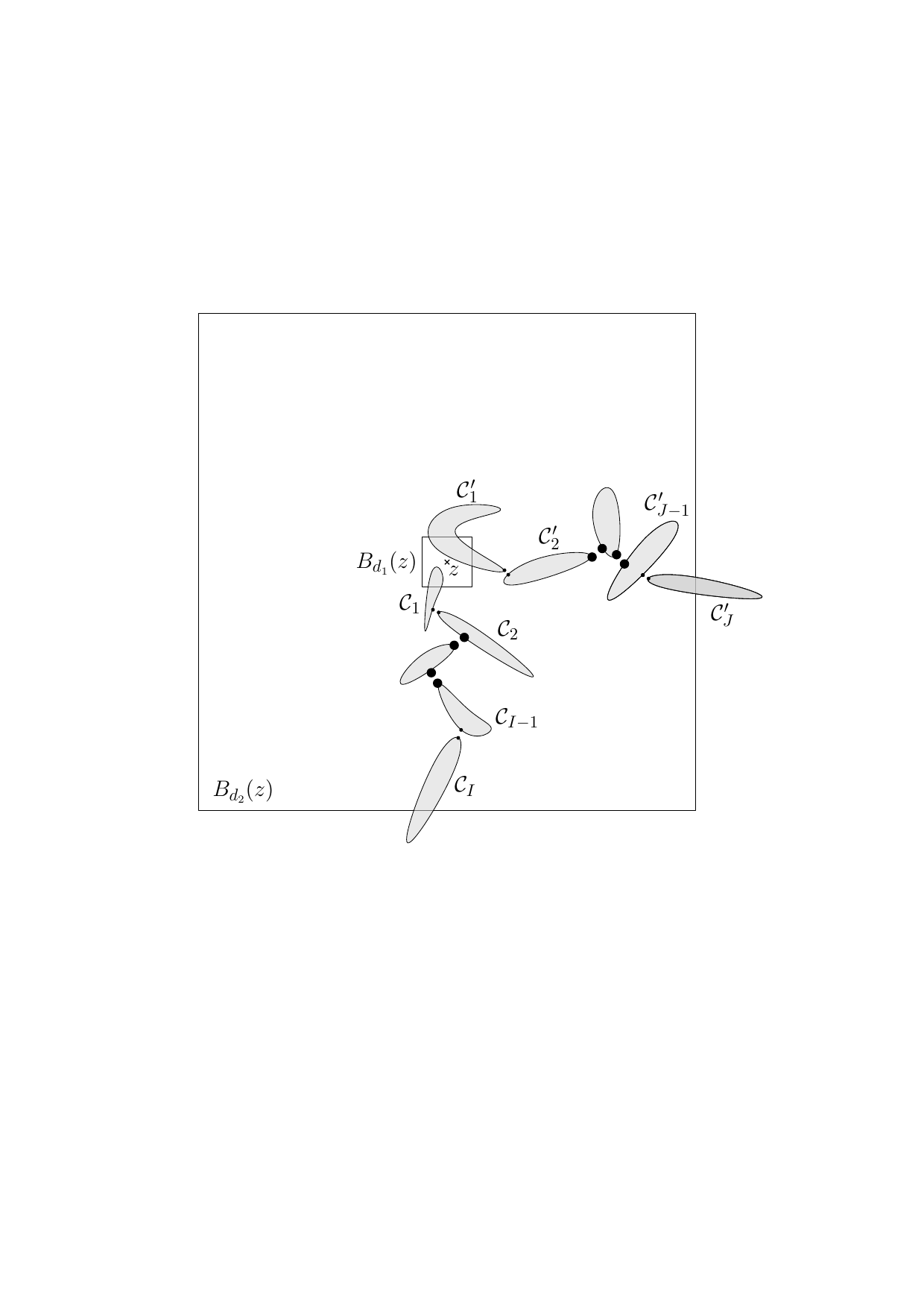}
	\caption{This figure illustrates the $\lambda$-arm event $\Ac^{\lambda}_L(z,d_1,d_2)$. 
		The two $*\lambda*$-chains $\Cc_1, \ldots, \Cc_I$ and $\Cc'_1, \ldots, \Cc'_J$ are composed of successive clusters shown in grey. The endpoints of passage edges are marked by dots, where the bigger ones are required to have occupation greater than $\lambda$.}
	\label{fig:mae}
\end{figure}

As before, we use the subscript $D$ to indicate that $L=\Lc_D$ for brevity, in all the notations that we just introduced. For example, we write $\Ac^*_{D}(z;d_1,d_2)=\Ac^*_{\Lc_D}(z;d_1,d_2)$. Further, as usual we omit $z$ if $z=0$, and we use $\loc$ to indicate the special case $D=B_{2d_2}(z)$, i.e.\ $\Ac_{\loc}^{\lambda}(z;d_1,d_2)=\Ac^{\lambda}_{B_{2d_2}(z)}(z;d_1,d_2)$.
We let $\Nc=\Nc'-2$ be the {\it connection number} of the associated $*$-arm event, where $\Nc'$ is the smallest possible value for the total number of clusters among all pairs of admissible $*$-chains (i.e.\ non-intersecting, and crossing properly the annulus). We define the connection number of the $\lambda$-arm event to be that of the corresponding $*$-arm event. 
When $\Nc=0$, the $*$-arm event reduces to the arm event introduced in earlier sections, that is, $\Ac^*_{D}(z;d_1,d_2)\cap\{ \Nc=0 \}=\Ac_{D}(z;d_1,d_2)$.

For the $*$-arm event, we denote by $\Pc^*$ the set of all $*$-passage edges for all admissible $*$-chains associated with it, and we call it the \emph{$*$-passage set}.
Similarly, for the $\lambda$-arm event,
we denote by $\Pc^{\lambda}$ the set of $\lambda$-passage edges for all admissible $*\lambda*$ chains associated with it, and we call it the \emph{$\lambda$-passage set}.

\begin{remark}\label{rem:modified arm event}
	We make the following observations about the definitions above, which will be useful later.
	\begin{enumerate}[(a)]
		\item For all $\lambda>0$, $\Ac^{\lambda}_D(z;d_1,d_2)\subseteq \Ac^*_D(z;d_1,d_2)$. Indeed, the $*$-arm event $\Ac^*_D(z;d_1,d_2)$ involves only the geometric structure of the loops, and it does not use any information about the occupation field.
		\label{rem:mae(a)}
		\item 	\label{rem:mae(b)}
		The $*$-arm event is monotone in the following sense:
		\[
		\Ac^{*}_D(z;d_1,d_2) \subseteq \Ac^{*}_D(z;d_3,d_4) \quad \text{ if } d_1\le d_3\le d_4\le d_2.
		\]
		This is also true for the $\lambda$-arm event.
		\item \label{rem:relax}
		In the definition of $\Ac^{\lambda}_D(z;d_1,d_2)$, we relax the condition on the first two and the last two clusters (i.e.\ they are required to be $*$-connected, rather than $\lambda$-connected).
		This relaxation allows us to modify easily the associated crossing loops (as in Sections~\seclocality{} and \secquasimult{} of \cite{GNQ2024a}) and ``pivotal'' loops (see the proof of Lemma~\ref{lem:mod-z}) to get locality (Lemma~\ref{lem:lambda-locality}), quasi-multiplicativity (Lemma~\ref{lem:quasi-lbd}) and decoupling (Lemma~\ref{lem:mod-z}) for the $\lambda$-arm event.
		\item \label{rem:modified arm event (iii)}
		By definition, the $\lambda$-arm event avoiding $z$ is independent of the loops that visit $z$. This observation, combined with the modification mentioned in (\ref{rem:relax}) just above, gives us a way to decouple the $\lambda$-arm event and the occupation field at $z$ (see Lemma~\ref{lem:mod-z} for details). This is the reason why we introduced the event $\Ac^{\lambda}_D(\dot{z};d_1,d_2)$.
		\item If the connection number of $\Ac^{\lambda}_D(z;d_1,d_2)$ is greater than $4$, i.e., $\Nc>4$, then the $\lambda$-passage set $\Pc^{\lambda}$ is not empty.
		\label{rem:mae(last)}
	\end{enumerate}
\end{remark}

In this section, we focus on $*$-arm events, and we postpone the analysis of $\lambda$-arm events to later sections. 
The following quasi-multiplicativity result for $*$-arm events can be proved in the same way as Lemma~\ref{lem:quasi} above, by suitably modifying crossing loops. Thus, we state the result without a proof.

\begin{lemma}[Quasi-multiplicativity for $*$-arm event]\label{lem:quasi*}
	For any $\alpha\in (0,1/2]$, there exists $c_3(\alpha)>0$ such that for all $z\in \Zb^2$, $1\le d_1\le d_2/2\le d_3/16$, and $D \supseteq B_{2d_3}(z)$,
	\begin{equation}\label{eq:quasi-*}
		\Pb( \Ac^{*}_D(z;d_1,d_3) ) \le c_3 \,\Pb( \Ac^*_{\loc}(z;d_1,d_2) )\, \Pb( \Ac^*_{D}(z;4d_2,d_3) ).
	\end{equation}
\end{lemma}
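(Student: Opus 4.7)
The plan is to follow the proof template of Lemma~\ref{lem:quasi}, adapting each of its three main ingredients to the $*$-chain setting: a locality property for $*$-arm events, a separation lemma at the intermediate scale $d_2$, and a surgery/decoupling argument that exploits the Poisson independence of loops supported in disjoint regions. A key structural observation is that the ``proper crossing'' requirement in the definition of $\Ac^*_D(z;d_1,d_3)$ already forces every intermediate cluster of an admissible $*$-chain to lie inside $\mathring{A}_{d_1,d_3}(z)$, so surgery affecting loops that are very large or very far from the two $*$-chains cannot break the chain; only the two extremal clusters on either end touch the boundary spheres and need careful treatment.

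The first step is to establish the analog of Proposition~\ref{prop:locality} for $*$-arm events, namely
\begin{equation*}
\Pb\big(\Ac^*_D(z;d_1,d_3)\big) \;\le\; C\, \Pb\big(\Ac^*_{\loc}(z;d_1,d_3)\big),
\end{equation*}
by the same surgery as in the proof of Proposition~\ref{prop:locality}: any loop of $\Lc_D$ escaping $B_{2d_3}(z)$ and intersecting one of the two $*$-chains is resampled so as to stay inside $B_{2d_3}(z)$, with a cost bounded by a universal multiplicative constant thanks to separation of the two disjoint $*$-chains. The fact that successive clusters in a $*$-chain are allowed to be merely $*$-adjacent rather than sharing vertices (as exploited in Remark~\ref{rem:modified arm event}) does not cause trouble here: resampling a loop contained inside a single cluster cannot create or destroy $*$-passage edges between two distinct clusters.

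Next, I would invoke a separation lemma at the intermediate scale $d_2$: conditionally on $\Ac^*_D(z;d_1,d_2)$, with conditional probability bounded below by a universal constant, the two disjoint $*$-chains can be realized so that their traces on $\partial B_{d_2}(z)$ lie in two well-separated arcs of macroscopic angular size. This is the direct analog of the separation results developed in \cite{GNQ2024a}, now applied to $*$-chains rather than single clusters. On this separation event, one resamples all loops of $\Lc_D$ that cross $A_{d_2,4d_2}(z)$: any such loop is not contained in $B_{2d_2}(z)$, so by the Poisson structure of the RWLS, the restriction of $\Lc_D$ to loops contained in $B_{2d_2}(z)$ is independent of the family of loops not contained in $B_{2d_2}(z)$. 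Combined with Lemma~\ref{lem:Palm formula}-style manipulations, this yields the factorization
\begin{equation*}
\Pb\big(\Ac^{*}_D(z;d_1,d_3)\big) \;\lesssim\; \Pb\big(\Ac^*_{\loc}(z;d_1,d_2)\big)\, \Pb\big(\Ac^*_D(z;4d_2,d_3)\big).
\end{equation*}

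The main obstacle will be controlling the $*$-chain structure during the surgery in the transition annulus $A_{d_2,4d_2}(z)$. In the standard four-arm case, one must preserve two disjoint crossing clusters, whereas here one must also preserve all the $*$-passage edges between consecutive clusters of each chain, and guarantee that no spurious $*$-passage edge is created between clusters of different chains (which would merge them into a single chain and violate disjointness). I expect that after using the separation lemma to confine the two chains to well-separated angular sectors at scale $d_2$, the surgery of the crossing loops can be localized to narrow collars around each chain, leaving enough empty buffer to rule out accidental $*$-connections between the two chains. A secondary subtlety is the treatment of single-site clusters coming from trivial loops, which can appear as links of a $*$-chain; these are cheap to control since they form an independent field whose density is bounded in terms of $\alpha$.
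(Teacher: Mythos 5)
The paper itself offers no proof of this lemma, merely stating that it ``can be proved in the same way as Lemma~\ref{lem:quasi} above, by suitably modifying crossing loops''; your proposal correctly fleshes out that one-liner using exactly the ingredients the authors would invoke (locality for $*$-arm events, separation at the intermediate scale, surgery exploiting Poisson independence), and it accurately pinpoints the genuine subtleties of the adaptation to $*$-chains -- preserving $*$-passage edges under resampling, using the separation buffer to prevent spurious $*$-connections that would merge the two chains, and the treatment of trivial single-site clusters. The one place where the exposition is slightly loose is in the bookkeeping of domains (the intermediate factorization should land the outer factor on $\Ac^*_{B_{2d_3}(z)}(z;4d_2,d_3)$ after localizing at scale $d_3$, and then one still needs to relate this back to $\Ac^*_{D}(z;4d_2,d_3)$, exactly as in the proof of Lemma~\ref{lem:quasi} in the companion paper), but this is a presentational detail rather than a gap in the argument.
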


\begin{remark}
	In fact, a locality result of the same flavor as Proposition~\ref{prop:locality} holds for $*$-arm events, and one can even show that conditionally on $ \Ac^*_D(z;d_1,d_2)$, with positive probability all the $*$-chains intersecting $\partial B_{d_1}(z)$ are contained in $B_{2d_2}$. But this property will not be needed later in the paper.
\end{remark}

Thanks to the above quasi-multiplicativity property for $*$-arm events, we can get a simple a priori bound for the probability of $*$-arm events with fixed connection number. One can view such $*$-arm events as analogous to arm events with defects in Bernoulli percolation, the connection number being just the number of defects in the latter. Using a similar proof strategy as for arm events with defects, which was based on isolating the defects and applying quasi-multiplicativity (see Proposition $18$ in \cite{No2008}), we obtain the same upper bound with a logarithmic correction.

\begin{lemma}\label{lem:m4a}
	There exist $c>0$ such that for all $\alpha\in (0,1/2)$, $m\ge 1$, $z\in\Zb^2$, $1\le d_1\le d_2/2$ and $D \supseteq B_{2d_2}(z)$,
	\[
	\Pb( \Ac^*_D(z;d_1,d_2) \cap \{\Nc=m\} ) \le \Big(c\, c_2\, c_3\,\log \Big( \frac{d_2}{d_1} \Big) \Big)^m \pi(d_1,d_2),
	\]
	where $\pi$ and $c_2$ are provided by \eqref{eq:beta2} and \eqref{eq:beta1}, respectively. 
\end{lemma}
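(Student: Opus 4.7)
The plan is to follow the template for bounding ``arm events with defects'' in Bernoulli percolation (see \cite[Proposition~18]{No2008}), replacing the exact independence used there by the quasi-multiplicativity of Lemma~\ref{lem:quasi*} together with the four-arm upper bound \eqref{eq:beta1}. The key idea is that on the event $\Ac^*_D(z;d_1,d_2) \cap \{\Nc = m\}$, each of the $m$ passage edges can be localized to a dyadic scale; summing over configurations of scales yields a factor of the form $(c\log(d_2/d_1))^m$, while each individual configuration is controlled by iterated quasi-multiplicativity.

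More precisely, I would fix a base $b \ge 16$ (so that Lemma~\ref{lem:quasi*} applies between consecutive scales), set $K := \lceil \log_b(d_2/d_1) \rceil = \Theta(\log(d_2/d_1))$, and define dyadic sub-annuli $A^{(k)} := A_{b^{k-1}d_1,\,b^kd_1}(z)$ for $k=1,\ldots,K$. On the target event, fix a minimal pair of admissible $*$-chains; each of the $m$ passage edges then lies inside a unique $A^{(k)}$, so by a union bound over the at most $K^m$ ordered tuples $(k_1,\ldots,k_m) \in \{1,\ldots,K\}^m$,
\[
\Pb\bigl(\Ac^*_D(z;d_1,d_2) \cap \{\Nc=m\}\bigr) \le \sum_{(k_1,\ldots,k_m)} \Pb(E_{k_1,\ldots,k_m}),
\]
where $E_{k_1,\ldots,k_m}$ is the sub-event with the $i$-th passage edge in $A^{(k_i)}$. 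For a fixed tuple with $j \le m$ distinct scales, the annulus $A_{d_1,d_2}(z)$ decomposes into at most $j$ ``defect'' sub-annuli $A^{(k_i)}$ (each of bounded dyadic ratio $b$) and at most $j+1$ ``defect-free'' sub-annuli between them, on which the standard four-arm event $\Ac_D$ must hold. Applying Lemma~\ref{lem:quasi*} iteratively through these $\le 2j+1$ pieces (paying $c_3$ per split), using \eqref{eq:beta1} to bound each defect-free factor by $c_2 \pi(\cdot)$ and each defect sub-annulus trivially by $1$, the telescoping identity $\prod_{\text{defect-free}}\pi(\cdot) = b^{(2+\beta)j}\,\pi(d_1,d_2)$ yields a per-configuration bound of the form $c_2\cdot (C(\alpha))^j\,\pi(d_1,d_2)$ with $C(\alpha) \lesssim c_2 c_3^2 b^{2+\beta}$. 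Combining with the $K^m$ configuration count and absorbing the prefactor into the $m$-th power (using that $c_2, c_3 \ge 1$ may be assumed without loss) yields the claimed bound after renaming the universal constant $c$.

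The main obstacle is a clean bookkeeping of the constants in the iterated quasi-multiplicativity: each of the $\le 2j$ splits contributes $c_3$ and each of the $\le j+1$ defect-free sub-annuli contributes $c_2$ from \eqref{eq:beta1}, and one must verify that these combine with the $K^m$ configuration count to yield the form $(c\, c_2\, c_3 \log(d_2/d_1))^m$. Some care is needed because the per-configuration bound involves $c_2^{j+1}$ rather than $c_2^{j}$ and $c_3^{2j}$ rather than $c_3^j$, but the final exponent is $m \ge j$ after summing, absorbing the discrepancy. A secondary minor issue is that the passage-edge constraint on a defect sub-annulus is not itself a standard four-arm event, but we bound it trivially by $1$; since defect sub-annuli have bounded ratio $b$, the ``lost'' decay $b^{-(2+\beta)j}$ contributes only a constant raised to the power $j$, which is absorbed into $C(\alpha)^j$.
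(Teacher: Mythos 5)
Your proposal takes essentially the same route as the paper: a union bound over the $\lesssim (\log(d_2/d_1))^m$ choices of dyadic scales for the $m$ passage edges, followed by iterated quasi-multiplicativity (Lemma~\ref{lem:quasi*}) to decompose the annulus into pieces bounded via the four-arm estimate \eqref{eq:beta1}, and a telescoping product of $\pi(\cdot,\cdot)$. The one material difference is in the bookkeeping: you isolate each of the $j$ distinct defect scales in its own bounded-ratio sub-annulus, paying $2j$ quasi-multiplicativity splits (hence $c_3^{2j}$), whereas the paper applies Lemma~\ref{lem:quasi*} once per passage edge ($\le m$ times total), absorbing each defect at a split boundary and so paying only $c_3^m$. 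As you already note, your version produces a constant of the form $c\,c_2\,c_3^2\,b^{2+\beta}$ rather than the stated $c\,c_2\,c_3$ (the discrepancy cannot be absorbed into the universal $c$, since $c_3$ depends on $\alpha$); this is harmless for every downstream use of the lemma, but matching the stated form literally requires the paper's leaner split. A secondary point you leave implicit (as does the paper's one-line proof): after iterating Lemma~\ref{lem:quasi*} the localized factors are a priori $*$-arm probabilities $\Pb(\Ac^*_{\loc}(\cdot))$, not standard four-arm probabilities, so to apply \eqref{eq:beta1} one must carry the constraint that the defect-free pieces have no passage edges through the decoupling — this comes from running the surgery argument underlying Lemma~\ref{lem:quasi*} directly on the intersection of the per-annulus events rather than from the bare statement of the lemma. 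Overall, this is the paper's argument, with only a slightly heavier decomposition.
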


\begin{proof}[Proof of Lemma~\ref{lem:m4a}]
	It suffices to apply Lemma~\ref{lem:quasi*} repeatedly, at most $m$ times, around each annulus that contains a $*$-passage edge (along the two admissible $*$-chains that realize the minimum connection number $m$). In this way, we can decompose the $*$-arm event into successive arm events. Using \eqref{eq:beta1} and multiplying the corresponding probabilities of arm events then yields the result. Note that the multiplicative factor $( c'\, \log ( \frac{d_2}{d_1} ))^m$, for some universal $c'>0$, is an upper bound on the number of choices for the annuli where two clusters meet.
\end{proof}

Although this rough bound with logarithmic correction is in fact sufficient for our proofs in the present paper, we can get rid of the power of $\log(d_2/d_1)$ in the subcritical case, which we do now for potential use in the future. 
This improvement can be achieved by using more delicate arguments, taking into account the cost around each $*$-passage edge. To this end, we need the next result. Under the presence of a $*$-passage edge, it decomposes a given $*$-arm event into three independent such events in disjoint annuli  (see Figure~\ref{fig:decom} for a schematic representation of this decomposition). Since this quasi-multiplicativity result can also be proved in the same way as Lemma~\ref{lem:quasi}, we omit the proof again.

\begin{figure}[t]
	\centering
	\includegraphics[width=.75\textwidth]{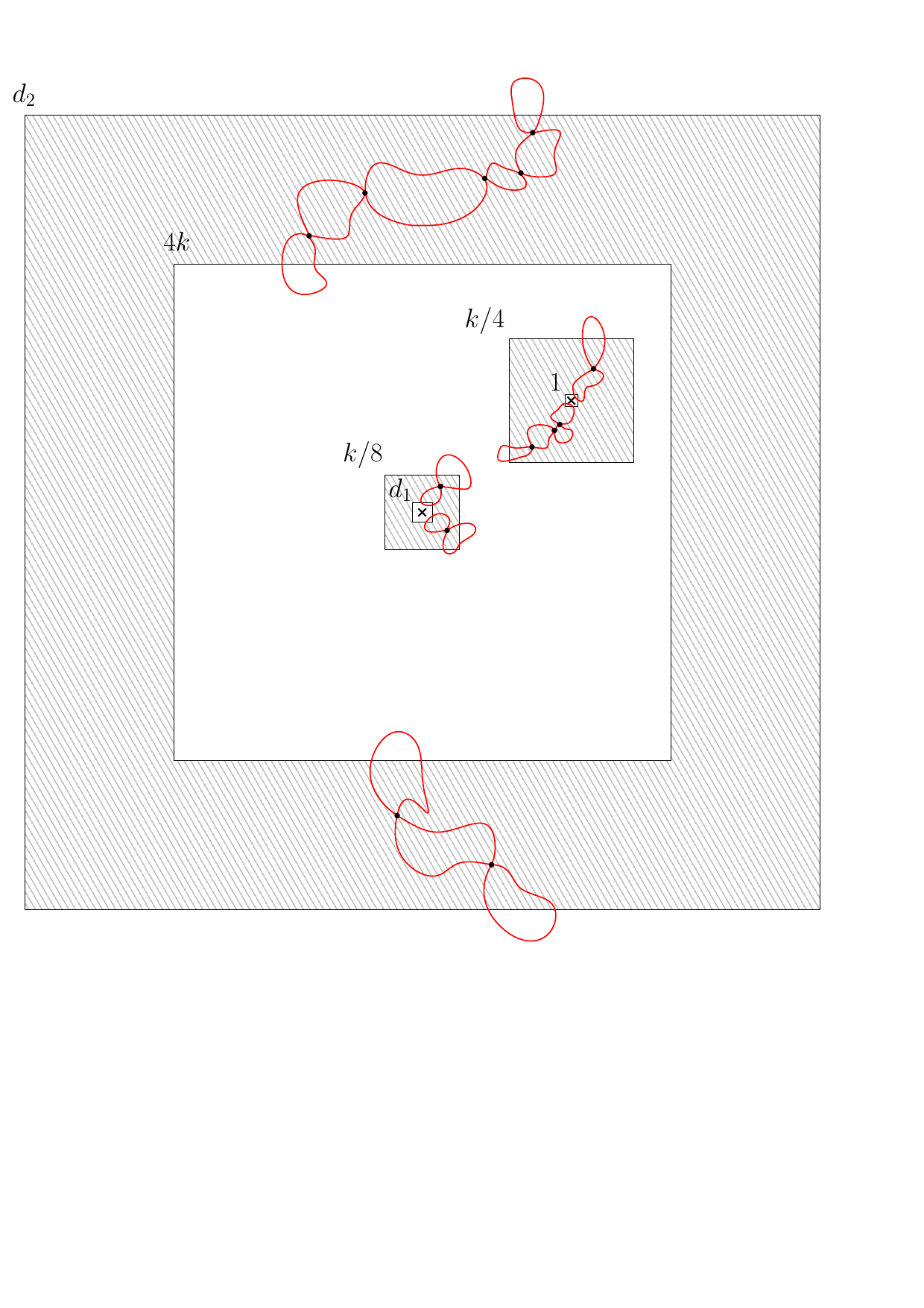}
	\caption{This figure shows the three arm events appearing in the decomposition provided by Lemma~\ref{lem:decom*}.}
	\label{fig:decom}
\end{figure}

\begin{lemma}\label{lem:decom*}
	For all $\alpha\in (0,1/2]$, there exists $c_4(\alpha)>0$ such that the following holds. Let $u,v\in \Zb^2$ with $|u|=k$ and $u\sim^* v$, and consider $0 < d_1 < d_2$ such that $16d_1\le k \le d_2/8$, and $D \supseteq B_{2d_2}$. Let $\Pc^*$ be the set of $*$-passage edges associated with $\Ac^{*}_D(d_1,d_2)$. Then, we have
	\begin{align}\notag
		\Pb( & \Ac^{*}_D(d_1,d_2) \cap \{ (u,v)\in\Pc^*  \} ) \\  \label{eq:decom*}
		& \quad \le c_4\,
		\Pb( \Ac^{*}_{\loc}(d_1,k/8) ) \, \Pb( \Ac^{*}_D(4k,d_2) ) \, \Pb( \Ac^{*}_{\loc}(u;1,k/4) ). 
	\end{align}
\end{lemma}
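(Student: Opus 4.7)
The plan is to adapt the quasi-multiplicativity arguments from Lemmas~\ref{lem:quasi} and~\ref{lem:quasi*}, adding a third factor for the arm configuration created around the $*$-passage edge $(u,v)$. The decomposition is natural geometrically: since $|u|=k$ with $16d_1\le k\le d_2/8$, the three relevant scales are the small scale near the origin (up to radius $k/8$), the medium scale around $u$ (up to radius $k/4$ around $u$), and the large scale from $4k$ out to $d_2$. The local boxes $B_{k/4}$ and $B_{k/2}(u)$ are disjoint, separated by distance at least $k/4$, and both sit well inside $B_{2k}$, which provides room to perform surgery arguments detaching the large-scale configuration from the interior.

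First I would show that on $E:=\Ac^*_D(d_1,d_2)\cap\{(u,v)\in\Pc^*\}$ three arm-type configurations coexist. Fixing admissible $*$-chains $(\Cc_1,\ldots,\Cc_I)$ and $(\Cc'_1,\ldots,\Cc'_J)$ realizing $\Ac^*_D(d_1,d_2)$, with $(u,v)$ a passage edge of the first chain between $\Cc_i$ and $\Cc_{i+1}$: the two original chains cross properly $A_{d_1,k/8}$ (since $d_1\le k/16$) and $A_{4k,d_2}$ (since $d_2\ge 8k$), yielding the small- and large-scale $*$-arm events. Near $u$, splitting the first chain at $(u,v)$ produces two disjoint sub-chains, one containing $\Cc_i\ni u$ and extending toward $\partial B_{d_1}$, the other containing $\Cc_{i+1}\ni v$ and extending toward $\partial B_{d_2}$; since $B_{k/4}(u)\subseteq A_{3k/4,5k/4}$ is far from both $\partial B_{d_1}$ and $\partial B_{d_2}$, each sub-chain must exit $B_{k/4}(u)$, producing a $*$-arm configuration across $A_{1,k/4}(u)$.

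Next I would invoke locality to replace these full-soup arm configurations by localized ones. The $*$-arm analog of Proposition~\ref{prop:locality} (established by the same surgery on crossing loops as in the companion paper) bounds the small-scale and near-$u$ configurations by constant multiples of $\Pb(\Ac^*_{\loc}(d_1,k/8))$ and $\Pb(\Ac^*_{\loc}(u;1,k/4))$, each depending only on loops contained in one of the disjoint boxes $B_{k/4}$ or $B_{k/2}(u)$. To decouple the large-scale event $\Ac^*_D(4k,d_2)$, I would apply the inward-locality analog of Proposition~\ref{prop:in-locality} to bound it by a version where the crossing clusters avoid $B_{2k}$, hence determined only by loops visiting vertices outside $B_{2k}$; this version is independent of the loops contained in $B_{k/4}$ and in $B_{k/2}(u)$. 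Multiplying the three now-independent factors and collecting all constants as $c_4$ yields the stated bound.

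The main technical obstacle is to carry out the three decouplings simultaneously and rigorously: the locality statements are phrased as inequalities between probabilities of arm events, whereas the conclusion demands an upper bound on the probability of an intersection of three events by a product of three probabilities. I expect this will require the surgery-on-loops arguments from the companion paper, applied in the intermediate annulus $A_{k,2k}$ to genuinely detach the large-scale configuration from the interior loops, and applied to disjoint sets of loops when localizing the two interior arm configurations around the origin and around $u$ (which is possible precisely because $B_{k/4}$ and $B_{k/2}(u)$ are disjoint). Some care will also be needed to ensure that modifications made during the near-$u$ localization do not disturb the arm configuration near the origin; this should follow from the spatial separation of the two local regions.
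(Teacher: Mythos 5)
Your proposal is correct and essentially coincides with the paper's intended argument: the paper only states that Lemma 4.7 "can also be proved in the same way as Lemma 3.6," i.e.\ by the surgery-on-loops arguments used to establish quasi-multiplicativity, and you reconstruct precisely that route — identifying the three coexisting $*$-arm configurations in the disjoint regions $B_{k/4}$, $B_{k/2}(u)$, and $B_{2k}^c$, and then detaching them via surgery in the separating annuli. You also correctly flag the one genuine subtlety (that the locality propositions as black-box inequalities do not by themselves decouple a triple intersection, so the surgery must be carried out directly on the joint event, in the disjoint regions simultaneously), which is exactly what the companion paper's arguments handle.
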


We are now in a position to improve the rough bound given in Lemma~\ref{lem:m4a}. The proof involves an induction argument akin to Theorem~5.3 of \cite{BN2022}. However, we will actually apply the induction on the connection number, while the induction is on the two scales $(d_1,d_2)$ in the proof of the referred theorem.
This two-scale induction in \cite[Theorem~5.3]{BN2022} is used later to prove Proposition~\ref{prop:mae} for $\lambda$-arm events. Note that the r.h.s. still depends on $m$. We will need to consider $\lambda$-arm events, for $\lambda$ sufficiently large, in order to get an upper bound which is in fact uniform in $m$.

\begin{proposition}\label{prop:m4a*}
	For all $\alpha\in (0,1/2)$, there exists $c_5(\alpha)>0$ such that 
	for all $m\ge 0$, $z\in\Zb^2$, $0< d_1< d_2$, and $D \supseteq B_{2d_2}(z)$,
	\begin{equation}\label{eq:mae-N}
		\Pb( \Ac^*_D(z;d_1,d_2) \cap \{\Nc\le m\} ) \le e^{c_5^{m+1}}\, \pi(d_1,d_2).
	\end{equation}
\end{proposition}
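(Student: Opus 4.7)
The plan is induction on $m\ge 0$, establishing the bound with constant $e^{c_5^{m+1}}$ for some $c_5=c_5(\alpha)>1$ chosen at the end. For the base case $m=0$, the event $\Ac^*_D(z;d_1,d_2)\cap\{\Nc=0\}$ coincides with the standard interior four-arm event $\Ac_D(z;d_1,d_2)$, so \eqref{eq:beta1} gives the bound $c_2\,\pi(d_1,d_2)$, absorbed by choosing $c_5$ large enough that $e^{c_5}\ge c_2$.

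For the inductive step, I assume the bound with constant $e^{c_5^m}$ for connection number $\le m-1$ and split
\[
\Pb\bigl(\Ac^*_D(z;d_1,d_2)\cap\{\Nc\le m\}\bigr)=\Pb\bigl(\Ac^*_D\cap\{\Nc\le m-1\}\bigr)+\Pb\bigl(\Ac^*_D\cap\{\Nc=m\}\bigr),
\]
the first summand being handled directly by the inductive hypothesis. For the second, I would partition according to the location $(u,v)$ of the \emph{innermost} $*$-passage edge along the realizing pair of chains, writing $k:=|u|$. The key observation is that innermostness forces no further passage edge to lie inside scale $k/8$, so that the portion of the arm event within $A_{d_1,k/8}(z)$ reduces to the standard four-arm event $\Ac_D(z;d_1,k/8)$ (with $\Nc=0$), while the portion within $A_{4k,d_2}(z)$ still belongs to $\Ac^*_D\cap\{\Nc\le m-1\}$.

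For each $(u,v)$ with $16d_1\le k\le d_2/8$, a suitable refinement of Lemma~\ref{lem:decom*}, together with Proposition~\ref{prop:locality} and \eqref{eq:beta1} on the inner and middle factors and the inductive hypothesis on the outer factor, yields
\[
\Pb\bigl(\Ac^*_D\cap\{\Nc=m,\,(u,v)\text{ innermost}\}\bigr)\lesssim_\alpha \pi(d_1,k/8)\cdot\pi(1,k/4)\cdot e^{c_5^m}\,\pi(4k,d_2)\lesssim_\alpha e^{c_5^m}\,\pi(d_1,d_2)\,k^{-(2+\beta)},
\]
since the three $\pi$-factors telescope (with $\pi(a,b)=(a/b)^{2+\beta}$). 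Summing over the $O(k)$ pairs $(u,v)$ with $|u|=k$ and over dyadic scales $k$ then produces a factor $\sum_k k\cdot k^{-(2+\beta)}=\sum_k k^{-(1+\beta)}<\infty$, convergent thanks to $\beta>0$ (i.e., $\xi(\alpha)>2$ for $\alpha\in(0,1/2)$). The boundary scales $k<16d_1$ and $k>d_2/8$ are handled routinely via the monotonicity in Remark~\ref{rem:modified arm event}(\ref{rem:mae(b)}), and the regime of small $d_2/d_1$ is immediate from $\Pb\le 1\lesssim_\alpha\pi(d_1,d_2)$. Thus $\Pb(\Ac^*_D\cap\{\Nc\le m\})\le(1+C_\alpha)\,e^{c_5^m}\,\pi(d_1,d_2)$, which is at most $e^{c_5^{m+1}}\,\pi(d_1,d_2)$ once $c_5$ is chosen so that $c_5(c_5-1)\ge\log(1+C_\alpha)$, closing the induction.

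The main obstacle is the required refinement of Lemma~\ref{lem:decom*}: as stated, it places $\Ac^*_{\loc}$ in the inner factor, whereas the summation only closes if I have the plain four-arm event $\Ac_{\loc}$ there, so that \eqref{eq:beta1} can be applied without any logarithmic loss coming from summing over possible inner connection numbers (\emph{cf.}\ Lemma~\ref{lem:m4a}). This refinement should follow from a small adaptation of the surgery argument used to prove Lemma~\ref{lem:decom*} in \cite{GNQ2024a}, exploiting the fact that the innermost-passage condition automatically forces $\Nc=0$ on the two inner chains inside $A_{d_1,k/8}(z)$.
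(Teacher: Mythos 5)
Your overall strategy -- induction on $m$, decomposition around a chosen $*$-passage edge using Lemma~\ref{lem:decom*}, then summation -- is essentially the same as the paper's, but the two arguments organize the decomposition differently, and yours has one gap that needs patching.

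The paper does not split $\{\Nc\le m\}=\{\Nc\le m-1\}\cup\{\Nc=m\}$ and does not isolate an ``innermost'' passage edge. Instead it splits on the event $E$ that no $*$-passage edge intersects the middle annulus $A_{2^{i+4},2^{j-3}}(z)$; on $E$, the plain four-arm event $\Ac_D(z;2^{i+4},2^{j-3})$ holds in that entire middle range, and on $E^c$ a union bound over the passage edges in $A_{2^{i+4},2^{j-3}}(z)$ is taken, with the induction hypothesis applied to \emph{all three} factors in \eqref{eq:decom*} (the paper also records a necessary detail you omit: the surgery in the proof of Lemma~\ref{lem:decom*} does not increase the number of $*$-passage edges, which is precisely what guarantees $\Nc_1,\Nc_2,\Nc_3\le m-1$). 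Your variant, pinning down the innermost edge, is a legitimate alternative and would in fact yield a slightly better constant ($(1+C_\alpha)^m e^{c_5}$ rather than a tower), but it is not needed for the stated bound.

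The genuine gap is your treatment of the \emph{middle} factor $\Pb(\Ac^*_{\loc}(u;1,k/4))$. Innermostness of $(u,v)$ at radius $k$ from $z$ does force the \emph{inner} event $\Ac_D(z;d_1,k/8)$ to have connection number $0$, as you say. But it does \emph{not} make the middle factor a plain four-arm event: a $*$-passage edge of the chains can lie at radius $k'\in[k,5k/4)$ from $z$ and therefore well inside $B_{k/4}(u)$, giving nonzero connection number for the event around $u$. Your display applies \eqref{eq:beta1} (hence $\pi(1,k/4)$ without a constant depending on $m$) to this middle factor, which is unjustified; the ``refinement of Lemma~\ref{lem:decom*}'' you invoke can legitimately give a plain arm event only in the inner factor. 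The easy fix is to apply the induction hypothesis to the middle factor as well, producing an extra $e^{c_5^m}$; the ensuing sum over scales $\sum_k k^{-\beta}<\infty$ still uses only $\xi(\alpha)>2$, and the inequality $e^{c_5^m}+C\,e^{2c_5^m}\le e^{c_5^{m+1}}$ closes the induction for $c_5$ large, so the conclusion survives. With this correction, and with the remark about the surgery not increasing $\Nc$, your argument would be sound.
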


\begin{proof}[Proof of Proposition~\ref{prop:m4a*}]
	Note that by monotonicity (see (\ref{rem:mae(b)}) in Remark~\ref{rem:modified arm event} above), it suffices to show that \eqref{eq:mae-N} holds for $d_1=2^i$ and $d_2=2^j$, for any two integers $0\le i<j$. We prove it by induction on $m$. When $m=0$, the event in the l.h.s. of \eqref{eq:mae-N} reduces to the arm event $\Ac_D(z;d_1,d_2)$, so that by \eqref{eq:beta1},
	\begin{equation} \label{eq:*arm_m=0}
		\Pb( \Ac^*_D(z;d_1,d_2) \cap \{\Nc=0\} ) =\Pb( \Ac_D(z;d_1,d_2) ) \le c'_2\, \pi(d_1,d_2).
	\end{equation}
	Hence, \eqref{eq:mae-N} is verified for $m=0$ if $c_5$ is chosen so that $e^{c_5} \geq c'_2$.
	
	We now prove that the property holds for some $m\ge 1$, assuming that it is already verified for $m-1$. Let $E$ be the event that in the $*$-passage set $\Pc^*$ of $\Ac^*_D(z; 2^i, 2^j)$, there is no edge that intersects $A_{2^{i+4},2^{j-3}}(z)$. Then, by the union bound,
	\begin{align}\notag
		\Pb( \Ac^*_D(z; 2^i, 2^j) \cap \{\Nc\le m\} ) & \le 
		\Pb( \Ac^*_D(z; 2^i, 2^j) \cap E) \\ \notag
		&\hspace{-2cm} +\sum_{k=i+4}^{j-4} \sum_{u \in A_{2^k,2^{k+1}}(z)} \sum_{v: v \sim^* u}  \Pb( \Ac^*_D(z; 2^i, 2^j) \cap \{\Nc\le m\} \cap \{ (u,v)\in \Pc^* \} )\\
		\label{eq:I+II}
		&=: (I) + (II).
	\end{align}
	Note that 
	\begin{equation}\label{eq:(I)}
		(I)\le \Pb( \Ac_D(z;2^{i+4},2^{j-3}) )\le  c'_2 \, \pi( 2^{i+4},2^{j-3} ) = c'_2 2^{7(2+\beta)} \, \pi( 2^i,2^j ),
	\end{equation}
	where the second inequality uses \eqref{eq:beta1}. Next, we deal with $(II)$. Using Lemma~\ref{lem:decom*} to decompose the event under consideration into three $*$-arm events in disjoint annuli, and applying the induction hypothesis three times, to each of these three events, we obtain that 
	\begin{align}\notag
		(II)& \le \sum_{k=i+4}^{j-4} \sum_{u \in A_{2^k,2^{k+1}}(z)} \sum_{v: v \sim^* u} c_4\,
		\Pb(  \Ac^*_{\loc}(z;2^i,2^{k-3})\cap \{\Nc_1\le m-1\} ) \\\notag
		&\hspace{3cm} \cdot \Pb( \Ac^*_D(z;2^{k+2},2^j) \cap \{\Nc_2\le m-1\} ) \,
		\Pb( \Ac^*_{\loc}(u;1,2^{k-2})\cap \{\Nc_3\le m-1\} )\\ \label{eq:induction}
		& \le \sum_{k=i+4}^{j-4} \sum_{u \in A_{2^k,2^{k+1}}(z)} \sum_{v: v \sim^* u} c_4\,
		e^{c_5^{m}}\, \pi(2^i,2^{k-3}) \cdot e^{c_5^{m}}\, \pi(2^{k+2},2^j) \cdot e^{c_5^{m}}\, \pi(1,2^{k-2}),
	\end{align}
	where $\Nc_1,\Nc_2,\Nc_3$ are the connection numbers of the corresponding $*$-arm events, i.e.\ $\Ac^*_{\loc}(z;2^i,2^{k-3})$, $\Ac^*_D(z;2^{k+2},2^j)$ and $\Ac^*_{\loc}(u;1,2^{k-2})$. We remark that we have used the fact that our modification in the proof of Lemma~\ref{lem:decom*} does not increase the number of $*$-passage edges, which implies $\Nc_1,\Nc_2,\Nc_3\le m-1$ above.
	Finally, this yields
	\begin{equation}
		(II) \le 2^{5(2+\beta)} \, C_2 \, c_4\, e^{3c_5^{m}}\, \pi(2^i,2^{j}),
	\end{equation}
	using first that 
	$$\pi(2^i,2^{k-3})\, \pi(2^{k+2},2^j) = 2^{5(2+\beta)} \, \pi(2^i,2^{j}),$$
	and then that we have
	\begin{align}\label{eq:summation}
		\sum_{k=i+4}^{j-4} \sum_{u \in A_{2^k,2^{k+1}}(z)} \sum_{v: v \sim^* u}  \pi(1,2^{k-2}) 
		\le
		\sum_{k=i+4}^{j-4} C_1\, 2^{2k}\,2^{-(2+\beta)k}\le C_2,
	\end{align}
	for some universal constants $0 < C_1,C_2 < \infty$. 
	Thus, in order to make the induction work, we only need to choose a large enough constant $c_5$ such that 
	\[
	c'_2 \le e^{c_5} \:\: (\text{for $m=0$}), \quad \text{and for all $m \ge 1$,} \:\: c'_2 2^{7(2+\beta)} + 2^{5(2+\beta)} \, C_2\,c_4\, e^{3c_5^{m}}\le e^{c_5^{m+1}}.
	\]
	This finishes the proof of the lemma.
\end{proof}

We want to stress that in the proof of Proposition~\ref{prop:m4a*}, the fact that the associated exponent is strictly larger than $2$ is used in an essential way, cf. \eqref{eq:summation}. Thus, the arguments fail in the critical case $\alpha=1/2$, which provides us with the exponent $\xi(\alpha)=2$.
However, Lemma~\ref{lem:m4a} still holds true in that case, since its proof uses only the multiplicativity of the probability upper bound.

\subsection{Upper bound on  $\lambda$-arm events} \label{subsec:moe_upper}

In this key section, we start by showing some basic properties for $\lambda$-arm events, which are analogous to those for $*$-arm events. Then, in Proposition~\ref{prop:mae}, we prove that for all values of $\lambda$ large enough, the probability of the $\lambda$-arm event $\Ac^\lambda_D(z;d_1,d_2)$ ($z \in \Zb^2$, $0 < d_1 < d_2$) can be bounded by some constant multiple of $\pi(d_1,d_2)$. This is achieved by using an induction argument similar to the proof of Proposition~\ref{prop:m4a*}, while incorporating the cost given by excessive occupation at the ends of $\lambda$-passage edges.

\subsubsection{Preliminary results}

First, we note that it is straightforward to generalize the locality results for arm events to $\lambda$-arm events, in both directions, and also for avoiding versions. More specifically, for any $z$, $d_1$, $d_2$ as above, and any loop configuration $L$, we define $\overrightarrow\Ac_L^{\lambda}(z;d_1,d_2)$ (resp.\ $\overleftarrow\Ac_L^{\lambda}(z;d_1,d_2)$) as the event that $\Ac_L^{\lambda}(z;d_1,d_2)$ holds, and all the $*\lambda*$-chains of $L$ that intersect $\partial B_{d_1}(z)$ (resp.\ $\partial B_{d_2}(z)$) do not intersect $\partial B_{2d_2}(z)$ (resp.\ $\partial B_{d_1/2}(z)$). Moreover, we write $\overrightarrow\Ac_L^{\lambda}(\dot{z};d_1,d_2):=\overrightarrow\Ac_{L\setminus L^z}^{\lambda}(z;d_1,d_2)$, and $\overleftarrow\Ac_L^{\lambda}(\dot{z};d_1,d_2):=\overleftarrow\Ac_{L\setminus L^z}^{\lambda}(z;d_1,d_2)$. Finally, as usual, we use the subscript $D$ to indicate that the configuration of loops is $L=\Lc_D$.

With these definitions at hand, the locality results for $\Ac^\lambda_D(z;d_1,d_2)$ and $\Ac^\lambda_D(\dot{z};d_1,d_2)$ can then be obtained by using the same strategy of proof as that employed for Proposition~\ref{prop:locality} and Proposition~\ref{prop:in-locality}. Indeed, thanks to our relaxation on the first and the last clusters of the chains in the definition of $\lambda$-arm events, the modifications therein for crossing loops are still applicable here. Thus, we omit the details of the proofs.

\begin{lemma}[Locality for $\lambda$-arm events]
	\label{lem:lambda-locality}
	Uniformly in $\alpha\in (0,1/2]$, $\lambda>0$, $z\in\Zb^2$, $1\le d_1\le d_2/2$, and $D \supseteq B_{2d_2}(z)$, we have
	\begin{equation}
		\Pb(\Ac^\lambda_D(z;d_1,d_2)) \lesssim \Pb( \overrightarrow\Ac^\lambda_D(z;d_1,d_2)) \wedge \Pb( \overleftarrow\Ac^\lambda_D(z;d_1,d_2)),
	\end{equation}
	and 
	\begin{equation}
		\Pb(\Ac^\lambda_D(\dot{z};d_1,d_2)) \lesssim \Pb( \overrightarrow\Ac^\lambda_D(\dot{z};d_1,d_2)) \wedge \Pb( \overleftarrow\Ac^\lambda_D(\dot{z};d_1,d_2)).
	\end{equation}
\end{lemma}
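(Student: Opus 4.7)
The plan is to mirror the proof strategy of Proposition~\ref{prop:locality} (outward locality) and Proposition~\ref{prop:in-locality} (inward locality) from \cite{GNQ2024a}, which are based on separation lemmas and surgery on ``big'' loops. The crucial point enabling a direct transplant of those arguments is the relaxation built into the definition of $*\lambda*$-chains, see item (\ref{rem:relax}) of Remark~\ref{rem:modified arm event}: the first two and the last two clusters of each chain are only required to be $*$-connected, not $\lambda$-connected. Consequently, the $\lambda$-passage edges witnessing the $\lambda$-arm event lie strictly inside the chain, among the clusters $\Cc_3,\ldots,\Cc_{k-2}$, and any surgery localized near $\partial B_{d_1}(z)$ or $\partial B_{d_2}(z)$ touches only the ``outer'' clusters $\Cc_1,\Cc_2,\Cc_{k-1},\Cc_k$, whose role is purely geometric.

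For outward locality, I would condition on $\Ac^\lambda_D(z;d_1,d_2)$ and consider two disjoint $*\lambda*$-chains realizing the event. If one of their clusters crosses $\partial B_{2d_2}(z)$, a loop in that cluster is responsible for the long excursion; as in \cite{GNQ2024a}, the separation result allows us to resample such a ``big'' loop so that it stays in $B_{2d_2}(z)$, with a uniformly positive probabilistic cost. The resampled loop can be arranged to preserve the $*$-connections of its containing cluster to the next cluster in the chain, which, by the relaxation mentioned above, is all that is required from a first or last cluster. The inner clusters $\Cc_3,\ldots,\Cc_{k-2}$, together with the occupation times at the endpoints of their $\lambda$-passage edges, are left untouched, so $\Ac^\lambda_D(z;d_1,d_2)$ is maintained and we land in $\overrightarrow\Ac^\lambda_D(z;d_1,d_2)$. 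The inward direction proceeds symmetrically, with surgery localized near $\partial B_{d_2}(z)$ forbidding excursions into $B_{d_1/2}(z)$.

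For the avoiding versions $\Ac^\lambda_D(\dot z;d_1,d_2) = \Ac^\lambda_{\Lc_D\setminus \Lc_D^z}(z;d_1,d_2)$, the same argument is applied to the loop configuration $\Lc_D\setminus \Lc_D^z$ in place of $\Lc_D$: since the excluded loops visit $z$ and are therefore disjoint from the surgery zones near $\partial B_{d_1}(z)\cup \partial B_{d_2}(z)$ (provided $d_1\ge 2$, which is harmless), the resampling proceeds verbatim and the removal commutes with the truncation operation.

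The main obstacle, compared to \cite{GNQ2024a}, is to ensure that the surgery does not accidentally alter the occupation field at a $\lambda$-passage vertex, since such an alteration could destroy a required $\lambda$-connection. This is precisely what the relaxation on the first and last two clusters guarantees: by construction, every $\lambda$-passage edge is attached to an interior cluster of the chain, which sits in an annulus bounded away from the surgery regions $B_{d_1}(z)\setminus B_{d_1/2}(z)$ and $B_{2d_2}(z)\setminus B_{d_2}(z)$. Hence the surgery, which only touches loops contributing to the outer clusters, leaves both the chain's $\lambda$-connectivity and all relevant occupation times invariant, allowing the arguments of Propositions~\ref{prop:locality} and~\ref{prop:in-locality} to go through with only cosmetic changes.
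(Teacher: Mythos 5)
Your proposal is correct and matches the paper's intended argument: the paper itself gives only a two-sentence indication of proof, saying to run the locality surgery of Propositions~\ref{prop:locality} and~\ref{prop:in-locality} (i.e.\ Propositions~\proplocality{} and~\proplocalityin{} of the companion paper) verbatim, and that the relaxation on the first/last clusters of $*\lambda*$-chains is precisely what makes the surgery on big crossing loops safe with respect to the $\lambda$-passage structure. Your elaboration of why the surgery zones avoid the $\lambda$-passage edges and why the avoiding version $\Ac^\lambda_D(\dot z;\cdot,\cdot)$ is handled by running the same argument on $\Lc_D\setminus\Lc_D^z$ is exactly the reasoning the authors intend.
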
 

Using the same arguments as for Lemma~\ref{lem:quasi}, we can also obtain quasi-multiplicativity for $\lambda$-arm events.

\begin{lemma}[Quasi-multiplicativity for $\lambda$-arm events]\label{lem:quasi-lbd}
	For any $\alpha\in (0,1/2]$, there exists $c_6(\alpha)>0$ such that for all $\lambda>0$, $z\in \Zb^2$, $1\le d_1\le d_2/2\le d_3/16$, and $D \supseteq B_{2d_3}(z)$,
	\begin{equation}\label{eq:quasi-lbd}
		\Pb( \Ac^{\lambda}_D(z;d_1,d_3) ) \le c_6 \,\Pb( \Ac^\lambda_{\loc}(z;d_1,d_2) )\, \Pb( \Ac^\lambda_{D}(z;4d_2,d_3) ).
	\end{equation}
\end{lemma}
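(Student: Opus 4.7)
The plan is to follow the same strategy as in the proof of Lemma~\ref{lem:quasi} from the companion paper \cite{GNQ2024a}, which established the analogous quasi-multiplicativity for unmodified four-arm events, and to argue that the modifications needed to accommodate the occupation-time constraints in the definition of a $\lambda$-arm event are essentially cosmetic. The key structural observation that makes the adaptation work is the relaxation recorded in point (\ref{rem:relax}) of Remark~\ref{rem:modified arm event}: in the definition of $\Ac^\lambda_D(z;d_1,d_3)$, the first two and the last two clusters of any admissible $*\lambda*$-chain are only required to be $*$-connected, not $\lambda$-connected. Thus the surgery performed in the intermediate annulus $A_{d_2,4d_2}(z)$, which will be used to connect the ``inner'' and the ``outer'' sub-configurations, never needs to produce $\lambda$-passage edges and only has to preserve geometric ($*$-)connectivity.

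The first step is to apply Lemma~\ref{lem:lambda-locality} to reduce $\Ac^\lambda_D(z;d_1,d_3)$ to (essentially) its conjunction with its inward- and outward-truncated versions, paying only a universal multiplicative constant. On the resulting event, the parts of the two $*\lambda*$-chains meeting $\partial B_{d_1}(z)$ stay inside $B_{2d_2}(z)$, while the parts meeting $\partial B_{d_3}(z)$ stay outside $B_{d_1/2}(z)$, so that the ``inner'' arms live in $A_{d_1,2d_2}(z)$ and the ``outer'' arms live in $A_{d_2,d_3}(z)$. The next step is to invoke the separation property for packets of random walks inside a random-walk loop soup established in \cite{GNQ2024a}: with a constant conditional probability, the clusters realizing the two $*\lambda*$-chains reach the intermediate annulus $A_{d_2,4d_2}(z)$ at points that are macroscopically separated from each other on both $\partial B_{d_2}(z)$ and $\partial B_{4d_2}(z)$.

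Step three is the actual surgery and gluing. Conditioning on the loops contained in $B_{2d_2}(z)$ together with the portions of ``bridging'' loops inside $B_{2d_2}(z)$, one resamples the outside portions of the bridging loops and, using the separation from the previous step, constructs at uniform conditional cost two short, disjoint $*$-connected cluster segments in $A_{d_2,4d_2}(z)$ joining the two ends of the inner arms to the two ends of the outer arms. Because these gluing clusters occupy the ``relaxed'' positions in the $*\lambda*$-chain (namely the second, third, penultimate, and antepenultimate positions), no extra occupation-time condition needs to be imposed on them, so the construction proceeds exactly as for unmodified four-arm events in \cite{GNQ2024a}. Summing over the separated boundary data and combining with the FKG inequality (Lemma~\ref{lem:FKG-RWLS}) applied to the approximately independent events $\Ac^\lambda_{\loc}(z;d_1,d_2)$ and $\Ac^\lambda_D(z;4d_2,d_3)$ then yields the claimed inequality, with a constant $c_6(\alpha)$ depending only on the intensity.

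The main obstacle is ensuring that the surgery does not alter the occupation field inside $B_{2d_2}(z)$ in a way that would destroy the $\lambda$-passage edges of the inner $*\lambda*$-chains. This is handled by performing all occupation-sensitive resampling on loops that are entirely outside $B_{2d_2}(z)$ or on the external portions of bridging loops, both of which contribute to $X_D$ only outside $B_{d_2}(z)$. Together with the relaxation explained above, this decouples the occupation conditions inside $B_{2d_2}(z)$ from the conditions inside $A_{4d_2,d_3}(z)$, and no further work on the occupation field is needed beyond what is already done in the four-arm case.
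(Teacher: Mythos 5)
The paper gives no proof of Lemma~\ref{lem:quasi-lbd}: it is stated immediately after the sentence ``Using the same arguments as for Lemma~\ref{lem:quasi}, we can also obtain quasi-multiplicativity for $\lambda$-arm events.'' Your plan correctly identifies the central idea that makes this adaptation possible, namely the relaxation recorded in item~(c) of Remark~\ref{rem:modified arm event}: because the first two and last two clusters of a $*\lambda*$-chain are only required to be $*$-connected (not $\lambda$-connected), the gluing clusters created by the separation-plus-surgery argument of the companion paper never need to carry occupation constraints, so the surgery from the unmodified four-arm case goes through without interference with the occupation field. This is exactly the point the paper emphasizes, and the core of your proposal is sound.

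Two technical steps as written are off and would need repair. First, applying Lemma~\ref{lem:lambda-locality} directly to $\Ac^{\lambda}_D(z;d_1,d_3)$ gives $\overrightarrow\Ac^{\lambda}_D(z;d_1,d_3)$, which truncates the inner chains at $\partial B_{2d_3}(z)$, not $\partial B_{2d_2}(z)$ as you claim; it therefore does not on its own force the inner arms to live in $A_{d_1,2d_2}(z)$. The decoupling at the intermediate scale must instead come from the surgery itself: one conditions on the loops contained in $B_{2d_2}(z)$, resamples the parts of bridging loops outside $B_{2d_2}(z)$ and recombines them in $A_{2d_2,4d_2}(z)$ using the separation property, which is precisely what the proof of Lemma~\ref{lem:quasi} in the companion paper does. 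Second, the final step cannot be justified by the FKG inequality (Lemma~\ref{lem:FKG-RWLS}): FKG yields the lower bound $\Pb(A\cap B)\ge \Pb(A)\Pb(B)$ for increasing events, which is the wrong direction for a quasi-multiplicativity \emph{upper} bound. What one actually uses is the Poissonian independence of the loop soups restricted to $B_{2d_2}(z)$ and to its complement: after the surgery, the modified event is contained in the intersection of an event measurable with respect to $\Lc_{B_{2d_2}(z)}$ and an event measurable with respect to the loops not contained in $B_{2d_2}(z)$, and these two are genuinely independent. Both corrections are compatible with your overall plan and do not require any new ideas beyond those already in the companion paper.
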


Using Lemma~\ref{lem:lambda-locality}, we can compare the probability of a $\lambda$-arm event to that of the corresponding avoiding one (recall that $\Lc^u_D$ is the set of loops in $\Lc_D$ visiting a given vertex $u$, and that we denote, for any $d > 0$, $\Lc_d = \Lc_{B_d}$).

\begin{lemma}\label{lem:delete-dot}
	For all $\alpha\in (0,1/2]$, $\lambda>0$, $u\in \Zb^2$, $8 \leq d_1 \le d_2/2$, and $D \supseteq B_{2d_2}(u)$,
	\[
	\Pb( \Ac^{\lambda}_{D}(\dot{u};d_1,d_2) )\, \Pb( \Lc^u_{D}\setminus \Lc_{d_1/2}^u=\emptyset ) \lesssim \Pb( \Ac^{\lambda}_{D}(u;d_1,d_2) ).
	\]
\end{lemma}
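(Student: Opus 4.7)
The plan is to combine the inward locality property for $\lambda$-arm events with the independence provided by the Poisson decomposition of the loop soup into loops visiting $u$ and loops avoiding $u$. Throughout, set $E := \{ \Lc^u_D \setminus \Lc^u_{d_1/2} = \emptyset \}$, the event that every loop of $\Lc_D$ visiting $u$ is contained in $B_{d_1/2}(u)$.

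The first step is to apply the inward version of Lemma~\ref{lem:lambda-locality} to the avoiding $\lambda$-arm event and obtain
$$ \Pb( \Ac^\lambda_D(\dot{u}; d_1, d_2) ) \lesssim \Pb( \overleftarrow{\Ac}^\lambda_D(\dot{u}; d_1, d_2) ). $$
On the event $\overleftarrow{\Ac}^\lambda_D(\dot{u}; d_1, d_2)$, the two witnessing $*\lambda*$-chains (in the loop configuration $\Lc_D \setminus \Lc_D^u$) intersect $\partial B_{d_1}(u)$ but do not intersect $\partial B_{d_1/2}(u)$. Since each cluster in such a chain is a connected subset of $\Zb^2$, every such cluster, and hence every $\lambda$-passage edge it participates in, must lie entirely outside $B_{d_1/2}(u)$.

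The second step is to multiply by $\Pb(E)$ and exploit an independence. The event $\overleftarrow{\Ac}^\lambda_D(\dot{u}; d_1, d_2)$ is measurable with respect to $\Lc_D \setminus \Lc_D^u$, while $E$ is measurable with respect to $\Lc_D^u$. By the standard splitting property of the Poisson point process of loops, these two subcollections are independent, and therefore
$$ \Pb( \overleftarrow{\Ac}^\lambda_D(\dot{u}; d_1, d_2) ) \cdot \Pb(E) = \Pb( \overleftarrow{\Ac}^\lambda_D(\dot{u}; d_1, d_2) \cap E ). $$

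The third and final step is to establish the inclusion $\overleftarrow{\Ac}^\lambda_D(\dot{u}; d_1, d_2) \cap E \subseteq \Ac^\lambda_D(u; d_1, d_2)$, which then yields the lemma upon chaining the three inequalities together. On this intersection, the two $*\lambda*$-chains from the first step lie in $\Lc_D \setminus \Lc_D^u$ with all their clusters located outside $B_{d_1/2}(u)$, while the loops of $\Lc_D^u$ are confined to $B_{d_1/2}(u)$ by $E$ and therefore share no vertex with those chains. Consequently these clusters remain unchanged as clusters of the full loop soup $\Lc_D$, their outermost status is preserved (any extra cluster of $\Lc_D$ formed by loops in $B_{d_1/2}(u)$ is either strictly interior to one of them or disjoint from them, by the geometric separation provided by the first step), and the occupation times at the $\lambda$-passage edges are unaffected by the added loops. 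This last verification, essentially bookkeeping, is the only point that requires any care; everything else is a direct consequence of Lemma~\ref{lem:lambda-locality} and the Poisson structure.
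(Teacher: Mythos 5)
Your proof is correct and follows essentially the same route as the paper's: apply the inward locality (Lemma~\ref{lem:lambda-locality}) to replace $\Ac^{\lambda}_{D}(\dot{u};d_1,d_2)$ by $\overleftarrow\Ac^{\lambda}_{D}(\dot{u};d_1,d_2)$, invoke the Poisson independence of $\Lc_D\setminus\Lc_D^u$ and $\Lc_D^u$, and then check the inclusion $\overleftarrow\Ac^{\lambda}_{D}(\dot{u};d_1,d_2)\cap\{\Lc^u_D\setminus\Lc^u_{d_1/2}=\emptyset\}\subseteq\Ac^{\lambda}_{D}(u;d_1,d_2)$. Your additional bookkeeping in the final step (verifying that outermost clusters, their disjointness, and the occupation at the $\lambda$-passage edges are all unchanged when the loops through $u$ are reinstated, since those loops are confined to $B_{d_1/2}(u)$) correctly fills in what the paper leaves implicit.
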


\begin{proof}[Proof of Lemma~\ref{lem:delete-dot}]
	By Lemma~\ref{lem:lambda-locality}, we have
	\begin{equation}
		\Pb( \Ac^{\lambda}_{D}(\dot{u};d_1,d_2) ) \lesssim 
		\Pb( \overleftarrow\Ac^{\lambda}_{D}(\dot{u};d_1,d_2) ).
	\end{equation}
	Moreover, we observe that 
	\[
	\overleftarrow\Ac^{\lambda}_{D}(\dot{u};d_1,d_2) \cap \{ \Lc^u_{D}\setminus \Lc_{d_1/2}^u=\emptyset \} \subseteq \Ac^{\lambda}_{D}(u;d_1,d_2),
	\]
	and the two events on the l.h.s. are independent, which implies the result.
\end{proof}

For $u\sim^* v$ and $D \supseteq B_{2d}(u)$, let $\Ac_{D}^{\lambda}(u,v;d)$ be the event that there exist two disjoint $\lambda*$-chains in $\Lc_D$ from $u$ to $\partial B_d(u)$ and from $v$ to $\partial B_d(u)$, respectively. 
Let $X^v_{D}(u):=X_{\Lc_D\setminus\Lc_D^v}(u)$ be the occupation at $u$ produced by loops in $\Lc_D$ that avoid $v$.
We show that $\Ac_{D}^{\lambda}(u,v;d)$ can be decoupled from the occupation field at $u$.

\begin{figure}[t]
	\centering
	\subfigure{\includegraphics[width=.48\textwidth]{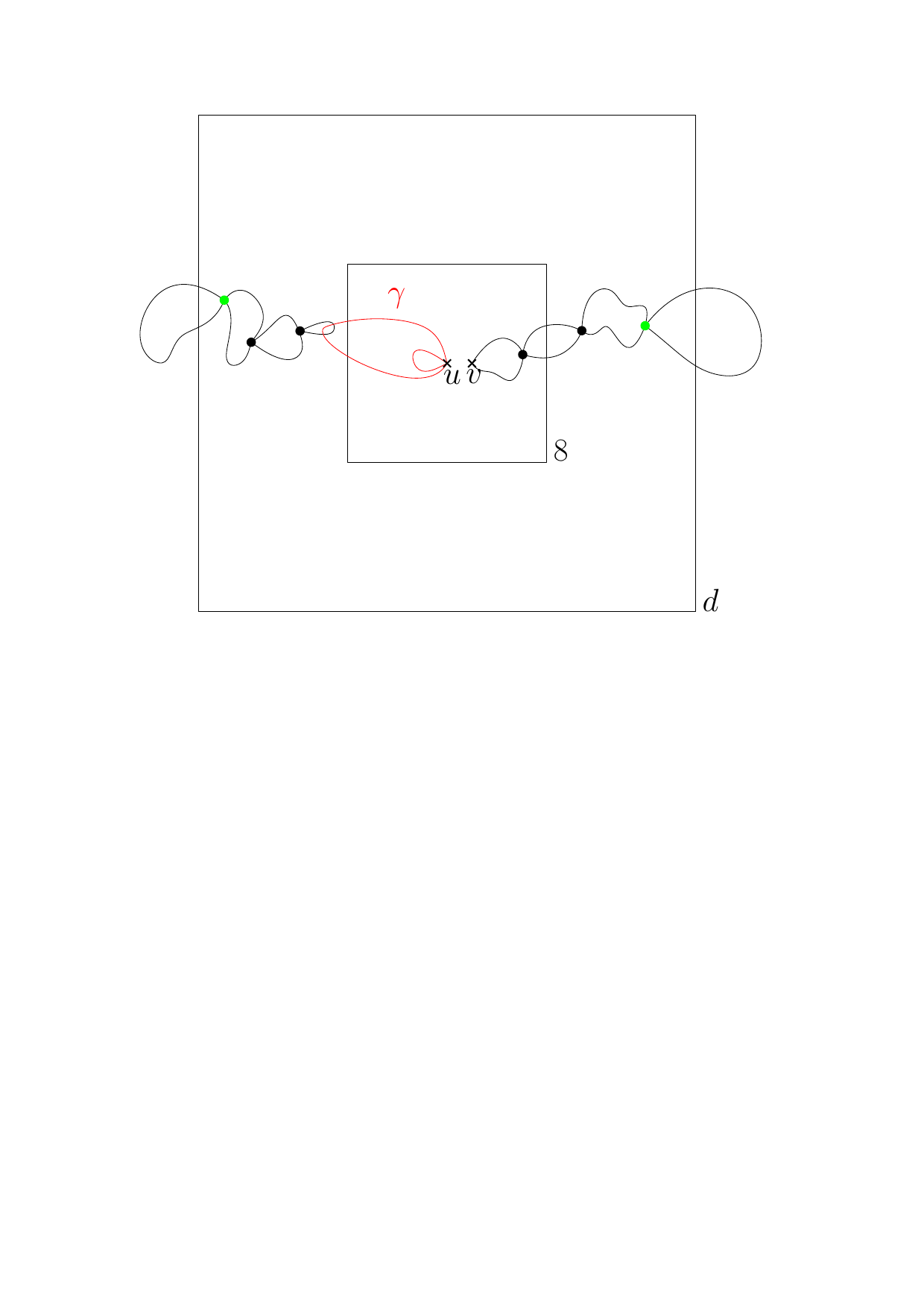}}
	\hspace{0.3cm}
	\subfigure{\includegraphics[width=.4\textwidth]{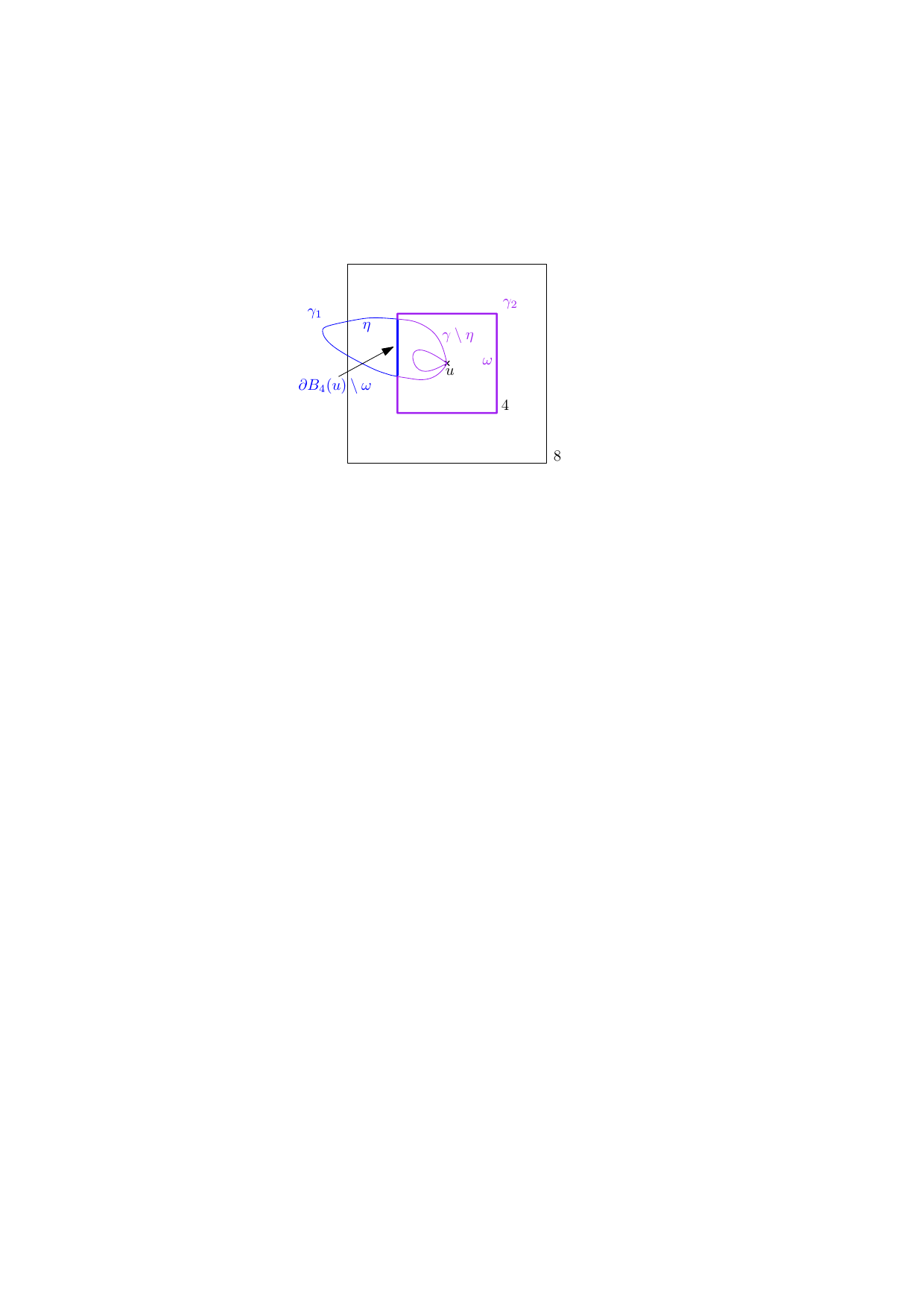}}
	\caption{\emph{Left:} Illustration of the event $\Ac^{\lambda}_D(u,v;d)\setminus \Ac^{\lambda}_D(\dot{u};8,d)$: $*$- and $\lambda$-passage edges are indicated by green and black dots, respectively. The red loop $\gamma$ is pivotal. \emph{Right:} We perform some local surgery, within $B_8(u)$, on the loop $\gamma$: $\eta$ is shown in blue, $\gamma\setminus\eta$ in purple, and $\omega$ (resp. $\partial B_4(u)\setminus\omega$) is the thicker purple (resp. blue) arc along $\partial B_4(u)$. Hence, $\gamma_1$ and $\gamma_2$ are obtained by concatenating the blue paths and the purple paths, respectively.}
	\label{fig:mod-z}
\end{figure}

\begin{lemma}\label{lem:mod-z}
	For all $\alpha>0$, there exists $c_7(\alpha)>0$ such that for all $\lambda>0$, $d> 8$, $u,v\in \Zb^2$ with $u\sim^* v$, $D \supseteq B_{2d}(u)$, and $a>0$,
	\begin{equation}\label{eq:modz}
		\Pb\big( \Ac_{D}^{\lambda}(u,v;d) \cap \{ X_{D}(u)>a \} \big) \le c_7\, \Pb\big( \Ac_{D}^{\lambda}(\dot{u};8,d) \big) \, \Pb( \Lc_D(\{u,v\})=\emptyset )\, \Pb( X^v_{D}(u)>a ).
	\end{equation}
\end{lemma}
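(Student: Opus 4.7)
The plan is to combine a deterministic structural observation with the Poissonian independence of loops classified by whether they visit $u$ or $v$, capped by a local surgery near $u$ of the type developed in the companion paper~\cite{GNQ2024a}.

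On the event $\Ac_{D}^{\lambda}(u,v;d)$, the two disjoint chains starting from $u$ and $v$ must lie in distinct clusters, so no loop can visit both $u$ and $v$; hence $\Lc_D(\{u,v\})=\emptyset$, and all non-trivial loops through $u$ automatically avoid $v$, which gives $X_{D}(u)=X_{D}^{v}(u)$. Therefore the event on the left-hand side of \eqref{eq:modz} is contained in
$$\Ac_D^\lambda(u,v;d)\cap\{\Lc_D(\{u,v\})=\emptyset\}\cap\{X_D^v(u)>a\}.$$
Using Poisson thinning, I would then split the non-trivial loops of $\Lc_D$ into the four independent sub-configurations $\Lc_D^{u\setminus v},\Lc_D^{v\setminus u},\Lc_D(\{u,v\}),\Lc_D^{\overline{u,v}}$, indexed by which of $u,v$ each loop visits, together with the independent trivial loops. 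The event $\Ac_{D}^{\lambda}(\dot u;8,d)$ depends only on loops avoiding $u$ (i.e.\ $\Lc_D^{v\setminus u}\cup\Lc_D^{\overline{u,v}}$, plus trivial loops away from $u$), the event $\{\Lc_D(\{u,v\})=\emptyset\}$ depends only on $\Lc_D(\{u,v\})$, and $X_D^v(u)$ is a functional of $\Lc_D^{u\setminus v}$ plus the trivial loop at $u$. These three inputs being mutually independent, their joint probability factorizes exactly as the product on the right-hand side of \eqref{eq:modz}, and it suffices to establish the stability bound
$$\Pb\bigl(\Ac_D^\lambda(u,v;d)\cap F\bigr)\le c_7\,\Pb\bigl(\Ac_{D}^{\lambda}(\dot u;8,d)\cap F\bigr), \qquad F:=\{\Lc_D(\{u,v\})=\emptyset\}\cap\{X_D^v(u)>a\}.$$

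For this stability bound, I would show that on $\Ac_D^\lambda(u,v;d)\cap F$, removing the loops $\Lc_D^u$ from the configuration still produces two disjoint $*\lambda*$-chains crossing $A_{8,d}(u)$ properly, possibly after a local surgery near $u$. The key points are: (i) the $v$-chain uses only loops avoiding $u$, since any loop through both $u$ and $v$ would place them in the same cluster (contradicting the disjointness of the two chains), so the chain survives the removal intact; (ii) in the $u$-chain, every cluster $\Cc^1_j$ with $j\ge 2$ avoids $u$ and hence contains no loop through $u$, so the $\lambda$-passage edges between $\Cc^1_j$ and $\Cc^1_{j+1}$ for $j\ge 2$ are preserved (the occupation at their endpoints is unchanged by the removal, since a loop through both $u$ and such an endpoint would place that endpoint in the cluster $\Cc^1_1$); (iii) the first $*$-passage edge from $\Cc^1_1$ to $\Cc^1_2$ is only required to be a $*$-edge for the $*\lambda*$-chain thanks to the relaxation of the first pair in Remark~\ref{rem:modified arm event}(\ref{rem:relax}), so if its $\Cc^1_1$-endpoint is some $w\ne u$, the sub-cluster of $\Cc^1_1\setminus\{u\}$ containing $w$ serves as the new starting cluster. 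In the only problematic case $w=u$, I would perform the local surgery within $B_4(u)$ depicted in Figure~\ref{fig:mod-z}: a pivotal loop $\gamma$ through $u$ is cut and resampled into two loops $\gamma_1,\gamma_2$ that avoid $u$ while preserving a $*$-passage to $\Cc^1_2$. Set up via Palm's formula (Lemma~\ref{lem:Palm formula}) applied to the pivotal loop and controlled by standard random-walk excursion estimates along $\partial B_4(u)$, this surgery costs at most a multiplicative constant $c_7(\alpha)$.

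The main obstacle will be making the surgery in case (iii) rigorous: one must identify a canonical pivotal loop $\gamma$, describe the resampling as a well-defined (almost-injection) map on configurations, and bound its Radon--Nikodym cost by comparing pre- and post-surgery weights, in the spirit of Sections~\seclocality{} and~\secquasimult{} of~\cite{GNQ2024a}. The relaxation at the first pair in the $*\lambda*$-chain definition is essential here, since it absorbs any perturbation of the $\lambda$-passage structure caused by modifying loops near $u$; without it, the surgery would have to preserve an actual $\lambda$-connection close to $u$, which would require occupation constraints incompatible with a scale-independent cost.
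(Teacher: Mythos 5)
Your high-level architecture matches the paper's: the deterministic observation that $\Ac_D^\lambda(u,v;d)$ forces $\Lc_D(\{u,v\})=\emptyset$ hence $X_D(u)=X_D^v(u)$, the Poisson thinning into sub-configurations indexed by which of $u,v$ each loop visits, the factorization of the right-hand side by independence, and the reduction to the stability bound $\Pb(\Ac_D^\lambda(u,v;d)\cap F)\lesssim_\alpha\Pb(\Ac_{D}^{\lambda}(\dot u;8,d)\cap F)$ are all correct and identical in spirit to the paper's proof.

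The gap is in the surgery, and it is not merely a technicality. You state that the pivotal loop $\gamma$ is resampled into two loops $\gamma_1,\gamma_2$ \emph{that avoid $u$}. That is incompatible with preserving the set $F$: since $\gamma\in\Lc_D^u$ and $\gamma$ avoids $v$, it contributes to $X^v_D(u)$, and replacing it by two loops that both avoid $u$ strictly decreases $n(u)$, hence stochastically lowers $X^v_D(u)$ and can take the modified configuration out of $\{X_D^v(u)>a\}$. The stability bound you reduced to then fails, because the surgery image no longer lands in $\Ac_{D}^{\lambda}(\dot u;8,d)\cap F$ --- and since $\Pb(X^v_D(u)>a)<1$ appears as a factor on the right of \eqref{eq:modz}, you cannot simply drop $F$ from the target. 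The paper's construction is careful precisely on this point: only $\gamma_1$ (the excursion $\eta$ of $\gamma$ in $\partial B_4(u)\cup B_4(u)^c$ closed up along an arc of $\partial B_4(u)$) avoids $u$, while $\gamma_2$ (the complementary piece $\gamma\setminus\eta$ closed up along the other arc $\omega$) still visits $u$ with exactly the same multiplicity as $\gamma$ and still avoids $v$. This preserves $n(u)$, hence $X_D^v(u)$, so that the modified configuration realizes $\Ac^{\lambda}_D(\dot{u};8,d)\cap\{\Lc_D(\{u,v\})=\emptyset\}\cap\{X_D^v(u)>a\}$ as required. Relatedly, your framing of the surgery as ``removing $\Lc_D^u$ and then repairing'' is misleading: the actual modification only splits the single pivotal loop $\gamma$ into $\gamma_1,\gamma_2$ and leaves the rest of $\Lc_D^u$ in place; the removal is just internal to the definition of the avoiding event $\Ac^{\lambda}_D(\dot{u};8,d)$, not an operation performed on the configuration.
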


\begin{proof}[Proof of Lemma~\ref{lem:mod-z}]
	Note that $X_{D}(u)=X^v_{D}(u)$ on the event $\Lc_D(\{u,v\})=\emptyset$, which is itself implied by the event $\Ac_{D}^{\lambda}(u,v;d)$.
	The three events on the r.h.s. of \eqref{eq:modz} are independent, since they use disjoint sets of loops, namely loops avoiding $u$, loops visiting both $u$ and $v$, and loops visiting $u$ but avoiding $v$, respectively. Thus, it suffices to show that 
	\[
	\Pb\big( (\Ac^{\lambda}_D(u,v;d)\setminus \Ac^{\lambda}_D(\dot{u};8,d)) \cap \{ X_{D}(u)>a \} \big) \lesssim_{\alpha} \Pb\big( \Ac_{D}^{\lambda}(\dot{u};8,d)\cap\{ \Lc_D(\{u,v\})=\emptyset \} \cap \{ X_{D}(u)>a \} \big).
	\]
	Write $\Lc_D^{(u)}:=\Lc_D\setminus \Lc_D^u$.
	On the event $\Ac^{\lambda}_D(u,v;d)\setminus \Ac^{\lambda}_D(\dot{u};8,d)$, there exists a $*\lambda$-chain in $\Lc_D^{(u)}$ from $\partial B_d(u)$ to $v$, and another $*\lambda$-chain in $\Lc_D^{(u)}$ that intersects $\partial B_{d}(u)$, stays in $B_8(u)^c$ and is connected (or $*$-connected) to $u$ by a loop $\gamma\in \Lc_D^u$, such that they together form two admissible $*\lambda*$-chains. Such a loop $\gamma$ that determines the occurrence of $\Ac^{\lambda}_D(u,v;d)$ is called a \emph{pivotal loop} from now on (note that in principle, there could be several of them). Our proof strategy is to split that pivotal loop into two loops, through local surgery around $u$, so that the event $\Ac^{\lambda}_D(\dot{u};8,d) \cap  \{ X_{D}^v(u)>a \} \cap\{ \Lc_D(\{u,v\})=\emptyset \}$ is satisfied after splitting. This will be enough to conclude. 
	
	More precisely, let $\gamma$ be any pivotal loop that visits $u$ as above. First, we find an excursion $\eta\subseteq \gamma$ in $\partial B_4(u)\cup B_4(u)^c$, which is $*$-connected to some $*\lambda$-chain in $\Lc_D^{(u)}$ that intersects $\partial B_{d}(u)$ and stays in $B_8(u)^c$.
	We refer the reader to Figure~\ref{fig:mod-z} for an illustration.
	Among the two arcs of $\partial B_4(u)$ joining the endpoints of $\eta$, let $\omega$ be the one such that the concatenation of $\gamma$ and $\omega$ surrounds $0$. Then, we attach $\partial B_4(u)\setminus\omega$ and $\omega$ to $\eta$ and $\gamma\setminus\eta$, respectively, to produce two loops $\gamma_1$ and $\gamma_2$. 
	In this way, $\gamma_1$ does not visit $u$, and the $\lambda$-arm event $\Ac^{\lambda}_D(\dot{u};8,d)$ occurs for $\Lc_D^{(u)}\uplus\{\gamma_1\}$, while $\gamma_2$ is a loop that visits $u$ but avoids $v$ and retains the same occupation at $u$.
	In other words, the event $\Ac^{\lambda}_D(\dot{u};8,d) \cap  \{ X_{D}^v(u)>a \} \cap\{ \Lc_D(\{u,v\})=\emptyset \}$ occurs after this modification (replacing $\gamma$ by $\gamma_1$ and $\gamma_2$). 
	Moreover, it is clear that $\gamma_1$ does not appear in the original configuration (otherwise $\Ac^{\lambda}_D(\dot{u};8,d)$ would occur). Since $\gamma_2$ intersects the other $*\lambda*$-chain (containing $v$), it does not appear in the original configuration as well.
	In addition, both $\gamma_1$ and $\gamma_2$ have multiplicity $1$.
	Therefore, by \eqref{eq:rwls}, the ratio between the probability of the resulting configuration (i.e.\ after modification) and that of the original configuration is bounded from below by 
	\begin{equation} \label{eq:cost_gamma12}
		\frac{\alpha \mu_0(\gamma_1) \,\alpha \mu_0(\gamma_2) }{\alpha \mu_0(\gamma)/(n(\gamma)J(\gamma))} \gtrsim \alpha,
	\end{equation}
	where $n(\gamma)\ge 1$ is the number of occurrences of $\gamma$ in the original configuration, and $J(\gamma)\ge 1$ is the multiplicity of $\gamma$. 
	Hence, \eqref{eq:cost_gamma12} implies that for any eligible configuration, our modification only costs a constant multiple of $\alpha^{-1}$ (which does not depend on the configuration). This completes the proof of Lemma~\ref{lem:mod-z}.
\end{proof}

\subsubsection{Decoupling using $\lambda$-passage edges}

Next, we show a decoupling result for $\lambda$-arm events under the presence of a $\lambda$-passage edge, which has a similar flavor to Lemma~\ref{lem:decom*}. However, in this case we make appear furthermore an extra factor in the multiplicative constant which decays exponentially in $\lambda$, due to the excessive occupation observed on the extremities of the $\lambda$-passage edge. This additional factor plays a key role in later summations.

\begin{lemma}\label{lem:decom}
	For all $\alpha\in (0,1/2]$, there exists $c_8(\alpha)>0$ such that the following holds. Let $u,v\in \Zb^2$ with $|u|=k$ and $u\sim^* v$, consider $0 < d_1 < d_2$ such that $16d_1\le k \le d_2/8$, and let $D \supseteq B_{2d_2}$. Let $\Pc^{\lambda}$ be the set of $\lambda$-passage edges associated with $\Ac^{\lambda}_D(d_1,d_2)$. For some universal constant $\rho\in (0,1)$, we have
	\begin{align}\notag
		\Pb( & \Ac^{\lambda}_D(d_1,d_2) \cap \{ (u,v)\in\Pc^{\lambda}  \} ) \\  \label{eq:decom}
		& \quad \le c_8\, \rho^{\lambda}\, 
		\Pb( \Ac^{\lambda}_{\loc}(d_1,k/8) ) \, \Pb( \Ac^{\lambda}_D(4k,d_2) ) \, \Pb( \Ac^{\lambda}_{\loc}(u;8,k/4)). 
	\end{align}
\end{lemma}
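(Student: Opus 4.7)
The plan is to adapt the argument used for the $*$-arm analogue Lemma~\ref{lem:decom*}, while incorporating the extra factor $\rho^\lambda$ coming from the excess occupation at the two endpoints of the $\lambda$-passage edge $(u,v)$. The hypothesis $16d_1\le k\le d_2/8$ is exactly what is needed to ensure that the three regions $B_{k/8}$, $B_{k/4}(u)$ and $A_{4k,d_2}$ are pairwise disjoint with non-trivial gaps, so that by the Poisson structure \eqref{eq:rwls}, the loops entirely contained in each of these regions are independent across regions.

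My first step would be to split the event $\Ac^{\lambda}_D(d_1,d_2)\cap\{(u,v)\in\Pc^{\lambda}\}$ across these three scales using the locality Lemma~\ref{lem:lambda-locality} and the quasi-multiplicativity Lemma~\ref{lem:quasi-lbd}. Paralleling the proof of Lemma~\ref{lem:decom*}, this yields the inner arm probability $\Pb(\Ac^{\lambda}_{\loc}(d_1,k/8))$, the outer one $\Pb(\Ac^{\lambda}_D(4k,d_2))$, and a remaining event in $B_{k/4}(u)$ encoding both the local arm structure and the passage condition. The key geometric observation here is that on $\{(u,v)\in\Pc^{\lambda}\}$, the two successive clusters $\Cc\ni u$ and $\Cc'\ni v$ joined by the passage edge extend, via the rest of the $\lambda$-chain crossing $A_{d_1,d_2}$, all the way out to $\partial B_{k/4}(u)$: note that our size hypotheses force $\partial B_{d_1}\cap B_{k/4}(u)=\emptyset$ and $\partial B_{d_2}\cap B_{k/4}(u)=\emptyset$. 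Together, these two sub-chains are disjoint and reach $\partial B_{k/4}(u)$ starting from $u$ and $v$ respectively, which is precisely the event $\Ac_D^{\lambda}(u,v;k/4)$ used in Lemma~\ref{lem:mod-z}.

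Next, to extract the factor $\rho^\lambda$, I would apply the decoupling Lemma~\ref{lem:mod-z} with $d=k/4$ and $a=\lambda$, keeping only the endpoint condition $\{X_D(u)>\lambda\}$ from the passage edge definition (the condition $\{X_D(v)>\lambda\}$ is simply dropped for an upper bound). This separates the local arm event near $u$ from the excess occupation at $u$ and produces the factor $\Pb(X_D^v(u)>\lambda)$. Markov's inequality together with the exponential moment bound in Lemma~\ref{lem:Gauss} then gives $\Pb(X_D^v(u)>\lambda)\le Ce^{-c\lambda}$ for some universal $c>0$, yielding the desired $\rho^\lambda$ with $\rho:=e^{-c}\in(0,1)$. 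Finally, Lemma~\ref{lem:delete-dot} would allow me to convert the avoiding version $\Ac^{\lambda}_D(\dot u;8,k/4)$ produced by Lemma~\ref{lem:mod-z} back into the localized event $\Ac^{\lambda}_{\loc}(u;8,k/4)$, at the cost of a multiplicative constant absorbed into $c_8$.

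The main obstacle will be to coordinate the two surgeries at different scales: the large-scale surgery around the origin (needed for quasi-multiplicativity and the splitting of arms across the three regions) and the small-scale surgery around $u$ (needed for Lemma~\ref{lem:mod-z}). They must act on loops in disjoint regions and the resampled loops must not disrupt the $\lambda$-connection structure near the interfaces between scales. The relaxation built into the definition of a $*\lambda*$-chain, where only the interior clusters need to be $\lambda$-connected (Remark~\ref{rem:modified arm event}(\ref{rem:relax})), provides exactly the flexibility required to carry out both surgeries independently and at bounded cost.
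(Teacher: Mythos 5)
Your proposal correctly identifies the three disjoint scales, Lemma~\ref{lem:mod-z} as the source of the occupation decoupling near $u$, Lemma~\ref{lem:Gauss} as the source of the $\rho^\lambda$ factor, and Lemma~\ref{lem:delete-dot} for passing back to a localized event. That much matches the paper. But the first step is not justified as written, and the gap is the heart of the lemma.

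Lemma~\ref{lem:quasi-lbd} applies to plain $\lambda$-arm events $\Ac^\lambda_D$; it does not automatically factorize an event of the form $\Ac^\lambda_D(d_1,d_2)\cap\{(u,v)\in\Pc^\lambda\}$. The extra condition $\{(u,v)\in\Pc^\lambda\}$ demands $X_D(u)>\lambda$ (and $X_D(v)>\lambda$), and $X_D(u)$ is a \emph{global} functional of the loop soup: a single loop starting far from $u$ can carry most of its occupation at $u$ and simultaneously act as a crossing loop between your three annuli. The quasi-multiplicativity surgery necessarily resamples such crossing loops, which can drop $X_D(u)$ below $\lambda$ and destroy the very event you are conditioning on. So after your proposed surgery you cannot assert that the ``remaining event in $B_{k/4}(u)$'' still includes $\{X_D(u)>\lambda\}$, and Lemma~\ref{lem:mod-z} has nothing to bite on. You flag ``coordinating the two surgeries'' as the obstacle and invoke the $*\lambda*$-chain relaxation, but the relaxation only protects the $\lambda$-connection structure at the endpoints of the chains; it offers no protection for the pointwise occupation $X_D(u)$ against loop resampling.

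The paper's proof is structured precisely to evade this. It first applies Palm's formula to condition on the loops visiting $u$, then splits (via $E_1$ and $E_2$, and the sums $I_m$, $J_m$) according to whether the excess occupation at $u$ is spread over many loops or concentrated on a single loop. In the spread case, the claim \eqref{eq:S} bounds the total visits to $u$ and yields $\rho^\lambda$ without any occupation-preserving surgery. In the concentrated case, the single offending loop may itself be a crossing loop, and the paper modifies the quasi-multiplicativity surgery by keeping a third loop $\gamma_3$ built from the intermediate pieces $\omega_1,\omega_2$, precisely so that the occupation at $u$ is preserved while the scales are decoupled. These two ingredients --- the Palm-formula decomposition and the $\gamma_3$-construction --- are what make a statement of the shape you wrote down actually provable, and they are absent from your outline.
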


\begin{proof}[Proof of Lemma~\ref{lem:decom}]
	\textbf{Step 1.} For any loop configuration $L$ in $D$, let $E(L)$ be the event that both of the following two properties hold:
	\begin{enumerate}[(i)]
		\item there is only one loop in $L$ visiting $u$, and there is no loop in $L$ visiting both $u$ and $v$;
		\item there exists a $*\lambda*$-chain across $A_{d_1,d_2}$, a $*\lambda$-chain from $\partial B_{d_1}$ to $u$, and a $\lambda*$-chain from $v$ to $\partial B_{d_2}$, and these three chains are disjoint.
	\end{enumerate}
	Further, we subdivide this event as $E(L) = E_1(L) \cup E_2(L)$, with $E_1(L) := E(L) \cap \{X_L(u)\le \lambda/2\}$ and $E_2(L) := E(L) \cap \{X_L(u) > \lambda/2\}$.

	We can then decompose the event on the left-hand side of \eqref{eq:decom} according to the number of loops that visit $u$. For $m \geq 1$, let $\Vc_m(L) := \Ac^{\lambda}_L(d_1,d_2) \cap \{ (u,v)\in\Pc^{\lambda}  \} \cap \{$there are exactly $m$ loops in $L$ that visit $u\}$, so that we have immediately
	\begin{equation}\label{eq:dec1}
		\Pb( \Ac^{\lambda}_D(d_1,d_2) \cap \{ (u,v)\in\Pc^{\lambda}  \} )
		=\sum_{m\ge 1} \Pb( \Vc_m(\Lc_D) ).
	\end{equation}
	By Lemma~\ref{lem:Palm formula} (in a similar way as in the proof of Lemma~\lemloops{} in \cite{GNQ2024a}, see Eq.~(\eqrec{}) there), we have: for all $m\ge 1$,
	\begin{equation} \label{eq:Vm_LcD}
		\Pb( \Vc_m(\Lc_D) ) =\frac{1}{m!} \sum_{\gamma_1\in L_{u,v}} \cdots \sum_{\gamma_m\in L_{u,v}} \Pb( \Vc_m(\Lc_D\uplus\{ \gamma_1,\ldots,\gamma_m \}) ) \prod_{j=1}^m \alpha\nu(\gamma_j),
	\end{equation}
	where $L_{u,v} := L_D^u \setminus L_D^v$ denotes the set of all loops in $D$ that visit $u$ but avoid $v$.
	
	For any $m\ge 2$, and $\gamma_1, \ldots, \gamma_m \in L_{u,v}$, we observe that 
	\begin{equation} \label{eq:Vm_LcD2}
		\Vc_m(\Lc_D\uplus\{ \gamma_1,\ldots,\gamma_m \})
		\subseteq \bigcup_{j=1}^m \big(E_1(\Lc_D\uplus\{ \gamma_j\}) \cap \{ X^j(u)>\lambda/2 \}\big) \cup E_2(\Lc_D\uplus\{ \gamma_j\}),
	\end{equation}
	with $X^j(u):=X_{\{ \gamma_1,\ldots,\gamma_m \}\setminus\{ \gamma_j\}}(u)$. It then follows from the union bound applied to \eqref{eq:Vm_LcD2}, combined with \eqref{eq:Vm_LcD}, that for all $m \geq 1$,
	\begin{align}\notag
		\Pb( \Vc_m(\Lc_D) )&\le  \sum_{\gamma\in L_{u,v}} \Pb( E_1(\Lc_D\uplus\{ \gamma\}) ) \,\alpha\nu(\gamma) \\\notag
		&\hspace{1cm} \times \frac{1}{(m-1)!}  \sum_{\gamma_1\in L_{u,v}} \cdots \sum_{\gamma_{m-1}\in L_{u,v}} \Pb(X_{\{ \gamma_1,\ldots,\gamma_{m-1} \}}(u)>\lambda/2) \prod_{j=1}^{m-1} \alpha\nu(\gamma_j)  \\[3mm] \notag
		&\hspace{2cm} + \frac{(\alpha\nu(L_{u,v}))^{m-1}}{(m-1)!} \sum_{\gamma\in L_{u,v}} \Pb( E_2(\Lc_D\uplus\{ \gamma\}) )\, \alpha\nu(\gamma)  \\ \label{eq:dec2}
		& =: I_m + J_m.
	\end{align}
	We now bound the two terms $I_m$ and $J_m$ separately, respectively in Steps~2 and 3.
	
	\textbf{Step 2.}
	We analyze $I_m$ first, and for this purpose, we denote by $n_u(\gamma)$ the total number of visits to $u$ made by $\gamma$. We claim that there exists a universal constant $0<c<1$ such that for all $n\ge 1$,
	\begin{equation}\label{eq:S}
		(S):=\sum_{m\ge 2} \frac{1}{(m-1)!}
		(\alpha\nu)^{m-1}\bigg( \bigg\{ (\gamma_1,\cdots,\gamma_{m-1})\in L_{u,v}^{m-1}: \sum_{i=1}^{m-1}n_u(\gamma_i)>n \bigg\} \bigg) \le c^n. 
	\end{equation} 
	Let us prove \eqref{eq:S}. To this aim, we rewrite $(S)$ as 
	\begin{align*}
		(S)=\sum_{r>n} \sum_{m\ge 1} \sum_{(n_1,\ldots, n_m)\in S_{m,r}} \frac{1}{m!} \prod_{i=1}^{m} \alpha\nu (\{ \gamma\in L_{u,v}: n_u(\gamma)=n_i \}),
	\end{align*}
	where $S_{m,r}$ denotes the set of all $m$-tuples of positive integers $(n_1,\ldots, n_m)$ such that $n_1+\cdots+n_m=r$. Note that for each $i =1, \ldots, m$,
	\begin{equation}\label{eq:nu-L}
		\nu (\{ \gamma\in L_{u,v}: n_u(\gamma)=n_i \})
		=\mu_0(\{ \text{$l$ is a loop rooted at $u$}: \Uc(l)\in L_{u,v}, n_u(l)=n_i \})/n_i
	\end{equation}
	(where $\mu_0$ is the natural extension of $\mu_0$ to rooted loops, see the paragraph below \eqref{eq:W-nu}), which is at most $\delta^{n_i}/n_i$ for some $0<\delta<1$, from the observation above \eqref{eq:WD} (recall that $L_{u,v}$ is the set of loops in $D$ that visit $u$ but avoid $v$). Therefore,
	\[
	(S)\le \sum_{r>n} \sum_{1 \leq m \leq r} \sum_{(n_1,\ldots, n_m)\in S_{m,r}} \frac{1}{m!} \prod_{i=1}^{m} \frac{\alpha \delta^{n_i}}{n_i} \le \sum_{r>n} \delta^r \lesssim c^n,
	\]
	where in the second inequality, we first dropped $\alpha$, thanks to $\alpha\le 1/2 <1$, and we then used the following combinatorial identity (see for instance Eq.~(6) in \cite{WP2021}):
	\[
	\text{for each $r \geq 1$,} \quad \sum_{1 \leq m \leq r} \sum_{(n_1,\ldots, n_m)\in S_{m,r}} \frac{1}{m!} \prod_{i=1}^{m} \frac{1}{n_i}=1.
	\]
	This shows the claim \eqref{eq:S}. From it, we can deduce that for some $\rho_1 \in (0,1)$,
	\[
	\sum_{m\ge 2} \frac{1}{(m-1)!} \sum_{\gamma_1\in L_{u,v}} \cdots \sum_{\gamma_{m-1}\in L_{u,v}} \Pb(X_{\{ \gamma_1,\ldots,\gamma_{m-1} \}}(u)>\lambda/2) \prod_{j=1}^{m-1} \alpha\nu(\gamma_j) \lesssim \rho_1^\lambda.
	\]
	Indeed, in the sum above, it suffices to distinguish whether the total number of visits made by the loops $\gamma_1, \ldots, \gamma_{m-1}$ together is $> \lambda/4$ or $\leq \lambda/4$. In the the former case, one can use \eqref{eq:S}, while in the latter case, one can notice that $X_{\{ \gamma_1,\ldots,\gamma_{m-1} \}}(u)$ is then stochastically dominated by a Gamma distributed random variable with parameters $(\lambda/4+\alpha,1)$, by construction of the occupation field (see Section~\ref{subsec:RWLS}).
	Therefore, using $E_1(L)\subseteq E(L)$, we get
	\begin{equation} \label{eq:sumIm1}
		\sum_{m\ge 2} I_m \le \rho_1^\lambda\, \sum_{\gamma\in L_{u,v}} \Pb( E(\Lc_D\uplus\{ \gamma\}) ) \,\alpha\nu(\gamma).
	\end{equation}
	
	Finally, we use the same construction as for Lemma~\ref{lem:quasi} in \cite{GNQ2024a}, in order to decouple the three arm events on the right-hand side of \eqref{eq:decom}. More precisely, we can in this way modify any potential crossing loop (which would intersect any two of the three corresponding annuli), which provides the following quasi-multiplicativity result:
	\begin{equation} \label{eq:sumIm2}
		\sum_{\gamma\in L_{u,v}} \Pb( E(\Lc_D\uplus\{ \gamma\}) ) \,\alpha\nu(\gamma) \lesssim_{\alpha} \Pb( \Ac^{\lambda}_{\loc}(d_1,k/8) ) \, \Pb( \Ac^{\lambda}_D(4k,d_2) ) \, \Pb( \Ac^{\lambda}_{\loc}(u;1,k/4) ).
	\end{equation}
	Note that here, there is an additional difficulty compared to the proof of Lemma~\ref{lem:quasi}, since the modification might change the occupation at some $\lambda$-passage edges. However this is not a real issue, thanks to our relaxation, in the chains associated with the $\lambda$-arm events, on the occupation at the first and last $*$-passage edges (see (\ref{rem:relax}) in Remark~\ref{rem:modified arm event} above). Hence, combining \eqref{eq:sumIm1} and \eqref{eq:sumIm2}, we finally obtain
	\begin{equation}\label{eq:sum-I}
		\sum_{m\ge 2} I_m\lesssim_{\alpha} \rho_1^\lambda\,\Pb( \Ac^{\lambda}_{\loc}(d_1,k/8) ) \, \Pb( \Ac^{\lambda}_D(4k,d_2) ) \, \Pb( \Ac^{\lambda}_{\loc}(u;1,k/4) ).
	\end{equation}
	Note that here, we can use an inner radius $1$ instead of $8$ in the above two inequalities, since we do not need to use the special surgery in Lemma~\ref{lem:mod-z}. Later in Step~4, we simply use the inequality $\Pb( \Ac^{\lambda}_{\loc}(u;1,k/4) )\le \Pb( \Ac^{\lambda}_{\loc}(u;8,k/4) )$, which follows from obvious monotonicity.
	
	\textbf{Step 3.} We now analyze $J_m$. First, it follows from \eqref{eq:nu-L} that $\nu(L_{u,v})$ is uniformly bounded, so
	\begin{equation} \label{eq:nu_unif_bd}
		\sum_{m\ge 2}\frac{(\alpha\nu(L_{u,v}))^{m-1}}{(m-1)!}=e^{\alpha\nu(L_{u,v})}-1\le C,
	\end{equation}
	for some universal $C>0$.
	
	Next, we need to deal with the remaining factor $\sum_{\gamma\in L_{u,v}} \Pb( E_2(\Lc_D\uplus\{ \gamma\}) )\, \alpha\nu(\gamma)$. In this case, we know that $\gamma$ contributes an occupation at least $\lambda/2$ at $u$. We would like to use again the modification from the proof of Lemma~\ref{lem:quasi}. However, note that it does not work here when $\gamma$ is also a crossing loop, since we have to preserve the occupation at $u$. We now explain how to solve this issue. For this purpose, we refer to notations in \cite{GNQ2024a}, see in particular Figure~\figquasi{} there. If we decompose $\gamma$ as in Eq.~(\eqquasidec{}) in that paper, but without the superscript $i$, the intermediate parts $\omega_1$ and $\omega_2$ of $\gamma$ might have visits to $u$ (see the blue paths in the figure just mentioned). Hence, we modify the construction by keeping these intermediate parts.
	
	More precisely, recall that $\gamma_1$ and $\gamma_2$ were obtained from $\gamma$ through Eq.~(\eqgamma{}) in \cite{GNQ2024a}. Apart from that, we further sample $\omega_1''$ and $\omega_2''$ according to $\mu^{z_1,z_2}$ and $\mu^{z_3,z_4}$, independently, such that $\omega_1''\subseteq l_1^{d_2/100}$ and $\omega_2''\subseteq l_2^{d_2/100}$ (where $l_1^{d_2/100}$ and $l_2^{d_2/100}$ are the $d_2/100$-sausages of $l_1$ and $l_2$, respectively). Then, we attach them to $\omega_1$ and $\omega_2$ to get a third loop
	$$\gamma_3:=\Uc(\omega_1\oplus \omega_1''\oplus \omega_2\oplus \omega_2''),$$
	that we keep in the final modified configuration.
	
	In this way, we can preserve the occupation produced by $\gamma$ without spoiling the arm event. Therefore, we obtain that 
	\begin{align} \notag
		\sum_{\gamma\in L_{u,v}} & \Pb( E_2(\Lc_D\uplus\{ \gamma\}) ) \,\alpha\nu(\gamma)\\ \label{eq:J1}
		& \quad \lesssim_{\alpha} \Pb( \Ac^{\lambda}_{\loc}(d_1,k/8) ) \, \Pb( \Ac^{\lambda}_{\Lc_D\setminus\Lc_D^u}(4k,d_2) ) \,  \Pb( \Ac^{\lambda}_{D}(u,v;k/4)\cap \{X_{D}(u)>\lambda/2\}).
	\end{align}
	Using Lemma~\ref{lem:mod-z}, we have  
	\begin{equation} \label{eq:J1_arms}
		\Pb( \Ac^{\lambda}_{D}(u,v;k/4)\cap \{X_{D}(u)>\lambda/2\}) \le c_7\, \Pb( \Ac^{\lambda}_{D}(\dot{u};8,k/4) )\, \Pb( \Lc_D(\{u,v\})=\emptyset )\, \Pb( X^v_{D}(u)>\lambda/2 ).
	\end{equation}
	From Lemma~\ref{lem:0w}, we get 
	\begin{equation} \label{eq:uv-u}
		\Pb( \Lc_D(\{u,v\})=\emptyset )\lesssim \Pb( \Lc_D^u=\emptyset ),
	\end{equation}
	and by independence, 
	\begin{equation} \label{eq:LkLu}
		\Pb( \Lc_D^u=\emptyset )=\Pb( \Lc^u_{k/2} = \emptyset ) \, \Pb( \Lc_D^u\setminus \Lc^u_{k/2} = \emptyset ). 
	\end{equation}
	By Lemma~\ref{lem:lambda-locality},
	\[
	\Pb( \Ac^{\lambda}_{D}(\dot{u};8,k/4) )\,\Pb( \Lc^u_{k/2} = \emptyset ) \lesssim \Pb( \overrightarrow\Ac^{\lambda}_D(\dot{u};8,k/4)) \,\Pb( \Lc^u_{k/2} = \emptyset ) = \Pb( \overrightarrow\Ac^{\lambda}_D(\dot{u};8,k/4)\cap \{ \Lc^u_{k/2} = \emptyset \} ).
	\]
	Let $G$ be the event that there is no loop in $\Lc_D$ crossing $A_{k/4,k/2}$ without disconnecting $B_8$ from infinity. By Lemma~\lemcrossingloop{} in \cite{GNQ2024a}, we have $\Pb(G)\ge C$ for some universal constant $C>0$. Note that both events $G$ and $\overrightarrow\Ac^{\lambda}_D(\dot{u};8,k/4)\cap \{ \Lc^u_{k/2} = \emptyset \}$ are decreasing with respect to the loops in $\Lc_D$ which cross $A_{k/4,k/2}$ without disconnecting $B_8$ from infinity. Therefore, by the FKG inequality (Lemma~\ref{lem:FKG-RWLS}), we have 
	\[
	\Pb( \overrightarrow\Ac^{\lambda}_D(\dot{u};8,k/4)\cap \{ \Lc^u_{k/2} = \emptyset \} ) \lesssim \Pb( \overrightarrow\Ac^{\lambda}_D(\dot{u};8,k/4)\cap \{ \Lc^u_{k/2} = \emptyset \}\cap G ).
	\]
	Since $\overrightarrow\Ac^{\lambda}_D(\dot{u};8,k/4)\cap \{ \Lc^u_{k/2} = \emptyset \}\cap G$ implies $\Ac^{\lambda}_{\loc}(u;8,k/4)$, it follows that 
	\begin{equation} \label{eq:J1_arms2}
		\Pb( \Ac^{\lambda}_{D}(\dot{u};8,k/4) )\,\Pb( \Lc^u_{k/2} = \emptyset ) \lesssim \Pb( \Ac^{\lambda}_{\loc}(u;8,k/4)).
	\end{equation}
	
	By adapting again the proof of Lemma~\ref{lem:delete-dot}, we also have 
	\begin{align} \notag
		&\Pb( \Ac^{\lambda}_{\Lc_D\setminus\Lc_D^u}(4k,d_2) ) \,\Pb( \Lc_D^u\setminus \Lc^u_{k/2} = \emptyset  )\\ \label{eq:J1_arms3}
		&\hspace{3cm} \lesssim \Pb( \overleftarrow\Ac^{\lambda}_{\Lc_D\setminus\Lc_D^u}(4k,d_2) ) \,\Pb( \Lc_D^u\setminus \Lc^u_{2k}=\emptyset  )
		\lesssim \Pb(  \Ac^{\lambda}_{D}(4k,d_2) ).
	\end{align}
	If we now plug \eqref{eq:J1_arms}, \eqref{eq:uv-u}, \eqref{eq:LkLu}, \eqref{eq:J1_arms2}, and \eqref{eq:J1_arms3} into \eqref{eq:J1}, we obtain 
	\begin{align*}
		\sum_{\gamma\in L_{u,v}} & \Pb( E_2(\Lc_D\uplus\{ \gamma\}) ) \,\alpha\nu(\gamma)\\
		& \quad \lesssim_{\alpha} \Pb( \Ac^{\lambda}_{\loc}(d_1,k/8) ) \, \Pb(  \Ac^{\lambda}_{D}(4k,d_2) ) \, \Pb( \Ac^{\lambda}_{\loc}(u;8,k/4))\, \Pb( X_{D}^v(u)>\lambda/2 ).
	\end{align*}
	By Lemma~\ref{lem:Gauss} and Chebyshev's inequality, there exists $\rho_2\in(0,1)$ such that
	\begin{equation*}
		\Pb( X^v_{k/2}(u)>{\lambda} )\le \rho_2^{\lambda},
	\end{equation*}
	which yields
	\begin{equation}\label{eq:E2}
		\sum_{\gamma\in L_{u,v}} \Pb( E_2(\Lc_D\uplus\{ \gamma\}) ) \,\alpha\nu(\gamma) 
		\lesssim_{\alpha} \rho_2^{\lambda}\,
		\Pb( \Ac^{\lambda}_{\loc}(d_1,k/8) ) \, \Pb( \Ac^{\lambda}_D(4k,d_2) ) \,  \Pb( \Ac^{\lambda}_{\loc}(u;8,k/4)).
	\end{equation}
	
	Thus, combining \eqref{eq:nu_unif_bd} and \eqref{eq:E2}, we have
	\begin{equation} \label{eq:sum-J}
		\sum_{m\ge 2} J_m\lesssim_{\alpha} \rho_2^\lambda\,\Pb( \Ac^{\lambda}_{\loc}(d_1,k/8) ) \, \Pb( \Ac^{\lambda}_D(4k,d_2) ) \,  \Pb( \Ac^{\lambda}_{\loc}(u;8,k/4) ).
	\end{equation}
	
	\textbf{Step 4.} Finally, we deal with $\Vc_1(\Lc_D)$ and conclude. By definition, $\Vc_1(\Lc_D)\subseteq E_2(\Lc_D)$, therefore, by \eqref{eq:E2},
	\begin{align*}
		& \Pb(\Vc_1(\Lc_D) ) \le \Pb( E_2(\Lc_D) ) \le \sum_{\gamma\in L_{u,v}} \Pb( E_2(\Lc_D\uplus\{ \gamma\}) ) \,\alpha\nu(\gamma) \\
		&\hspace{3cm}
		\lesssim_{\alpha} \rho_2^{\lambda}\,
		\Pb( \Ac^{\lambda}_{\loc}(d_1,k/8) ) \, \Pb( \Ac^{\lambda}_D(4k,d_2) ) \,  \Pb( \Ac^{\lambda}_{\loc}(u;8,k/4)).
	\end{align*}
	
	Combined with the previous bounds, this gives that for some $\rho\in (\rho_1\vee\rho_2,1)$,
	\begin{align*}
		\Pb( \Ac^{\lambda}_D(d_1,d_2) \cap \{ (u,v)\in\Pc^{\lambda}  \} )&	\stackrel{\eqref{eq:dec1}, \eqref{eq:dec2}}{\le}\ \Pb(\Vc_1(\Lc_D) ) + \sum_{m\ge 2} (I_m+J_m) \\
		&	\stackrel{\eqref{eq:sum-I}, \eqref{eq:sum-J}}{\lesssim_{\alpha}}\ 	
		\rho^{\lambda}\,
		\Pb( \Ac^{\lambda}_{\loc}(d_1,k/8) ) \, \Pb( \Ac^{\lambda}_D(4k,d_2) ) \,  \Pb( \Ac^{\lambda}_{\loc}(u;8,k/4)),
	\end{align*}
	which completes the proof of Lemma~\ref{lem:decom}.
\end{proof}

\subsubsection{Main estimate: stability of $\lambda$-arm events}

Now, we are finally in a position to prove the main result of this section, which shows that when $\lambda$ is large enough, the probability of the $\lambda$-arm event $\Ac^{\lambda}_D(z;d_1,d_2)$ is upper bounded by (a constant multiple of) $\pi(d_1,d_2)$, exactly as the arm event $\Ac_D(z;d_1,d_2)$. As we have explained, the proof involves an induction procedure which has the same spirit to that of Proposition~\ref{prop:m4a*} or \cite[Theorem~5.3]{BN2022}.

\begin{proposition}\label{prop:mae}
	For all $\alpha\in (0,1/2)$, there exist $c_9(\alpha)>0$ and $\lambda_0(\alpha)>0$ such that the following holds. For all $\lambda \geq \lambda_0$, $z\in \Zb^2$, $0<d_1< d_2$, and $D \supseteq B_{2d_2}(z)$,
	\begin{equation}\label{eq:modified-arm}
		\Pb(\Ac^{\lambda}_D(z;d_1,d_2))\le c_9\, \pi(d_1,d_2).
	\end{equation}
\end{proposition}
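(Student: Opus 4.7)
The plan is to proceed by induction on the dyadic scale ratio. By the monotonicity from Remark~\ref{rem:modified arm event}, it suffices to establish \eqref{eq:modified-arm} when $d_1 = 2^i$ and $d_2 = 2^j$ with integers $0 \le i < j$, and the induction is on $j - i$. The base case (bounded $j - i$) follows from the trivial bound $\Pb(\Ac^\lambda_D) \leq 1$ together with the fact that $\pi(2^i, 2^j) = 2^{-(j-i)(2+\beta)}$ is bounded below when $j - i$ is bounded. For the inductive step, I would follow the scheme of the proof of Proposition~\ref{prop:m4a*} and split
$$\Pb(\Ac^\lambda_D(2^i, 2^j)) \leq (I) + (II),$$
where $(I)$ is the probability that no edge of the $\lambda$-passage set $\Pc^\lambda$ intersects the middle annulus $A_{2^{i+4}, 2^{j-3}}$, while $(II)$ gathers, via a union bound, the configurations having a $\lambda$-passage edge at some $(u, v)$ with $|u| = k \in [2^l, 2^{l+1})$, $l \in [i+4, j-4]$, and $v \sim^* u$.

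For $(I)$, if no $\lambda$-passage edge lies in the middle annulus, then each admissible $*\lambda*$-chain contributes at most two $*$-passage edges inside that annulus (namely the first and last, non-$\lambda$, $*$-passage edges of the chain, if they happen to fall there), so that $\Ac^*_D(2^{i+4}, 2^{j-3}) \cap \{\Nc \leq 4\}$ occurs (possibly after pruning boundary-touching interior clusters to extract a properly crossing sub-chain). Applying Proposition~\ref{prop:m4a*} with $m = 4$ then gives
$$(I) \leq e^{c_5^5}\, \pi(2^{i+4}, 2^{j-3}) = e^{c_5^5}\, 2^{7(2+\beta)}\, \pi(2^i, 2^j),$$
a bound independent of $\lambda$.

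For $(II)$, applying Lemma~\ref{lem:decom} to each edge $(u, v)$ together with the inductive hypothesis on the first two factors (whose scale ratios $k/(8 \cdot 2^i)$ and $2^j/(4k)$ are both at most $2^{j-i-6}$, hence strictly less than $2^{j-i}$) yields a bound of the form
$$c_8 \rho^\lambda\, c_9^2\, \pi(2^i, k/8)\, \pi(4k, 2^j)\, \Pb(\Ac^\lambda_{\loc}(u; 8, k/4)).$$
The product $\pi(2^i, k/8)\, \pi(4k, 2^j) = 32^{2+\beta}\, \pi(2^i, 2^j)$ is independent of $k$, and under the additional inductive bound $\Pb(\Ac^\lambda_{\loc}(u; 8, k/4)) \leq c_9\, \pi(8, k/4) = c_9\, 32^{2+\beta}\, k^{-(2+\beta)}$ on the third factor, the spatial sum over $(u, v)$ and $l$ reduces to $\sum_l 2^{2l}\, 2^{-(l-5)(2+\beta)} \lesssim_\beta 1$, converging thanks to $\beta > 0$. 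This leads to $(II) \leq C(\alpha)\, c_9^3\, \rho^\lambda\, \pi(2^i, 2^j)$. Choosing $c_9$ large enough that $e^{c_5^5}\, 2^{7(2+\beta)} \leq c_9/2$, and then choosing $\lambda_0$ large enough that $C(\alpha)\, c_9^3\, \rho^{\lambda_0} \leq 1/2$, would close the induction.

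The main obstacle is controlling the third factor $\Pb(\Ac^\lambda_{\loc}(u; 8, k/4))$ when its scale ratio $k/32$ exceeds the running ratio $2^{j-i}$, a situation that arises when $d_1$ is large compared to $d_2/d_1$. In that regime, the inductive hypothesis does not apply directly to arm~3. I expect this obstacle to be resolved by a genuine two-scale induction on the inner and outer scales in the spirit of \cite[Theorem~5.3]{BN2022}, or by an additional use of quasi-multiplicativity (Lemma~\ref{lem:quasi-lbd}) to further split arm~3 into a bounded-ratio inner piece (controlled trivially by a constant multiple of $\pi$) and a residual piece that does fit under the inductive hypothesis.
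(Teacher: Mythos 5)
Your proposal follows the paper's proof closely, and your diagnosis of the main obstacle is exactly right: arm~3, namely $\Pb(\Ac^{\lambda}_{\loc}(u;8,2^{k-2}))$, cannot be handled by an induction purely on the scale ratio $j-i$. The paper uses precisely the two-scale induction you anticipate -- the inductive hypothesis is that \eqref{eq:modified-arm} holds for all pairs $(i',j')$ with $j'<j$ and for all pairs $(i',j)$ with $i'>i$ -- under which arm~3 falls directly within the hypothesis because its outer scale satisfies $k-2 < j$, so no further splitting via quasi-multiplicativity is required.
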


\begin{proof}[Proof of Proposition~\ref{prop:mae}]
	Once again, using monotonicity (see (\ref{rem:mae(b)}) in Remark~\ref{rem:modified arm event}), we can see that it is enough to show that \eqref{eq:modified-arm} holds for $d_1=2^i$ and $d_2=2^j$, for any two integers $0\le i<j$. We prove it by induction on $(i,j)$. 
	
	First, it is clear that for $c_9 \geq 2^{10(2+\beta)}$, \eqref{eq:modified-arm} holds for all pairs $(i,j)$ with $1\le j-i\le 10$. Now, consider a pair $(i,j)$ satisfying $j-i\ge 11$, and assume that \eqref{eq:modified-arm} holds for all pairs $(i',j')$ with $j'<j$, as well as for all pairs $(i',j)$ with $i'>i$. It remains to show that under this assumption, \eqref{eq:modified-arm} holds for the pair $(i,j)$.
	
	Letting
	\begin{equation}\label{eq:Ec12}
		\Ec_1=\Ac^{\lambda}_{D}(z;2^{i+4},2^{j-3})\cap \{ \Nc\le 4 \},
		\quad \Ec_2=\Ac^{\lambda}_D(z;2^i,2^j)\setminus \Ec_1,
	\end{equation}
	where $\Nc$ is the connection number associated with $\Ac^{\lambda}_{D}(z;2^{i+4},2^{j-3})$,
	we have 
	\begin{equation}\label{eq:sp1}
		\Pb(\Ac^{\lambda}_D(z;2^i,2^j))\le \Pb( \Ec_1 ) + \Pb( \Ec_2 ).
	\end{equation}
	From (\ref{rem:mae(b)}) and (\ref{rem:mae(a)}) of Remark~\ref{rem:modified arm event}, respectively, we have the inclusions
	\begin{equation}\label{eq:inclusion-1}
		\Ac^{\lambda}_D(z;2^i,2^j) \subseteq \Ac^{\lambda}_{D}(z;2^{i+4},2^{j-3}) \subseteq \Ac^*_{D}(z;2^{i+4},2^{j-3}). 
	\end{equation}
	Using the second inclusion in \eqref{eq:inclusion-1}, and applying Proposition~\ref{prop:m4a*} with $m=4$, we obtain that 
	\begin{equation}\label{eq:Ec1}
		\Pb(\Ec_1 )\le \Pb( \Ac^*_{D}(z;2^{i+4},2^{j-3})\cap \{ \Nc\le 4 \} ) \le e^{c_5^5}\, \pi(2^{i+4},2^{j-3}) = e^{c_5^5}\, 2^{7(2+\beta)}\, \pi(2^i,2^j).
	\end{equation}
	Next, we deal with $\Ec_2$. Let $\Pc^{\lambda}$ be the $\lambda$-passage set associated with $\Ac^{\lambda}_D(z;2^i,2^j)$.
	By the inclusion \eqref{eq:inclusion-1}, on the event $\Ec_2$, there exists at least one $\lambda$-passage edge $(u,v)\in\Pc^{\lambda}$ such that $u\in A_{2^{i+4},2^{j-3}}(z)$ (see (\ref{rem:mae(last)}) of Remark~\ref{rem:modified arm event}). By the union bound, and then Lemma~\ref{lem:decom}, we have
	\begin{align}\notag
		\Pb(\Ec_2) &\le \sum_{k=i+4}^{j-4} \sum_{u \in A_{2^k,2^{k+1}}(z)} \sum_{v: v \sim^* u}  \Pb( \Ac^{\lambda}_D(z;2^i,2^j) \cap \{ (u,v)\in\Pc^{\lambda}  \} ) \\ \label{eq:lbd-E2}
		&\le \sum_{k=i+4}^{j-4} \sum_{u \in A_{2^k,2^{k+1}}(z)} \sum_{v: v \sim^* u} c_8\, \rho^{\lambda}\,  \Pb( \Ac^{\lambda}_{\loc}(z;2^i,2^{k-3}) ) \,\Pb( \Ac^{\lambda}_D(z;2^{k+2},2^j) )\, \Pb( \Ac^{\lambda}_{\loc}(u;8,2^{k-2}) ),
	\end{align}
	for some universal constant $\rho \in (0,1)$. Applying the induction hypothesis three times, we obtain
	\begin{align*}
		\Pb(\Ec_2) & \le \sum_{k=i+4}^{j-4} \sum_{u \in A_{2^k,2^{k+1}}(z)} \sum_{v: v \sim^* u} c_8\, \rho^{\lambda}\, \cdot c_9\,  \pi(2^i,2^{k-3}) \, \cdot c_9\,  \pi(2^{k+2},2^j)\, \cdot c_9\,  \pi(8,2^{k-2}).
	\end{align*}
	Finally, this yields
	\begin{equation}\label{eq:Ec2}
		\Pb(\Ec_2) \le c_8\, c_9^3\, C_2\,  2^{5(2+\beta)}\,   \rho^{\lambda} \,  \pi(2^i,2^{j}),
	\end{equation}
	using first that 
	$$\pi(2^i,2^{k-3})\, \pi(2^{k+2},2^j) = 2^{5(2+\beta)}\, \pi(2^i,2^{j}),$$
	and then the summability of the remaining sum:
	\begin{align*}
		\sum_{k=i+4}^{j-4} \sum_{u \in A_{2^k,2^{k+1}}(z)} \sum_{v: v \sim^* u}
		\pi(8,2^{k-2}) \le
		\sum_{k=i+4}^{j-4} C_1\, 2^{2k}\,  2^{-(2+\beta)k}\le C_2,
	\end{align*}
	for some universal constants $0<C_1,C_2<\infty$ (it is in this last inequality that the bound $\xi(\alpha) > 2$ on the four-arm exponent is used crucially).
	
	We now explain how to fix $c_9$ so as to make the induction step work. We first take some $c_9 \geq 2^{10(2+\beta)} \vee (e^{c_5^5}\, 2^{7(2+\beta)+1})$, and then for such a $c_9$, we let $\lambda_0$ be large enough so that
	$$c_8\, c_9^3\, C_2\,  2^{5(2+\beta)}\, \rho^{\lambda_0} \leq \frac{c_9}{2}.$$
	For such a choice of $c_9$ and $\lambda_0$, we obtain from \eqref{eq:sp1}, combined with \eqref{eq:Ec1} and \eqref{eq:Ec2}, that for all $\lambda \geq \lambda_0$,
	\[
	\Pb(\Ac^{\lambda}_D(z;2^i,2^j))\le c_9\,  \pi(2^i,2^{j}).
	\]
	Hence, we have verified that \eqref{eq:modified-arm} holds for the pair $(i,j)$. This completes the proof of Proposition~\ref{prop:mae}.
\end{proof}

\subsection{Modified boundary four-arm events}\label{subsec:boundary}

In previous sections, we only deal with $\lambda$-arm events $\Ac^{\lambda}_{D}(z;d_1,d_2)$ under the assumption that $B_{2d_2}(z)\subseteq D$, which can be viewed as an interior case. However, in the proofs of Theorems~\ref{thm:main} and~\ref{thm:carpet1} in Section~\ref{subsec:main}, we will consider $\lambda$-chains starting from the bottom side of a given box, which produces boundary $\lambda$-two-arm events (see Definition~\ref{def:blta}). Hence, we also need to deal with the boundary case, i.e.\ when $B_{2d_2}(z)\nsubseteq D$.
It will be clear in Section~\ref{subsec:blta} that in order to analyze the boundary $\lambda$-two-arm event, we should deal with its four-arm counterpart first. Therefore, we focus on the four-arm case in this section, before turning to two-arm events in the next section.

We first introduce boundary $*$- and $\lambda$-arm events, in a similar way to boundary four-arm events (see Definition~\ref{def:bfa}). For any $0< d_1<d_2$, $z\in\wb\Hb$, and $D \supseteq B_{2d_2}(z)$, the modified boundary four-arm events are defined as
\begin{equation}
	\Ac^{*,+}_D(z;d_1,d_2):=\Ac^{*}_{D^+}(z;d_1,d_2),\quad \text{and} \quad \Ac^{\lambda,+}_D(z;d_1,d_2):=\Ac^{\lambda}_{D^+}(z;d_1,d_2)
\end{equation}
(recall that $D^+:=D\cap\Hb$). In other words, we only use the loops that stay in $\Hb$.

In this situation, we adopt the same terminology as in the interior case, using the connection number $\Nc$, the $*$-passage set $\Pc^{*}$, and the $\lambda$-passage set $\Pc^{\lambda}$ (with obvious adaptations in the definitions). Also, we abbreviate $\Ac^{\lambda,+}_{B_{2d_2}(z)}(z;d_1,d_2)$ as $\Ac^{\lambda,+}_{\loc}(z;d_1,d_2)$. 

These arm events can be analyzed in a similar fashion as in the interior case, so we remain brief, omitting large parts of the proofs. First, we have the following counterpart of Proposition~\ref{prop:m4a*}.

\begin{proposition}\label{prop:m4a+}
	For all $\alpha\in (0,1/2)$, there exists $c_{10}(\alpha)>0$ such that for all $m\ge 0$, $z = (x,0)$, $0<d_1< d_2$, and $D \supseteq B_{2d_2}(z)$,
	\begin{equation}\label{eq:mae+-N}
		\Pb( \Ac^{*,+}_D(z;d_1,d_2) \cap \{\Nc\le m\} ) \le e^{c_{10}^{m+1}}\, \pi(d_1,d_2).
	\end{equation}
\end{proposition}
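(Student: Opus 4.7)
The plan is to run the same induction on the connection number $m$ as in the proof of Proposition~\ref{prop:m4a*}, systematically substituting boundary counterparts of the underlying tools. By monotonicity (see (\ref{rem:mae(b)}) of Remark~\ref{rem:modified arm event}), it suffices to establish \eqref{eq:mae+-N} for $d_1 = 2^i$ and $d_2 = 2^j$ with integers $0 \le i < j$. For the base case $m = 0$, the event $\Ac^{*,+}_D(z;d_1,d_2) \cap \{\Nc = 0\}$ reduces to the boundary four-arm event $\Ac^+_D(z;d_1,d_2)$, which is at most $c_2^+\, \pi(d_1,d_2)$ by \eqref{eq:beta3}; this is absorbed by taking $c_{10}$ such that $e^{c_{10}} \ge c_2^+$.

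For the inductive step, I would split the event according to whether no $*$-passage edge lies inside the reduced half-annulus $A^+_{2^{i+4},2^{j-3}}(z)$, or such an edge $(u,v)$ exists, exactly as in \eqref{eq:I+II}. The first contribution is contained in $\Ac^+_D(z; 2^{i+4}, 2^{j-3})$ and is thus bounded by $c_2^+\, 2^{7(2+\beta)}\, \pi(2^i,2^j)$. For the second, I would sum over the possible positions $u \in A_{2^k,2^{k+1}}(z) \cap \Hb$ of such a passage edge, for $k$ ranging from $i+4$ to $j-4$. The crucial ingredient is a boundary analog of Lemma~\ref{lem:decom*}, which can be derived by transposing to the half-plane the crossing-loop surgery that yielded its interior counterpart; such a transposition is the exact analog of what was already carried out for locality and quasi-multiplicativity (cf.\ Proposition~\proplocalityb{} in \cite{GNQ2024a} and the concluding remark of Section~\ref{sec:prop_four_arm}).

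This boundary decomposition bounds the contribution of each $(u,v)$ by $c_4^+$ times the product of three $*$-arm probabilities: $\Pb(\Ac^{*,+}_{\loc}(z; 2^i, 2^{k-3}))$, $\Pb(\Ac^{*,+}_D(z; 2^{k+2}, 2^j))$, and a third factor around $u$ at scale $2^{k-2}$. The third factor is an interior event $\Pb(\Ac^{*}_{\loc}(u; 1, 2^{k-2}))$ when $d_\infty(u, \partial\Hb) \ge 2^{k-1}$, and a boundary one $\Pb(\Ac^{*,+}_{\loc}(u; 1, 2^{k-2}))$ otherwise. Because the surgery preserves passage-edge counts, each factor corresponds to connection number at most $m-1$, so the induction hypothesis of the present proposition (for the two boundary factors around $z$ and, when relevant, around $u$) combined with Proposition~\ref{prop:m4a*} (for the interior-$u$ case) bounds every factor by $e^{c_{10}^m}\, \pi(\cdot,\cdot)$.

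Summing these contributions, the telescoping $\pi(2^i,2^{k-3})\, \pi(2^{k+2},2^j) = 2^{5(2+\beta)}\, \pi(2^i,2^j)$ combined with the convergent residual sum $\sum_{k \ge i+4} 2^{2k}\, 2^{-(2+\beta)k}$, which converges precisely because $\beta > 0$, i.e.\ $\xi(\alpha) > 2$ (this is where the subcritical hypothesis $\alpha < 1/2$ enters crucially), produces a total of order $K\, e^{3c_{10}^m}\, \pi(2^i, 2^j)$, with $K = K(\alpha)$ collecting the $\alpha$-dependent constants. Choosing $c_{10}$ large enough so that $c_2^+\, 2^{7(2+\beta)} + K\, e^{3c_{10}^m} \le e^{c_{10}^{m+1}}$ for all $m \ge 1$ then closes the induction. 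The main obstacle is the rigorous derivation of the boundary version of Lemma~\ref{lem:decom*} and the bookkeeping needed to decide whether the arm event around $u$ is interior or boundary; however, neither represents a substantial new difficulty beyond what \cite{GNQ2024a} has already developed for ordinary boundary arm events.
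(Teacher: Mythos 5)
Your overall strategy matches the paper's: induction on the connection number $m$, base case from the boundary four-arm bound \eqref{eq:beta3}, a boundary analogue of Lemma~\ref{lem:decom*} for the inductive step, and a summation over the location of the passage edge using $\xi(\alpha)>2$. The gap is in your treatment of the third factor around $u$. You propose a dichotomy (interior $*$-arm event around $u$ when $u$ is far from $\partial\Hb$, ``boundary one $\Pb(\Ac^{*,+}_{\loc}(u;1,2^{k-2}))$ otherwise''), and then want to apply the induction hypothesis ``around $u$''. But the induction hypothesis of Proposition~\ref{prop:m4a+} is only stated (and proved) for centers $z=(x,0)$ on $\partial\Hb$. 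When $u=(x',y')$ is close to but not on the boundary, $\Ac^{*,+}_{\loc}(u;1,2^{k-2})$ is not covered by the hypothesis, and you cannot absorb it directly.

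The paper circumvents this with a \emph{four}-factor decomposition (see \eqref{eq:decoup+}): the contribution around $u$ is split into an interior $*$-arm event $\Ac^*_{\loc}(u;1,2^{r-1}\wedge 2^{k-4})$ at scales below the height $y'\in[2^r,2^{r+1})$ of $u$ (handled by Proposition~\ref{prop:m4a*}), together with a boundary $*$-arm event $\Ac^{*,+}_{\loc}((x',0);2^{r+1}\wedge 2^{k-2},2^{k-2})$ centered at the projection $(x',0)$ of $u$ onto $\partial\Hb$, at scales above $y'$ (handled by the induction hypothesis, since the center is now genuinely on the boundary). This is precisely the extra bookkeeping you flag at the end, but it is not cosmetic: without it your induction cannot close in the regime where $u$ is near but not on $\partial\Hb$. (Alternatively, one could first prove a general-center $*$-arm corollary in the spirit of Proposition~\ref{prop:bbmae} at each connection level, but that is essentially the same split recast as a separate lemma.) With this refinement, the rest of your computation — the telescoping of $\pi$, the convergent geometric sum, and the final choice of $c_{10}$ — goes through unchanged.
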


\begin{proof}
	We use the same induction procedure on $m$ as in the proof of Proposition~\ref{prop:m4a*}, so we adopt the notations introduced there. When $m=0$, the event in the l.h.s. of \eqref{eq:mae+-N} is simply the boundary arm event $\Ac^+_D(z;d_1,d_2)$ (in a similar way as in \eqref{eq:*arm_m=0}), so \eqref{eq:beta3} implies
	$$\Pb( \Ac^{*,+}_D(z;d_1,d_2) \cap \{\Nc=0\} ) =\Pb( \Ac^+_D(z;d_1,d_2) ) \le c'_2\, \pi(d_1,d_2).$$
	For $m\ge 1$, using the same decomposition as in \eqref{eq:I+II}, applying the union bound, and using \eqref{eq:beta3} (analogously to \eqref{eq:(I)}) to bound the first term in the sum, we get 
	\begin{align*}
		\Pb( \Ac^{*,+}_D(z;d_1,d_2) \cap \{\Nc\le m\} ) & \le c'_2 \, \pi( 2^{i+4},2^{j-3} ) \\
		& \hspace{-2cm} + \sum_{k=i+4}^{j-4} \sum_{u \in A_{2^k,2^{k+1}}(z)} \sum_{v: v \sim^* u}  \Pb( \Ac^{*,+}_D(z;d_1,d_2) \cap \{\Nc\le m\} \cap \{ (u,v)\in \Pc^* \} ).
	\end{align*}
	The decoupling result in Lemma~\ref{lem:decom*} also holds for boundary $*$-arm events, from essentially the same proof. Thus, for $u=(x',y')\in A^+_{2^k,2^{k+1}}(z)$ with $2^r\le y'<2^{r+1}$, we have 
	\begin{align}\notag
		\Pb( & \Ac^{*,+}_D(z;d_1,d_2) \cap \{\Nc\le m\} \cap \{ (u,v)\in\Pc^*  \} )\\ \notag
		& \le c_4\, \Pb(  \Ac^{*,+}_{\loc}(z;2^i,2^{k-3})\cap \{\Nc_1\le m-1\} ) \cdot \Pb( \Ac^{*,+}_D(z;2^{k+2},2^j) \cap \{\Nc_2\le m-1\} ) \\ \label{eq:decoup+}
		& \quad \cdot \Pb( \Ac^{*,+}_{\loc}((x',0);2^{r+1} \wedge 2^{k-2},2^{k-2})\cap \{\Nc_3\le m-1\} ) \cdot \Pb( \Ac^*_{\loc}(u;1,2^{r-1} \wedge 2^{k-4})\cap \{\Nc_4\le m-1\} ).
	\end{align}
	Note that in the above decomposition, the last event on the right-hand side is an interior $*$-arm event, so that we can bound it directly by using Proposition~\ref{prop:m4a*}. Then, we apply the induction hypothesis three times, with $m-1$, to the first three events (which are all boundary $*$-arm events), respectively. This yields the following upper bound, for some universal constant $C>0$:
	\[
	\Pb( \Ac^{*,+}_D(z;d_1,d_2) \cap \{\Nc\le m\} ) \le c'_2 \, \pi( 2^{i+4},2^{j-3} ) + C e^{c_5^m} e^{3c_{10}^m} \pi(2^i,2^j).
	\]
	By choosing $c_{10}\ge c_5$ large enough, we can thus conclude the proof by proceeding as we did toward the end of the proof of Proposition~\ref{prop:m4a*}.
\end{proof}

We can then derive the following boundary analog of Proposition~\ref{prop:mae}, by using the same induction argument as for that result, but now with Proposition~\ref{prop:m4a+} as an input (instead of Proposition~\ref{prop:m4a*}).

\begin{proposition}\label{prop:bmae}
	For all $\alpha\in (0,1/2)$, 
	there exist constants $c_{11}(\alpha)>0$ and $\lambda_1(\alpha)\ge\lambda_0(\alpha)$ such that the following holds. For all $\lambda \geq \lambda_1$, $z = (x,0)$, $0<d_1< d_2$, and $D \supseteq B_{2d_2}(z)$,
	\begin{equation}\label{eq:b-modified-arm}
		\Pb(\Ac^{\lambda,+}_D(z;d_1,d_2))\le c_{11}\, \pi(d_1,d_2).
	\end{equation}
\end{proposition}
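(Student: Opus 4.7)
The plan is to run an induction argument on pairs $(i,j)$, with $d_1 = 2^i$ and $d_2 = 2^j$, that is completely parallel to the proof of Proposition~\ref{prop:mae}, but where the input Proposition~\ref{prop:m4a*} is replaced by its boundary counterpart Proposition~\ref{prop:m4a+}, and Lemma~\ref{lem:decom} is replaced by a boundary version established along the way. The monotonicity of $\Ac^{\lambda,+}_D$ in $(d_1,d_2)$ (analogous to (\ref{rem:mae(b)}) of Remark~\ref{rem:modified arm event}) allows the reduction to dyadic scales. For the base case $1 \leq j-i \leq 10$, one checks that any $c_{11} \geq 2^{10(2+\beta)}$ works. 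For the inductive step with $j - i \geq 11$, split the event as $\Ac^{\lambda,+}_D(z;2^i,2^j) \subseteq \Ec_1 \cup \Ec_2$ with
$$\Ec_1 = \Ac^{\lambda,+}_D(z;2^{i+4},2^{j-3}) \cap \{\Nc \leq 4\}, \qquad \Ec_2 = \Ac^{\lambda,+}_D(z;2^i,2^j) \setminus \Ec_1,$$
where $\Nc$ is the connection number of the middle boundary $\lambda$-arm event.

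For $\Ec_1$, Proposition~\ref{prop:m4a+} applied with $m=4$ gives directly $\Pb(\Ec_1) \leq e^{c_{10}^5}\, 2^{7(2+\beta)}\, \pi(2^i, 2^j)$. For $\Ec_2$, exactly as in the interior proof, the inclusion $\Ac^{\lambda,+}_D \subseteq \Ac^{*,+}_D$ together with $\Nc \geq 5$ forces the existence of a $\lambda$-passage edge $(u,v) \in \Pc^\lambda$ with $u \in A^+_{2^{i+4}, 2^{j-3}}(z)$, so a union bound gives
$$\Pb(\Ec_2) \leq \sum_{k=i+4}^{j-4} \sum_{u \in A^+_{2^k,2^{k+1}}(z)} \sum_{v \sim^* u} \Pb(\Ac^{\lambda,+}_D(z;2^i,2^j) \cap \{(u,v) \in \Pc^\lambda\}).$$

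The crux is then to establish a boundary decoupling analogous to Lemma~\ref{lem:decom} around each such $\lambda$-passage edge. For $u = (x', y')$ with $2^r \leq y' < 2^{r+1}$, the construction that led to Lemma~\ref{lem:decom} (resampling the loop through $u$ that carries the excessive occupation, together with Step~1--3 of its proof yielding the $\rho^\lambda$ factor from the exponential tail of $X^v_D(u)$) can be carried out inside $\Hb$, exactly as the interior surgery from Lemma~\ref{lem:quasi} was adapted in \eqref{eq:decoup+} to obtain the boundary quasi-multiplicativity used in Proposition~\ref{prop:m4a+}. This combined surgery will decompose the event into four pieces: two boundary $\lambda$-arm events $\Ac^{\lambda,+}_{\loc}(z;2^i,2^{k-3})$ and $\Ac^{\lambda,+}_D(z;2^{k+2},2^j)$ at the two extremities, a boundary $\lambda$-arm event $\Ac^{\lambda,+}_{\loc}((x',0);\, 2^{r+1}\wedge 2^{k-2},\, 2^{k-2})$ around $u$'s projection on $\partial\Hb$, and an interior $\lambda$-arm event $\Ac^{\lambda}_{\loc}(u;\, 8,\, 2^{r-1}\wedge 2^{k-4})$ around $u$ itself, each multiplied by $c_8\,\rho^\lambda$.

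Applying the induction hypothesis to the three boundary pieces, and Proposition~\ref{prop:mae} (which is already available and is the only place where $\lambda \geq \lambda_0$ is needed separately) to the interior piece, we obtain a bound of the form
$$\Pb(\Ec_2) \leq c_8\, c_{11}^3\, c_9\, \rho^\lambda \sum_{k=i+4}^{j-4} \sum_{u \in A^+_{2^k,2^{k+1}}(z)} \sum_{v \sim^* u} \pi(2^i, 2^{k-3})\, \pi(2^{k+2}, 2^j)\, \pi(2^{r+1}\wedge 2^{k-2}, 2^{k-2})\, \pi(8, 2^{r-1}\wedge 2^{k-4}).$$
Using $\pi(2^i, 2^{k-3})\, \pi(2^{k+2}, 2^j) = 2^{5(2+\beta)}\pi(2^i, 2^j)$ and estimating the remaining triple sum by splitting on $r$ (interior $u$ with $2^r \gtrsim 2^k$ versus near-boundary $u$, where the boundary factor compensates for the loss of the interior one), one checks that the sum is bounded by a universal constant; the essential ingredient is again $\xi(\alpha) > 2$, which guarantees the geometric summability at each step. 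One then first picks $c_{11}$ large enough to absorb the $\Ec_1$ contribution, and then takes $\lambda_1 \geq \lambda_0$ large enough that $c_8\, c_{11}^3\, c_9\, C\, \rho^{\lambda_1} \leq c_{11}/2$, closing the induction.

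The main obstacle is the proper bookkeeping in the boundary decoupling: one must carry out the loop-surgery of Lemma~\ref{lem:decom} (preserving the occupation at $u$ via the auxiliary loop $\gamma_3$ built from $\omega_1''$, $\omega_2''$) simultaneously with the further subdivision used in \eqref{eq:decoup+} that separates boundary and interior scales around $u$. In particular, the relaxation built into Definition~\ref{def:mfa} on the first and last $*$-passage edges (see (\ref{rem:relax}) of Remark~\ref{rem:modified arm event}) is what makes this compatible modification possible without spoiling the $\lambda$-conditions, and the exponential bound of Lemma~\ref{lem:Gauss} remains valid in $\Hb$, delivering the $\rho^\lambda$ factor unchanged.
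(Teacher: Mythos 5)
Your proposal is correct and follows essentially the same route as the paper: an induction on dyadic scales $(2^i,2^j)$ parallel to Proposition~\ref{prop:mae}, with the $\Ec_1/\Ec_2$ split, Proposition~\ref{prop:m4a+} replacing Proposition~\ref{prop:m4a*} for $\Ec_1$, and a boundary decoupling estimate for $\Ec_2$ that is precisely the paper's \eqref{eq:lbd+} — splitting the annulus around $u$ into an inner interior $\lambda$-arm piece bounded by Proposition~\ref{prop:mae} and an outer boundary piece handled by the induction hypothesis. The paper states this compressed into a short paragraph; you supply the bookkeeping (base case, summability of the triple sum by splitting on $r$, and the choice of $c_{11}$ then $\lambda_1$), all of which checks out.
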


\begin{proof}
	This result can be proved by using almost the same induction procedure as in the proof of Proposition~\ref{prop:mae}, with only a minor modification in the decoupling result, as we already did in \eqref{eq:decoup+}. More precisely, for $u=(x',y')\in A^+_{2^k,2^{k+1}}(z)$ with $2^r\le y'<2^{r+1}$, we use the following estimate (which can be obtained from similar arguments as for Lemma~\ref{lem:decom}):
	\begin{align}\notag
		\Pb( \Ac^{\lambda,+}_D(z;2^i,2^j) \cap \{ (u,v)\in\Pc^{\lambda}  \} )
		&\le c_8\, \rho^\lambda\, \Pb( \Ac^{\lambda,+}_{\loc}(z;2^i,2^{k-3}) ) \, \Pb( \Ac^{\lambda,+}_D(z;2^{k+2},2^j) ) \\ \label{eq:lbd+}
		& \quad  \cdot \Pb( \Ac^{\lambda,+}_{\loc}((x',0);2^{r+1}\wedge2^{k-2},2^{k-2}) ) \Pb( \Ac^{\lambda}_{\loc}(u;8,2^{r-1}\wedge2^{k-4}) ).
	\end{align}
	As in \eqref{eq:decoup+}, the last event on the right-hand side is an interior $\lambda$-arm event, which can be bounded directly thanks to Proposition~\ref{prop:mae}, and we can bound the first three boundary $\lambda$-arm events by applying the induction hypothesis three times. Hence, we have reached a similar situation as when we showed Proposition~\ref{prop:mae}, and we can thus complete the proof in the same way.
\end{proof}

Finally, we consider the general case of boundary $\lambda$-arm events when the center is at an arbitrary place in $\wb\Hb$, which will be needed later.

\begin{proposition}\label{prop:bbmae}
	For all $\alpha\in (0,1/2)$, there exists $c_{12}(\alpha)>0$ such that the following holds, with $\lambda_1(\alpha)$ as in the statement of Proposition~\ref{prop:bmae}. For all $\lambda \geq \lambda_1$, $z\in\wb\Hb$, $0<d_1< d_2$, and $D \supseteq B_{2d_2}(z)$,
	\begin{equation}\label{eq:bb-modified-arm}
		\Pb(\Ac_{D^+}^{\lambda}(z;d_1,d_2))\le c_{12}\, \pi(d_1,d_2).
	\end{equation}
\end{proposition}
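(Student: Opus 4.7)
The plan is to reduce Proposition~\ref{prop:bbmae} to Proposition~\ref{prop:mae} (interior case) and Proposition~\ref{prop:bmae} (boundary case with center on $\partial\Hb$), through a case analysis based on the height $y \geq 0$ of the center $z = (x,y)$. Throughout I write $z' := (x,0) \in \partial\Hb$, and I may assume $d_2 \geq 32 d_1$, since otherwise $\pi(d_1,d_2)$ is bounded below by an absolute constant and the bound is trivial. When $y \geq d_2/4$ (the interior regime), the ball $B_{d_2/5}(z)$ is strictly contained in $\Hb$, hence in $D^+$; I would first use monotonicity of the $\lambda$-arm event (see Remark~\ref{rem:modified arm event}) to pass from outer radius $d_2$ to $d_2/10$, then invoke the interior locality result (Lemma~\ref{lem:lambda-locality}) and finally Proposition~\ref{prop:mae} to obtain $\Pb(\Ac_{D^+}^\lambda(z; d_1, d_2)) \lesssim \pi(d_1, d_2/10) \lesssim \pi(d_1, d_2)$.

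When $y \leq d_1$ (the boundary regime), I would use the geometric facts $\partial B_{d_1}(z) \subseteq B_{d_1 + y}(z')$ and $d_\infty(w, z') \geq d_2 - y$ for every $w \in \partial B_{d_2}(z)$. Extracting sub-chains, any two disjoint $*\lambda*$-chains in $\Lc_{D^+}$ properly crossing $A_{d_1, d_2}(z)$ yield two disjoint $*\lambda*$-sub-chains properly crossing $A_{d_1 + y, d_2 - y}(z')$, so that
\[
\Ac_{D^+}^\lambda(z; d_1, d_2) \subseteq \Ac_{D^+}^\lambda(z'; d_1 + y, d_2 - y).
\]
Since $B_{2(d_2 - y)}(z') \subseteq B_{2 d_2}(z) \subseteq D$, Proposition~\ref{prop:bmae} applies to the right-hand side, and combined with the bound $(d_2 - y)/(d_1 + y) \geq d_2/(4 d_1)$ it yields $c_{11} \pi(d_1 + y, d_2 - y) \lesssim \pi(d_1, d_2)$.

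The intermediate regime $d_1 < y < d_2/4$ is the hardest, and my plan is to combine the two previous strategies through a quasi-multiplicativity step at the intermediate scale $y/16$. Specifically, I would establish
\[
\Pb(\Ac_{D^+}^\lambda(z; d_1, d_2)) \lesssim \Pb(\Ac_{\loc}^\lambda(z; d_1, y/16)) \cdot \Pb(\Ac_{D^+}^\lambda(z; y/4, d_2)),
\]
bound the first factor by Proposition~\ref{prop:mae} as $\lesssim \pi(d_1, y/16) \lesssim \pi(d_1, y)$, and apply the boundary-regime sub-chain extraction to the second factor to reduce it to $\Ac_{D^+}^\lambda(z'; 5y/4, d_2 - y)$, which Proposition~\ref{prop:bmae} bounds by $\lesssim \pi(y, d_2)$. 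The multiplicative identity $\pi(d_1, y) \pi(y, d_2) = \pi(d_1, d_2)$ then closes the argument.

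The main obstacle will be justifying the quasi-multiplicativity inequality displayed above, because Lemma~\ref{lem:quasi-lbd} as stated requires $D \supseteq B_{2 d_2}(z)$, a hypothesis violated by $D = D^+$ whenever $y < 2 d_2$, since $B_{2 d_2}(z)$ then straddles $\partial\Hb$. The key observation I would rely on is that the surgery in the proof of Lemma~\ref{lem:quasi-lbd} is local: it modifies only loops at the intermediate scale $\partial B_{y/16}(z)$, inside $B_{y/8}(z)$, and this ball sits at distance $\geq 7y/8$ from $\partial\Hb$ and is therefore strictly contained in $\Hb \subseteq D^+$. The argument of Lemma~\ref{lem:quasi-lbd} then carries over verbatim, in the same spirit as the boundary analogs of locality and quasi-multiplicativity that are used (without explicit restatement) throughout the proof of Proposition~\ref{prop:bmae}.
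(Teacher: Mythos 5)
Your three-case decomposition --- interior ($y \geq d_2/4$), near-boundary ($y \leq d_1$), and intermediate with a quasi-multiplicativity split at a scale comparable to $y$ --- is precisely the strategy of the paper's proof, which uses the thresholds $r \le 4 d_1$, $4 d_1 < r \le d_2/6$, and $r > d_2/6$ (writing $z = (0,r)$). The remaining differences are cosmetic. In the near-boundary regime the paper re-centers at $z'$ via the inclusion $B_{d_1}(z) \subseteq B_{6 d_1}(z')$ and plain monotonicity, rather than your sub-chain extraction to $\Ac^\lambda_{D^+}(z'; d_1 + y, d_2 - y)$ --- two equivalent ways of making the same observation. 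In the intermediate regime the paper's quasi-multiplicativity \eqref{eq:par-bdy} already hands back an outer factor $\Pb(\Ac^{\lambda,+}_D(3r, d_2))$ centered at $z'$, so no second re-centering is needed, whereas you factor with both pieces centered at $z$ and then re-center the outer piece; this is again a harmless variant. Your explicit discussion of why the quasi-multiplicativity surgery is still valid in $D^+$ --- because it only modifies loops inside a ball $B_{cy}(z)$ that sits entirely in $\Hb$ --- is exactly what the paper leaves implicit when it says \eqref{eq:par-bdy} ``can be obtained in a similar way as Lemma~\ref{lem:quasi-lbd}.''

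There is one small bookkeeping error worth flagging. Your near-boundary regime $y \le d_1$ is too narrow for the intermediate-regime split at scale $y/16$ to make sense throughout $d_1 < y < d_2/4$: a quasi-multiplicativity of the form in Lemma~\ref{lem:quasi-lbd} needs the intermediate outer radius to be at least twice the inner one, i.e. $d_1 \le (y/16)/2$, hence $y \ge 32 d_1$, and for $d_1 < y < 32 d_1$ the annulus $A_{d_1,\, y/16}(z)$ in your first factor is degenerate or empty. The fix is routine: raise the near-boundary threshold to $y \le K d_1$ for a fixed constant $K$ (your extraction still gives $(d_2 - y)/(d_1 + y) \gtrsim_K d_2/d_1$, at the cost of a $K$-dependent constant), and start the intermediate case at $y > K d_1$ with $K$ chosen to meet the quasi-multiplicativity constraint for whatever intermediate scale you pick. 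This is exactly what the paper's choice $r \le 4 d_1$ paired with the intermediate scale $r/2$ accomplishes.
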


\begin{proof}[Proof of Proposition~\ref{prop:bbmae}]
	We can assume, without loss of generality, that $z=(0,r)$ with $r\ge 0$. We first note that it is sufficient to consider the case $24d_1\le d_2$, since \eqref{eq:bb-modified-arm} holds for any $24d_1> d_2$ by choosing some large constant $c_{12}$. Hence, in the following, we restrict to the case $24d_1\le d_2$.
	\begin{itemize}
		\item If $r\le 4d_1$, then $B(z,d_1)\subseteq B(0,6d_1)$, and $\Ac_{D^+}^{\lambda}(z;d_1,d_2)\subseteq \Ac^{\lambda}_{D^+}(6d_1,d_2)=\Ac^{\lambda,+}_{D}(6d_1,d_2)$ by monotonicity. Hence, \eqref{eq:bb-modified-arm} follows immediately from Proposition~\ref{prop:bmae}. 
		\item If $r > d_2/6$, then we have $D^+\supseteq B_{d_2/6}(z)$. Hence, $\Ac_{D^+}^{\lambda}(z;d_1,d_2)\subseteq \Ac_{D^+}^{\lambda}(z;d_1,d_2/12)$ and the result follows from Proposition~\ref{prop:mae}.
		\item If $4d_1<r\le d_2/6$, then we use the following quasi-multiplicativity estimate for $\lambda$-arm events (which can be obtained in a similar way as Lemma~\ref{lem:quasi-lbd}):
		\begin{equation}\label{eq:par-bdy}
			\Pb(\Ac_{D^+}^{\lambda}(z;d_1,d_2)) \lesssim_{\alpha} \Pb(\Ac^{\lambda}_{\loc}(z;d_1,r/2))\,  \Pb( \Ac^{\lambda,+}_D(3r,d_2)).
		\end{equation}
		By applying Propositions~\ref{prop:mae} and \ref{prop:bmae} to the first and the second factors, respectively, we obtain that
		\begin{equation}\label{eq:par-bdy2}
			\Pb(\Ac^{\lambda}_{\loc}(z;d_1,r/2))\,  \Pb( \Ac^{\lambda,+}_D(3r,d_2)) \lesssim_{\alpha} \pi(d_1,r/2) \, \pi(3r,d_2),
		\end{equation}
		which is bounded by a constant multiple of $\pi(d_1,d_2)$. We can thus conclude by combining \eqref{eq:par-bdy} and \eqref{eq:par-bdy2}.
	\end{itemize}
\end{proof}

\subsection{Modified boundary two-arm events}\label{subsec:blta}

As we mentioned earlier, in order to deal with boundary crossing events for the occupation field, we need to make use of boundary $\lambda$-two-arm events, which are modified versions of the boundary two-arm events $\Bc_D(l,d)$ (see Definition~\ref{def:bta}). Hence, we adapt the definitions in Section~\ref{subsec:moe} as follows.

\begin{definition}[Modified boundary two-arm event]\label{def:blta}
	For $0<d_1<d_2$, $z=(x,0)$, and $D \supseteq B_{2d_2}(z)$, let $\Bc^*_D(z;d_1,d_2)$ be the \emph{boundary $*$-two-arm event} that there exists a $*$-chain in $\Lc_{D^+}$ crossing $A^+_{d_1,d_2}(z)$. Moreover, for $\lambda>0$, let $\Bc^\lambda_D(z;d_1,d_2)$ be the \emph{boundary $\lambda$-two-arm event} that there exists a $*\lambda*$-chain in $\Lc_{D^+}$ crossing $A^+_{d_1,d_2}(z)$.
\end{definition}

We first deal with the boundary $*$-two-arm event, with the estimates for boundary two-arm events (Proposition~\ref{prop:2_arm_proba_bdy}) and boundary $*$-arm events (Proposition~\ref{prop:m4a+}) as necessary ingredients.
\begin{proposition}\label{prop:b2am}
	For all $\alpha\in (0,1/2)$ and $\eps>0$, there exists $c_{13}(\alpha,\eps)>0$ such that for all $m\ge 0$, $z = (x,0)$, $0<d_1< d_2$, and $D \supseteq B_{2d_2}(z)$,
	\begin{equation}\label{eq:b2am}
		\Pb( \Bc^{*}_D(z;d_1,d_2) \cap \{\Nc\le m\} ) \le e^{c_{13}^{m+1}}\, (d_2/d_1)^{-\xi^{2,+}(\alpha)+\eps},
	\end{equation}
	where $\xi^{2,+}(\alpha) > 1$ is the boundary two-arm exponent from \eqref{eq:2_arm_exponent_bdy}. 
\end{proposition}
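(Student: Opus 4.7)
The plan is to proceed by induction on $m \ge 0$, in direct analogy with the boundary four-arm counterpart Proposition~\ref{prop:m4a+}, with the boundary two-arm event replacing the boundary four-arm event throughout. For the base case $m=0$, the admissible chain consists of a single cluster, so $\Bc^{*}_D(z;d_1,d_2) \cap \{\Nc = 0\}$ reduces to the standard boundary two-arm event $\Bc_D(z;d_1,d_2)$, whose probability is at most $\bar c_2(\alpha,\eps)(d_2/d_1)^{-\xi^{2,+}(\alpha)+\eps}$ by Proposition~\ref{prop:2_arm_proba_bdy}; this constant is absorbed into $e^{c_{13}}$ by choosing $c_{13}$ large enough.

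For the inductive step at level $m\ge 1$, I will write $d_1=2^i$, $d_2=2^j$, and split the event according to whether a $*$-passage edge falls in the sub-semi-annulus $A^+_{2^{i+4},2^{j-3}}(z)$. The case of no such edge immediately reduces the event to $\Bc_D(z;2^{i+4},2^{j-3})$, and Proposition~\ref{prop:2_arm_proba_bdy} yields the desired bound up to a fixed multiplicative factor. Otherwise, by the union bound I sum over the position of a $*$-passage edge $(u,v)\in\Pc^{*}$ in this sub-semi-annulus, with $u\in A^+_{2^k,2^{k+1}}(z)$ for some $i+4\le k\le j-4$.

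The main technical input is a decoupling inequality analogous to Lemma~\ref{lem:decom*}: for $u=(x',y')$ with $2^r\le y'<2^{r+1}$, I expect
\begin{align*}
\Pb(\Bc^{*}_D(z;2^i,2^j) & \cap \{(u,v)\in\Pc^{*}\}) \\
& \lesssim \Pb(\Bc^{*}_{\loc}(z;2^i,2^{k-3})) \cdot \Pb(\Bc^{*}_D(z;2^{k+2},2^j)) \\
& \quad \cdot \Pb(\Bc^{*}_{\loc}((x',0);2^{r+1}\wedge 2^{k-2},2^{k-2})) \cdot \Pb(\Ac^{*}_{\loc}(u;1,2^{r-1}\wedge 2^{k-4})),
\end{align*}
which can be proved by the same loop-modification technique as Lemma~\ref{lem:quasi}, while preserving the $*$-connection at $(u,v)$. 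The crucial geometric observation is that the local event around $u$ is an interior \emph{four-arm} event (not a two-arm one): the two successive clusters of the chain meeting at $(u,v)$ extend in opposite directions (one towards $\partial B_{d_1}(z)$, the other towards $\partial B_{d_2}(z)$), so both cross any sufficiently small annulus around $u$. Applying the induction hypothesis at level $m-1$ to the three boundary $*$-two-arm factors, together with Proposition~\ref{prop:m4a*} (at level $m-1$) for the interior $*$-four-arm factor, the first two factors recombine into a prefactor $(d_2/d_1)^{-\xi^{2,+}+\eps}$, and the remaining sum reduces to
\begin{equation*}
\sum_{k=i+4}^{j-4}\sum_{r=0}^{k} 2^{k+r}\cdot 2^{-(k-r)(\xi^{2,+}-\eps)}\cdot 2^{-(2+\beta)r} \;=\; \sum_{k,r} 2^{k(1-\xi^{2,+}+\eps)+r(\xi^{2,+}-1-\eps-\beta)},
\end{equation*}
which is $O(1)$ thanks to $\xi^{2,+}(\alpha)>1$ (from \eqref{eq:2_arm_exponent_bdy}) and $\beta(\alpha)>0$ (from \eqref{eq:def_beta}). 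The induction then closes by taking $c_{13}$ large enough so that $C + c\, e^{3c_{13}^m+c_5^m}\le e^{c_{13}^{m+1}}$ for all $m\ge 1$.

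The main obstacle I expect is the rigorous derivation of the decoupling inequality above, and in particular correctly identifying the local factor around $u$ as an interior four-arm event at the appropriate scales, together with pinning down the ``projection'' boundary two-arm factor at $(x',0)$. Once the decoupling is in hand the computation is essentially forced, and it is the strict inequality $\xi^{2,+}(\alpha)>1$ that makes the double sum converge. This inequality fails at $\alpha=1/2$, consistently with the proposition being restricted to the subcritical regime $\alpha\in(0,1/2)$.
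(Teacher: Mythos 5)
Your overall strategy — induction on $m$, base case from Proposition~\ref{prop:2_arm_proba_bdy}, decomposition over the position of a $*$-passage edge $(u,v)$, decoupling, union bound — is the same as the paper's. However, your decoupling inequality misidentifies the local picture at the projection point $(x',0)$, and this is a genuine conceptual gap.

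You correctly observe that around $u$ the local event is an interior $*$-\emph{four}-arm event, because the two successive clusters of the chain branch off in opposite directions from the passage edge. But exactly the same reasoning applies at $(x',0)$ at scales above the height $y'$ of $u$: the sub-chain heading toward $\partial B_{d_1}(z)$ and the sub-chain heading toward $\partial B_{d_2}(z)$ both start inside $B_{2y'}((x',0))$ and both reach $\partial B_{2^{k-2}}((x',0))$, so \emph{two} disjoint $*$-chains cross the semi-annulus $A^+_{2^{r+1}\wedge 2^{k-2},\,2^{k-2}}((x',0))$. The third factor should therefore be the boundary $*$-\emph{four}-arm event $\Ac^{*,+}_{\loc}((x',0);2^{r+1}\wedge 2^{k-2},2^{k-2})$, not the boundary $*$-two-arm event $\Bc^{*}_{\loc}((x',0);\ldots)$; it should be bounded by Proposition~\ref{prop:m4a+} (giving decay $\pi(\cdot,\cdot)=(\cdot)^{-(2+\beta)}$), not by the two-arm induction hypothesis. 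Your weaker two-arm factor happens to still give a convergent double sum (as you checked), and logically $\Ac^{*,+}\subseteq\Bc^*$ so your stated inequality is \emph{implied} by the correct one; but the surgery argument used to \emph{prove} such a decoupling naturally produces the four-arm factor, so as written your derivation has a hole.

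Separately from this error, the paper actually packages the local factor more compactly: instead of explicitly splitting into ``interior $*$-four-arm at $u$'' times ``boundary $*$-four-arm at $(x',0)$'', it keeps a single boundary $*$-four-arm factor $\Ac^{*,+}_{B_{2^{k-1}}(u)}(u;8,2^{k-2})$ centered at $u$ (with $u$ at arbitrary height in $\wb\Hb$), and cites a variant of Proposition~\ref{prop:m4a+} in the spirit of Proposition~\ref{prop:bbmae} to bound it by $e^{c_{10}^m}\pi(8,2^{k-2})$. That variant internally performs exactly the split you attempted. Your explicit split is a valid alternative once you replace $\Bc^*$ by $\Ac^{*,+}$ at $(x',0)$ and use the four-arm bound there; the resulting sum over $(k,r)$ reduces to $\sum_k 2^{-\beta k}$, which converges because $\beta(\alpha)>0$, and the induction closes the same way.
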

\begin{proof}
	The proof is similar to that of Proposition~\ref{prop:m4a+}, by induction on $m$. For the first step of the induction, i.e.\ the case $m=0$, we use Proposition~\ref{prop:2_arm_proba_bdy} as an input. For $m\ge 1$, we use the following decoupling result (similar to Lemma~\ref{lem:decom*}): for all $u\in A^+_{2^k,2^{k+1}}(z)$,
	\begin{align}\notag
		\Pb( \Bc^{*}_D & (z;2^i,2^j)  \cap \{\Nc\le m\} \cap \{ (u,v)\in\Pc^{*}  \} )\\ \notag
		& \le c_4\, \Pb( \Bc^{*}_{B_{2d_2}(z)}(z;2^i,2^{k-3}) \cap \{\Nc\le m-1\} ) \, \Pb( \Bc^{*}_D(z;2^{k+2},2^j) \cap \{\Nc\le m-1\} ) \\ \label{eq:btae}
		 & \hspace{1cm} \cdot \Pb \Big( \Ac^{*,+}_{B_{2^{k-1}}(u)}(u;8,2^{k-2}) \cap \{\Nc\le m-1\} \Big),
	\end{align}
	where the last term on the r.h.s of \eqref{eq:btae} is at most $e^{c_{10}^{m}}\, \pi(8,2^{k-2})$, by Proposition~\ref{prop:m4a+} (in fact, we use a stronger form of this estimate, with an arbitrary center in $\wb\Hb$, which can be proved in the same way as Proposition~\ref{prop:bbmae}). By using the induction hypothesis to deal with the first two events on the r.h.s of \eqref{eq:btae}, and applying the union bound as before, we conclude the proof.
\end{proof}

In a similar way as for Proposition~\ref{prop:bmae}, we can obtain the following result, where the inputs are now Propositions~\ref{prop:bbmae} and~\ref{prop:b2am}.

\begin{proposition}\label{prop:b2lam}
	For all $\alpha\in (0,1/2)$ and $\eps>0$, 
	there exist constants $c_{14}(\alpha,\eps)>0$ and $\bar\lambda(\alpha,\eps)>0$ such that the following holds. For all $\lambda \geq \bar\lambda$, $z = (x,0)$, $0<d_1 < d_2$, and $D \supseteq B_{2d_2}(z)$,
	\begin{equation}\label{eq:b2lam}
		\Pb(\Bc^{\lambda}_D(z;d_1,d_2))\le c_{14}\, (d_2/d_1)^{-\xi^{2,+}(\alpha)+\eps}.
	\end{equation}
\end{proposition}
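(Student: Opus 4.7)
The plan is to mimic the induction scheme that yielded Proposition~\ref{prop:bmae} from Proposition~\ref{prop:m4a+}, now with Proposition~\ref{prop:b2am} playing the role of the a priori estimate with fixed connection number and with $(d_2/d_1)^{-\xi^{2,+}(\alpha)+\eps}$ replacing $\pi(d_1,d_2)$. Using monotonicity, it suffices to prove the bound for $d_1 = 2^i$, $d_2 = 2^j$, and we argue by induction on the pair $(i,j)$. The base case, for $j-i$ bounded, is handled by choosing $c_{14}$ large enough, and for the inductive step we split
\[
\Bc^\lambda_D(z;2^i,2^j) \subseteq \Ec_1 \cup \Ec_2, \quad \Ec_1 := \Bc^\lambda_D(z;2^{i+4},2^{j-3}) \cap \{\Nc \le M\}, \quad \Ec_2 := \Bc^\lambda_D(z;2^i,2^j) \setminus \Ec_1,
\]
where $\Nc$ is the connection number of the inner event and $M$ will be chosen below.

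First, $\Ec_1 \subseteq \Bc^{*}_D(z; 2^{i+4}, 2^{j-3}) \cap \{\Nc \le M\}$, so Proposition~\ref{prop:b2am} gives
\[
\Pb(\Ec_1) \le e^{c_{13}^{M+1}} \cdot 2^{7(\xi^{2,+}(\alpha)-\eps)} (d_2/d_1)^{-\xi^{2,+}(\alpha)+\eps}.
\]
For $\Ec_2$, by Remark~\ref{rem:modified arm event}(\ref{rem:mae(last)}) and monotonicity there must be at least one $\lambda$-passage edge $(u,v)$ of the inner event with $u \in A^+_{2^{i+4},2^{j-3}}(z)$; by the union bound
\[
\Pb(\Ec_2) \le \sum_{k=i+4}^{j-4} \sum_{u \in A^+_{2^k,2^{k+1}}(z)} \sum_{v \sim^* u} \Pb\bigl( \Bc^\lambda_D(z;2^i,2^j) \cap \{(u,v) \in \Pc^\lambda\}\bigr).
\]

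The key technical input is a decoupling estimate in the spirit of Lemma~\ref{lem:decom}, adapted to the boundary two-arm setting: for some $\rho \in (0,1)$ and some constant $c(\alpha)$, and writing $u = (x',y')$ with $2^r \le y' < 2^{r+1}$,
\begin{align*}
\Pb\bigl( \Bc^\lambda_D(z;2^i,2^j) \cap \{(u,v) \in \Pc^\lambda\} \bigr)
&\le c\, \rho^\lambda\, \Pb\bigl(\Bc^\lambda_{B_{2d_2}(z)}(z;2^i,2^{k-3})\bigr) \, \Pb\bigl(\Bc^\lambda_D(z;2^{k+2},2^j)\bigr) \\
&\hspace{0.5cm}\times \Pb\bigl(\Ac^{\lambda,+}_{\loc}((x',0);2^{r+1}\wedge 2^{k-2},2^{k-2})\bigr)\,\Pb\bigl(\Ac^\lambda_{\loc}(u;8,2^{r-1}\wedge 2^{k-4})\bigr).
\end{align*}
This is proved by the same loop-surgery and resampling arguments as Lemma~\ref{lem:decom}, splitting a pivotal loop around $u$ and paying the exponential cost in $\lambda$ for the excess occupation at $u$ via Lemma~\ref{lem:Gauss}. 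Applying the induction hypothesis to the two boundary $\lambda$-two-arm factors, and Propositions~\ref{prop:mae} and \ref{prop:bbmae} to the last two factors (which contribute $\pi$-factors bounded by $\pi(8, 2^{k-2})$), we obtain
\[
\Pb(\Ec_2) \le c_{14}^2\, c(\alpha)\, \rho^\lambda\, (d_2/d_1)^{-\xi^{2,+}(\alpha)+\eps} \sum_{k=i+4}^{j-4} C\, 2^{2k}\, \pi(8, 2^{k-2}),
\]
and the residual sum converges because $\pi(1, 2^k) = 2^{-(2+\beta)k}$ with $2 + \beta > 2$, which comes from the inequality $\xi(\alpha) > 2$ for the interior four-arm exponent.

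The final step is to fix constants in the correct order: pick $M$ so that the $\Ec_1$ contribution contains the target $(d_2/d_1)^{-\xi^{2,+}(\alpha)+\eps}$ with a constant, say, $c_{14}/2$; then pick $c_{14}$ much larger than that constant and absorb the base case; finally choose $\bar\lambda = \bar\lambda(\alpha,\eps)$ large enough so that $c(\alpha)\, c_{14}^2\, \rho^{\bar\lambda}\, \sum_k 2^{2k}\pi(8,2^{k-2}) \le c_{14}/2$, closing the induction. The main obstacle is the decoupling estimate: one has to handle the fact that $u$ may sit at an arbitrary height $y' \in [2^r, 2^{r+1})$ above the boundary, which forces the local arm picture around $u$ to split into an interior four-arm contribution at scales below $y'$ and a boundary four-arm contribution at scales above $y'$; the case distinction is standard in boundary arm analysis but must be tracked carefully so that the product of scales telescopes to yield the summability condition above.
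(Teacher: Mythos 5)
Your proposal is correct and follows essentially the same route as the paper: decompose into $\Ec_1$ (bounded via Proposition~\ref{prop:b2am}, the boundary $*$-two-arm estimate with fixed connection number) and $\Ec_2$ (bounded via a boundary adaptation of the decoupling Lemma~\ref{lem:decom}, then closing the induction using the $\rho^\lambda$ gain and the summability $\sum_k 2^{2k}\pi(8,2^{k-2})<\infty$, which rests on $\xi(\alpha)>2$). The one cosmetic difference is in how the local factor at the passage vertex $u$ is handled: the paper records it as a single boundary $\lambda$-arm event $\Ac^{\lambda}_{B_{2^{k-1}}(u)^+}(u;8,2^{k-2})$ with arbitrary center in $\wb\Hb$ and then invokes Proposition~\ref{prop:bbmae} to bound it by $\pi(8,2^{k-2})$, whereas you inline that proposition's own decomposition (interior four-arm below height $\approx 2^r$, boundary four-arm above), which is what \eqref{eq:lbd+} does in the proof of Proposition~\ref{prop:bmae}. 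Both are valid and produce the same telescoping product; the paper's choice is just more modular.
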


\begin{proof}
First, we can use Proposition~\ref{prop:b2am} to deal with the case when no $\lambda$-passage edge exists in the bulk (analogously to when we used the event $\Ec_1$ in the proof of Proposition~\ref{prop:mae}, see \eqref{eq:Ec12} and below). We can then apply the same induction procedure as in the proof of Proposition~\ref{prop:bmae}, but now with the following decoupling result: for all $u\in A^+_{2^k,2^{k+1}}(z)$,
\begin{align}\notag
	\Pb( \Bc^{\lambda}_D(z;2^i,2^j) \cap \{ (u,v)\in\Pc^{\lambda}  \} )
	&\le c_8\, \rho^\lambda\, \Pb( \Bc^{\lambda}_{\loc}(z;2^i,2^{k-3}) ) \, \Pb( \Bc^{\lambda}_D(z;2^{k+2},2^j) ) \\ \label{eq:blbd+}
	& \hspace{2cm}  \cdot \Pb\bigg( \Ac^{\lambda}_{B^+_{2^{k-1}}(u)}(u;8,2^{k-2}) \bigg).
\end{align}
The first two events on the r.h.s of \eqref{eq:blbd+} can be handled by using the induction hypothesis, and we can use Proposition~\ref{prop:bbmae} to deal with the last event. The remaining steps are similar, so we omit them.
\end{proof}

\subsection{Proof of Theorems~\ref{thm:main} and~\ref{thm:carpet1}} \label{subsec:main}

In this section, we use Propositions~\ref{prop:mae} and \ref{prop:b2lam} to establish Theorems~\ref{thm:main} and~\ref{thm:carpet1}. Let $\alpha \in (0,1/2)$ and $D \subseteq \Zb^2$. Recall that for any $\lambda > 0$, a vertex $z \in D$ is called $\lambda$-open if it has occupation $X_D(z)\le\lambda$ (for the RWLS $\Lc_D$ with intensity $\alpha$ in $D$), and $\lambda$-closed otherwise. We remind the reader that the boundary and the bulk crossing events are denoted by $\Ec_{\alpha,N}(\lambda)$ (see \eqref{eq:E-lbd}) and $\Ec'_{\alpha,N}(\lambda)$ (see \eqref{eq:tE-lbd}), respectively, and the corresponding critical values by $\lambda_*(\alpha)$ \eqref{eq:crit_val1} and $\lambda'_*(\alpha)$ \eqref{eq:crit_val2}.
We also recall the events $\Fc_{\alpha,N}(\lambda)$ and $\Fc'_{\alpha,N}(\lambda)$ in Definition~\ref{def:carpet_cross}.

Let us make one last observation. For any two domains $D \subseteq \tilde{D} \subseteq \Zb^2$, the RWLS $\Lc_D$ and $\Lc_{\tilde{D}}$ are naturally coupled, so we have $X_D(z) \leq X_{\tilde{D}}(z)$ for all $z \in D$ (but there might be loops in $\tilde{D}$ which visit $z$, but are not fully contained in $D$). Hence, we also have some obvious monotonicity for the boundary $\lambda$-two-arm event: $\Bc^{\lambda}_D(z;d_1,d_2)\subseteq \Bc^{\lambda}_{\tilde D}(z;d_1,d_2)$.

\begin{proof}[Proof of Theorems~\ref{thm:main} and~\ref{thm:carpet1}]
	We fix some value $\alpha\in (0,1/2)$ during the whole proof. First, from the monotonicity mentioned above, we have the following. For each $\lambda > 0$, $\Ec'_{\alpha,N}(\lambda) \subseteq \Ec_{\alpha,N/2}(\lambda)$ for all $N \geq 1$, so $\lambda'_*(\alpha) \geq \lambda_*(\alpha)$ by definition.
	
	We first show \eqref{eq:F_alpha1} which will also imply \eqref{eq:main-cr}. By duality, if $\Fc_{\alpha,N}(\lambda)$ does not occur, then there exists a $\lambda$-chain in $\Lc_N$ (the RWLS in $B_N$ with intensity $\alpha$) connecting the top and bottom sides of $B_N$ (See Figure~\ref{fig:blocking_path2}). 
	By the union bound, translation invariance, and the monotonicity of boundary $\lambda$-two-arm events, we get: for all $\lambda\ge\bar\lambda(\alpha,\eps)$,
	\begin{equation}\label{eq:cr-1}
		\Pb \Big( \Ec_{\alpha,N}(\lambda)^c \Big) \le \sum_{|x|\le N} \Pb \Big( \Bc^\lambda_{B_{2N}((x,0))}((x,0);1,N) \Big) \lesssim N^{1-\xi^{2,+}(\alpha)+\eps},
	\end{equation}
	where we used Proposition~\ref{prop:b2lam} in the last inequality. This finishes the proof of \eqref{eq:F_alpha1} and also \eqref{eq:main-cr}.

	\begin{figure}[t]
		\centering
		\includegraphics[width=.6\textwidth]{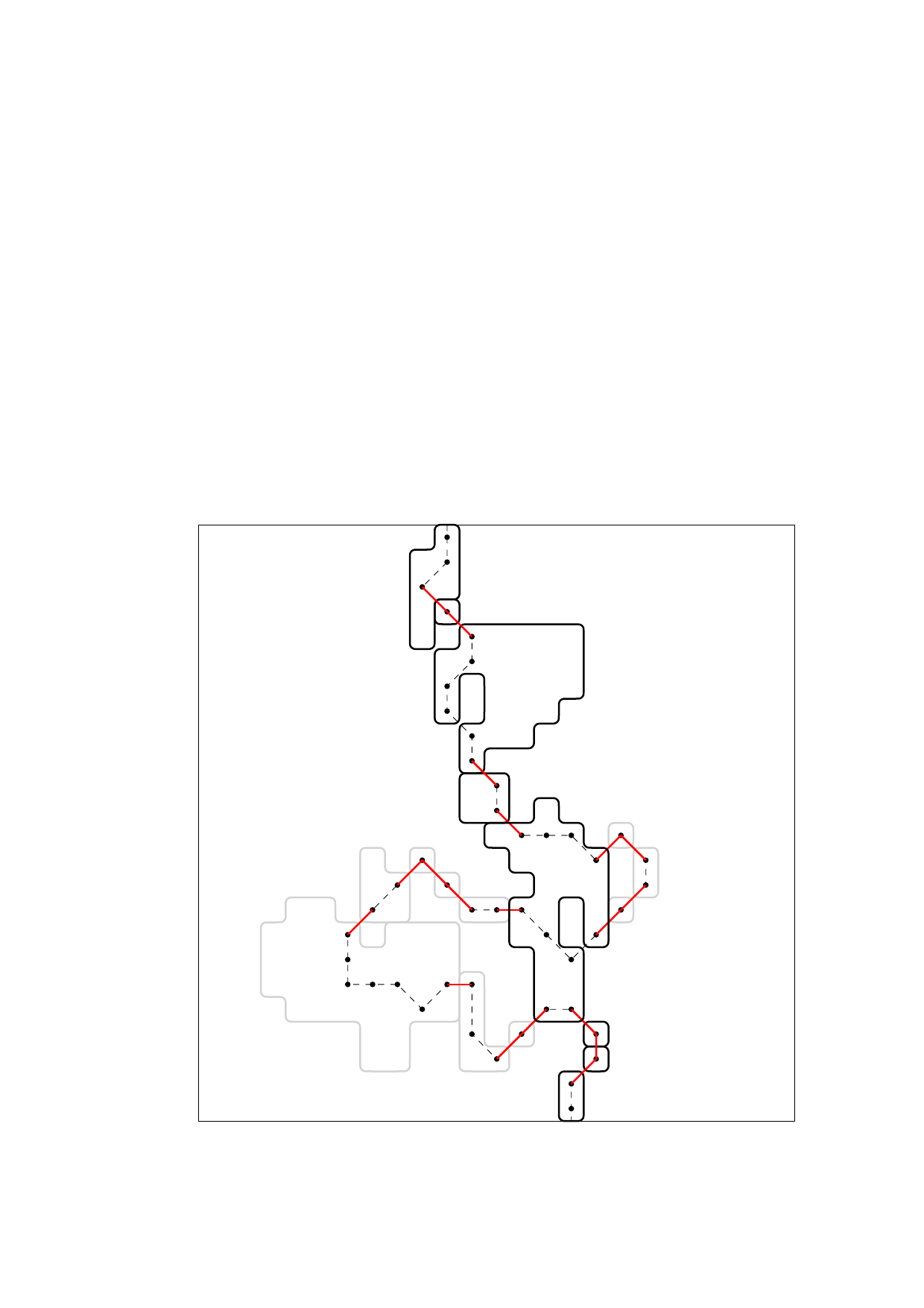}
		\caption{A blocking vertical path with high occupation may visit the same cluster of loops multiple times. From such a chain, we can extract a $\lambda$-chain, where all clusters are distinct, by simply deleting ``loops'' of clusters. For example, the chain shown on this figure can be reduced by keeping only the clusters shown in black (i.e.\ removing the grey ones), and the successive $\lambda$-passage edges connecting them.}
		\label{fig:blocking_path2}
	\end{figure}

	We now turn to the proof of \eqref{eq:F_alpha2}, which will also imply $\lambda'_*(\alpha)<\infty$. Consider any $\lambda \geq \lambda_0(\alpha)$, where $\lambda_0(\alpha)$ appears in Proposition~\ref{prop:mae}. For $N \geq 1$, consider the event $F_N := \{$there exists a cluster in $\Lc_N$ with a diameter $> N/100 \}$. There exists a universal constant $c' > 0$ such that
	\begin{equation}\label{eq:FN'}
		\text{for all $N \geq 1$,} \quad \Pb( F_N^c ) \ge c'
	\end{equation}
	(this follows immediately from the connection between the RWLS and CLE in the scaling limit, see Lemma~\lemclustersize{} in \cite{GNQ2024a}).
	
	We then make the following observation, using again duality. On the event $\Fc'_{\alpha,N}(\lambda)^c\cap F_N^c$, the (interior) $\lambda$-arm event $\Ac^{\lambda}_{B_N}(z;1,N/4)$ occurs at some $z\in B_{N/2}$ (since the above-mentioned $\lambda$-chain contains necessarily a $\lambda$-passage edge in $B_{N/2}$). From the union bound, and then Proposition~\ref{prop:mae} (recall that $\lambda \geq \lambda_0(\alpha)$ by assumption), we obtain that
	$$\Pb \Big( \Fc'_{\alpha,N}(\lambda)^c\cap F_N^c \Big) \le \sum_{z\in B_{N/2}}  \Pb \Big( \Ac^{\lambda}_{B_N}(z; 1, N/4) \Big) \lesssim_{\alpha} N^2 \cdot N^{-2-\beta} \stackrel{N \to \infty}{\longrightarrow} 0,$$
	which, combined with \eqref{eq:FN'}, implies that
	$$\liminf_{N \to \infty} \Pb \big( \Fc'_{\alpha,N}(\lambda) \big) \geq \liminf_{N \to \infty} \Pb \big( \Fc'_{\alpha,N}(\lambda) \cap F_{N}^c \big) \geq c' > 0.$$
Hence, we deduce \eqref{eq:F_alpha2} and $\lambda'_*(\alpha) \leq \lambda < \infty$. Note that by \eqref{eq:crit_val2}, we even obtain that $\lambda'_*(\alpha) \leq \lambda_0(\alpha)$. In fact, in the statements of Theorems~\ref{thm:main} and~\ref{thm:carpet1}, we can choose $\bar\lambda_0:=\bar\lambda\vee\lambda_0$.
	
	There remains to show that $\lambda_*(\alpha)>0$, which turns out to be the easy direction. We want to check that when $\lambda > 0$ is small enough,
	\begin{equation} \label{eq:lim_crossingprobab}
		\lim_{N\rightarrow\infty} \Pb( \Ec_{\alpha,N}(\lambda) ) = 0.
	\end{equation}
	To this aim, we prove that the percolation configuration of the occupation field can be dominated by a configuration of Bernoulli percolation, with some subcritical value of the parameter. 
	For all $z\in B_N$, we introduce $Y_z := \mathbbm{1}_{X_N(z)\le \lambda}$ (i.e., $Y_z$ is the indicator function of the event $\{z$ is $\lambda$-open$\}$).
	It follows from the definition of the occupation field (see Section~\ref{subsec:RWLS}) that for each $z\in B_N$,
	$$\Pb( Y_z=1 \mid \sigma(Y_u, u\neq z) ) \le \Pb( R_z\le \lambda ),$$
	where $R_z$ has $\Gamma(\alpha,1)$ distribution. We deduce that
	$$\Pb( Y_z=1 \mid \sigma(Y_u, u\neq z) ) \leq \frac{1}{\Gamma(\alpha)} \int_{0}^{\lambda} x^{\alpha-1} e^{-x} dx \leq \frac{\lambda^{\alpha}}{\alpha \Gamma(\alpha)}.$$
	Let $p_c = p_c^{\text{site}}(\Zb^2) \in (0,1)$ be the critical threshold for Bernoulli site percolation on $\Zb^2$. We define $\lambda>0$ via the relation
	\begin{equation} \label{eq:choice_lambda}
		\frac{\lambda^{\alpha}}{\alpha \Gamma(\alpha)} = \frac{p_c}{2}.
	\end{equation}
	
	Clearly, for such a choice of $\lambda$, the collection $(Y_z)_{z\in B_N}$ is stochastically dominated by Bernoulli site percolation with parameter $p_c/2$ on $B_N$, which is in subcritical regime. Moreover, $\Ec_{\alpha,N}(\lambda)$ implies that there exists a path in $B_N$ connecting the left and right sides of that box, along which the function $Y$ takes value $1$. For subcritical Bernoulli percolation, the existence of such an open crossing in $B_N$ has a probability tending to $0$ exponentially fast as $N \to \infty$. Hence, we obtain \eqref{eq:lim_crossingprobab} for the value $\lambda$ as in \eqref{eq:choice_lambda}, which finally allows us to conclude the proof of Theorem~\ref{thm:main}.
\end{proof}

\section{Critical case $\alpha=1/2$} \label{sec:critical-metric}

In this section, we deal with the critical case $\alpha=1/2$ for the random walk loop soup, for which our reasonings need to be adjusted. From \eqref{eq:arm_exponent} and \eqref{eq:alpha_kappa}, we know that when $\alpha=1/2$, which corresponds to $\kappa = 4$ for CLE, the exponent $\xi$ associated with the four-arm event is exactly equal to $2$ (instead of being $> 2$ in the subcritical case $\alpha < 1/2$). Because of this, the induction arguments from Section~\ref{sec:pf-thm} fail as they are, since the probabilities that arise are no longer summable. In this case, we need to let $\lambda$ increase slowly with $N$, so as to make the arguments from the subcritical case work.

More concretely, we prove the following theorem in Sections~\ref{subsec:inside-critical} and \ref{subsec:bdy-critical}. Recall that the events $\Fc_{\alpha,N}(\lambda)$ and $\Fc'_{\alpha,N}(\lambda)$ were introduced in Definition~\ref{def:carpet_cross}.
\begin{theorem}\label{thm:carpet2}
	There exist $c_0,a_0>0$, such that for all $a\ge a_0$,
	\begin{align}
		\label{eq:critical_carpet1}
		&\Pb( \Fc_{1/2,N}(a\log\log N) )=1-O( (\log N)^{-c_0 a}) \quad \text{as} \quad N\to \infty.
	\end{align}
	Moreover, for all $a\ge a_0$,
	\begin{align}
		\label{eq:critical_carpet2}
		&\liminf_{N \to \infty}\Pb( \Fc'_{1/2,N}(a\log\log N) )>0.
	\end{align}
\end{theorem}
Because upper bounds on the probabilities of four-arm events happen to be a key input in the various induction arguments, in particular in the proofs of Propositions~\ref{prop:critical-lbd-1} and \ref{prop:critical-lbd-2} below, the effectiveness of our method depends heavily on how precisely such events can be estimated. Hence, the strengthened estimates obtained in \cite{bi2025arm} for $\alpha=1/2$ (recalled as Propositions~\ref{prop:critical four arm} and \ref{prop:critical 2_arm_proba_bdy} above) play a crucial role.

Finally, in Section~\ref{subsec:toGFF}, we translate the results for the critical RWLS to other processes by making use of various isomorphism theorems. More precisely, we establish Theorem~\ref{thm:GFF_carpet} (which implies Theorem~\ref{thm:GFF}) and Theorem~\ref{thm:thick}, using Theorem~\ref{thm:carpet2}.

Since we focus on the critical case throughout this section, we tacitly fix $\alpha=1/2$.

\subsection{Bulk crossing event: proof of \eqref{eq:critical_carpet2}} \label{subsec:inside-critical}

We first analyze the bulk crossing event $\Fc'_{N}(a\log\log N) := \Fc'_{1/2,N}(a\log\log N)$, at a level of order $\log\log N$, and prove \eqref{eq:critical_carpet2}.

For any $z\in \Zb^2$, $0<d_1<d_2$, and $D \supseteq B_{2d_2}(z)$, let $\Ac_{D}(z;d_1,d_2)$ be the four-arm event for the critical RWLS. 
By combining Proposition~\ref{prop:critical four arm} with the proof of Lemma~\ref{lem:m4a}, we can obtain that there exists $c'_2>0$ such that for all $z$, $d_1$, $d_2$, and $D$ as above, we have: for all $m\ge 1$,
\begin{equation}\label{eq:o2}
	\Pb( \Ac^*_D(z;d_1,d_2) \cap \{\Nc=m\} ) \le \Big(c'_2\,\log \Big( \frac{d_2}{d_1} \Big) \Big)^m\, \Big(\frac{d_2}{d_1}\Big)^{-2}.
\end{equation}
Using that upper bound, and the induction arguments in the proof of Proposition~\ref{prop:mae}, we can then derive the following estimate for $\lambda$-arm events at criticality, which is an important ingredient in the proof of \eqref{eq:critical_carpet2}. Throughout this section, we consider the constant $\rho\in (0,1)$ from the statement of Lemma~\ref{lem:decom}, and we let $\delta := - \log \rho > 0$, so that $\rho = e^{-\delta}$.

\begin{proposition}\label{prop:critical-lbd-1}
	There exist $n_0>0$ and $b_0>0$ such that for all $z\in \Zb^2$, $N\ge n_0$, $0<d_1<d_2<N$, and $D \supseteq B_{2d_2}(z)$,
	\begin{equation}\label{eq:critical-lbd-1}
		\Pb\big(\Ac^{\lambda}_D(z;d_1,d_2) \big)\le  b_0\, \Big(\log \Big( \frac{d_2}{d_1} \Big) \Big)^4 \Big(\frac{d_2}{d_1}\Big)^{-2} \quad \text{for $\lambda\ge \frac{14}{\delta} \log\log N$}.
	\end{equation}
\end{proposition}

\begin{proof}[Proof of Proposition~\ref{prop:critical-lbd-1}]
	We use similar arguments as for Proposition~\ref{prop:mae}, following the induction procedure in the proof of that result, and adopting the same notations. In particular, recall that we started by assuming that $d_1$ and $d_2$ are of the form $d_1=2^i$ and $d_2=2^j$, for any two integers $0 \leq i < j$. As before, we first choose $b_0 \ge 2^{20} (\log 2)^{-4}$ so that \eqref{eq:critical-lbd-1} holds for all $(i,j)$ with $1\le j-i\le 10$. 
	
	
	Now, suppose that the pair $(i,j)$ satisfies $j-i\ge 11$, and assume that \eqref{eq:critical-lbd-1} holds for all pairs $(i',j')$ with $j'<j$, and also for all $(i',j)$ with $i'>i$. It remains to show that \eqref{eq:critical-lbd-1} holds for the pair $(i,j)$ under this assumption. Recall that we can write $\Pb(\Ac^{\lambda}_D(z;2^i,2^j))\le \Pb( \Ec_1 ) + \Pb( \Ec_2 )$, where $\Ec_1$ and $\Ec_2$ were defined in \eqref{eq:Ec12}. In the remainder of the proof, we let $\sigma_1,\sigma_2,\sigma_3$ be universal constants. Using \eqref{eq:o2}, we can replace \eqref{eq:Ec1} by
	\begin{equation} \label{eq:prob_E1}
		\Pb( \Ec_1 )\le \Pb( \Ac^*_{D}(z,2^{i+4},2^{j-3})\cap \{ \Nc\le 4 \} ) \le \sigma_1\, (j-i)^4\, 2^{-2(j-i)}.
	\end{equation}
	As for $\Ec_2$, we use Lemma~\ref{lem:decom}, and then apply the induction hypothesis three times, as before, to get (similarly to \eqref{eq:lbd-E2})
	\begin{align} \notag
		\Pb(\Ec_2) & \leq \sum_{k=i+4}^{j-4} \sum_{u \in A_{2^k,2^{k+1}}(z)} \sum_{v: v \sim^* u} c_8\, \rho^{\lambda}\,  \Pb( \Ac^{\lambda}_{\loc}(z;2^i,2^{k-3}) ) \,\Pb( \Ac^{\lambda}_D(z;2^{k+2},2^j) )\, \Pb( \Ac^{\lambda}_{\loc}(u;8,2^{k-2}) ),\\ \label{eq:prob_E2}
		& \le \sum_{k=i+4}^{j-4} \sum_{u \in A_{2^k,2^{k+1}}(z)} \sum_{v: v \sim^* u} c_8\, \rho^{\lambda} \, \sigma_2 \cdot b_0\, (k-i)^4\, 2^{-2(k-i)} \cdot b_0\, (j-k)^4\, 2^{-2(j-k)}\, \cdot b_0\, k^4\, 2^{-2k}.
	\end{align}
	Since $d_2=2^j<N$ and $i\le k\le j$, we have 
	\begin{equation} \label{eq:prob_E2_sum}
		\sum_{k=i+4}^{j-4} \sum_{u \in A_{2^k,2^{k+1}}(z)} \sum_{v: v \sim^* u} (k-i)^4 (j-k)^4 k^4 2^{-2k} \le \sigma_3\, (\log N)^{13}.
	\end{equation}
	For $\lambda\ge (14/\delta)\log\log N$, we have (using that $\rho=e^{-\delta}$): $\rho^{\lambda} = e^{- \delta \lambda} \le e^{-\delta \cdot (14/\delta)\log\log N} = (\log N)^{-14}$. Hence, we deduce from \eqref{eq:prob_E2} and \eqref{eq:prob_E2_sum} that
	\begin{equation} \label{eq:prob_E2'}
		\Pb(\Ec_2) \le c_8 \, \rho^{\lambda} \, \sigma_2 \, b^3_0\,2^{-2(j-i)}\, \sigma_3\, (\log N)^{13} \le \big(c_8\, \sigma_2\,\sigma_3\, b^2_0\, (\log N)^{-1}\big)\, b_0\, 2^{-2(j-i)}.
	\end{equation}
	
	To make the induction work, we first choose $b_0$ large enough so that $\sigma_1\le \frac{b_0}{2} (\log 2)^4$. For such a $b_0$, we then take $n_0$ sufficiently large so that $c_8\,\sigma_2\,\sigma_3\, b^2_0\, (\log n_0)^{-1}\le \frac{1}{2}$. 
    This allows us to complete the proof by combining \eqref{eq:prob_E1} and \eqref{eq:prob_E2'}.
\end{proof}

\begin{remark}
	By using a similar strategy as in the proof of  Proposition~\ref{prop:critical-lbd-1}, we can also show the following, slightly different, result. There exist $a_0>0$ and $b_0>0$ such that for all $z\in \Zb^2$, $0<d_1<d_2$, and $D \supseteq B_{2d_2}(z)$,
	\begin{equation}\label{eq:critical-lbd}
		\Pb\big(\Ac^{\lambda}_D(z; d_1,d_2) \big)\le  b_0\, \Big(\log \frac{d_2}{d_1} \Big)^4 \Big(\frac{d_2}{d_1}\Big)^{-2} \quad \text{for $\lambda \ge a_0 \log\log \Big(\frac{d_2}{d_1}\Big)$}.
	\end{equation}
	Compared with Proposition~\ref{prop:critical-lbd-1}, this bound allows one to consider $\lambda$ of the form $a \log\log(d_2/d_1)$, which is potentially quite lower than the threshold in \eqref{eq:critical-lbd-1}. However, it is only valid for large enough values of the constant $a$.
\end{remark}

We now use Proposition~\ref{prop:critical-lbd-1} to prove \eqref{eq:critical_carpet2}.

\begin{proof}[Proof of \eqref{eq:critical_carpet2}]
	We use the same arguments as for $\lambda'_*(\alpha)<\infty$ in the last part of the proof of Theorems~\ref{thm:main} and~\ref{thm:carpet1}. 
    Let $n_0$ and $b_0$ be as in the statement of Proposition~\ref{prop:critical-lbd-1}. For any $N \geq n_0$, let $\lambda= \frac{14}{\delta} \log\log N$. We have (recall that $F_N$ was defined above \eqref{eq:FN'}, and $\Ac^{\lambda}_{B_N}(z,w;N/4)$ above Lemma~\ref{lem:mod-z})
	\begin{align*}
		\Pb( (\Fc'_N(\lambda) \cup F_N)^c ) &\ \le \sum_{z \in B_{N/2}} \sum_{w: w \sim^* z} \Pb( \Ac^{\lambda}_{B_N}(z,w;N/4) \cap \{ X_{B_N}(z)>{\lambda} \}  ) \\
		&\  \lesssim \sum_{z \in B_{N/2}} \sum_{w: w \sim^* z} \Pb( \Ac^{\lambda}_{B_N}(z;8,N/4) )\, \rho^{\lambda},
	\end{align*}
	where in the second inequality we used Lemma~\ref{lem:mod-z}, combined, successively, with \eqref{eq:uv-u}, Lemma~\ref{lem:Gauss}, and Lemma~\ref{lem:delete-dot} to get the following decoupling result:
	\begin{align}\notag
		\Pb\big( \Ac_{B_N}^{\lambda}(z,w;N/4) \cap \{ X_{B_N}(z)>\lambda \} \big) &\lesssim \Pb\big( \Ac_{B_N}^{\lambda}(\dot{z};8,N/4) \big) \, \Pb( \Lc_{B_N}(\{z,w\})=\emptyset )\, \Pb( X^w_{B_N}(z)>\lambda )\\ \notag
		&\lesssim \Pb\big( \Ac_{B_N}^{\lambda}(\dot{z};8,N/4) \big) \, \Pb( \Lc^z_{B_N}=\emptyset ) \, \rho^{\lambda}\\ \label{eq:decouple}
		&\lesssim \Pb\big( \Ac_{B_N}^{\lambda}(z;8,N/4) \big) \, \rho^{\lambda}.
	\end{align}
	By using Proposition~\ref{prop:critical-lbd-1}, which we are allowed to do from our choice of $\lambda$, we then obtain (since $\rho^{\lambda} = e^{-\delta\lambda} = e^{-\delta \frac{14}{\delta} \log \log N} = (\log N)^{-14}$)
	\begin{equation*}
		\Pb( (\Fc'_N(\lambda) \cup F_N)^c ) \lesssim N^2\cdot b_0\, (\log N)^4 N^{-2} \cdot (\log N)^{-14} \lesssim (\log N)^{-10}.
	\end{equation*}
	Therefore, for our particular choice of $\lambda$, $\Pb( (\Fc'_N(\lambda)\cup F_N)^c )$ converges to $0$ as $N \to \infty$. This, combined with \eqref{eq:FN'} (this property remains true in the critical case), finishes the proof of \eqref{eq:critical_carpet2}.
\end{proof}

\subsection{Boundary crossing event: proof of \eqref{eq:critical_carpet1}}\label{subsec:bdy-critical}

We now deal with the boundary crossing event $\Fc_{N}(a\log\log N) := \Fc_{1/2,N}(a\log\log N)$, at a level of order $\log\log N$, and prove \eqref{eq:critical_carpet1}.

In order to do so, we need to estimate boundary $\lambda$-two-arm events in the critical case, which requires an a priori estimate for its four-arm counterpart (analogously to Section~\ref{subsec:blta}). As in Section~\ref{subsec:inside-critical}, we first gather the ingredients that are needed. Since $\xi^+(1/2)=4$, it follows from \eqref{eq:arm_proba_bdy} that for any $\eps>0$, there exists $c_2^+(\eps)>0$ such that for all $0 < d_1< d_2$, and $D \supseteq B_{2d_2}$,
\begin{equation}\label{eq:b1}
	\Pb( \Ac^+_D(d_1,d_2) ) \le c_2^+\, \Big(\frac{d_2}{d_1}\Big)^{-4+\eps}.
\end{equation}
Regarding boundary $*$-arm events, we have the following result, similarly to Lemma~\ref{lem:m4a}. For any $\eps>0$, there exists $c_2'(\eps) > 0$ such that for all $m\ge 1$, $0 < 2d_1\leq d_2$, and $D \supseteq B_{2d_2}$,
\begin{equation}\label{eq:b2}
	\Pb( \Ac^{*,+}_D(d_1,d_2) \cap \{\Nc=m\} ) \le \Big(c'_2\,\log \Big( \frac{d_2}{d_1} \Big) \Big)^m\, \Big(\frac{d_2}{d_1}\Big)^{-4+\eps}.
\end{equation}

Using these estimates, we can derive the following upper bound.
\begin{proposition}\label{prop:critical-lbd-2}
	There exist $n_1 > 0$ and $b_1 > 0$ such that for all $N\ge n_1$, $0< d_1<d_2<N$, and $D \supseteq B_{2d_2}$,
	\begin{equation}\label{eq:critical-lbd-2}
		\Pb\big(\Ac^{\lambda,+}_D(d_1,d_2) \big)\le  b_1\, \Big(\frac{d_2}{d_1}\Big)^{-3} \quad \text{for $\lambda\ge \frac{14}{\delta}\log\log N$}.
	\end{equation}
\end{proposition}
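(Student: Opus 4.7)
\textbf{Proof strategy for Proposition~\ref{prop:critical-lbd-2}.} The plan is to adapt the induction procedure from the proof of Proposition~\ref{prop:critical-lbd-1} to the boundary setting, combining the three ingredients already at our disposal: the boundary $*$-arm estimate \eqref{eq:b2} with fixed connection number, the boundary decoupling inequality \eqref{eq:lbd+} around a $\lambda$-passage edge, and the interior critical bound just proved in Proposition~\ref{prop:critical-lbd-1}. The key structural observation is that the a priori boundary four-arm exponent at $\alpha=1/2$ is $\xi^+(1/2)=4$, whereas the target exponent in \eqref{eq:critical-lbd-2} is only $3$; this unit gap is the ``room'' that will make the later summations absolutely convergent, and it replaces the gap $\beta>0$ used in the subcritical counterpart (Proposition~\ref{prop:bmae}).

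Fix $a>0$, set $\lambda:=a\log N$, and choose $\eps_1:=a\delta/2$ so that Proposition~\ref{prop:critical-lbd-1} yields, for all $N\geq n_0(\eps_1)$, the interior bound $\Pb(\Ac^{\lambda}_D(z;d,d'))\leq b_0(\eps_1)\,(d'/d)^{-2+\eps_1}$ uniformly in $z\in\Zb^2$, $0<d<d'<N$ and $D\supseteq B_{2d'}(z)$. Parametrize $d_1=2^i$, $d_2=2^j$ and run an induction on $(i,j)$, formulated in the translation-invariant form ``centered at any boundary point $(x,0)$'' so that the induction hypothesis is available at the shifted centers $(x',0)$ produced by \eqref{eq:lbd+}. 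For $j-i\leq m_1$ (some fixed constant to be chosen later), the base case is trivial as soon as $b_1\geq 2^{3m_1}$. For the inductive step, decompose
\[
\Ac^{\lambda,+}_D(2^i,2^j)=\Ec_1\cup\Ec_2, \quad \Ec_1:=\Ac^{\lambda,+}_D(2^{i+4},2^{j-3})\cap\{\Nc\leq 4\},
\]
exactly as in the proof of Proposition~\ref{prop:mae}; then \eqref{eq:b2} applied at $\eps'=1/4$ gives
\[
\Pb(\Ec_1)\leq C_1\,(\log(d_2/d_1))^4\,(d_2/d_1)^{-4+1/4}\leq C_2\,(d_2/d_1)^{-3},
\]
which absorbs into $\tfrac{b_1}{2}(d_2/d_1)^{-3}$ once $b_1\geq 2C_2$.

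For $\Ec_2$, Remark~\ref{rem:modified arm event}\,(\ref{rem:mae(last)}) forces a $\lambda$-passage edge $(u,v)$ to lie in the semi-annulus $A^+_{2^{i+4},2^{j-3}}$. Union-bound over such $(u,v)$, stratifying $u\in A^+_{2^k,2^{k+1}}$ by $k\in\{i+4,\dots,j-4\}$ and by the height $y_u\in[2^r,2^{r+1})$ with $r\in\{0,\dots,k\}$. Invoking the boundary decoupling \eqref{eq:lbd+}, the induction hypothesis on the three resulting boundary $\lambda$-arm events (at centers $0$, $0$, and $(x',0)$), and Proposition~\ref{prop:critical-lbd-1} on the remaining interior $\lambda$-arm event at $u$, one obtains
\[
\Pb(\Ec_2)\leq c_8\,\rho^{\lambda}\cdot b_1^2\cdot 2^{15}(d_2/d_1)^{-3}\cdot b_1 b_0\cdot S_{i,j},
\]
where the double sum
\[
S_{i,j}\ \lesssim\ \sum_{k=i+4}^{j-4}\sum_{r=0}^{k} 2^{k+r}\cdot 2^{-3((k-r-3)\vee 0)}\cdot 2^{((r-4)\wedge(k-7))(-2+\eps_1)}\ \lesssim\ 2^{\eps_1 j}\ \leq\ N^{\eps_1}
\]
is controlled by the boundary factor $2^{-3(k-r)}$ beating the count $2^{k+r}$ of admissible $u$, together with the interior factor $\sim 2^{-2r}$ controlling the residual geometric sum up to the factor $2^{\eps_1 j}$. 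Using $\rho^{\lambda}=N^{-a\delta}$ and $\eps_1=a\delta/2$, one gets $\Pb(\Ec_2)\leq C'\,b_1^3\,b_0\,N^{-a\delta/2}(d_2/d_1)^{-3}$, which is at most $\tfrac{b_1}{2}(d_2/d_1)^{-3}$ as soon as $n_1=n_1(a)$ is taken large enough that $C'\,b_1^2\,b_0\,N^{-a\delta/2}\leq 1/2$ for all $N\geq n_1$. This closes the induction.

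The main obstacle is the careful bookkeeping of the two-parameter sum over $(k,r)$: because \eqref{eq:lbd+} puts the boundary arm at $(x',0)$ on the scales $2^{r+1}\wedge 2^{k-2}$ and $2^{k-2}$ while the interior arm at $u$ sees $8$ and $2^{r-1}\wedge 2^{k-4}$, one must verify convergence separately in the two regimes $r\leq k-4$ and $r\in\{k-3,\dots,k\}$, in which the boundary and interior contributions play qualitatively different roles. The unit gap between the a priori exponent $4$ and the target $3$ is exactly what is needed in the first regime, and the interior exponent $2-\eps_1$ suffices in the second. The constants are then fixed in the usual order: first $\eps_1=a\delta/2$, then $m_1$ and $b_1$ large enough to absorb the base case and $\Ec_1$, and finally $n_1$ large enough (depending on $a$, $b_0(\eps_1)$, $b_1$, and universal constants) to beat the $N^{\eps_1}$ incurred in the $\Ec_2$ summation via the $N^{-a\delta}$ provided by $\rho^{\lambda}$.
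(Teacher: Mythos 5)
Your proposal is correct and follows essentially the same route as the paper: the same $\Ec_1/\Ec_2$ decomposition, the same absorption of the log factor from \eqref{eq:b2} into the unit gap $\xi^+(1/2)-3=1$, the same use of the decoupling \eqref{eq:lbd+} with its exponential factor $\rho^\lambda$, and the same inputs (induction hypothesis for the boundary pieces, Proposition~\ref{prop:critical-lbd-1} with $\eps_1=a\delta/2$ for the interior piece), closed in the same way by choosing $b_1$ and then $n_1(a)$ so that $\rho^\lambda \cdot N^{\eps_1}=N^{-a\delta/2}$ beats the accumulated constants. The only difference from the paper's write-up is bookkeeping: you carry out the full two-parameter $(k,r)$-summation dictated by the four-factor form of \eqref{eq:lbd+} (and are appropriately careful that the induction hypothesis be stated at a general boundary center $(x,0)$), whereas the paper's proof collapses the last two factors of \eqref{eq:lbd+} and records only a one-parameter $k$-sum with a single interior arm factor $\Pb(\Ac^\lambda_{\loc}(u;8,2^{k-2}))$; your more explicit version verifies that this collapse is harmless, at the cost of one extra power of $b_1$ which is easily absorbed.
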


\begin{proof}
	The proof is similar to that of Proposition~\ref{prop:critical-lbd-1}. Again, we let $\sigma_1, \sigma_2, \sigma_3$ denote universal constants. First, we choose $m_1$ large enough so that
	\begin{equation} \label{eq:m_1}
		\text{for all $m \geq m_1$,} \quad m^4 \le 2^{m/2}.
	\end{equation}
	We then take $b_1$ satisfying $b_1 \geq 2^{3 m_1}$, which yields \eqref{eq:critical-lbd-2} for all $1\le j-i\le m_1$.

    Now, assume that the pair $(i,j)$ satisfies $j-i>m_1$ and \eqref{eq:critical-lbd-2} holds for all pairs $(i',j')$ with $j'<j$ and for $(i',j)$ with $i'>i$. It remains to show that \eqref{eq:critical-lbd-2} holds for the pair $(i,j)$ under this assumption.
	As for $\Ec_1$, taking $\eps=1/2$ in \eqref{eq:b2}, we have
	$$\Pb( \Ec_1 )\le \Pb( \Ac^{*,+}_{D}(2^{i+4},2^{j-3})\cap \{ \Nc\le 4 \} ) \le \sigma_1 \, (j-i)^4\, 2^{-3.5(j-i)}\le \sigma_1\, 2^{-3(j-i)},$$
	where we used \eqref{eq:m_1} in the third inequality.
	
	By \eqref{eq:lbd+}, 
	\begin{align*}
		\Pb(\Ec_2) \le \sum_{k=i+4}^{j-4} \sum_{u \in A_{2^k,2^{k+1}}} \sum_{v: v \sim^* u} c_8\, \rho^{\lambda}\,  \Pb( \Ac^{\lambda,+}_{\loc}(2^i,2^{k-3}) ) \,\Pb( \Ac^{\lambda,+}_D(2^{k+2},2^j) )\, \Pb( \Ac^{\lambda}_{\loc}(u;8,2^{k-2}) ).
	\end{align*} 
	We then use the induction hypothesis twice, for the first two boundary $\lambda$-arm events, and we use Proposition~\ref{prop:critical-lbd-1} for the third $\lambda$-arm event (we let $n_0$ and $b_0$ be as in the statement of that result). This gives, for $N \geq n_0$,
	\begin{align*}
		\Pb(\Ec_2) &\le \sum_{k=i+4}^{j-4} \sum_{u \in A_{2^k,2^{k+1}}} \sum_{v: v \sim^* u} c_8\, \rho^{\lambda}\, \sigma_2 \cdot b_1\, 2^{-3(k-i)} \cdot b_1\, 2^{-3(j-k)} \cdot b_0\, k^4\, 2^{-2k} \\
		&\le \big(c_8 \, \rho^{\lambda} \, \sigma_2 \, b_0\, b_1 \, \sigma_3\, j^5 \big)\, b_1 \, 2^{-3(j-i)}.
	\end{align*} 
	Note that $\rho^{\lambda} \, j^5 \le (\log 2)^{-5} (\log N)^{-9}$, since $2^j < N$. 
    Thus, to make the induction work, it suffices to choose $b_1$ large enough so that $b_1 \geq 2^{3m_1}$ and $\sigma_1 \le \frac{b_1}{2}$, and then, for such a $b_1$, we choose $n_1$ large enough so that $n_1 \ge n_0$ and $c_8\, \sigma_2 \, b_0\, b_1\, \sigma_3 \, (\log 2)^{-5} (\log n_1)^{-9} \le \frac{1}{2}$. This completes the proof.
\end{proof}

Similarly to Proposition~\ref{prop:bbmae}, we can obtain an estimate for boundary $\lambda$-arm events with an arbitrary center $z$ in $\wb\Hb$.
\begin{proposition}\label{prop:critical-lbd-3}
	There exist $n_2>0$ and $b_2>0$ such that for all $N \ge n_2$, $z\in\wb\Hb$, $0<d_1<d_2<N$, and $D \supseteq B_{2d_2}(z)$,
	$$\Pb\big(\Ac^{\lambda}_{D^+}(z;d_1,d_2) \big)\le  b_2\, \Big(\log \Big( \frac{d_2}{d_1} \Big) \Big)^4\, \Big(\frac{d_2}{d_1}\Big)^{-2} \quad \text{for $\lambda\ge \frac{14}{\delta} \log\log N$}.$$
\end{proposition}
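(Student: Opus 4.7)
The plan is to mimic the case analysis used for Proposition~\ref{prop:bbmae} in the subcritical setting, now feeding in the critical-case estimates from Propositions~\ref{prop:critical-lbd-1} and~\ref{prop:critical-lbd-2} instead of Propositions~\ref{prop:mae} and~\ref{prop:bmae}. As usual, first reduce to $24 d_1 \le d_2$ (otherwise the bound is trivial by choosing $b_2$ large), and assume without loss of generality that $z = (0,r)$ with $r \ge 0$. Then split into three regimes according to the distance $r$ of the center to the boundary of $\Hb$.

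If $r \le 4 d_1$, then $B_{d_1}(z) \subseteq B_{6 d_1}$, so by the usual monotonicity of the $\lambda$-arm event (Remark~\ref{rem:modified arm event}(\ref{rem:mae(b)})),
$$\Ac^{\lambda}_{D^+}(z; d_1, d_2) \subseteq \Ac^{\lambda,+}_{D}(6 d_1, d_2),$$
and Proposition~\ref{prop:critical-lbd-2} (applied with $a = 2\eps/\delta$) yields an upper bound of $b_1 (d_2/(6 d_1))^{-3}$, which is much stronger than $(d_2/d_1)^{-2+\eps}$. If instead $r > d_2/6$, then $D^+ \supseteq B_{d_2/6}(z)$, so
$$\Ac^{\lambda}_{D^+}(z;d_1,d_2) \subseteq \Ac^{\lambda}_{D^+}(z;d_1,d_2/12),$$
and now $D^+$ plays the role of the ambient domain for a genuinely interior $\lambda$-arm event whose outer radius has a $2$-fold safety margin; Proposition~\ref{prop:critical-lbd-1} then gives the desired upper bound $b_0 (d_2/(12 d_1))^{-2+\eps}$.

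The remaining intermediate regime $4 d_1 < r \le d_2/6$ is handled by a quasi-multiplicativity decomposition, in the spirit of Lemma~\ref{lem:quasi-lbd} (and of inequality (\ref{eq:par-bdy}) used in the proof of Proposition~\ref{prop:bbmae}): by modifying crossing loops in the annulus around $\partial B_r(z)$, one obtains
$$\Pb\big(\Ac^{\lambda}_{D^+}(z;d_1,d_2)\big) \lesssim \Pb\big(\Ac^{\lambda}_{\loc}(z; d_1, r/2)\big) \cdot \Pb\big(\Ac^{\lambda,+}_{D}(3r, d_2)\big).$$
The first factor is the interior $\lambda$-arm event inside $B_r(z) \subseteq \Hb$, bounded by Proposition~\ref{prop:critical-lbd-1} by $b_0 (r/(2d_1))^{-2+\eps}$; the second is a boundary $\lambda$-arm event bounded by Proposition~\ref{prop:critical-lbd-2} by $b_1 (d_2/(3r))^{-3}$. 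Setting $x = r/d_1 \ge 4$ and $y = d_2/r \ge 6$, the product $x^{-2+\eps} y^{-3} = (xy)^{-2+\eps} \cdot y^{-1-\eps}$ is bounded above by $(d_2/d_1)^{-2+\eps}$, since $y \ge 1$.

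Finally, $n_2(\eps)$ and $b_2(\eps)$ are chosen as suitable maxima/combinations of $n_0(\eps)$, $n_1(2\eps/\delta)$, $b_0(\eps)$, $b_1(2\eps/\delta)$ and the absolute constants appearing in each of the three cases. The main potential obstacle is verifying the mixed interior/boundary quasi-multiplicativity statement used in case three: while not stated explicitly, it is the $\lambda$-arm analogue of a boundary-version of Lemma~\ref{lem:quasi}, and the surgery argument is identical to the one underlying Lemma~\ref{lem:quasi-lbd}, using the relaxation in Remark~\ref{rem:modified arm event}(\ref{rem:relax}) so that modifying loops on the scale $r$ does not spoil the $\lambda$-passage structure of the chains.
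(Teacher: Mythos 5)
Your proof is correct and follows essentially the same route as the paper's own argument: reduce to $24 d_1 \le d_2$, split into the three cases according to $r$ (re-using the case analysis of Proposition~\ref{prop:bbmae}), handle the first two cases by monotonicity plus Propositions~\ref{prop:critical-lbd-2} and~\ref{prop:critical-lbd-1} respectively, and use the mixed quasi-multiplicativity bound \eqref{eq:par-bdy} (valid for all $\alpha \in (0,1/2]$) in the intermediate regime. Your arithmetic in the third case (writing $x^{-2+\eps} y^{-3} = (xy)^{-2+\eps} y^{-1-\eps} \le (xy)^{-2+\eps}$) and the choice $n_2 > n_0(\eps) \vee n_1(2\eps/\delta)$, $b_2 > b_0(\eps) \vee b_1(2\eps/\delta)$ with suitable absolute constants all agree with what the paper does, so there is nothing to add.
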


\begin{proof}
	First, we take $n_2>n_0\vee n_1$ and $b_2>b_0\vee b_1$, where $n_0$, $b_0$ are from Proposition~\ref{prop:critical-lbd-1}, and $n_1$, $b_1$ are from Proposition~\ref{prop:critical-lbd-2}. Assume that $z=(0,r)$, with $r\ge 0$.
	
We can distinguish cases as in the proof of Proposition~\ref{prop:bbmae}, and in the first two cases, we can proceed in the same way as in that proof. Hence, we only consider the third case $4d_1<r \le d_2/6$. From \eqref{eq:par-bdy}, we have
	\begin{equation} \label{eq:prod_prob_height}
		\Pb(\Ac_{D^+}^{\lambda}(z;d_1,d_2)) \le C\,  \Pb(\Ac^{\lambda}_{\loc}(z;d_1,r/2))\,  \Pb( \Ac^{\lambda,+}_D(3r,d_2)).
	\end{equation}
	By applying Proposition~\ref{prop:critical-lbd-1} and Proposition~\ref{prop:critical-lbd-2}, respectively to the first factor and the second factor in the right-hand side of \eqref{eq:prod_prob_height}, we obtain
	$$\Pb(\Ac_{D^+}^{\lambda}(z;d_1,d_2)) \le C \cdot b_0\, \Big(\log \Big( \frac{r}{d_1} \Big) \Big)^4 \,r^{-2} \cdot b_1\, \Big(\frac{d_2}{r}\Big)^{-3} \le (C\, b_0\,b_1)\, \Big(\log \Big( \frac{d_2}{d_1} \Big) \Big)^4\, d_2^{-2}.$$
	Hence, it suffices to further require $b_2 > C\, b_0\,b_1$ to complete the proof.
\end{proof}

Using Proposition~\ref{prop:critical 2_arm_proba_bdy} and Proposition~\ref{prop:critical-lbd-3}, we are now able to get the following result for the boundary $\lambda$-two-arm event at the critical intensity $\alpha=1/2$. Since its proof is similar to that of Proposition~\ref{prop:critical-lbd-2}, with the aid of the decoupling result in \eqref{eq:blbd+}, we omit the proof.

\begin{proposition}\label{prop:critical-b2}
	There exists $b_3>0$ such that for all $z=(x,0)$, $N \ge n_2$, $0<d_1<d_2<N$, and $D \supseteq B_{2d_2}(z)$,
	$$\Pb\big(\Bc^{\lambda}_{D}(z;d_1,d_2) \big)\le  b_3\, \Big(\log \Big( \frac{d_2}{d_1} \Big) \Big)^4\, \Big(\frac{d_2}{d_1}\Big)^{-1} \quad \text{for $\lambda\ge \frac{14}{\delta} \log\log N$}.$$
\end{proposition}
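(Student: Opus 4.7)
The plan is to mirror the induction scheme from the proof of Proposition~\ref{prop:critical-lbd-2}, adapted to the boundary two-arm geometry. By monotonicity it suffices to treat dyadic scales $d_1 = 2^i$, $d_2 = 2^j$ with $0 \le i < j$, and to proceed by induction on the pair $(i,j)$. I first fix an integer $m_3 = m_3(\eps)$ large enough that $m^4 \le 2^{(\eps/2)m}$ for all $m \ge m_3$; taking $b_3$ to be at least $2^{(1-\eps)m_3}$ handles the base case $j-i \le m_3$ automatically. The induction hypothesis is that the estimate holds for all pairs $(i',j')$ with $j' < j$, and for all pairs $(i',j)$ with $i' > i$.

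For the inductive step, decompose $\Bc^{\lambda}_D(z;2^i,2^j) = \Ec_1 \cup \Ec_2$, where $\Ec_1$ is the subevent on which no $\lambda$-passage edge lies in the bulk semi-annulus $A^+_{2^{i+4},2^{j-3}}(z)$ (equivalently, the associated connection number $\Nc$ for the middle region is at most $4$), and $\Ec_2$ is the complement within $\Bc^{\lambda}_D(z;2^i,2^j)$. For $\Ec_1$, I use the critical analog of Proposition~\ref{prop:b2am}, derived by the same proof as \eqref{eq:b2am} but using the critical boundary two-arm exponent $\xi^{2,+}(1/2) = 1$ and $\eps/2$ in place of $\eps$; this gives $\Pb(\Ec_1) \le C(\eps)\, (j-i)^4 \cdot 2^{-(1-\eps/2)(j-i)}$, which is at most $(b_3/2) \cdot 2^{-(1-\eps)(j-i)}$ for $j-i \ge m_3$, by the choice of $m_3$ (the polylogarithmic factor is absorbed into the $\eps/2$ margin).

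For $\Ec_2$, apply the union bound over the location of a $\lambda$-passage edge $(u,v)$ with $u \in A^+_{2^{i+4},2^{j-3}}(z)$, together with the decoupling estimate \eqref{eq:blbd+}. This produces three factors: the two boundary $\lambda$-two-arm events at inner and outer scales, bounded by the induction hypothesis and contributing $b_3^2 \cdot C \cdot 2^{-(1-\eps)(j-i)}$ after telescoping; and the half-plane-type $\lambda$-arm factor $\Pb(\Ac^{\lambda}_{B^+_{2^{k-1}}(u)}(u;8,2^{k-2}))$, bounded by Proposition~\ref{prop:critical-lbd-3} by $C b_2 \cdot 2^{-(2-\eps)k}$. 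The entropy sum then becomes
\[
\sum_{k=i+4}^{j-4} \sum_{u \in A^+_{2^k,2^{k+1}}(z)} \sum_{v \sim^* u} 2^{-(2-\eps)k} \;\lesssim\; \sum_{k} 2^{2k} \cdot 2^{-(2-\eps)k} \;\lesssim\; 2^{\eps j} \le N^{\eps}.
\]
Combined with the prefactor $\rho^{\lambda} = e^{-\delta\lambda} = N^{-2\eps}$ coming from \eqref{eq:blbd+}, this yields $\Pb(\Ec_2) \le C\, b_3^2\, b_2 \, N^{-\eps} \cdot 2^{-(1-\eps)(j-i)}$. To close the induction I choose $b_3$ large enough to dominate the $\Ec_1$ constant and the base case, and then $n_3(\eps) \ge n_2(\eps)$ large enough so that $C\, b_3\, b_2\, n_3^{-\eps} \le 1/2$, ensuring $\Pb(\Ec_2) \le (b_3/2) \cdot 2^{-(1-\eps)(j-i)}$.

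The main subtlety, identical in spirit to Propositions~\ref{prop:critical-lbd-1} and~\ref{prop:critical-lbd-2}, is that at $\alpha = 1/2$ the boundary two-arm exponent $\xi^{2,+}(1/2) = 1$ is exactly critical, so the entropy series over the position of the $\lambda$-passage edge is only marginally summable. Summability is restored by the gain $N^{-2\eps}$ harvested from the decoupling \eqref{eq:blbd+} at each $\lambda$-passage edge, which is precisely what forces the logarithmic scaling $\lambda = (2\eps/\delta)\log N$ and the hypothesis $d_2 < N$.
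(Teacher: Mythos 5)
Your proposal is correct and follows exactly the route the paper indicates (induction on dyadic scales, $\Ec_1/\Ec_2$ split, critical analog of the boundary two-arm version of Lemma~\ref{lem:m4a} for $\Ec_1$, decoupling~\eqref{eq:blbd+} together with the induction hypothesis and Proposition~\ref{prop:critical-lbd-3} for $\Ec_2$, with $\rho^{\lambda}=N^{-2\eps}$ absorbing the $N^{\eps}$ entropy growth); the paper omits this proof but describes it as ``similar to that of Proposition~\ref{prop:critical-lbd-2}, with the aid of the decoupling result in \eqref{eq:blbd+},'' which is precisely what you did. Two small cosmetic remarks: the $\Ec_1$ bound $(j-i)^4 2^{-(1-\eps/2)(j-i)}$ is the Lemma~\ref{lem:m4a}-style polylogarithmic estimate (like \eqref{eq:b2}) rather than the $e^{c^{m+1}}$-style bound from Proposition~\ref{prop:b2am} whose induction does not close at $\alpha=1/2$, and the threshold you call $n_3(\eps)$ can be folded back into the stated $n_2(\eps)$ at the cost of enlarging $b_3$ (trivially bounding the probability by $n_3^{1-\eps}(d_2/d_1)^{-1+\eps}$ when $n_2\le N<n_3$).
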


We are finally in a position to prove \eqref{eq:critical_carpet1}, and thus complete the proof of Theorem~\ref{thm:carpet2}.

\begin{proof}[Proof of  \eqref{eq:critical_carpet1}]
	First, we have the following decoupling result (similar to \eqref{eq:decouple}): for $z=(x,0)$ and $D=B_{2N}(z)$,
	\begin{equation*}
		\Pb( \Bc^\lambda_{D}(z;1,N)\cap \{ X_{D}(z)>\lambda \} ) \lesssim \rho^{\lambda}\,\Pb( \Bc^\lambda_{D}(z;8,N) ).
	\end{equation*}
	Analogously to \eqref{eq:cr-1} (but now incorporating the occupation at the starting point of the $\lambda$-chain), we have 
	\[
	\Pb(\Fc_N(\lambda)^c) \le \sum_{|x|\le N} \Pb( \Bc^\lambda_{D}(z;1,N)\cap \{ X_{D}(z)>\lambda \} )
	\lesssim N\, \rho^{\lambda} \,\Pb( \Bc^\lambda_{D}(z;8,N) ).
	\]
	By Proposition~\ref{prop:critical-b2}, we obtain that for $a \geq \frac{14}{\delta}$ and $\lambda=a\log\log N$, with $N\ge n_2$,
	\[
	\Pb(\Fc_N(\lambda)^c) \lesssim N\, (\log N)^{-\delta a}\, (\log N)^{4}\, N^{-1} = (\log N)^{4-\delta a} \le (\log N)^{-\delta a/2}.
	\]
	This completes the proof of \eqref{eq:critical_carpet1}.
\end{proof}

\subsection{TSLS of GFF and thick points of random walks}\label{subsec:toGFF}

We now explain how the percolation results at the critical intensity $\alpha=1/2$ can be formulated in terms of other processes, especially the GFF. This will be an immediate consequence of the isomorphism theorems, Theorems~\ref{thm:Le-Jan} and~\ref{thm:gsrk}, recalled in Section~\ref{subsec:GFF}.

Let $\varphi_N:=\varphi_{B_N}$ be the GFF on $B_N$. 
Also, let $X_N$ be the occupation field for the RWLS in $B_N$ with intensity $1/2$.

\begin{proof}[Proof of Theorem~\ref{thm:GFF_carpet}]
	We know from Theorem~\ref{thm:Le-Jan} that $\varphi_N$ and $X_N$ can be coupled in such a way that for all $z\in B_N$, $X_N(z)=\frac12 \varphi_N(z)^2$ (almost surely). 
	Theorem~\ref{thm:GFF_carpet} thus follows readily from Theorem~\ref{thm:carpet2}.
\end{proof}



\begin{remark}\label{rmk:yg}
	For $a>0$,  let $L_N(a)$ be the length of the shortest horizontal crossing of $B_{N/2}$ in $\{z\in B_N: |\varphi_N(z)|\le \sqrt{2a \log\log N} \}$ if there exists one (i.e.\ the event appearing in \eqref{eq:gff-1} occurs with $C = \sqrt{2a}$), and let $L_N(a) = \infty$ otherwise. The quantity $L_N(a)$ is also called the \emph{chemical distance}.
	It has been proved in \cite[Proposition~4.4]{MR4419197} that there exists $a_1>0$ such that for each $a\le a_1$, we have: for some $b(a) \in (0,1)$,
	\begin{equation} \label{eq:chem}
		\lim_{N \to \infty}\Pb( L_N(a)\ge N e^{(\log N)^b} ) = 1.
	\end{equation}
	Under the hypothesis that $L_N(a)<\infty$ for some $a\le a_1$ (with positive probability uniformly in $N$), \eqref{eq:chem} would give rise to an interesting regime where low crossings exist and have a length which is superlinear in $N$.
	This would in particular follow if \eqref{eq:critical_carpet2} holds for some $a \le a_1$, but we do not know whether it is true. Indeed, it is not clear how the above-mentioned constant $a_1$ and the constant $a_0$ in Theorem~\ref{thm:carpet2} compare. This is in fact part of the question (Q\ref{q1}) raised earlier.
\end{remark}

Furthermore, using the generalized second Ray-Knight theorem (Theorem~\ref{thm:gsrk}), we can also obtain results about the connectivity of the level sets of $\Lfr^t_N := \Lfr^t_{B_N}$, defined in \eqref{eq:local-time} (the local times induced by a continuous-time simple random walk). By combining Theorem~\ref{thm:GFF} and Corollary~\ref{cor:gsrk}, we can obtain immediately the following.
\begin{theorem}\label{thm:lt}
	Let $c_0,C_0$ be as in Theorem~\ref{thm:GFF}. Then for all $C\ge C_0$ and $t > 0$, we have
	\begin{align}\label{eq:lt-2}
		& \Pb\Big( \Cc_{N}\Big( \Big\{ z\in B_N: \Lfr^t_N(z)\le \frac12\big(C\sqrt{\log\log N}+\sqrt{2t}\big)^2 \Big\} \Big) \Big)=1-O((\log N)^{-c_0C^2}) \quad \text{as} \quad N\to \infty.
	\end{align}
	Moreover, for all $C\ge C_0$ and $t > 0$, 
	\begin{align}
		\label{eq:lt-1}
		&\liminf_{N \to \infty} \Pb\Big( \Cc_{N/2}\Big( \Big\{ z\in B_N: \Lfr^t_N(z)\le \frac12\big(C\sqrt{\log\log N}+\sqrt{2t}\big)^2 \Big\} \Big) \Big)>0.
	\end{align}
\end{theorem}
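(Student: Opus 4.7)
The strategy is to combine the deterministic inclusion of level sets (from the triangle inequality) with the stochastic domination provided by Corollary~\ref{cor:gsrk}, and then invoke Theorem~\ref{thm:GFF}. Since $\Cc_n(A)$ is monotone in $A$ (a larger set of low vertices makes a crossing more likely), this reduces to chaining set inclusions through a coupling and applying the GFF estimates already established.

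More concretely, I would first observe that if $|\varphi_N(z)|\le \eps\sqrt{\log N}$, then by the triangle inequality
\[
\tfrac12(\varphi_N(z)+\sqrt{2t})^2 \le \tfrac12\bigl(\eps\sqrt{\log N}+\sqrt{2t}\bigr)^2 =: \Lambda_N(\eps,t).
\]
Hence, as a pointwise inclusion of subsets of $B_N$,
\[
\{ z\in B_N : |\varphi_N(z)|\le \eps\sqrt{\log N} \} \subseteq \bigl\{ z\in B_N : \tfrac12(\varphi_N(z)+\sqrt{2t})^2 \le \Lambda_N(\eps,t) \bigr\},
\]
and therefore the corresponding crossing events satisfy the same inclusion, both for $\Cc_N(\cdot)$ and $\Cc_{N/2}(\cdot)$.

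Next, I would apply Corollary~\ref{cor:gsrk} to realize $\Lfr^t_N$ and $\varphi_N$ on a common probability space in such a way that $\Lfr^t_N(z)\le \tfrac12(\varphi_N(z)+\sqrt{2t})^2$ for every $z\in B_N$ almost surely. Under this coupling,
\[
\bigl\{ z : \tfrac12(\varphi_N(z)+\sqrt{2t})^2 \le \Lambda_N(\eps,t) \bigr\} \subseteq \bigl\{ z : \Lfr^t_N(z) \le \Lambda_N(\eps,t) \bigr\},
\]
and by monotonicity of $\Cc_n(\cdot)$ in the underlying set, the crossing event in the right-hand set contains the crossing event in the left-hand set. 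Combining this with the deterministic inclusion above yields, under the coupling, the event inclusion
\[
\Cc_N\bigl(\{|\varphi_N|\le \eps\sqrt{\log N}\}\bigr) \subseteq \Cc_N\bigl(\{\Lfr^t_N \le \Lambda_N(\eps,t)\}\bigr),
\]
and analogously for $\Cc_{N/2}$. Taking probabilities and invoking Theorem~\ref{thm:GFF} (with the same constant $c_0$) immediately gives \eqref{eq:lt-2} and \eqref{eq:lt-1}, since the probability of the right-hand crossing event is at least $1-O(N^{-c_0\eps^2})$ in the first case and bounded away from $0$ uniformly in $N$ in the second case.

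There is essentially no obstacle in this argument — both the triangle-inequality inclusion and the coupling from Corollary~\ref{cor:gsrk} are elementary, and the monotonicity of $\Cc_n$ in its argument is immediate from the definition \eqref{eq:Cn}. The only point requiring a brief justification is that stochastic domination of fields, as stated in Corollary~\ref{cor:gsrk}, indeed yields a coupling with pointwise inequality, which is standard; once this coupling is in hand, the deduction is purely set-theoretic.
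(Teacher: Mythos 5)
Your proposal is correct and follows exactly the route the paper intends: the paper states that Theorem~\ref{thm:lt} follows "immediately" by combining Theorem~\ref{thm:GFF} with Corollary~\ref{cor:gsrk}, and your argument is the natural fleshing-out of that deduction (triangle inequality on level sets, pointwise coupling from the generalized second Ray-Knight isomorphism, and monotonicity of the crossing event). One small remark: the coupling with pointwise inequality does not even require appealing to Strassen-type results, since Theorem~\ref{thm:gsrk} directly exhibits a coupling where $\Lfr^t_D(z)+\frac12\varphi'_D(z)^2 = \frac12(\varphi_D(z)+\sqrt{2t})^2$ in law, and dropping the nonnegative term $\frac12\varphi'^2_D$ gives the pointwise bound.
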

Finally, we explain why Theorem~\ref{thm:lt} implies Theorem~\ref{thm:thick}.
\begin{proof}[Proof of Theorem~\ref{thm:thick}]
	Recall that $t_N$ was defined in \eqref{eq:tN}. Note that
	$$\frac{1}{2} \big(C\sqrt{\log\log N}+\sqrt{2t_N} \big)^2 = \frac{(\sqrt{\theta}+a_{C, N})^2}{\pi}(\log N)^2,$$
	with
	$$a_{C, N}:=C\, \sqrt{\frac{\pi}{2}}\, \frac{\sqrt{\log\log N}}{\log N}.$$
	Hence, the set $B_N\setminus \Tc_N^+(\theta,a_{C, N})$ consists of the vertices where $\Lfr^{t_N}_N < \frac12\big(C\sqrt{\log\log N}+\sqrt{2t_N}\big)^2$, so (from \eqref{eq:lt-2} and \eqref{eq:lt-1}, respectively)
	\begin{align*}
		\Pb\big( \Cc_{N}\big( B_N\setminus \Tc_N^+(\theta,a_{C, N}) \big) \big)=1-O((\log N)^{-c_0C^2}) \quad \text{and} \quad \liminf_{N \to \infty} \Pb\big( \Cc_{N/2}\big( B_N\setminus \Tc_N^+(\theta,a_{C, N}) \big) \big)>0.
	\end{align*}
	We can then complete the proof by using duality.
\end{proof}

\begin{remark} \label{rem:*thick}
From the way in which we conclude the proof, i.e.\ invoking duality, we can see that in fact, a stronger result holds. Indeed, the properties \eqref{eq:thick-2} and \eqref{eq:thick-1} remain true if in the set of thick points $\Tc_N^+(\theta,a_{C,N})$, we consider $*$-paths (instead of simply paths on $\Zb^2$).
\end{remark}

\section{Extensions} \label{sec:extensions}

In this last section, we consider further estimates on crossing probabilities, and some generalizations. First, we derive a Russo-Seymour-Welsh type lower bound on crossing probabilities in Section~\ref{sec:RSW}, based on a general result obtained recently by K\"ohler-Schindler and Tassion \cite{KST2023}. In Section~\ref{subsec:annulus}, we then study crossings of annuli, which allows us to obtain an exponential decay property throughout the whole subcritical regime. Finally, in another direction, we explain in Section~\ref{sec:metric_graph} how our reasonings can also be applied on the metric graph (cable system) associated with $\Zb^2$.

\subsection{A Russo-Seymour-Welsh result} \label{sec:RSW}

In this section, we prove that the (strongly correlated) percolation model in this paper has a Russo-Seymour-Welsh (RSW) type property. We consider an arbitrary intensity $\alpha > 0$, and use the same notations as in previous sections. Hence, for every subset $D \subseteq \Zb^2$, we denote by $\Lc_D$ the RWLS with intensity $\alpha$ in $D$, and by $X_D$ the associated occupation field (see Section~\ref{subsec:RWLS}). For any threshold $\lambda>0$, we consider the configuration $\omega = (\omega(z))_{z \in D} \in \{0,1\}^{D}$, with $\omega(z) :=\mathbbm{1}_{X_D(z)\le \lambda}$: each vertex $z \in D$ is called open if $\omega(z)=1$, and closed otherwise. In this way, we have defined a site percolation process in $D$, whose distribution is denoted by $\Pf_{\alpha,D}^{\lambda}$. In the remainder of this section, we omit the dependence on $\alpha$ and $\lambda$, simply writing $\Pf_D$, and we abbreviate $\Pf_N:=\Pf_{B_N}$.

We consider crossing events of the form 
$$\Cc(m,n) := \left\{  \begin{array}{c} \text{there exists a path crossing from left to right}\\ \text{in $([-m,m]\times [-n,n]) \cap \Zb^2$ which consists of open sites} \end{array} \right\}, \quad m,n\ge 1.$$
We show later that $\Pf_N$ satisfies the conditions of Theorem~2 in \cite{KST2023} (see Lemmas~\ref{lem:SSD} and~\ref{lem:FKG} below), which will imply, thanks to that result, the following RSW estimate for the finite-volume measure $\Pf_N$.

\begin{theorem}\label{thm:RSW}
	There exists a homeomorphism $\psi: [0,1]\rightarrow [0,1]$ such that for all $\alpha>0$, $\lambda>0$, and $N\ge 1$,
	$$\Pf_{2N}( \Cc(2N,N) )\ge \psi \left( \Pf_{6N}( \Cc(N,2N) ) \right).$$
\end{theorem}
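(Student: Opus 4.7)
The plan is to derive Theorem~\ref{thm:RSW} as a direct consequence of the general finite-volume Russo--Seymour--Welsh theorem of K\"ohler-Schindler and Tassion (Theorem~2 of \cite{KST2023}), which applies to any family of percolation-like measures on $\Zb^2$ satisfying three conditions: invariance under the symmetries of the square lattice, the FKG inequality, and a monotonicity/stochastic-domination property between nested domains. Since $\Pf_N$ is defined in terms of the rotation- and reflection-invariant random walk loop measure and of i.i.d.\ waiting times, the invariance of $\Pf_N$ under the symmetries of $B_N$ is immediate. The real content is then contained in the two lemmas Lemma~\ref{lem:FKG} and Lemma~\ref{lem:SSD}, which verify the remaining two hypotheses. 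The theorem then follows directly, with some universal homeomorphism $\psi$ produced by the general result.

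For Lemma~\ref{lem:FKG}, the idea is to lift the FKG inequality from the loop soup (Lemma~\ref{lem:FKG-RWLS}) to the percolation configuration $\omega(z) = \mathbbm{1}_{X_D(z) \le \lambda}$. The occupation field $X_D$ is, by construction, an increasing functional of the pair $(\Lc_D, T)$, where $T$ denotes the collection of i.i.d.\ exponential/$\Gamma$ waiting times attached to the jumps and trivial loops. Hence any increasing event in $\omega$ corresponds to a decreasing event in $(\Lc_D, T)$. The loop soup satisfies FKG by Lemma~\ref{lem:FKG-RWLS}, the waiting times have FKG trivially as an independent product, and they are independent of $\Lc_D$; hence the joint law has FKG, and positive association for decreasing functions of $(\Lc_D, T)$ translates to positive association for increasing functions of $\omega$.

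For Lemma~\ref{lem:SSD}, the key observation is that for any $D \subseteq D'$, the random walk loop soups $\Lc_D$ and $\Lc_{D'}$ admit a natural coupling via Poisson restriction (so that $\Lc_D \subseteq \Lc_{D'}$ almost surely), and the waiting times can be coupled identically on the common loops. Along this coupling, $X_{D'}(z) \ge X_D(z)$ for every $z\in D$, hence $\omega_{D'}(z) \le \omega_{D}(z)$, and the restriction of $\Pf_{D'}$ to $D$ is stochastically dominated by $\Pf_D$. This is precisely the form of domain monotonicity required by the KST framework: it lets one import crossings obtained in the larger volume $B_{6N}$ into the smaller volume $B_{2N}$, which is exactly what underlies the enlargement from $2N$ to $6N$ in the statement.

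With these two lemmas in hand, together with the symmetries of $B_N$, the hypotheses of Theorem~2 in \cite{KST2023} are satisfied uniformly in $\alpha$ and $\lambda$, and the conclusion yields the claimed inequality with a universal homeomorphism $\psi:[0,1]\to[0,1]$ independent of $\alpha$, $\lambda$, and $N$. The main (essentially only) technical obstacle lies in phrasing Lemma~\ref{lem:SSD} so that it matches the exact ``boundary-condition monotonicity'' axiom of \cite{KST2023}: the $6N$ vs.\ $2N$ enlargement must be chosen to leave enough geometric room to glue short-way crossings into long-way crossings via the KST argument. Once the axioms are phrased correctly, no further surgery on loops is needed, and the proof reduces to a verification step.
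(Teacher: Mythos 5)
Your proposal is correct and follows essentially the same route as the paper: invoke the finite-volume RSW theorem of K\"ohler-Schindler--Tassion after verifying its two axioms, which the paper packages as Lemma~\ref{lem:SSD} (symmetric stochastic domination) and Lemma~\ref{lem:FKG} (positive association). Your argument for Lemma~\ref{lem:SSD}, via the natural inclusion coupling $\Lc_D \subseteq \Lc_{D'}$ giving $X_{D'} \ge X_D$ on $D$ pointwise, is exactly the paper's argument, combined (as the paper does in the same lemma) with the lattice symmetries $\sigma \in \Sigma_N$ to pass from $\Pf_{\sigma \cdot B_N}$ to $\Pf_N$.

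The one place where you diverge from the paper is the FKG step, and there the two arguments are equivalent but structured differently. You take the ``raw'' construction of the occupation field as an increasing functional of the pair $(\Lc_D, T)$, where $T$ is the independent collection of waiting times attached to jumps and trivial loops, and then appeal to the standard fact that a product of two FKG probability spaces is FKG, so decreasing events in $(\Lc_D, T)$ (equivalently, increasing events in $\omega$) are positively associated. The paper instead conditions on the visit-count field $\bar n=(n(z))_z$, applies the lattice FKG inequality to the conditional product law of $\bar t$ given $\bar n$, and then applies the loop-soup FKG inequality to the resulting decreasing functionals $\Pf_N(E\mid \bar n)$ and $\Pf_N(F\mid\bar n)$ of $\Lc_D$. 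Both routes are valid; yours is arguably slightly cleaner because it avoids the conditioning, while the paper's version keeps the argument explicitly in terms of the sampling procedure described in Section~\ref{subsec:RWLS}. No gaps.
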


More specifically, we need to check that $\Pf_N$ possesses two properties: (1) it is invariant under the symmetries of $\Zb^2$; (2) it is positively associated (i.e., it satisfies the FKG inequality). In fact, the authors of \cite{KST2023} consider both the infinite-volume and the finite-volume setups, but here we restrict to the latter case, discussed in Section~6 of that paper, since $\Pf_{\Zb^2}$ is degenerate anyway. In addition, note that even though the results in \cite{KST2023} are formulated in terms of bond percolation, their proofs are quite robust (see the explanation just below Theorem~1 in that paper). In particular, they can be extended to site percolation, which is our focus here.

In order to state and check the above-mentioned two conditions, we first need to introduce some additional notations. We denote by $\Sigma$ the group of symmetries of $\Zb^2$, which is generated by the translation by the vector $(1,0) \in\Zb^2$, the rotation by an angle $\pi/2$ around the origin, and the reflection across the $y$-axis. We write, for each $m \geq 1$,
$$\Sigma_m := \big\{ \sigma\in\Sigma: \sigma\cdot B_{m} \subseteq B_{2m} \big\}.$$
An event $E$ (for the percolation configuration) is called \emph{increasing} if its indicator function $\mathbbm{1}_E$ is increasing (similarly to the definition above Lemma~\ref{lem:FKG-RWLS} for loop configurations), i.e.\ if $\omega \in E$ and $\omega \le \omega'$ imply $\omega'\in E$. We are now in a position to state the two conditions, in the finite-volume setup, as the following two lemmas, respectively.

\begin{lemma}[Symmetric stochastic domination] \label{lem:SSD}
	For all $N \geq 1$, $\sigma\in \Sigma_N$, and any increasing event $E$ depending only on the sites in $B_N$, we have
	$$\Pf_N(E) \ge \Pf_{2N}(\sigma \cdot E).$$
\end{lemma}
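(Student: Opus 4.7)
The plan is to combine a natural monotone coupling of the RWLS in nested domains with the lattice symmetry of its law. The key observation is that adding loops can only increase the occupation field, hence can only destroy $\lambda$-openness; consequently, for an increasing event $E$, passing from the smaller domain $\sigma \cdot B_N$ to the larger one $B_{2N}$ can only \emph{decrease} the probability. Since the symmetry $\sigma$ maps the law in $B_N$ to the law in $\sigma \cdot B_N$, this yields the desired inequality.

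Concretely, I would realise the three loop soups $\Lc_{B_N}$, $\Lc_{\sigma \cdot B_N}$ and $\Lc_{B_{2N}}$ as measurable functionals of a single RWLS $\Lc$ on $\Zb^2$ (together with the families of independent $\mathrm{Exp}(1)$ jump-times and $\Gamma(\alpha,1)$ trivial-loop holding-times attached as in Section~\ref{subsec:RWLS}), by retaining, for each target domain $D$, the non-trivial loops of $\Lc$ contained in $D$ and the trivial loops at vertices of $D$. Since $\sigma \in \Sigma_N$ implies $\sigma \cdot B_N \subseteq B_{2N}$, this gives the inclusion $\Lc_{\sigma \cdot B_N} \subseteq \Lc_{B_{2N}}$ with matched waiting times, hence the pointwise domination $X_{\sigma \cdot B_N}(z) \le X_{B_{2N}}(z)$ for every $z \in \sigma \cdot B_N$. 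In terms of the $\lambda$-open indicators $\omega^D(z) := \mathbbm{1}_{X_D(z) \le \lambda}$, this reads $\omega^{\sigma \cdot B_N}(z) \ge \omega^{B_{2N}}(z)$ for all such $z$.

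Next, I would invoke the fact that the unrooted loop measure $\nu$ of \eqref{eq:nu} depends on loops only through $\mu_0(\gamma) = 4^{-|\gamma|}$ and the multiplicity $J(\gamma)$, so it is invariant under the action of every $\sigma \in \Sigma$; the attached waiting times are i.i.d.\ and hence likewise invariant. Consequently $(X_{\sigma \cdot B_N}(\sigma \cdot z))_{z \in B_N}$ has the same law as $(X_{B_N}(z))_{z \in B_N}$, and the same holds for the associated indicators. Writing the action on events as $\sigma \cdot E = \{\omega : (\omega(\sigma \cdot z))_{z \in B_N} \in E\}$, and using that $E$ (and thus $\sigma \cdot E$) is increasing, I would conclude with
$$\Pf_{2N}(\sigma \cdot E) = \Pb\big((\omega^{B_{2N}}(\sigma \cdot z))_{z \in B_N} \in E\big) \le \Pb\big((\omega^{\sigma \cdot B_N}(\sigma \cdot z))_{z \in B_N} \in E\big) = \Pf_N(E).$$

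The argument is essentially bookkeeping and I do not foresee a serious obstacle; the only points deserving care are (i)~that the waiting-time families can be coupled compatibly across the three loop soups, which is transparent since every loop carries its own independent clock, and (ii)~that the $\sigma$-action is applied consistently to both the loops and the events (so that the identity $\Pb((\omega^{\sigma \cdot B_N}(\sigma \cdot z))_z \in E) = \Pf_N(E)$ is indeed the distributional symmetry and not just a relabelling).
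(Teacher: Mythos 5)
Your proof is correct and follows essentially the same route as the paper: first the natural monotone coupling $\Lc_{\sigma\cdot B_N} \subseteq \Lc_{B_{2N}}$ gives the domain-monotonicity $\Pf_{\sigma\cdot B_N}(\sigma\cdot E) \ge \Pf_{2N}(\sigma\cdot E)$ for increasing events, and then the $\Sigma$-invariance of $\nu$ (plus the i.i.d.\ waiting times) gives $\Pf_{\sigma\cdot B_N}(\sigma\cdot E) = \Pf_N(E)$. The paper states these two steps more compactly, but the underlying argument is identical.
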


\begin{lemma}[Positive association] \label{lem:FKG}
	For all $N \geq 1$, and any two increasing events $E$ and $F$ depending only on the vertices in $B_N$, we have
	$$\Pf_N(E \cap F) \ge \Pf_N(E) \cdot \Pf_N(F).$$
\end{lemma}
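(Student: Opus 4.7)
The plan is to lift the problem from the configuration $\omega \in \{0,1\}^{B_N}$ to a larger probability space on which $\omega$ arises as a deterministic monotone function of an FKG-distributed object, and then invoke FKG there. Using the construction recalled in Section~\ref{subsec:RWLS}, the occupation field admits the representation
\[
X_{B_N}(z) = S_z + \sum_{\gamma \in \Lc_{B_N} \cap \Gamma}\, \sum_{i:\gamma_i = z} T_{\gamma,i}, \quad z \in B_N,
\]
where the $T_{\gamma,i}$ are i.i.d. $\mathrm{Exp}(1)$ marks attached to the jumps of potential non-trivial loops, and the $S_z$ are i.i.d. $\Gamma(\alpha,1)$ variables attached to the trivial loop at each vertex, both families being independent of each other and of $\Lc_{B_N}$. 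I will regard the whole datum $U := (\Lc_{B_N},(T_{\gamma,i}),(S_z))$ as a single random object, endowed with the natural product partial order (set inclusion on the loop component, componentwise order on the marks).

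The key monotonicity observation is that with respect to this order, adding a non-trivial loop or enlarging any waiting time can only increase $X_{B_N}(z)$, so $X_{B_N}$ is an increasing function of $U$. Consequently $\omega(z) = \mathbbm{1}_{X_{B_N}(z) \le \lambda}$ is a decreasing function of $U$, and for any increasing event $E$ in $\omega$, the indicator $\mathbbm{1}_E(\omega(U))$ is a decreasing functional of $U$; the same holds for $F$.

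Next, I will argue that the law of $U$ is FKG. It is the product of two pieces: (i) a marked Poisson point process on non-trivial loops in $B_N$, which can be realised as an (unmarked) Poisson point process on the product space of loops with their mark-sequences, hence satisfies the FKG inequality for monotone functionals exactly as in Lemma~\ref{lem:FKG-RWLS}; and (ii) the independent product measure on the Gamma variables $(S_z)$, which is trivially FKG. Since the product of two FKG measures is FKG, any two monotone functionals of $U$ in the same direction are positively correlated. Applying this to the decreasing functionals $\mathbbm{1}_E(\omega(U))$ and $\mathbbm{1}_F(\omega(U))$ yields
\[
\Pf_N(E \cap F) = \Eb\bigl(\mathbbm{1}_E(\omega)\,\mathbbm{1}_F(\omega)\bigr) \ge \Eb(\mathbbm{1}_E(\omega))\,\Eb(\mathbbm{1}_F(\omega)) = \Pf_N(E)\,\Pf_N(F),
\]
as required.

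The only mildly delicate point, and the one I would be careful about when writing the proof in detail, is the handling of the Exp$(1)$ marks, which a priori live on a random set of loops. This is the step where it is important to realise the marked loops as an (unmarked) Poisson point process on a product space; once this identification is in place, Lemma~\ref{lem:FKG-RWLS} applies verbatim and the remainder of the argument is immediate. Everything else (the pointwise monotonicity of $X_{B_N}$, the preservation of FKG under products with independent factors, and the covariance inequality for two functionals decreasing in the same direction) is entirely standard.
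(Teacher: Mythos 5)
Your proof is correct, and it takes a genuinely different route from the paper's. The paper conditions on $\bar n = (n(z))_z$ (the visit counts coming from $\Lc_{B_N}$): conditionally on $\bar n$ the configuration $\omega$ is a product measure over $B_N$, so the lattice Harris inequality gives $\Pf_N(E\cap F\mid\bar n)\ge\Pf_N(E\mid\bar n)\Pf_N(F\mid\bar n)$; the conditional probabilities are then decreasing functions of the loop soup, so a second application of FKG, namely Lemma~\ref{lem:FKG-RWLS}, finishes the job. You instead lift to the joint space $U=(\Lc_{B_N},(T_{\gamma,i}),(S_z))$ and apply FKG once there. This is a different formalization of the same underlying product/tower structure: the paper peels off the two sources of randomness sequentially via the tower property, while you glue them into a single FKG-able space and apply the covariance inequality directly. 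The paper's version has the minor advantage of only invoking two off-the-shelf facts (lattice Harris for finite products, and Lemma~\ref{lem:FKG-RWLS}) without needing to set up FKG for marked Poisson processes; your version is shorter once the marked-PPP framework is in place. One point you flag yourself, and which should be made precise if you write this up in full: the waiting times must be attached to copies of loops, not just to loops, so that a loop of multiplicity $n_{\Lc}(\gamma)\ge 2$ receives independent mark-sequences for each copy. Viewing the marked collection as an unmarked Poisson point process on (loop $\times$ mark-sequence) space — or equivalently pre-sampling marks indexed by $(\gamma,i,k)$ with $k$ ranging over copy indices — handles this correctly and makes your representation of $X_{B_N}$ distributionally exact; with that fix, the one-step FKG argument goes through verbatim.
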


As we mentioned above, Theorem~\ref{thm:RSW} follows immediately from Theorem~2 of \cite{KST2023}, combined with Lemmas~\ref{lem:SSD} and \ref{lem:FKG}. Hence, there only remains to prove these two lemmas.

\begin{proof}[Proof of Lemma~\ref{lem:SSD}]
	We first observe that the family of probability measures $\Pf_D$, $D\subseteq \Zb^2$, is ordered in the following sense: for any two subsets $D \subseteq D' \subseteq \Zb^2$, for any increasing event depending only on the vertices in $D$,
	$$\Pf_{D}(E) \ge \Pf_{D'}(E).$$
	Indeed, this follows immediately from the natural coupling of $\Lc_D$ and $\Lc_{D'}$. Hence, for any increasing $E$ depending only on the vertices in $B_N$, and every $\sigma\in \Sigma_N$, we have (since $\sigma\cdot B_N \subseteq B_{2N}$ by definition)
	$$\Pf_N(E) = \Pf_{\sigma\cdot B_N}(\sigma \cdot E) \ge  \Pf_{2N}(\sigma \cdot E),$$
	which completes the proof.
\end{proof}

Next, we turn to the proof of Lemma~\ref{lem:FKG}. As we explain, it follows rather directly from Lemma~\ref{lem:FKG-RWLS} and the FKG inequality for product measures.

\begin{proof}[Proof of Lemma~\ref{lem:FKG}]
	We recall the construction of $X_D$ in Section~\ref{subsec:RWLS}, and the families $\bar n:= (n(z))_{z\in D}$ and $\bar t := (t(z))_{z\in D}$ introduced there. It is easy to see that an event $E$ which is increasing for the percolation configuration in $B_N$ is necessarily decreasing in both $\bar n$ and $\bar t$. Conditionally on $\bar n$, we know that the distribution of $\bar t$ is simply the product measure under which for each $z \in D$, $t(z) \sim \Gamma(n(z) + \alpha,1)$. In other words, given $\bar n$, we can write 
	$$\Pf_N \mid _{\bar n}\ =\prod_{z\in D} \mu_z,$$
	where $\mu_z$ is the Bernoulli distribution on $\{0,1\}$, where $\mu_z(1)$ is given by the cumulative distribution function at $\lambda$ of the $\Gamma(n(z) + \alpha,1)$ distribution, and $\mu_z(0)=1-\mu_z(1)$. Thus, by the standard lattice FKG inequality for product measures (see e.g. Section~2.2 of the classical reference \cite{MR1707339}), we have 
	$$\Pf_N(E \cap F \mid \bar n) \ge \Pf_N(E \mid \bar n)\cdot \Pf_N(F \mid \bar n).$$
	By applying Lemma~\ref{lem:FKG-RWLS} with $f=\Pf_N(E \mid \bar n)$ and $g=\Pf_N(F \mid \bar n)$, which are both decreasing, we then obtain that 
	$$\Eb(\Pf_N(E \mid \bar n)\cdot \Pf_N(F \mid \bar n)) \ge \Eb(\Pf_N(E \mid \bar n)) \cdot \Eb(\Pf_N(F \mid \bar n))= \Pf_N(E) \cdot \Pf_N(F).$$
	This completes the proof. 
\end{proof}

We have focused on primal crossings so far, but similar results hold for dual crossing events, of the form 
$$\Cc^*(m,n) := \left\{  \begin{array}{c} \text{there exists a $*$-path crossing from left to right}\\ \text{in $([-m,m]\times [-n,n]) \cap \Zb^2$ which consists of closed sites} \end{array} \right\}, \quad m,n\ge 1.$$
In particular, the conclusion of Theorem~\ref{thm:RSW} remains valid with $\Cc^*(m,n)$ in place of $\Cc(m,n)$, as we now explain. First, we have by duality: for all $N \geq n \vee m$,
$$\Pf_N(\Cc(m,n))=1-\Pf_N(\Cc^*(n,m)).$$
Therefore, there exists a homeomorphism $\psi^*: [0,1]\rightarrow [0,1]$ such that for all $\alpha>0$, $\lambda>0$, and $N\ge 1$,
\begin{equation} \label{eq:dual-RSW}
	\Pf_{6N}( \Cc^*(2N,N) )\ge \psi^* \left( \Pf_{2N}( \Cc^*(N,2N) ) \right). 
\end{equation}
Moreover, since $\Cc^*(2N,N)$ is a decreasing event, we have that for any finite $D \supseteq B_{6N}$,
$$\Pf_{D}( \Cc^*(2N,N) )\ge \Pf_{6N}( \Cc^*(2N,N) ).$$
By Lemma~1 of \cite{KST2023} (based on standard path-gluing techniques and the FKG inequality), we obtain the following. For every integer $\rho \ge 2$, we have
$$\Pf_{3\rho N}( \Cc^*(\rho N,N) )\ge \big( \Pf_{6N}( \Cc^*(2N,N) ) \big)^{2\rho-1} \ge \big( \psi^* \big( \Pf_{2N}( \Cc^*(N,2N) ) \big) \big)^{2\rho-1},$$
where \eqref{eq:dual-RSW} was used in the second inequality. In other words, we have obtained a general form of RSW for dual crossing events.

\subsection{Annulus setting and exponential decay}\label{subsec:annulus}

In this section, we consider annulus-crossing events. As we will see immediately, the critical values defined in this setup are the same as $\lambda'_*(\alpha)$ (see \eqref{eq:crit_val2}), which is associated with the bulk box-crossing event. Moreover, we can derive in this way an exponential decay property for the one-arm probability in the subcritical regime.

First, the boundary and bulk annulus-crossing events are defined, respectively, by
\begin{equation}
	\overline\Ec_{\alpha,N}(\lambda):=\{ \text{there exists a $\lambda$-open path in $B_N$ that crosses $A_{N/2,N}$} \},
\end{equation}
and 
\begin{equation}
	\overline\Ec'_{\alpha,N}(\lambda):=\{ \text{there exists a $\lambda$-open path in $B_N$ that crosses $A_{N/4,N/2}$} \}.
\end{equation}
The associated critical values are then, respectively,
\begin{equation}\label{eq:bar-lambda}
	\overline\lambda_*(\alpha) := \inf \left\{ \lambda\ge 0: \liminf_{N \to \infty} \Pb( \overline\Ec_{\alpha,N}(\lambda) )>0 \right\},
\end{equation}
and 
\begin{equation}\label{eq:inside-bar-lambda}
	\overline\lambda'_*(\alpha) := \inf \left\{ \lambda\ge 0: \liminf_{N \to \infty} \Pb( \overline\Ec'_{\alpha,N}(\lambda) )>0 \right\}.
\end{equation}

Using some geometric observations, together with the RSW estimate from Theorem~\ref{thm:RSW}, we can show that the above two critical values are equal, and that they in fact coincide with the value $\lambda'_*(\alpha)$ introduced earlier.

\begin{theorem} \label{thm:equal}
	For all $\alpha\in (0,1/2]$, we have $\overline\lambda_*(\alpha)=\overline\lambda'_*(\alpha)=\lambda'_*(\alpha)$.
\end{theorem}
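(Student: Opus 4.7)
The plan is to prove the triple equality via the circular chain of inequalities
\[
\lambda'_*(\alpha) \ \le\ \overline\lambda_*(\alpha) \ \le\ \overline\lambda'_*(\alpha) \ \le\ \lambda'_*(\alpha).
\]

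\emph{Step one}, $\overline\lambda_*(\alpha) \le \overline\lambda'_*(\alpha)$, follows from a direct monotonicity argument. Under the natural coupling $\Lc_{B_{2N}} \subseteq \Lc_{B_{4N}}$, we have $X_{B_{2N}}(z) \le X_{B_{4N}}(z)$ for every $z \in B_{2N}$; hence any $\lambda$-open path in $B_{4N}$ contained in $B_{2N}$ is automatically $\lambda$-open in $B_{2N}$. Since both events $\overline\Ec'_{\alpha,4N}(\lambda)$ and $\overline\Ec_{\alpha,2N}(\lambda)$ ask for a $\lambda$-open crossing of the same annulus $A_{N,2N}$, we obtain $\overline\Ec'_{\alpha,4N}(\lambda) \subseteq \overline\Ec_{\alpha,2N}(\lambda)$ under the coupling, so $\Pb(\overline\Ec'_{\alpha,4N}(\lambda)) \le \Pb(\overline\Ec_{\alpha,2N}(\lambda))$ for every $N$.

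\emph{Step two}, $\overline\lambda'_*(\alpha) \le \lambda'_*(\alpha)$, combines the RSW machinery with a geometric observation. Assuming $\liminf_N \Pb(\Ec'_{\alpha,N}(\lambda)) > 0$, iterating Theorem~\ref{thm:RSW} together with Lemma~\ref{lem:FKG} (in the manner of \cite{KST2023}) yields, with probability bounded below uniformly in $N$, a $\lambda$-open left-right crossing of the flat rectangle $[-N/2, N/2] \times [-N/8, N/8]$. Any such crossing has endpoints on $\partial B_{N/2}$, and when its $x$-coordinate equals $0$ the corresponding vertex lies in $\{0\} \times [-N/8, N/8] \subseteq B_{N/8} \subset \mathring{B}_{N/4}$. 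Therefore the crossing enters $B_{N/4}$, and cutting it at its first visit to $\partial B_{N/4}$ produces a $\lambda$-open sub-path contained in $A_{N/4,N/2}$ joining $\partial B_{N/2}$ to $\partial B_{N/4}$, which realizes $\overline\Ec'_{\alpha,N}(\lambda)$.

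\emph{Step three}, $\lambda'_*(\alpha) \le \overline\lambda_*(\alpha)$, is the main obstacle. Assuming $\liminf_N \Pb(\overline\Ec_{\alpha,N}(\lambda)) > 0$, the $\pi/2$-rotational invariance of $\Lc_{B_N}$ combined with a pigeonhole argument gives, with probability bounded below, a $\lambda$-open crossing of $A_{N/2,N}$ whose inner endpoint lies on the east side $\{N/2\} \times [-N/2, N/2]$ of $\partial B_{N/2}$. A further reflection-based symmetrization together with the FKG inequality extracts, with positive probability, a $\lambda$-open path inside the east half-annulus $A_{N/2,N} \cap \{x \ge 0\}$ connecting its inner and outer vertical boundaries, i.e.\ a $\lambda$-open horizontal crossing (in the short direction) of the $(N/2) \times (2N)$ rectangle $[N/2, N] \times [-N, N]$. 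Applying Theorem~\ref{thm:RSW} once (converting this easy crossing of a $1{:}4$ rectangle into a hard crossing of a wider rectangle of aspect ratio $2{:}1$) and restricting to a square sub-region, one obtains a $\lambda$-open left-right crossing of a square of side proportional to $N$. Via the domain monotonicity used in Step one (now in the form $\Lc_{B_M} \subseteq \Lc_{B_N}$ for $M \le N$), this box crossing in $\Lc_{B_N}$ upgrades to one in $\Lc_{B_M}$, realizing $\Ec'_{\alpha,M}(\lambda)$ for some $M$ proportional to $N$.

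The hard step is clearly the last one, where two issues arise: first, extracting a clean rectangle crossing out of a radial annulus crossing requires carefully combining the rotational and reflection symmetries of $\Lc_{B_N}$ with FKG, to rule out paths that "wrap around" the annulus rather than traversing a half-annulus; second, Theorem~\ref{thm:RSW} is stated for loop soups in specific box sizes $B_{2N}, B_{6N}$, so the RSW iteration must be done with attention to the scales (and may require adapting the proof of Theorem~\ref{thm:RSW} inside a fixed larger loop soup $\Lc_{B_N}$). Steps one and two, by contrast, are routine, relying only on domain monotonicity of the occupation field and a single geometric observation.
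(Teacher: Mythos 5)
Your chain of inequalities $\lambda'_* \le \overline\lambda_* \le \overline\lambda'_* \le \lambda'_*$ is a valid way to organize the argument, and your Step one (monotonicity of the occupation field under domain inclusion) matches what the paper does. However, the other two steps diverge from the paper, and Step three contains a genuine gap.

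The paper's key mechanism for converting an annulus crossing into a box crossing is not a confinement argument at all, but a deterministic topological observation. Concretely, the paper covers the square annulus $A_{2N,4N}$ by four overlapping $2N\times 8N$ rectangles (translates/rotates of $[-4N,-2N]\times[-4N,4N]$) and observes that \emph{any} path from $\partial B_{2N}$ to $\partial B_{4N}$ must cross one of these rectangles in its short direction. A union bound then gives $\Pb(\overline\Ec'_{\alpha,8N}(\lambda)) \le 4\,\Pf_{5N}(\Cc(N,4N))$, after which a single application of the RSW theorem upgrades the short-direction crossing to a long-direction crossing and hence a box crossing. No FKG, no symmetrization, no path confinement is needed on this step. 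In contrast, your Step three asks the FKG inequality and a ``reflection-based symmetrization'' to rule out paths that wrap around the annulus and to produce a path confined to the east half-annulus. You flag this yourself as the crux, but the argument is not actually supplied, and I do not see how it could be: FKG provides positive association between increasing events and cannot force a random path to stay in a prescribed region, and symmetrizing the law of the field does not change the set of admissible crossings. A crossing whose inner endpoint lies on the east side of $\partial B_{N/2}$ can still exit the east half-annulus, wind around, and hit $\partial B_N$ anywhere, and no amount of correlation inequality will prevent this. The deterministic observation is the missing ingredient, and once you have it the probabilistic apparatus you invoke becomes unnecessary.

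Your Step two (from $\Ec'$ to $\overline\Ec'$) is correct in principle but over-engineered: you iterate RSW to obtain a flat-rectangle crossing and then cut it at $\partial B_{N/4}$, whereas the paper gets the same inequality by the elementary observation that a left-to-right crossing of $B_{N/2}$ must pass through one of eight $N/8$-side boxes along the left side of $B_{N/2}$, and around each such box the crossing gives an annulus crossing; a union bound plus translation invariance finishes this step with no RSW at all. This matters because it means RSW is needed only for the direction where there is no way around it, namely getting from annulus crossings to box crossings. Finally, note that whenever you move a crossing event to a loop soup on a different box, you should justify the comparison by the coupling monotonicity $X_{D} \le X_{D'}$ for $D\subseteq D'$; you use this in Step one and at the very end of Step three, but it also needs to be invoked when comparing the rectangle-crossing probabilities under $\Pf_{5N}$, $\Pf_{4N}$ and the annulus-crossing event in $\Lc_{B_N}$, as the paper does explicitly.
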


\begin{proof}
	Since the occupation field is increasing in the loop soup, the following monotonicity holds true. For each $\lambda > 0$, $\overline\Ec'_{\alpha,N}(\lambda) \subseteq \overline\Ec_{\alpha,N/2}(\lambda)$ for all $N \geq 1$, so 
	\begin{equation} \label{eq:ineq_Ec_Ec'1}
		\Pb( \overline\Ec'_{\alpha,N}(\lambda) ) \le \Pb( \overline\Ec_{\alpha,N/2}(\lambda) ).
	\end{equation}
	Conversely, we consider a ``circuit'' of $36$ non-overlapping boxes of side length $N/8$ around $B_{N/2}$ that, together, disconnect that box from infinity. Since any open path crossing $A_{N/2,N}$ has to intersect one of these smaller boxes, and thus cross an annulus with radii $N/16$ and $N/8$ with the same center as this box, we deduce that
	\begin{equation} \label{eq:ineq_Ec_Ec'2}
		\Pb( \overline\Ec_{\alpha,N}(\lambda) ) \le 36\, \Pb( \overline\Ec'_{\alpha,N/4}(\lambda) ).
	\end{equation}
	From \eqref{eq:ineq_Ec_Ec'1} and \eqref{eq:ineq_Ec_Ec'2}, we can conclude that $\overline\lambda_*(\alpha)=\overline\lambda'_*(\alpha)$.
	
	Next, we compare $\overline\lambda'_*(\alpha)$ with $\lambda'_*(\alpha)$. For this, we consider $8$ boxes with side length $N/8$ which are contained in $B_{N/2}$ and lie along the left side of that box. This implies readily 
	\begin{equation} \label{eq:ineq_Ec_Ec'3}
		\Pb( \Ec'_{\alpha,N}(\lambda) ) \le 8\, \Pb( \overline\Ec'_{\alpha,N/4}(\lambda) ).
	\end{equation}
	In the other direction, let $R_1 := [-4N,-2N]\times [-4N,4N]$, which has the same left side as $B_{4N}$, and let $R_2$, $R_3$ and $R_4$ be the rectangles obtained by rotating $R_1$ around the origin by an angle, respectively, $\pi/2$, $\pi$ and $3 \pi/2$. Then, any path in $B_{8N}$ across the annulus $A_{2N,4N}$ has to cross at least one of these $4$ rectangles in the short direction, which implies that
	\begin{equation} \label{eq:ineq_Ec_Ec'4}
		\Pb( \overline\Ec'_{\alpha,8N}(\lambda) )\le 4\, \Pf_{5N}( \Cc(N,4N) ),
	\end{equation}
	where we use the same notation as in Section~\ref{sec:RSW}. By Theorem~\ref{thm:RSW} (the aspect ratio used there is not essential), we have 
	\begin{equation} \label{eq:ineq_Ec_Ec'5}
		\psi'( \Pf_{5N}( \Cc(N,4N) ) ) \le \Pf_{4N}( \Cc(4N,N) ), 
	\end{equation}
	where $\psi'$ is a homeomorphism from $[0,1]$ to $[0,1]$. Finally, we note that
	\begin{equation} \label{eq:ineq_Ec_Ec'6}	
		\Pf_{4N}( \Cc(4N,N) )\le \Pb( \Ec'_{\alpha,2N}(\lambda) ).
	\end{equation}
	We can thus complete the proof of $\overline\lambda'_*(\alpha)=\lambda'_*(\alpha)$ by combining \eqref{eq:ineq_Ec_Ec'3}, \eqref{eq:ineq_Ec_Ec'4}, \eqref{eq:ineq_Ec_Ec'5}, and \eqref{eq:ineq_Ec_Ec'6}.
\end{proof}

\begin{remark}
	In fact, by using the RSW estimate in Theorem~\ref{thm:RSW}, one can easily show that furthermore, for each $\alpha\in (0,1/2]$, the three critical values considered in Theorem~\ref{thm:equal} do not depend on the aspect ratios in their definition.
\end{remark}

We are finally in a position to prove Theorem~\ref{thm:exp_decay}.

\begin{proof}[Proof of Theorem~\ref{thm:exp_decay}]
Consider any $\lambda<\lambda'_*(\alpha)$ and $\eps>0$. From Theorem~\ref{thm:equal}, we also have $\lambda<\overline\lambda'_*(\alpha)$, so it follows immediately from the definition \eqref{eq:inside-bar-lambda} that there exists $m$ large enough such that
	\begin{equation}\label{eq:de-per}
		\Pb( \overline\Ec'_{\alpha,m}(\lambda) ) < \eps.
	\end{equation}
	Now, we consider some $10m < n \le N$, and we tile $B_N$ with boxes which are translations of $B_{m/4}$. If $T$ is such a tile in $B_N$, we declare $T$ to be open if there exists a $\lambda$-open path in $T''$ that crosses the annulus $T'\setminus T$, where $T$, $T'$, $T''$ are concentric boxes such that $T''\setminus T'$ and $T'\setminus T$ are translations of $A_{m/2,m}$ and $A_{m/4,m/2}$, respectively, and the property of being $\lambda$-open is with respect to the field $\Lc_{T''}$. Then, $T_i$ is open with probability smaller than $\eps$ by \eqref{eq:de-per}, and the random variables $\ind_{\{\text{$T_i$ is open}\}}$ and $\ind_{\{\text{$T_j$ is open}\}}$ are independent as soon as $T''_i\cap T''_j=\emptyset$. Hence, we have produced a $k$-dependent translation-invariant percolation process on the tiling, for some suitable finite $k$.
	
	Now, we can note that every $\lambda$-open path as in \eqref{eq:arm_def}, connecting $0$ to $\partial B_n$, produces a ``path'' of open tiles. Moreover, if a $\lambda$-open path in $B_N$ crosses the annulus $T'\setminus T$, then such a crossing of $T'\setminus T$ also exists in $T''$. We can thus get \eqref{eq:arm} immediately by choosing $\eps > 0$ sufficiently small in \eqref{eq:de-per}, using the classical stochastic domination of $k$-dependent percolation by Bernoulli percolation established in \cite{LSS1997}.
\end{proof}

\subsection{RWLS on the metric graph} \label{sec:metric_graph}

In this final section, we discuss another setting that has been studied in the literature, namely the loop soup on the \emph{metric graph} of $\Zb^2$, obtained by considering the edges as continuous segments (a precise definition is given below). Our main goal is to explain why results in the discrete case can be translated to this situation, with relatively minor adaptations to the proofs.

First, we briefly review the definitions of various objects on the metric graph, and we refer the reader to Section~2 in \cite{MR3502602} for more details. In order to distinguish this setting from the discrete one considered so far in this paper, we add a $\sim$ above each notation for a discrete quantity, to indicate the corresponding metric-graph version. Consider any finite subset $D \subseteq \Zb^2$. In this section, we denote by $\Gc_D:= (D,E_D)$ the corresponding subgraph of $\Zb^2$, where $E_D$ is the set of all edges contained in $D$, consisting of the edges of $\Zb^2$ that connect two vertices in $D$. We then obtain the metric graph (also known as \emph{cable system}) of $D$, denoted by $\wt\Gc_D$, by associating an open interval $I_e$ of length $1/2$ with each edge $e \in E_D$, and identifying the endpoints of $I_e$ with the two vertices adjacent to $e$. That is, we let $\wt\Gc_D := D\cup\bigcup_{e\in E_D} I_e$.

For any $D \subseteq \Zb^2$, recall that $\partial^{\mathrm{out}} D = \partial^{\mathrm{in}}(D^c)$ denotes the outer boundary of $D$, and let $\bar D := D \cup (\partial^{\mathrm{out}} D)$. In order to have a direct comparison to the discrete setting, we need to define the various random processes on the metric graph $\wt\Gc_{\bar D}$, instead of $\wt\Gc_D$. A standard Brownian motion $\wt W$ on $\wt\Gc_{\bar D}$ can be constructed in the natural way, by gluing stopped Brownian motions or Brownian excursions, as in Section 2 of \cite{MR3502602}. Let $\wt\xi:=\inf\{ t\ge 0: \wt W_t \in \partial^{\mathrm{out}} D \}$. The $0$-potential of the process $(\wt W_t)_{0\le t<\wt\xi}$ has a density with respect to the Lebesgue measure on $\wt\Gc_{\bar D}$, which is the Green's function $(\wt G_D(u,v))_{u,v\in \wt\Gc_{\bar D}}$, and $\wt G_D(u,v)$ coincides with $G_D(u,v)$ when $u,v\in D$.

Following the framework of \cite{FR2014} (see also \cite{MR3502602}), $(\wt W_t)_{0\le t<\wt\xi}$ can be used to construct a measure $\wt\nu_{\bar D}$ supported on time-parametrized continuous loops in $\wt\Gc_{\bar D} \setminus \partial^{\mathrm{out}} D$. The (metric-graph) loop soup $\wt\Lc_{\bar D}$ of intensity $\alpha>0$ is the Poisson point process of loops with intensity $\alpha\wt\nu_{\bar D}$. We then let $\wt X_D: \wt\Gc_{D} \longrightarrow [0,\infty)$ be the occupation field associated with $\wt\Lc_{\bar D}$, restricted to $\wt\Gc_{D}$, which is defined formally as
$$\wt X_D(z) := \sum_{\tilde\gamma\in \wt\Lc_{\bar D}} L_{T(\tilde\gamma)}^z(\tilde\gamma), \quad z \in \wt\Gc_{D},$$
where $T(\tilde\gamma)$ is the time duration of $\tilde\gamma$, and $(L_{t}^u(\tilde\gamma))_{0\le t\le T(\tilde\gamma),u\in\wt\Gc_{\bar D}}$ are the space-time continuous local times of $\tilde\gamma$ (with respect to the Lebesgue measure on $\wt\Gc_{\bar D}$). As explained in Section~2 of \cite{MR3502602} (see also Section~7.3 of \cite{FR2014}), the RWLS $\Lc_D$ can be obtained from $\wt\Lc_{\bar D}$ by taking the trace of the latter on $D$, i.e., restricting the continuous loops in $\wt\Gc_{\bar D} \setminus \partial^{\mathrm{out}} D$ to the discrete set $D$. From now on, we will always assume that $\Lc_D$ and $\wt\Lc_{\bar D}$ are naturally coupled on the same probability space through such a restriction. In particular, $\wt X_D(z)$ coincides with $X_D(z)$ for all $z\in D$. 

Analogously to the discrete case, we say that $z\in \wt\Gc_D$ is $\lambda$-open if $\wt X_D(z)\le\lambda$. We write $\wt X_N := \wt X_{B_N}$, and $\wt\Gc_N := \wt\Gc_{B_N}$. We define a $\lambda$-open path for $\wt X_N$ as a continuous curve in $\wt\Gc_{N}$ along which all points are $\lambda$-open (with respect to $\wt X_N$). We can study the same percolation problem for the occupation field $\wt X_N$ as for $X_N$. For this, 
we consider two types of crossing events, completely analogous to \eqref {eq:E-lbd} and \eqref{eq:tE-lbd}, respectively:
\begin{equation}\label{eq:E-lbd-m}
	\wt\Ec_{\alpha,N}(\lambda) :=  \left\{ \text{there exists a $\lambda$-open path for $\wt X_N$ crossing from left to right in $\wt\Gc_N$} \right\},
\end{equation}
and 
\begin{equation}\label{eq:tE-lbd-m}
	\wt\Ec'_{\alpha,N}(\lambda) := \left\{ \text{there exists a $\lambda$-open path for $\wt X_N$ crossing from left to right in $\wt\Gc_{N/2}$} \right\}.
\end{equation}
We then introduce the associated critical parameters, i.e.
\begin{equation}
	\wt\lambda_*(\alpha) := \inf \left\{ \lambda\ge 0 : \liminf_{N \to \infty} \Pb( \wt\Ec_{\alpha,N}(\lambda) )>0 \right\},
\end{equation}
and 
\begin{equation}
	\wt\lambda'_*(\alpha) := \inf \left\{ \lambda\ge 0 : \liminf_{N \to \infty} \Pb( \wt\Ec'_{\alpha,N}(\lambda) )>0 \right\}.
\end{equation}

Now, the corresponding result on the metric graph can be stated as follows.
\begin{theorem}\label{thm:main-m}
	For all $\alpha\in (0,1/2)$, we have $0<\lambda_*(\alpha)\le \wt\lambda_*(\alpha)\le\wt\lambda'_*(\alpha)<\infty$. Moreover, for all $\eps>0$, there exists $\wt\lambda_0(\alpha,\eps)\in (0,\infty)$ such that the following hods. For all $\lambda\ge\wt\lambda_0(\alpha,\eps)$,
	$$\Pb( \wt\Ec_{\alpha,N}(\lambda) )=1-O(N^{1-\xi^{2,+}(\alpha)+\eps}) \quad \text{as } N \to \infty.$$
\end{theorem}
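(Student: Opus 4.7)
The plan is to follow the proof of Theorem~\ref{thm:main} closely, exploiting the natural coupling under which $\Lc_N$ is the trace of the metric-graph loop soup on $B_N$ and, in particular, $\wt X_N(z)=X_N(z)$ for every vertex $z\in B_N$. In this way the vertex-level arm-event toolbox developed in Sections~\ref{sec:tools_RWLS} and~\ref{sec:pf-thm} transfers essentially as is, and the only genuinely new input will concern the occupation field on a single edge interior.

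First I would dispatch the string of inequalities. The bound $0<\lambda_*(\alpha)$ is part of Theorem~\ref{thm:main}. The inequality $\lambda_*(\alpha)\le\wt\lambda_*(\alpha)$ follows from the inclusion $\wt\Ec_{\alpha,N}(\lambda)\subseteq\Ec_{\alpha,N}(\lambda)$: the vertex trace of any continuous $\lambda$-open left-to-right curve in $\wt\Gc_N$ is a nearest-neighbor discrete path whose vertices all satisfy $X_N\le\lambda$. The monotonicity $\wt\lambda_*(\alpha)\le\wt\lambda'_*(\alpha)$ is a standard box-doubling argument, and the finiteness $\wt\lambda'_*(\alpha)<\infty$ will drop out of the probability estimate proved below.

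To prove the probability estimate, I would introduce metric-graph analogues $\wt\Ac^\lambda$ and $\wt\Bc^\lambda$ of the $\lambda$-arm and boundary $\lambda$-two-arm events of Definitions~\ref{def:mfa} and~\ref{def:blta}, by strengthening the $\lambda$-passage condition: a $*$-passage edge $(u,v)$ is declared $\lambda$-open in the metric-graph sense when $\wt X_N>\lambda$ on the entire closed arc $\{u\}\cup I_{uv}\cup\{v\}$, rather than only at its endpoints. The plan is then to argue that on the event $\wt\Ec_{\alpha,N}(\lambda)^c$, a topological argument based on the planarity of the underlying graph and on the continuity of $\wt X_N$ produces a top-to-bottom metric-graph blocking structure whose vertex trace yields a discrete $*$-chain of outermost clusters of $\Lc_N$, each of whose passage edges is $\lambda$-open in the strengthened sense. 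The union-bound argument in Section~\ref{subsec:main} then reduces the estimate to proving the metric-graph analogue of Proposition~\ref{prop:b2lam},
\[
\Pb\bigl(\wt\Bc^\lambda_D(z;d_1,d_2)\bigr)\lesssim(d_2/d_1)^{-\xi^{2,+}(\alpha)+\eps},
\]
valid for $\lambda\ge\wt\lambda_0(\alpha,\eps)$. I would establish this by rerunning the induction of Sections~\ref{subsec:moe_upper}--\ref{subsec:blta} with $\wt\Ac^\lambda$ and $\wt\Bc^\lambda$ in place of $\Ac^\lambda$ and $\Bc^\lambda$. Palm's formula, the locality of Lemma~\ref{lem:lambda-locality}, the quasi-multiplicativity of Lemma~\ref{lem:quasi-lbd}, and the decoupling of Lemma~\ref{lem:decom} are combinatorial in nature and carry over verbatim once the exponential vertex tail of Lemma~\ref{lem:Gauss} is replaced by its edge analogue: conditionally on the discrete loop configuration, $\Pb(\sup_{z\in I_{uv}}\wt X_N(z)>\lambda)$ decays exponentially in $\lambda$, uniformly in the environment. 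That edge bound would be derived by decomposing the contribution to $\wt X_N$ on $I_{uv}$ into Brownian-bridge local-time segments (one per traversal of $\{u,v\}$ by a discrete loop, each of total Brownian length $1/2$) and edge-contained metric-graph loops (a Poisson point process on $I_{uv}$ with bounded intensity and explicit local-time moment generating function), and controlling both pieces by standard one-dimensional tail estimates.

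The hard part will be the topological reduction sketched above: extracting from the failure of a metric-graph crossing a discrete $*$-chain whose passage edges are $\lambda$-open on the metric graph. Since $\wt\Gc_N$ is one-dimensional, a continuous high-$\wt X_N$ obstruction cannot sit as an open island inside the interior of a single edge without also crossing through at least one of its endpoints, so morally the obstructing set is forced to be supported on whole edges together with their endpoints; rigorizing this requires some care, e.g.\ through a discretization of $\wt X_N$ on each edge combined with the compactness of $\wt\Gc_N$ and the finiteness of the vertex set, after which standard planar duality on the underlying graph produces the desired $*$-chain. Once this step is settled, the exponential edge tail and the induction together deliver the probability bound, and taking $\wt\lambda_0(\alpha,\eps)$ to be the resulting threshold concludes the proof.
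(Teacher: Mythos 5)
Your treatment of the easy inequalities is correct, and your overall instinct (re-run the induction from Section~\ref{sec:pf-thm} with an edge-level exponential tail as new input, proved via Bessel-bridge decompositions) is the right one, but there is a genuine gap in the way you set up the reduction.

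First, the ``strengthened $\lambda$-passage condition'' you propose -- $\wt X_N>\lambda$ on the closed arc $\{u\}\cup I_{uv}\cup\{v\}$ -- is not well defined when $(u,v)$ is a diagonal $*$-edge, i.e.\ when $u\sim^* v$ but $u\not\sim v$; in that case there is no interval $I_{uv}$ on $\wt\Gc_N$. These diagonal passage edges are precisely the ones that occur between clusters touching at a corner, and they cannot be ignored. Second, the claim that a continuous high-$\wt X_N$ obstruction must be ``supported on whole edges together with their endpoints'' is false: the set $\{\wt X_N>\lambda\}$ can perfectly well contain an interior sub-arc of $I_e$ while both endpoints of $e$ are $\lambda$-open, and such an isolated piece can still contribute to topological blocking when combined with closed vertices elsewhere. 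Consequently the failure of $\wt\Ec_{\alpha,N}(\lambda)$ does not produce a $*$-chain whose passage edges satisfy your strengthened condition, so the union bound you want to run is over the wrong event. Third, the edge-tail bound you state -- exponential decay of $\Pb(\sup_{I_{uv}}\wt X_N>\lambda)$ uniformly over the discrete loop configuration -- cannot hold: conditionally on the discrete data, the occupation inside $I_e$ is a sum of squared Bessel bridges whose dimensions and initial values grow with $n(e)$, $X_N(u)$, $X_N(w)$, so the tail degrades as these grow. The correct statement, and the one the paper uses, is a conditional bound: given that $n(e)$, $X_N(u)$, $X_N(w)$ are all at most $\lambda$, the probability that $\sup_{I_e}\wt X_N>K\lambda$ decays like $e^{-c\lambda}$ for a large but fixed universal $K$.

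The paper sidesteps all of this by replacing your per-edge passage condition with a per-vertex one. It first strengthens the discrete result (Theorem~\ref{thm:Z}) to the field $Z_N(z)=\max_{|w-z|\le 1}X_N(w)\vee Y_N(w)$, then defines a vertex $z$ to be $\lambda$-good if $Z_N(z)\le\lambda$ and every point of every interval adjacent to $z$ has $\wt X_N<K\lambda$. The key observation is that a discrete left-right crossing of $\lambda$-good vertices in $B_N$ \emph{implies} a $K\lambda$-open left-right crossing of $\wt\Gc_N$, so $\wt\Ec_{\alpha,N}(K\lambda)^c$ implies (by ordinary discrete planar duality) a top-to-bottom $*$-crossing of $\lambda$-bad vertices, and no topological argument on the metric graph is needed at all. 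When a $*$-passage edge $(u,v)$ has both endpoints $\lambda$-bad, one distinguishes whether $Z_N(u)\ge\lambda$ (handled by the discrete toolbox via Theorem~\ref{thm:Z}) or the discrete data at $u$ is small but an adjacent edge has $\wt X_N$ exceeding $K\lambda$ somewhere (handled by the conditional Bessel-bridge bound). To repair your argument you would essentially have to reproduce this per-vertex device; the per-edge version you propose does not close.
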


We prove Theorem~\ref{thm:main-m} by comparing crossing events on the metric graph with the corresponding discrete events. Note that duality (i.e., ``blocking'' paths) takes a slightly different form here, as illustrated on Figure~\ref{fig:metric_graph}. We will explain how to circumvent this difficulty.

\begin{figure}[t]
	\centering
	\includegraphics[width=.58\textwidth]{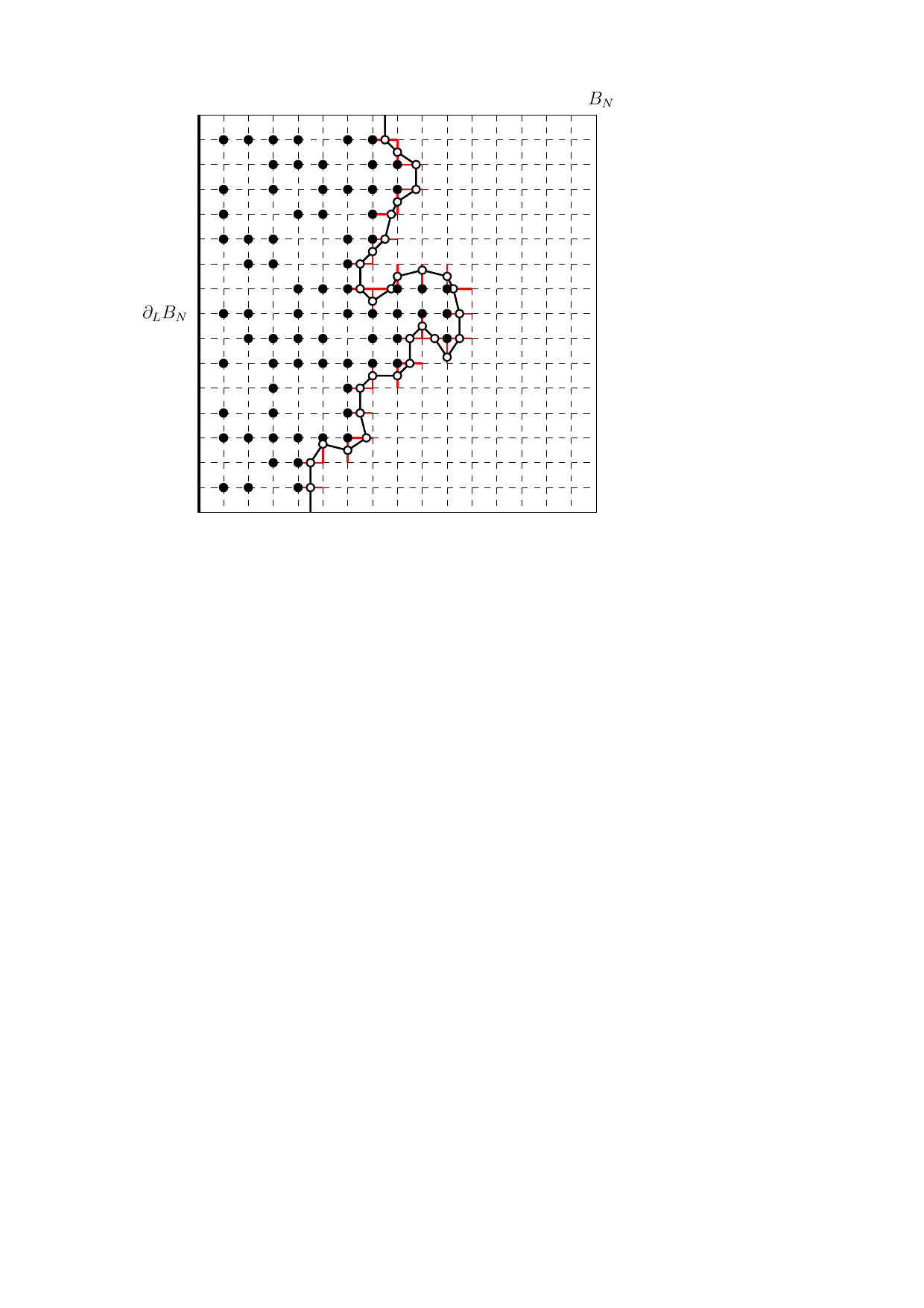}
	\caption{In a given box $B_N$, we can consider the set $\mathcal{C}_L$ of all vertices which are connected to the left side $\partial_L B_N$ by a $\lambda$-open path. By following the edge boundary $\partial_e \mathcal{C}_L$ of this set (or, rather, the edge boundary of its ``filling''), depicted in red, we can see that if there is no horizontal $\lambda$-open crossing in $B_N$, then there must exist a high ``path'' along $\partial_e \mathcal{C}_L$ as shown here, made of $\lambda$-closed vertices.}
	\label{fig:metric_graph}
\end{figure}

In order to perform this comparison, we first need to strengthen our results in the discrete case. For all $z\in B_N$, let $Y_N(z)$ be the ``discrete'' occupation field, that is, $Y_N(z) := n(z)$, with $D=B_N$ (i.e.\ the total number of visits to $z$ by loops, see \eqref{eq:disc-occup}). We further define
$$Z_N(z) := \max_{ w : |w-z| \le 1} X_N(w) \vee Y_N(w).$$
One can easily check that all the estimates derived in the previous sections still hold when the field $X_N$ is replaced by the field $Y_N$, and even by $Z_N$. In particular, the various summation arguments in Section~\ref{sec:pf-thm} remain valid, and also Theorem~\ref{thm:main}.

\begin{theorem} \label{thm:Z}
	The same conclusions as in Theorem~\ref{thm:main} hold true when the field $X_N$ in replaced by $Y_N$, or by $Z_N$.
\end{theorem}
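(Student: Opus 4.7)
The plan is to observe that the entire proof of Theorem~\ref{thm:main} developed in Section~\ref{sec:pf-thm} adapts directly to the fields $Y_N$ and $Z_N$, up to minor modifications. The occupation field $X_N$ enters those arguments only through two ingredients: the definition of $\lambda$-openness (and hence of $\lambda$-passage edges and $\lambda$-arm events), and an exponential upper tail on the field, which is used in Lemma~\ref{lem:Gauss} and through the decoupling estimates in Step~3 of Lemma~\ref{lem:decom} and in~\eqref{eq:decouple}.

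First, I would verify the required exponential moment bounds. For $Y_N$, the bound is in fact already contained inside the proof of Lemma~\ref{lem:Gauss}: the estimate $\Eb(e^{c W_D^w(0)}) \le C'$ established in \eqref{eq:WD} is precisely an exponential moment bound on the $Y$-analog of $X_D^w(0)$, namely the total number of visits to $0$ by non-trivial loops in $\Lc_D$ avoiding $w$. For $Z_N$, which is defined as a maximum of $X_N$ and $Y_N$ over a uniformly bounded neighborhood, an exponential tail follows from the $X$ and $Y$ cases by a union bound. Once the definitions of $\lambda$-passage edges and $\lambda$-arm events are rewritten in terms of $Y_N$ (resp.~$Z_N$), the proofs of Propositions~\ref{prop:mae}, \ref{prop:bmae}, \ref{prop:bbmae}, \ref{prop:b2am}, and \ref{prop:b2lam} carry over unchanged, possibly with larger values of the thresholds $\lambda_0, \lambda_1, \bar\lambda$. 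The argument of Section~\ref{subsec:main} then yields both $\lambda'_*(\alpha) < \infty$ and the polynomial estimate \eqref{eq:main-cr} for each of the two fields.

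The lower bound $\lambda_*(\alpha) > 0$ is then handled separately for each field. For $Z_N$, the pointwise inequality $Z_N(z) \ge X_N(z)$ implies that any $\lambda$-open left-right crossing for $Z_N$ is also such a crossing for $X_N$, so the bound follows immediately from Theorem~\ref{thm:main}. For $Y_N$, the Gamma randomness which made the Bernoulli-domination step in the proof of Theorem~\ref{thm:main} transparent is no longer available; I would instead use the inclusion $\{Y_N(z) \le \lambda\} \subseteq \{|\Lc_N^z| \le \lambda\}$ together with the Poisson tail for the number of loops visiting $z$ (whose mean is of order $\alpha \log N$ in the bulk) to show that the density of $\lambda$-open vertices for $Y_N$ is polynomially small in $N$, and combine this with a Liggett--Schonmann--Stacey-type stochastic domination by subcritical Bernoulli percolation, in the spirit of the proof of Theorem~\ref{thm:exp_decay}. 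The main obstacle is precisely this last step: controlling the conditional law of $Y_N(z)$ given $(Y_N(u))_{u\ne z}$ requires some care because the same loop can contribute to the field at several vertices, but a careful use of Palm's formula (Lemma~\ref{lem:Palm formula}), applied to the loops visiting $z$, reduces this to a routine computation.
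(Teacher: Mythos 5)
Your analysis of the upper direction is on point and matches the paper's (terse) indication: the arm-event machinery of Section~\ref{sec:pf-thm} depends on the occupation field only through the exponential-moment bound of Lemma~\ref{lem:Gauss} and the robustness of the loop surgeries to local changes in occupation, and, as you observe, the $Y$-analog of Lemma~\ref{lem:Gauss} is precisely the bound on $W_D^w(0)$ established in \eqref{eq:WD}, while the $Z$-analog follows by a union bound over the finitely many $w$ with $|w-z|\le 1$. Your proof of $\lambda_*(\alpha)>0$ for $Z_N$ via the pointwise inequality $Z_N\ge X_N$ is clean and, note, actually necessary: the per-vertex conditional Bernoulli domination of Section~\ref{subsec:main} does not transfer to $Z_N$ directly, because the trivial-loop time $R_z$ is no longer conditionally independent of the $Z_N$-values at neighboring vertices.

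The $Y_N$ lower bound, however, contains a genuine gap. First, two numerical corrections: $|\Lc_N^z|$ is Poisson with mean $\alpha\,\nu(\Lc_N^z)=\alpha\log G_{B_N}(z,z)$, which is of order $\alpha\log\log N$, not $\alpha\log N$ (this follows from $\Pb(\Lc_N^z=\emptyset)=G_{B_N}(z,z)^{-\alpha}$); accordingly the density of $\lambda$-open vertices for $Y_N$, which for $\lambda\in[0,1)$ equals $\Pb(Y_N(z)=0)=G_{B_N}(z,z)^{-\alpha}$, is of order $(\log N)^{-\alpha}$, not polynomially small in $N$. More seriously, the per-vertex conditional bound cannot be rescued by Palm's formula, because the obstruction is structural, not computational: unlike $X_N(z)$, which always contains the fully independent $\Gamma(\alpha,1)$ contribution $R_z$ from the trivial loop at $z$, the field $Y_N(z)$ has no independent-of-everything component, and in fact $\Pb(Y_N(z)=0\mid Y_N(u)=0\text{ for all }u\sim z)=1$, since any non-trivial loop visiting $z$ must also visit a neighbor of $z$. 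So no uniform conditional bound exists, and the argument of Section~\ref{subsec:main} does not adapt. Your instinct to invoke \cite{LSS1997} is right, but the domination should be set up at a fixed \emph{mesoscopic} scale $d$ rather than per vertex: $\{Y_N(z)=0\}$ is contained in the event that no loop of $\Lc_{B_N}$ staying in $B_d(z)$ visits $z$, and the latter defines a $2d$-dependent field whose bulk marginal is $G_{B_d}(0,0)^{-\alpha}\to 0$ as $d\to\infty$. Fixing $d$ large and applying \cite{LSS1997} to this larger field then gives domination by subcritical Bernoulli percolation, yielding $\lambda_*(\alpha)>0$ for $Y_N$ without ever examining the conditional law of $Y_N(z)$, and without the circularity that arises if one instead tries to imitate the proof of Theorem~\ref{thm:exp_decay}, which already presupposes $\lambda<\lambda'_*(\alpha)$.
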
 

\begin{remark}
	The above theorem holds for roughly the following reasons. First, for any $k\ge 1$, $w \neq 0$ with $|w|\le k$, $d\ge 2k$, and $D \supseteq B_{2k}$, the analogs of both Lemma~\ref{lem:0w} and Lemma~\ref{lem:Gauss} hold (with constants $c$, $C$ depending only on $k$). And second, the modification of loops in Section~\ref{sec:pf-thm} is very robust, and not sensitive to the occupation at a few points. This means that one could even alternatively define
	$$Z_N(z) :=\max_{w: |w-z| \le k} X_N(w)\vee Y_N(w),$$
	and still get the same result. However, one should note that the associated critical values might depend on $k$.
\end{remark}

Now, we conclude this section by completing the proof of Theorem~\ref{thm:main-m}.
\begin{proof}[Proof of Theorem~\ref{thm:main-m}]
	Obviously, percolation for the metric graph is harder than for the discrete graph, in the sense that $\wt\Ec_{\alpha,N}(\lambda) \subseteq \Ec_{\alpha,N}(\lambda)$ (under the natural coupling explained above). Thus, we get $\wt\lambda_*(\alpha)\ge \lambda_*(\alpha)>0$ directly. The inequality $\wt\lambda'_*(\alpha) \geq \wt\lambda_*(\alpha)$ is also clear by monotonicity (in the same way as $\lambda'_*(\alpha) \geq \lambda_*(\alpha)$ in the proof of Theorem~\ref{thm:main}).

	In the following, we show that for all $\lambda$ large enough,
	$$\liminf_{N \to \infty} \Pb( \wt\Ec'_{\alpha,N}(\lambda) ) >0, \quad \text{and} \quad \Pb( \wt\Ec_{\alpha,N}(\lambda)^c )=O(N^{1-\bar\xi(\alpha)+\eps}) \quad \text{as } N \to \infty.$$
	This is done by employing the same summation argument as in Section~\ref{sec:pf-thm}. For this, we need to modify the definitions a little bit. Let $K$ be a large universal constant, that we will explain how to choose later. We say that a vertex $z\in B_N$ is \emph{$\lambda$-good} if it satisfies two conditions (and \emph{$\lambda$-bad} otherwise): (1) $Z_N(z)\le\lambda$, i.e.\ $X_N(w), Y_N(w) \leq \lambda$ for all $w \sim z$, and for $w=z$; (2) every point that lies in an interval adjacent to $z$ has $\wt X_N$-value smaller than $K \lambda$. Note that the existence of a $\lambda$-good discrete path crossing from left to right in $B_N$ implies that one can find a corresponding $(K \lambda)$-open path (crossing from left to right) for $\wt X_N$ in $\wt\Gc_N$. Therefore, it is enough to bound the probability of existence of a $*$-crossing made by $\lambda$-bad vertices from top to bottom, in a similar way as before.
	
	In this new setting, we say that two clusters $\Cc,\Cc'$ of $\Lc_N$ are $\lambda$-connected if there exists a $*$-passage edge $(u,v)$ of $(\Cc,\Cc')$ such that both $u$ and $v$ are $\lambda$-bad. With this modification, all the other notions introduced in Section~\ref{sec:pf-thm} apply here. The main issue is to obtain a decoupling result as in Lemma~\ref{lem:decom}. For this, let us use the setup in that lemma, and assume that $(u,v)\in\Pc^\lambda$ (using the modified notion of $\lambda$-connectedness). In particular, $u$ is necessarily $\lambda$-bad, which can come from two reasons:
	\begin{enumerate}[(i)]
		\item \label{eq:casei} either $Z_N(u)\ge\lambda$,
		
		\item \label{eq:caseii} or there exists an edge $e = (u,w) \in E$ such that
		$$X_N(u)\vee Y_N(u)\vee X_N(w)\vee Y_N(w)\le\lambda \quad \text{and} \quad \sup_{z\in I_e}\wt X_N(z)>K\lambda.$$
	\end{enumerate}
	From Theorem~\ref{thm:Z}, and the comment prior to it, we know that the estimate \eqref{eq:decom} holds in case~(\ref{eq:casei}). Hence, we only need to further analyze case~(\ref{eq:caseii}), which we do from now on. Let $n(e)$ be the number of crossings of $e$ by loops in $\Lc_N$, which is smaller than $Y_N(u)\wedge Y_N(w)\le\lambda$. From Section~2 in \cite{MR3502602}, we can reconstruct the trace of the loops, as well as the occupation field inside $I_e$, from the knowledge of $n(e)$, $X_N(u)$ and $X_N(w)$, in the following way.
	\begin{itemize}
		\item Sample $n(e)$ independent Brownian excursions from $u$ to $w$ inside $I_e$. The occupation field of each excursion is a squared Bessel bridge of dimension $2$ from $0$ at $u$ to $0$ at $w$, over the time interval $I_e$ with length $1/2$ (see Theorem 4 in \cite{williams1970decomposing}).
		
		\item Sample the Poisson point process of excursions from $u$ to $u$ inside $I_e$, with intensity 
		$$X_N(u) \cdot n_+\, \ind_{\text{height excursion}<1/2},$$
		where $n_+$ is the measure on positive Brownian excursions. By the second Ray-Knight theorem, its occupation field is a squared Bessel process of dimension $0$ started from $X_N(u)$ at $u$, conditioned to hit $0$ before time $1/2$.
		
		\item Similarly, sample excursions from $w$ to $w$ inside $I_e$, in the same way as from $u$ to $u$.
		
		\item Sample the loop soup inside $I_e$. According to Proposition 4.6 of \cite{Lu2018}, its occupation field is a squared Bessel bridge of dimension $2\alpha$ over $I_e$, from $0$ at $u$ to $0$ at $w$.
	\end{itemize}
	
	In what follows, we use the following notation. For $\delta\ge 0$ and $x\ge 0$, we let $(\beta_t^{\delta,x})_{0\le t\le 1/2}$ be a squared Bessel bridge of dimension $\delta$ over $[0,1/2]$, from $x$ to $0$. 
	For $y\in\Rb$, we denote by $(b_t^{y})_{0\le t\le 1/2}$ a standard (linear) Brownian bridge of length $1/2$ from $y$ to $0$.
	We start by recalling two basic properties of squared Bessel bridges (see Section~5 in \cite{pitman1982decomposition}, or Chapter~XI in \cite{RY1999}).
	\begin{itemize}
		\item Reversibility: for all $\delta>0$, $(\beta_t^{\delta,0})_{0\le t\le 1/2}$ and $(\beta_{1/2-t}^{\delta,0})_{0\le t\le 1/2}$ have the same law.
		\item Additivity: if $(\beta_t^{\delta,x})_{0\le t\le 1/2}$ and $(\beta_t^{\delta',x'})_{0\le t\le 1/2}$ are independent, then their sum has the same law as $(\beta_t^{\delta+\delta',x+x'})_{0\le t\le 1/2}$.
	\end{itemize}
	For simplicity, we now identify $I_e$ with $(0,1/2)$, $u$ with $0$, and $w$ with $1/2$. Then, the occupation inside $I_e$ is distributed as the following sum of four independent processes:
	$$\Big(\beta_t^{2n(e),0}+\beta_t^{0,X_N(u)}+\beta_{1/2-t}^{0,X_N(w)}+\beta_t^{2\alpha,0}\Big)_{0\le t\le 1/2}$$
	(where we have used the additivity property for the first term). According to Eq.~(5.8) in \cite{pitman1982decomposition}, $(\beta_t^{1,x})_{0\le t\le 1/2}$ has the same law as $(|b_t^{\sqrt{x}}|^2)_{0\le t\le 1/2}$. From the additivity property, it also has the same law as the independent sum $(\beta_t^{0,x}+\beta_t^{1,0})_{0\le t\le 1/2}$. Hence, for $y>x>0$,
	\begin{equation} \label{eq:bd_Bessel1}
		\Pb\Big( \sup_{t\in (0,1/2)}(\beta_t^{0,x}+\beta_t^{1,0}) > y \Big) = \Pb\Big( \sup_{t\in (0,1/2)}|b_t^{\sqrt{x}}| >\sqrt{y} \Big)\le 2e^{-4\sqrt{y}(\sqrt{y}-\sqrt{x})},
	\end{equation}
	where in the inequality, we used a standard estimate about the tail of the maximum of a Brownian bridge (see e.g. \cite{BS2002}, Chapter IV.26). By the reversibility property, $(\beta_t^{1,0})_{0\le t\le 1/2}$ has the same law as $(\beta_{1/2-t}^{1,0})_{0\le t\le 1/2}$, so we also have
	\begin{equation} \label{eq:bd_Bessel2}
		\Pb\Big( \sup_{t\in (0,1/2)}(\beta_{1/2-t}^{0,x}+\beta_t^{1,0}) > y \Big)\le 2e^{-4\sqrt{y}(\sqrt{y}-\sqrt{x})}.
	\end{equation}
	
	Next, we bound the maximum of $\beta_t^{k,0}$ over $t\in [0,1/2]$, for each integer $k \geq 1$. We denote $M := \sup_{t\in (0,1/2)}|b_t^0|^2$, and we let $M_1,\ldots,M_k$ be $k$ i.i.d. copies of $M$. Since $(\beta_{t}^{k,0})_{0\le t\le 1/2}$ can be interpreted as the sum of $k$ i.i.d. copies of $(|b_t^0|^2)_{0\le t\le 1/2}$ (by additivity), we have immediately 
	$$\Eb\Big(e^{\sup_{t\in (0,1/2)}\beta_t^{k,0}}\Big) \le  \Eb\Big(e^{\sum_{1\le i\le k}M_i}\Big)= C^k,$$
	where by \eqref{eq:bd_Bessel1},
	$$C :=\Eb \big( e^M \big) \le \int_0^\infty 2\,e^{-4x+x}dx<\infty.$$
	Therefore, by Chebyshev's inequality,
	\begin{equation} \label{eq:bd_Bessel3}
		\Pb \bigg( \sup_{t\in (0,1/2)}\beta_t^{k,0} >y \bigg) \le e^{-y} C^k.
	\end{equation}
	We now choose the value
	\begin{equation} \label{eq:valueK}
		K := 12(\log C)\vee 16.
	\end{equation}
	Putting things together, on the event $\{X_N(u)\vee Y_N(u)\vee X_N(w)\vee Y_N(w) \vee n(e) \le \lambda\}$, and conditioned on the values of $X_N(u)$, $X_N(w)$, and $n(e)$, we have (for ease of notation, below we simply write $\tilde{\Pb}$ for the conditional probability measure): for some universal $c>0$,
	\begin{align*}
		\tilde\Pb\Big( \sup_{t\in (0,1/2)} & (\beta_t^{2n(e),0} + \beta_t^{0,X_N(u)}+\beta_{1/2-t}^{0,X_N(w)} + \beta_t^{2\alpha,0}) > K\lambda \Big) \\
		& \le \tilde\Pb\Big( \sup_{t\in (0,1/2)}\beta_t^{2n(e),0} > \frac{K\lambda}{3} \Big) +
		\tilde\Pb\Big( \sup_{t\in (0,1/2)}(\beta_t^{0,X_N(u)} + \beta_t^{1,0}) > \frac{K\lambda}{3} \Big) \\
		& \hspace{2cm} +  \tilde\Pb\Big( \sup_{t\in (0,1/2)}(\beta_{1/2-t}^{0,X_N(w)} + \beta_t^{1,0}) > \frac{K\lambda}{3} \Big),
	\end{align*}
	where we have bounded $\beta_t^{2\alpha,0}$ by the sum of two independent copies of $\beta_t^{1,0}$, by using $2\alpha\leq 2$, additivity and monotonicity. By applying \eqref{eq:bd_Bessel3}, \eqref{eq:bd_Bessel1} and \eqref{eq:bd_Bessel2} to the three terms in the sum above (respectively), and using the value of $K$ (see \eqref{eq:valueK}), we then obtain
	\begin{align*}
		\tilde\Pb\Big( \sup_{t\in (0,1/2)} & (\beta_t^{2n(e),0} + \beta_t^{0,X_N(u)}+\beta_{1/2-t}^{0,X_N(w)} + \beta_t^{2\alpha,0}) > K\lambda \Big) \\
		& \le e^{-K\lambda/3} C^{2n(e)} + 2e^{-\sqrt{K\lambda}(\sqrt{K\lambda}-2\sqrt{X_N(u)})} + 2e^{-\sqrt{K\lambda}(\sqrt{K\lambda}-2\sqrt{X_N(w)})} \\
		& \le e^{-c\lambda},
	\end{align*}
	for some universal constant $c > 0$. Hence, \eqref{eq:decom} also holds in case~(\ref{eq:caseii}).
	
	In other words, whenever a $\lambda$-passage edge $(u,v)\in\Pc^\lambda$ arises, we obtain an extra cost decaying exponentially fast in $\lambda$. Hence, we can obtain all the desired results by following an induction argument which is analogous to those in Section~\ref{sec:pf-thm}. We omit further details, thus concluding the proof of Theorem~\ref{thm:main-m}.
\end{proof}

Finally, the result can be generalized to the critical case on the metric graph in a straightforward way, since the same description of occupation inside edges, in terms of Bessel bridges, holds when $\alpha=1/2$, and also the same exponential upper bound for the maximum occupation. Hence, we can obtain the result below.

\begin{theorem}
	There exist $c'_0>0$ and $C'_0$, such that for all $C\ge C'_0$,
	\begin{align}\label{eq:mcritical2}
		&\Pb( \wt\Ec_{1/2,N}(C\sqrt{\log\log N}) )= 1-O( (\log N)^{-c'_0 C^2}) \quad \text{as} \quad N\to \infty,\\[2mm]
		\label{eq:mcritical1}
		&\liminf_{N \to \infty}\Pb( \wt\Ec'_{1/2,N}(C\sqrt{\log\log N}) )>0.
	\end{align}
\end{theorem}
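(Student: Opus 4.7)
The plan is to combine the two extension ideas already deployed in the paper: the critical induction of Section~\ref{sec:critical-metric} that yields Theorem~\ref{thm:carpet2}, and the metric graph adaptation of Section~\ref{sec:metric_graph} that yields Theorem~\ref{thm:main-m}. Mimicking the proof of Theorem~\ref{thm:main-m}, I would call a vertex $z\in B_N$ \emph{$\lambda$-bad} if either $Z_N(z)>\lambda$, or there exists an edge $e=(z,w)\in E_{B_N}$ with $X_N(z)\vee X_N(w)\vee Y_N(z)\vee Y_N(w)\le\lambda$ but $\sup_{y\in I_e}\wt X_N(y)>K\lambda$, where $K$ is the universal constant from \eqref{eq:valueK}. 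A path of $\lambda$-good (i.e., not $\lambda$-bad) discrete vertices crossing $B_N$ lifts to a $K\lambda$-open path for $\wt X_N$ in $\wt\Gc_N$; by duality on the discrete graph, it thus suffices to rule out $*$-chains of $\lambda$-bad vertices crossing the relevant annulus at the level $\lambda=\varepsilon\log N$ (up to rescaling $\varepsilon$ by $K$).

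I would then introduce the analogs $\wt\Ac^\lambda_D$, $\wt\Ac^{\lambda,+}_D$ and $\wt\Bc^\lambda_D$ of the $\lambda$-arm events of Section~\ref{sec:pf-thm}, where a $\lambda$-passage edge requires both endpoints to be $\lambda$-bad in the above metric graph sense. The core step is to re-establish the decoupling bound of Lemma~\ref{lem:decom} in this new setting. For each $\lambda$-passage edge $(u,v)$, the vertex $u$ is $\lambda$-bad for one of two reasons: either $Z_N(u)>\lambda$, in which case the exponential factor $\rho^\lambda$ is provided by Lemma~\ref{lem:decom} applied to the $Z_N$ field as in Theorem~\ref{thm:Z}; or $\sup_{y\in I_e}\wt X_N(y)>K\lambda$ on some incident edge $e$ whose two discrete endpoints carry bounded occupations, in which case the Bessel bridge computation from the proof of Theorem~\ref{thm:main-m} yields a conditional bound $e^{-c\lambda}$. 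A union bound over the (at most four) incident edges then produces a uniform exponential gain $\rho_1^\lambda$ at each $\lambda$-passage edge, for some universal $\rho_1\in(0,1)$.

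With this decoupling in hand, the induction of Proposition~\ref{prop:critical-lbd-1} can be rerun verbatim: setting $\delta':=-\log\rho_1$ and choosing $\lambda=(2\varepsilon/\delta')\log N$ produces the gain $\rho_1^\lambda=N^{-2\varepsilon}$, which combined with the entropy factor $\sum_{k=i+4}^{j-4}2^{2k}\cdot 2^{(-2+\varepsilon)k}\lesssim N^\varepsilon$ remains summable. This yields $\Pb(\wt\Ac^\lambda_D(z;d_1,d_2))\lesssim(d_2/d_1)^{-2+\varepsilon}$, and the boundary analogs mirroring Propositions~\ref{prop:critical-lbd-2} and~\ref{prop:critical-b2} give $\Pb(\wt\Bc^\lambda_D(z;d_1,d_2))\lesssim (d_2/d_1)^{-1+\varepsilon}$. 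The bulk statement \eqref{eq:mcritical1} then follows from the decoupling around a typical interior passage edge as in the proof of \eqref{eq:critical_carpet2}, combined with \eqref{eq:FN'}; and \eqref{eq:mcritical2} follows from the union bound along $\partial B_N$ as in the proof of \eqref{eq:critical_carpet1}, with $c_0$ depending on $\delta'$ and $K$.

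The main obstacle will be checking that the Bessel bridge surgery from the proof of Theorem~\ref{thm:main-m} combines cleanly with the tighter book-keeping required at criticality. In the subcritical setting the strict inequality $\xi(\alpha)>2$ provides slack to absorb the ``high-edge'' case loosely, whereas at $\xi(1/2)=2$ the Bessel contribution and the discrete exponential gain must be quantitatively matched in the exponent of $N$. Once the combined constant $\delta'$ is identified, the remainder of the argument is a routine transposition of Sections~\ref{sec:critical-metric} and~\ref{sec:metric_graph}.
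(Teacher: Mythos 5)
Your proposal is correct and follows essentially the same route the paper takes. The paper's own proof of this final theorem is a one-sentence remark stating that the Bessel bridge decomposition and the resulting exponential bound in Section~\ref{sec:metric_graph} persist at $\alpha=1/2$, so the critical arguments of Section~\ref{sec:critical-metric} can be rerun; you correctly unfold this into the metric-graph $\lambda$-bad vertex definition, the two-case decoupling (high $Z_N$ versus high occupation inside an incident edge), the combined exponential gain $\rho_1^\lambda$, and the critical induction with $\lambda=(2\eps/\delta')\log N$. One small remark: the ``quantitative matching'' you flag as the main obstacle is not really a concern --- since both the Bessel estimate $e^{-c\lambda}$ and the discrete gain $\rho^\lambda$ decay at universal exponential rates, the combined rate is simply the worse of the two, which is still universal, and the induction of Proposition~\ref{prop:critical-lbd-1} then applies with $\delta'=-\log\rho_1$ exactly as in the discrete critical case.
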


\subsection*{Acknowledgments}
Part of YG's work was done while working at City University of Hong Kong.
YG and PN are partially supported by a GRF grant from the Research Grants Council of the Hong Kong SAR (project CityU11307320). WQ is partially supported by a GRF grant from the Research Grants Council of the Hong Kong SAR (project CityU11308624).

\bibliographystyle{abbrv}
\bibliography{percolation_GFF2}

\end{document}